\newtheorem{theo}{{\bfseries Theorem}}[section]
\newtheorem{prop}[theo]{{\bfseries Proposition}}
\newtheorem{lem}[theo]{{\bfseries Lemma}}
\newtheorem{cor}[theo]{{\bfseries Corollary}}
\newtheorem{df}[theo]{{\bfseries Definition}}
\newtheorem{ex}{{\bfseries Example}}
\newtheorem{exes}[ex]{{\bfseries Examples}}
\newtheorem{ques}[theo]{{\bfseries Question}}
\newtheorem{add}[theo]{{\bfseries Addendum}}
\newtheorem{back}[theo]{{\bfseries Background}}
\def \N {\mathbb N}
\def \Z {\mathbb Z}
\def \R {\mathbb R}
\def \A {\mathcal A}
\def \B {\mathcal B}
\def \CC {\mathcal C}
\def \U {\mathcal U}
\def \a {\alpha }
\def \alp {\aleph}
\def \b {\beta}
\def \ep {\epsilon}
\def \om {\omega}
\def \th {\theta}
\def \d {\delta}
\def \r {\rho}
\def \s {\sigma}
\def \t {\tau}
\def \ha {\rightharpoonup}
\def \lha {\leftharpoonup}
\def \1  {{\mathbf 1}}
\def \0  {{\mathbf 0}}
\numberwithin{equation}{section}
\begin{document}

\begin{titlepage}
\large
\title {\bfseries Topological Tournaments}
\author{Ethan Akin}
 \vspace{.7cm}

\address{Mathematics Department \\
    The City College \\ 137 Street and Convent Avenue \\
       New York City, NY 10031, USA     }
\email{ethanakin@earthlink.net}

\date{March, 2023}

\begin{abstract} A directed graph $R^{\circ}$ on a set $X$ is a set of ordered pairs of
distinct points called \emph{arcs}. It is a tournament when  every pair of distinct points is connected by an arc in one direction or
the other (and not both). We can describe a tournament $R \subset X \times X$ as a total, antisymmetric relation, i.e. $R \cup R^{-1} = X \times X$
and $R \cap R^{-1}$ is the diagonal $1_X = \{ (x,x) : x \in X \}$. The set of arcs is $R^{\circ} = R \setminus 1_X = (X \times X) \setminus R^{-1}$.
A topological tournament on a compact Hausdorff space $X$ is a tournament $R$ which is a closed subset of $X \times X$. We construct uncountably many
non-isomorphic examples on the Cantor set $X$ as well as examples of arbitrarily large cardinality. We also describe compact Hausdorff spaces which
do not admit any topological tournament.

\end{abstract}

\keywords{tournaments, topological tournaments, lexicographic products, inverse limits of tournaments,prime topological tournaments,
arc cyclic tournaments, classification of arc cyclic topological tournaments}

\thanks{{\em 2010 Mathematical Subject Classification} 05C20, 05C25, 05C38, 05C63, 06A05, 54D30}

\end{titlepage}
\maketitle

\tableofcontents

\vspace{1cm}
\setcounter{page}{1}
\section{ \textbf{Introduction}}\vspace{.5cm}

A directed graph (or just \emph{digraph})  consists of a non-empty finite set
$X$ of elements called \emph{vertices} and a finite set $R^{\circ}$ of ordered pairs of
distinct vertices called \emph{arcs}. In addition, we assume that $(x,y) \in R^{\circ}$ implies $(y,x) \not\in R^{\circ}$. That is, for
any pair of distinct vertices there is at most one arc between them. A digraph is called a \emph{tournament} if for
any pair of distinct vertices there is  exactly one arc between them.  That is, either $(x,y) \in R^{\circ}$ or $(y,x) \in R^{\circ}$ but
not both. Digraphs have been the object of considerable study, see e.g. \cite{B-JG}. For the special case of tournaments, see \cite{HM} and \cite{M}.
When the tournament $R$ is understood, we will write $x \ha y$ when $(x,y) \in R^{\circ}$.

In considering tournaments on infinite set $X$, it will be convenient to attach the diagonal set $1_X = \{ (x,x) \in X \}$. So we
will call $R$ a tournament on $X$ when it is an anti-symmetric, total relation on $X$.  That is, $R \subset X \times X$ with
$R \cap R^{-1} = 1_X$ and $R \cup R^{-1} = X \times X$, where $R^{-1} = \{ (x,y) : (y,x) \in R \}$.  The set of arcs is
$$R^{\circ} \ = \ R \setminus 1_X \ = \ (X \times X) \setminus R^{-1}.$$

A tournament $(X,R)$ is a \emph{trivial tournament} when $X$ is a singleton set, and is an \emph{arc tournament} when $X$ is a two point set.
If $A \subset X$, then $(A,R|A)$ is called the \emph{restriction} to $A$ where $R|A = R \cap (A \times A)$.

We will call $R$ a topological tournament
on a topological space $X$
when it is a tournament, closed as a subset of $X \times X$. Of course, when $X$ is given the discrete topology, any tournament on $X$ is a
topological tournament. We will be primarily interested in the case when $X$ is compact.

All our spaces are assumed to be Hausdorff, but as they need not be metrizable, we will use the convergence theory of nets. These are
analogues of sequences, indexed by directed sets instead of by the natural numbers $\N$. For the theory of nets, see \cite{K} Chapter 2 on
Moore-Smith Convergence.

For a topological tournament $R$ and a point $x \in X$, the \emph{outset} is  $R^{\circ}(x) = R(x) \setminus \{ x \}$, and the
\emph{inset} is $R^{\circ -1}(x)$.
We call a point \emph{right balanced} if it is in the closure of its outset and \emph{left balanced} if it is in the
closure of its inset. It is \emph{balanced} if
it is both left and right balanced. A point is neither left nor right balanced if and only if it is an isolated point. We call a point $x$ a
\emph{cycle point} when every neighborhood of $x$ contains a $3-$cycle which includes $x$. Clearly a cycle point is balanced and we will see that
in the compact case a cycle point is a $G_{\d}$ point and so has a countable neighborhood base. Of course, balanced points of any sort only
occur when the space $X$ is infinite.

\begin{prop}\label{propintro01b} If $x$ is a cycle point for a tournament $R$ on a compact space $X$, then $x$ is a $G_{\d}$ point and so
has a countable neighborhood base in $X$. \end{prop}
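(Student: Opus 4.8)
The plan is to produce a countable family of open neighborhoods of $x$ whose intersection is exactly $\{x\}$; since $X$ is compact Hausdorff, hence normal, such a family makes $\{x\}$ a $G_{\d}$ set, and a standard decreasing-closure argument (choose open $V_n$ with $x\in V_n\subseteq \overline{V_n}\subseteq U_n\cap V_{n-1}$ and use that the compact sets $\overline{V_n}\setminus O$ decrease to $\emptyset$) then upgrades it to a countable neighborhood base. First I would record the basic topology of the sections. Because $R$ is closed, so is $R^{-1}$, and therefore the arc set $R^{\circ}=(X\times X)\setminus R^{-1}$ is open; consequently each outset $R^{\circ}(y)=X\setminus R^{-1}(y)$ and each inset $R^{\circ-1}(y)=X\setminus R(y)$ is open, while the sections $R(x)$ and $R^{-1}(x)$ are closed. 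Writing $A=R^{\circ}(x)$ and $B=R^{\circ-1}(x)$, these are disjoint open sets whose closures satisfy $\overline{A}\subseteq R(x)$, $\overline{B}\subseteq R^{-1}(x)$, so $\overline{A}\cap\overline{B}\subseteq R(x)\cap R^{-1}(x)=\{x\}$.

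The heart of the argument is a recursive choice of $3$-cycles. For a $3$-cycle $x\ha a\ha b\ha x$ (so $a\in A$, $b\in B$, $a\ha b$) set
$$W(a,b)\ =\ R^{\circ}(b)\cap R^{\circ-1}(a),$$
an open set containing $x$, since $b\ha x$ and $x\ha a$. Using that $x$ is a cycle point, I would build $3$-cycles $x\ha a_n\ha b_n\ha x$ so that $a_{n+1},b_{n+1}$ lie in the open neighborhood $V_n=\bigcap_{k\le n}W(a_k,b_k)$ of $x$. For $k<m$ the membership $a_m,b_m\in W(a_k,b_k)$ unwinds to the arc relations $b_k\ha a_m$ and $b_m\ha a_k$; together with the diagonal $3$-cycle arcs $a_n\ha b_n$ this yields $b_i\ha a_j$ for every $i\ne j$.

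The step I expect to be the main obstacle is passing from this net of $3$-cycles to an honest convergent sequence, and the nested cores $V_n$ are designed precisely to force it. Let $(a^{*},b^{*})$ be any cluster point of the sequence $((a_n,b_n))$ in the compact space $X\times X$. Passing to a convergent subnet and using that $R$ is closed, the diagonal arcs $a_n\ha b_n$ give $(a^{*},b^{*})\in R$, while the off-diagonal arcs $b_i\ha a_j$ give, after an iterated limit, $(b^{*},a^{*})\in R$. Antisymmetry then forces $a^{*}=b^{*}$, and since $a^{*}\in\overline{A}$, $b^{*}\in\overline{B}$, this common value lies in $\overline{A}\cap\overline{B}=\{x\}$. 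Thus $(x,x)$ is the only cluster point, so in the compact space $X\times X$ the sequence converges: $a_n\to x$ and $b_n\to x$.

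Finally I would close the loop. If $p\in\bigcap_n W(a_n,b_n)$ then $b_n\ha p$ and $p\ha a_n$ for all $n$, so $(b_n,p),(p,a_n)\in R$; letting $n$ run and using closedness gives $(x,p),(p,x)\in R$, whence $p=x$. Therefore $\bigcap_n W(a_n,b_n)=\{x\}$ is a countable intersection of open sets, so $x$ is a $G_{\d}$ point and has a countable neighborhood base. The only delicate points needing careful verification are the openness of the outsets and insets and the legitimacy of the iterated net limit in the cluster-point computation; everything else is bookkeeping with the tournament axioms.
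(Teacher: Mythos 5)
Your proof is correct and follows essentially the same approach as the paper's own proof of this statement (Theorem \ref{theocycle07}): both recursively extract $3$-cycles through $x$ lying inside the open neighborhoods $R^{\circ}(b_k)\cap R^{\circ -1}(a_k)$ determined by the earlier cycles, and both conclude via compactness, closedness of $R$, and antisymmetry that the resulting countable intersection of neighborhoods is exactly $\{x\}$. The only divergence is bookkeeping: the paper interpolates closures $\overline{U_{n+1}}\subset U_n$ by regularity and runs the limit argument on a single cluster point inside $K=\bigcap_n U_n$, whereas you prove outright convergence $a_n,b_n\to x$ (via the unique-cluster-point fact in compact spaces together with $\overline{R^{\circ}(x)}\cap\overline{R^{\circ -1}(x)}\subseteq\{x\}$) and then pin down the intersection directly; both routes are sound.
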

\vspace{.5cm}

A tournament is \emph{arc cyclic} when every arc is contained in a $3-$cycle. We call a topological tournament \emph{weakly arc cyclic} or just \emph{wac}
when every non-isolated point is a cycle point. As the name suggests, an arc cyclic topological tournament is wac. A finite tournament is always
wac and although not all finite tournaments are arc cyclic, many are.

For compact topological tournaments $(X_1,R_1)$ and $(X_2,R_2)$ a \emph{quotient map} $h : (X_2,R_2) \to (X_1,R_1)$ is a surjective continuous
map $h : X_2 \to X_1$ such that $(h \times h)(R_2) \subset R_1$.  When $h$ is injective, it is an \emph{isomorphism} with inverse
$h^{-1} : (X_1,R_1) \to (X_2,R_2)$. It is an \emph{automorphism} when, in addition, $(X_2,R_2) = (X_1,R_1)$.

If $(X_1,R_1)$ and $\{ (Y_x,S_x) : x \in X_1 \}$  are tournaments, then the \emph{lexicographic product}
$$(X_2,R_2) \ = \ (X_1,R_1) \ltimes \{ (Y_x,S_x) : x \in X_1 \}$$
is the tournament with $X_2 = \bigcup \{ \{x \} \times Y_x : x \in X_1 \}$ and $((x,y),(x',y')) \in R_2$ when
$(x,x') \in R_1^{\circ}$ or $x = x'$ and $(y,y') \in S_x$. The map $\pi : X_2 \to X_1$ is the projection to the first coordinate.
The product is called a \emph{compact topological lexicographic product} when the following conditions are satisfied.
 \begin{itemize}
 \item[(i)] The space $X_1$ and each $Y_x$ is compact.

\item[(ii)] If $x \in X_1$  is non-isolated point of $X_1$, then $(Y_x,S_x)$ is a  trivial tournament.

\item[(iii)] The space $X_2$ is given the topology with basis $\B$ where $U \in \B$ when
either $U = \pi^{-1}(V)$ for $V$ some open  subset of $X_1$, or
$U = \{ x \} \times V$ for $x$ isolated in $X_1$ and $V$ some open subset of $Y_x$.
\end{itemize}

In that case, $(X_2,R_2)$ is a compact topological tournament and $\pi : (X_2,R_2) \to (X_1,R_1)$ is an open quotient map.

A sequence $\{ (X_i,R_i,f_i) : i \in \N \}$ is a \emph{compact inverse sequence} when each $(X_i,R_i)$ is a compact topological tournament
and each $f_i : (X_{i+1},R_{i+1}) \to (X_i,R_i)$ is a quotient map. The \emph{inverse limit} is the compact tournament
$(X,R) \ = \ \overleftarrow{Lim} \{ (X_i,R_i,f_i) \}$ with
\begin{displaymath}\begin{split}
X \ = \ \{ x \in \prod_i X_i : x_i = f_i(x_{i+1}) \ \ \text{ for all } \ i \in \N \}, \hspace{1cm}\\
R \ = \ \{ (x,x') \in X \times X : (x_i,x'_i) \in R_i \ \ \text{ for all } \ i \in \N \}.
\end{split}\end{displaymath}
The projection $\pi_i : (X,R) \to (X_i,R_i)$ given by $\pi_i(x) = x_i$ is a quotient map for each $i \in \N$.

\begin{prop}\label{propintro01a} Let $h : (X_2,R_2) \to (X_1,R_1)$  be a quotient map.

(a) If $y \in X_1$ is a cycle point for $R_1$, then $h^{-1}(y)$ is a singleton $\{ x \} \subset X_2$ and $x$ is a cycle point for $R_2$.
In particular, if every point of $X_1$ is a cycle point, then $h$ is an isomorphism.

(b) If $(X_2,R_2)$ is an arc cyclic (or wac) tournament, then $(X_1,R_1)$ is arc cyclic (resp. wac) and there is
an isomorphism
$$ q : (X_1,R_1) \ltimes \{ (h^{-1}(y),R_2|h^{-1}(y)) : y \in X_1 \} \ \to \ (X_2,R_2) $$
such that $\pi = h \circ q$. In particular, $h$ is an open map.

(c) The inverse limit $ \overleftarrow{Lim} \{ (X_i,R_i,f_i) \}$ is an arc cyclic (or wac) tournament, if and only if
each $(X_i,R_i)$ is arc cyclic (resp. wac). In particular, if each $X_i$ is finite, then $ \overleftarrow{Lim} \{ (X_i,R_i,f_i) \}$ is wac.
\end{prop}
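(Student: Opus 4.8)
The plan is to base everything on one elementary observation about quotient maps, which I will call \emph{forced lifting}: if $h : (X_2,R_2) \to (X_1,R_1)$ is a quotient map, $(y,y') \in R_1^{\circ}$ is an arc, and $u \in h^{-1}(y)$, $v \in h^{-1}(y')$, then necessarily $u \ha v$ in $R_2$. Indeed $u \neq v$ since their images differ, and if instead $v \ha u$ then $(h \times h)(R_2) \subset R_1$ would give $(y',y) \in R_1$, contradicting antisymmetry as $y \neq y'$. Thus the $R_2$-relation between points lying over \emph{distinct} points of $X_1$ is completely dictated by $R_1$. First I would record the immediate consequence that the natural bijection $q$ from $(X_1,R_1) \ltimes \{(h^{-1}(y),R_2|h^{-1}(y))\}$ to $(X_2,R_2)$, sending $(y,u)$ to $u$, is an isomorphism of (abstract) tournaments for \emph{any} quotient map, with $\pi = h \circ q$; the content of (b) will be that it is a homeomorphism and that the product is a \emph{compact topological} lexicographic product.

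For part (a), let $y$ be a cycle point and write $F = h^{-1}(y)$. Choose, in shrinking neighborhoods of $y$, $3$-cycles $y \ha a_\lambda \ha b_\lambda \ha y$, so that $a_\lambda, b_\lambda \to y$; lift each to $a'_\lambda \in h^{-1}(a_\lambda)$, $b'_\lambda \in h^{-1}(b_\lambda)$ and pass to a subnet with $a'_\lambda \to a^*$, $b'_\lambda \to b^*$, necessarily in $F$. Forced lifting gives $p \ha a'_\lambda$ and $b'_\lambda \ha p$ for every $p \in F$ (from $y \ha a_\lambda$ and $b_\lambda \ha y$) and $a'_\lambda \ha b'_\lambda$; since $R_2$ is closed, in the limit $(p,a^*) \in R_2$ and $(b^*,p) \in R_2$ for all $p \in F$ and $(a^*,b^*) \in R_2$. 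Taking $p=b^*$ yields $(b^*,a^*) \in R_2$, which together with $(a^*,b^*) \in R_2$ forces $a^*=b^* =: c$ by antisymmetry; then $(p,c),(c,p) \in R_2$ for all $p \in F$ forces $F = \{c\}$, so the fibre is a singleton $\{x\}$. That $x$ is a cycle point then follows because, $h$ being a closed map, any neighborhood $W$ of $x$ contains $h^{-1}(V)$ for some neighborhood $V$ of $y$; a small $3$-cycle $y \ha a \ha b \ha y$ inside $V$ lifts, via forced lifting again, to a $3$-cycle $x \ha a' \ha b' \ha x$ inside $W$. The last clause of (a) is immediate: if every point of $X_1$ is a cycle point then every fibre is a singleton, so $h$ is a continuous bijection of a compact space onto a Hausdorff space, hence a homeomorphism, and forced lifting shows $h^{-1}$ is also a morphism.

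For (b) the abstract isomorphism $q$ is already in hand, so the work is to prove $R_1$ inherits the relevant property and that $q$ is topological. The arc cyclic case is direct: an arc $(y,y') \in R_1^{\circ}$ lifts to an arc in $R_2$, which lies in a $3$-cycle; projecting it back and checking via antisymmetry that the third image avoids $y$ and $y'$ produces a $3$-cycle through $(y,y')$. For the wac case I would take a non-isolated $y \in X_1$, pick $y_\lambda \to y$ with $y_\lambda \neq y$ and lift to $x_\lambda$; passing to a subnet on which all $y_\lambda$ lie (say) in the outset of $y$, forced lifting shows every subnet-limit $x^*$ of $\{x_\lambda\}$ in $F$ is beaten by all other points of $F$, so there is a \emph{unique} such limit and it is the minimum of $(F,R_2|F)$. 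This $x^*$ is non-isolated, hence a cycle point of the wac tournament $R_2$; but a minimum of $F$ cannot lie in any $3$-cycle contained in $F$, so every small $3$-cycle through $x^*$ must project to a genuine $3$-cycle through $y$, showing $y$ is a cycle point. Hence $R_1$ is wac, and by part (a) every non-isolated point of $X_1$ has a singleton fibre, which is exactly condition (ii) for a compact topological lexicographic product. Finally I would check that $q$ carries each basic open set of the lexicographic topology to an open set of $X_2$ (preimages $h^{-1}(V)$ are open by continuity, and fibres over isolated points are open), so $q$ is an open continuous bijection between compact Hausdorff spaces; openness of $h = \pi \circ q^{-1}$ then follows since $\pi$ is open.

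For (c), the forward implication is a direct application of (b) to each projection $\pi_i : (X,R) \to (X_i,R_i)$, which is a quotient map. For the converse in the arc cyclic case I would fix an arc $(x,x') \in R^{\circ}$, let $i_0$ be the first index where the coordinates differ, and for $i \geq i_0$ form the closed set $C_i$ of points $w$ whose coordinates up to level $i$ complete a $3$-cycle with $x,x'$; arc cyclicity of $R_i$ together with surjectivity of $\pi_i$ makes each $C_i$ nonempty, the bonding maps (being quotient maps) make the lower-level conditions automatic, and the nested family of nonempty closed sets in the compact space $X$ has nonempty intersection, yielding the desired $3$-cycle. The main obstacle is the converse in the wac case (which also underlies the ``finite implies wac'' assertion): when some coordinate $x_i$ is non-isolated one transfers cycle-point-ness up the tower using part (a), but when a non-isolated point $x$ has \emph{every} coordinate $x_i$ isolated the cycle points cannot be located at any finite stage, and one must instead produce $3$-cycles accumulating at $x$ directly from the splitting behaviour of the fibres $\pi_i^{-1}(x_i)$. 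This is the delicate step where I expect to spend the most effort, since naive ``first-difference'' completions need not close up into a cycle, so the argument must genuinely exploit compactness of the fibres together with the hypothesis imposed at each level.
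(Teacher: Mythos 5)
Your parts (a) and (b), and the arc cyclic half of (c), are correct and follow essentially the paper's own route. Your ``forced lifting'' observation is exactly (\ref{eq1.06}) and Proposition \ref{prop1.01a}; your net argument collapsing the fiber over a cycle point is the argument of Theorem \ref{theo3.08}; your identification of a unique extreme point in the fiber over a non-isolated point, followed by projecting small $3$-cycles downstairs, is the content of Theorem \ref{theo1.02}, Addendum \ref{add1.03} and Theorem \ref{wactheo02}(f); and the arc cyclic claims are Corollary \ref{cor1.01b} and Proposition \ref{propinv01a}. Two small blemishes: with your subnet chosen in the \emph{outset} of $y$, the limit $x^*$ satisfies $p \ \underline{\ha} \ x^*$ for every $p$ in the fiber, so it is the terminal point (the maximum) of the fiber, not the minimum; and your nested-closed-sets argument for the arc cyclic converse in (c) is needless work, since cycles lift to cycles exactly (your own forced lifting), so one finds the $3$-cycle at the first level where $x$ and $x'$ differ and lifts its third vertex arbitrarily, which is what Proposition \ref{propinv01a} does.

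The genuine gap is the step you flagged and postponed: the wac converse of (c), together with the ``each $X_i$ finite implies the limit is wac'' assertion, is nowhere proved in your proposal. Moreover, no argument can fill it, because that part of the statement is false. Let $X_i$ carry the linear order on $\{1,\dots,i\}$ and let $f_i : X_{i+1} \to X_i$ be the monotone surjection sending $i+1$ to $i$ and fixing the other points; each $(X_i,R_i)$ is a finite, hence wac, compact topological tournament and each $f_i$ is a quotient map, but the inverse limit is the compact LOTS on $\N^*$, the one-point compactification of $\N$ with its linear order. Its point at infinity is non-isolated yet lies in no $3$-cycle, since a transitive tournament contains none; so the limit is not wac. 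The failure mode is exactly your diagnosis: a non-isolated point of the limit all of whose coordinates are isolated escapes every condition that levelwise wac-ness imposes, because wac-ness, unlike arc cyclicity, says nothing at isolated points. Note that the paper itself never proves intro statement (c) in this generality: the proof of Theorem \ref{qtheo16} uses precisely the above phenomenon (an inverse system of finite transitive tournaments whose limit is an infinite compact LOTS, which has non-isolated points but no cycle points), and Theorem \ref{classtheo05} shows that wac-ness of such a limit is equivalent to extra fiberwise hypotheses (its conditions (iv)/(iv'), e.g.\ each fiber trivial or point cyclic). So the honest resolution of your ``delicate step'' is a counterexample plus strengthened hypotheses, not a cleverer compactness argument.
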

\vspace{.5cm}

If $X$ is a group with identity $e$, a \emph{game subset} $A$ is a subset of $X$ such that $A \cap A^{-1} = \{ e \}$ and $A \cup A^{-1} = X$, where
$A^{-1} = \{ x^{-1} : x \in A \}$. That is, for every $x \not = e$ in $X$, exactly one member of the pair $\{ x, x^{-1} \}$ lies in $A$. Thus, $X$
admits a game subset if and only if it contains no elements of order two. When $X$ is finite, this means $X$ has odd order.
Associated with a game subset $A$
is the tournament $\widehat{A} = \{ (x,y) : x^{-1}y \in A \}$. If $X$ is a topological group and $A$ is a closed game subset,
then $\widehat{A}$ is a topological
tournament on $X$.

\begin{theo}\label{theointro01} Let $X$ be an infinite, compact topological group with no elements of order two.

(a) There exists a closed game subset $A$ for $X$ if and only if the space $X$ is totally disconnected and metrizable, i.e. it is a Cantor set.

(b) If $A$ is a closed game subset for $X$, then the tournament $\widehat{A}$ is arc cyclic and so every point of $X$ is a cycle point.
\end{theo}
\vspace{.5cm}

Notice that a topological group contains an isolated point if and only if it is discrete. Hence, any infinite, compact topological group
has no isolated points.

On the one hand, there are many different topological group structures on the Cantor set.  For example, the $p-$adic integers for any prime
$p$ including $2$ admits closed game subsets.

On the other hand, this illustrates that the existence of a topological tournament is a demanding condition.  For example, if $X$ is an uncountable product
of finite groups of odd order, then because the product topology is not metrizable, the product group does not admit a closed game subset.

A transitive tournament is just a linear order. A linear order on $X$ is a topological tournament when $X$ is given the order topology.
Conversely, if $L$ is a transitive topological tournament on a compact space $X$, then, as we will see, the topology on $X$ is the order
topology associated with $L$, i.e. $X$ is a compact \emph{LOTS} (= linearly ordered topological space).

A topological tournament $R$ on $X$ is called \emph{nowhere locally transitive} if no restriction of $R$ to a nonempty open set is transitive. This
is equivalent to the condition that every nonempty open
subset contains a $3-$cycle.

Clearly, if $X$ contains a dense set of cycle points, then $R$ is nowhere locally transitive. Conversely, we have

\begin{theo}\label{theointro02} Let $(X,R)$ be a compact topological tournament.

(a) If $R$ is nowhere locally transitive, then $X$
is totally disconnected and contains a dense set of cycle points.

(b) If $R$ is balanced and the space $X$ is totally disconnected, then $R$ is nowhere locally
transitive.
\end{theo}
\vspace{.5cm}

Despite these limitations, it is possible to construct big examples by using lexicographic products of LOTS and inverse limits.

\begin{theo}\label{theointro03} Let $\alp$ be an arbitrary uncountable cardinal. There exists a compact, totally disconnected LOTS
 $X$ such that every nonempty open subset has cardinality at least that of $\alp$  and $X$ admits a balanced topological tournament $R$.
The set of cycle points for $R$ is a dense $G_{\d}$ subset of $X$, while
 the set of non $G_{\d}$ points  is also  a dense subset of $X$. Furthermore, no open subset is separable. \end{theo}
\vspace{.5cm}

A nontrivial, compact topological tournament $(Y,P)$ is a \emph{prime tournament} when the only quotient maps $h : (Y,P) \to (Y_1,P_1)$ with
$(Y_1,P_1)$ nontrivial are isomorphisms.  For example, an arc tournament is prime.
A quotient map $(X,R) \to (Y,P)$ with $(Y,P)$ prime is called a \emph{prime quotient map} and $(Y,P)$ is
called a \emph{prime quotient} of $(X,R)$.

\begin{theo}\label{theointro04} If $(X,R)$ is a wac tournament, then it has a prime quotient which is unique up to isomorphism.
If the prime quotient $(Y,P)$ is not an arc, then the prime quotient map is unique up to isomorphism.  That is,
If $h : (X,R) \to (Y,P)$ and $h_1 : (X,R) \to (Y_1,P_1)$ are prime quotient maps and $(Y,P)$ is not an arc, then there exists
an isomorphism $q : (Y,P) \to (Y_1,P_1)$ such that $q \circ h = h_1$. \end{theo}
\vspace{.5cm}

If $(Y,L)$ is an order on a nontrivial finite set, then a \emph{maximum order quotient map} $h: (X,R) \to (Y,L)$ is a quotient map
such that for all $y \in Y$, the restriction $(h^{-1}(y),R|h^{-1}(y))$ does not have an arc quotient.

\begin{add}\label{addintro04a} If $(X,R)$ is a wac tournament with an arc quotient, then it has a maximum order quotient map unique
up to isomorphism. \end{add}
\vspace{.5cm}

Thus, every wac tournament $(X,R)$ has a base quotient map $h: (X,R) \to (Y,P)$, unique up to isomorphism, as follows
\begin{itemize}
\item If $(X,R)$ is nontrivial and does not have an arc quotient, then $h$ is a prime quotient map.

 \item If $(X,R)$ is nontrivial and has an arc quotient, then $h$ is a maximum order quotient map.

 \item If $(X,R)$ is trivial, then $(Y,P)$ is trivial.
 \end{itemize}

  \begin{df}\label{defintro05} For a wac tournament $(X,R)$ the \emph{classifier system}
  is an inverse system $\{(X_i,R_i,f_i) \}$ of
 topological tournaments, together with quotient maps $h_i : (X,R) \to (X_i,R_i)$ which satisfy the
 following properties.
 \begin{itemize}
 \item[(i)]  $h_i = f_i \circ h_{i+1}$ for all $i \in \N$.
 \item[(ii)] $h_1 : (X,R) \to (X_1,R_1)$ is a base quotient map.
 \item[(iii)] For each $x_i \in X_i$, the restriction $(h_i^{-1}(x_i),R|h^{-1}_i(x_i))$ is a wac tournament and
 the map $$h_{i+1} : (h_i^{-1}(x_i),R|h^{-1}_i(x_i)) \to (f_i^{-1}(x_i),R_{i+1}|f^{-1}_i(x_i))$$ is a base quotient map.
 \end{itemize}
 \end{df}

  \begin{theo}\label{theointro06} If $(X,R)$ is a wac tournament, then it has a classifier system which is unique up to isomorphism
  and the product map $\prod_i h_i : (X,R) \to  \overleftarrow{Lim} \{ (X_i,R_i,f_i) \}$ is an isomorphism. Furthermore,
  the classifier can be constructed so that $(X_{i+1},R_{i+1}) = (X_i,R_i) \ltimes \{ (Y_{ix_i}, S_{ix_i} : x_i \in X_i \}$
  with each $(Y_{ix_i}, S_{ix_i})$ either prime, a nontrivial finite order or  trivial. \end{theo}
  \vspace{.5cm}

  Using the uniqueness of the classifier system, we are able to construct an uncountable number of arc cyclic tournaments on the
  Cantor set with each $(X_i,R_i)$ a finite arc cyclic tournament, such that no two are isomorphic.

  On the compact group $\Z[2]$ of $2-$adic integers, there exists a closed game subset $A$ such that $\widehat{A}$ is a prime
  tournament. Using it, we are able to construct an uncountable number of prime, arc cyclic tournaments on the Cantor set, as well
  as arc cyclic, prime tournaments with countable sets of isolated points and with Cantor subsets.

\begin{back}\label{back} \end{back} We briefly review some standard results about
compact spaces which we will be using.  All of our spaces, compact or not, are
assumed to be Hausdorff.
\begin{enumerate}
\item If $\{ A_n \}$ is a decreasing sequence of compact sets in a space $X$ with intersection $A$ and $U$
is an open set with $A \subset U$, then for sufficiently
large $n$, $A_n \subset U$, because $\{ U \} \cup \{ X \setminus A_n \}$ is an open cover of $A_1$ and so has a finite subcover.  In particular, if
$A$ is clopen (= open as well as closed), then, using $U = A$, $A_n = A$ for sufficiently large $n$.

\item A component $A$ in a compact space is the intersection of the clopen subsets which meet and therefore contain it. Hence, if $X$ is totally
disconnected, i.e. the only connected subsets are singletons, then the clopen subsets form a base for the topology.

\item If $X$ is compact and metrizable, and so has a countable base,  then there are only countably many clopen subsets because each clopen set is
a finite union of members of the base.

\item A Cantor set is a compact, metrizable, totally disconnected space with no isolated points. Any Cantor set is homeomorphic to the
product $\{ 0, 1 \}^{\N}$ and so to any other Cantor set. In particular, it is homeomorphic to the classical Cantor Set $C$ contained in the
unit interval of $\R$.

\item For any compact, metrizable, totally disconnected space $X$, the product $X \times C$ is a Cantor set, homeomorphic to $C$ itself and so
$X$ can be embedded in $C$.

\item A countable compact space $X$ is  totally disconnected since any non-trivial connected compact space maps onto the unit interval and so is uncountable.
The diagonal $1_X = \{ (x,x) : x \in X \}$ is clearly a $G_{\d}$ subset. In a compact space, a closed subset is
a $G_{\d}$ set if and only if it has a countable base of neighborhoods. A compact space is metrizable if and only if the diagonal is $G_{\d}$
because then the uniformity of neighborhoods of the diagonal has a countable base, see, e.g. \cite{K} Theorem 6.13. Hence,
the countable compact space $X$ can be embedded in
a Cantor set.

\item The countably infinite product of non-trivial, compact, metrizable, totally disconnected spaces is a compact, metrizable,
totally disconnected space with no isolated points, i.e. a Cantor set.

\item On $\{ 0, 1 \}^{\N}$ the metric $u$ defined to by $u(x,x') = \max_i 2^{-i}|x_i - x'_i|$ is compatible with the product topology and
is an \emph{ultra-metric}\index{ultra-metric}.  That is, it satisfies the strengthening of the triangle inequality:
$ u(x,x'') \ \le \ \max (u(x,x'),u(x',x''))$. For any ultra-metric $u$ and $\ep > 0$ the set $V_{\ep} = \{ (x,x') : u(x,x') < \ep \}$ is an equivalence
relation with finitely many clopen equivalence classes, namely, the $\ep$ balls $V_{\ep}(x)$. Hence,
$V_{\ep} = \bigcup_x \{ V_{\ep}(x) \times V_{\ep}(x) \}$ is clopen.
Any compact, metrizable, totally disconnected space admits such an ultrametric.

\item If $X$ is a general compact, totally disconnected space and $V$ is a neighborhood of the diagonal $1_X$, then there exists a
clopen equivalence relation $E$ on $X$ such that $E \subset V$. We can choose a finite cover $\{U_1, \dots, U_n \}$ of $X$ by clopen sets
such that $U_i \times U_i \subset V$. With $U_0 = \emptyset $ we let $U'_i = U_i \setminus \bigcup_{j< i} U_j$ to get a clopen partition
and then let $E = \bigcup_i U_i' \times U_i'$.

\item Let $X$ be a compact metric space with metric $d$. If $G$ is a compact topological group with a continuous action $(g,x) \mapsto gx$ on $X$,
then $d_G(x,x') = \max \{ d(gx,gx') : g \in G \}$ is a $G$ invariant metric compatible with the topology on $X$. Notice that if
 $\ep > 0$, then $\min \{ d(g^{-1}x,g^{-1}x') : g \in G, d(x,x') \ge \ep \} = \d > 0$ and so  $d(x,x') < \d$ implies $d_G(x,x') < \ep$.
 If $d$ is an ultra-metric, then $d_G$ is a $G$ invariant ultra-metric.
 \end{enumerate}

\vspace{1cm}

\section{ \textbf{Topological Tournaments}}\vspace{.5cm}

Following \cite{A93} we will use the language of relations.  For  sets $X, Y$ (not necessarily finite)
a \emph{relation}\index{relation} $F$ from $X$ to $Y$
is just a subset of the product set
set $X \times Y$ of ordered pairs. We let $\pi_1$ and $\pi_2$ denote the coordinate projections.

We define for a relation $F$ from $X$ to $Y$ and $x \in X, A \subset X, B \subset Y$:
\begin{equation}\label{eq1.01}
\begin{split}
F(x) \ = \ \{ y \in Y : (x,y) \in F \}, \hspace{2cm} \\
F(A) \ = \ \bigcup \{ F(x) : x \in A \} \ = \ \pi_2((A \times Y) \cap F) \\
F^{-1} \ = \ \{ (y,x) \in Y \times X : (x,y) \in F \}, \hspace{1.5cm} \\
F^*(B) \ = \ X \setminus F^{-1}(Y \setminus B).  \hspace{2cm}
\end{split}
\end{equation}
The \emph{reverse relation}\index{relation!reverse} $F^{-1}$ is a relation from $Y$ to $X$.

Notice that
\begin{equation}\label{eq1.01aa}
\begin{split}
F^{-1}(B) \ = \ \{ x \in X : F(x) \cap B \not= \emptyset \}, \hspace{1.5cm} \\
F^*(B) \ = \ \{ x \in X : F(x) \subset B \}. \hspace{1.5cm}
\end{split}
\end{equation}

We think of a relation as a generalization of a mapping.  The relation $F$ is a function from $X$ to $Y$
when for every $x \in X$ the set $F(x)$ is a singleton,
i.e. $|F(x)| = 1$ where we use $|A|$ to denote the cardinality of a finite set $A$.  For example, the identity map $1_X$ is the relation
$\{ (x,x) : x \in X \}$ on $X$.
If $F$ is a mapping, then $F^{-1}(B) = F^*(B)$ is the usual pre-image of $B$.

The relation $F$ is called \emph{surjective}\index{relation!surjective}
when for all $x \in X, y \in Y$, $F(x) \not= \emptyset$ and $F^{-1}(y) \not= \emptyset$, or, equivalently, for every $x \in X$ there exists
$y \in Y$ such that $y \in F(x)$ and for every $y \in Y$ there exists $x \in X$ such that $y \in F(x)$.
When $F$ is a function, it is a surjective relation exactly when it is a surjective function.

If $F$ is a relation from $X$ to $Y$ and $G$ is a relation from $Y$ to $Z$,
the \emph{composition}\index{relation!composition} is the relation
$G \circ F$ from $X$ to $Z$  defined by
\begin{equation}\label{eq1.01a}
\begin{split}
G \circ F \ = \ \pi_{13}((F \times Z) \cap (X \times G)) \ = \ \hspace{2cm} \\
\{ (x,z) \in X \times Z : \text{there exists }  \ y \in Y  \ \text{such that}  \ (x,y) \in F, (y,z) \in G \},
\end{split}
\end{equation}
where $\pi_{13}$ is the coordinate projection from $X \times Y \times Z$ to $X \times Z$.

Thus, for any subset $A$ of $X$, $(G \circ F)(A) = G(F(A))$.  Clearly, $ (G \circ F)^{-1} = F^{-1} \circ G^{-1}$. As with functions, composition
of relations is associative.

When $X = Y$ $F$ is called a \emph{relation on} $X$.

If $R_1$ is a relation on $X_1$ and $R_2$ is a relation on $X_2$, then a function $h : X_2 \to X_1$ maps $R_2$ to $R_1$ when
$(x,x') \in R_2$ implies $(h(x),h(x')) \in R_1$. That is,
\begin{equation}\label{eq1.02}
(h \times h)(R_2) \subset R_1, \quad \text{or, equivalently,} \quad R_2 \subset (h \times h)^{-1}(R_1),
\end{equation}
where $h \times h : X_2 \times X_2 \to X_1 \times X_1$ is the product map induced by $h$. It clearly follows that $h$ maps
$R_2^{-1}$ to $R_1^{-1}$.

We define
the \emph{product relation}\index{relation!product}  $R_1 \times R_2$ on $X_1 \times X_2$ by
\begin{equation}\label{eq1.03}
R_1 \times R_2 \ = \ \{ (x_1,x_2),(y_1,y_2)) : (x_1,y_1) \in R_1 \ \ \text{and} \ \ (x_2,y_2) \in R_2 \}.
\end{equation}
That is, we identify $(X_1 \times X_1) \times (X_2 \times X_2)$ with $(X_1 \times X_2) \times (X_1 \times X_2)$.

If $Y \subset X$ and $R$ is a relation on $X$, then the \emph{restriction}\index{relation!restriction} of $R$ to $Y$ is $R|Y = R \cap (Y \times Y)$.

A relation $R$ on $X$ is \emph{reflexive}\index{relation!reflexive} when $1_X \subset R$,
\emph{symmetric}\index{relation!symmetric} when $R = R^{-1}$
and \emph{transitive}\index{relation!transitive} when $R \circ R \subset R$.

For $n > 1$ an \emph{$n-$cycle}\index{$n-$cycle} for the relation $R$ on $X$ is a sequence $\{ x_1, \dots, x_n \}$ of distinct points of
$X$ such that $(x_i,x_{i+1}) \in R$ for $i = 1, \dots, n$ (with addition mod $n$).

A \emph{closed relation}\index{relation!closed} (or an \emph{open relation})\index{relation!open} is a
 relation $F$ between Hausdorff topological spaces $X$ and $Y$ with $F$ a closed subset
(resp. an open subset) of $X \times Y$. Clearly, for a closed relation $F$,
the reverse relation $F^{-1}$ is closed and for each $x \in X, y \in Y$ the sets $F(x)$ and $F^{-1}(y)$ are closed.  The product and restriction
of closed relations are closed relations.
Similarly, for an open relation $F$ the sets $F^{-1}$, $F(x), F^{-1}(y)$  are open and the product and restriction of open relations are open relations.

As all of our spaces  are assumed to be Hausdorff,
 any continuous map between spaces is  a closed relation. If the spaces are compact, then the converse holds. That is, if a mapping between
  compact spaces  is a closed
 relation, then it is a continuous map. For compact spaces the composition of closed relations
 is closed and the image of a closed subset by a closed relation
 is closed. Furthermore, in the compact case, if $B$ is open, then $F^{*}(B)$ is open.

Thus, a digraph is a relation $R^{\circ}$ on a finite set $X$ such that
$R^{\circ} \cap (R^{\circ})^{-1} = \emptyset$. It is a tournament when, in addition,
$R^{\circ} \cup (R^{\circ})^{-1} = X \times X \setminus 1_X$.  A tournament is said
to be \emph{regular}\index{tournament!regular} when for every $x \in X$
the \emph{inset}\index{tournament!inset} $(R^{\circ})^{-1}(x)$
and the \emph{outset}\index{tournament!outset} $R^{\circ}(x)$ have the same cardinality, i.e. $|(R^{\circ})^{-1}(x)| = |R^{\circ}(x)|$ for all $x \in X$.
A regular tournament exists on a finite set $X$
only when the cardinality $|X|$ is odd. Conversely, as we will see from the group examples below,
a finite set of odd cardinality admits regular tournaments.

Notice that an $n-$cycle for a digraph has length $n$ greater than $2$.

It will be convenient for our purposes
to attach the identity $1_X$ to $R^{\circ}$.

\begin{df}\label{def1.01} For an arbitrary nonempty set $X$, a \emph{tournament}\index{tournament}
on $X$ is a relation $R$ on $X$ which is anti-symmetric and total, i.e.
\begin{equation}\label{eq1.04}
R \cap R^{-1} \ = \ 1_X, \quad \text{and} \quad R \cup R^{-1} \ = \ X \times X.
\end{equation}
We denote by $R^{\circ}$ the \emph{arc-set}\index{tournament!arc-set} $R \setminus 1_X$.

The tournament $1_X$ on a singleton set $X$ is called a \emph{trivial tournament}\index{tournament!trivial}.
A tournament on a two point set is called an \emph{arc tournament}\index{tournament!arc} or simply an arc.

A \emph{topological tournament}\index{tournament!topological}\index{topological tournament} is a
tournament on a topological space which is a closed relation.\end{df}

We will call a pair $(X,R)$ a tournament when $R$ is a tournament on the set $X$. The pair is a topological tournament when
$X$ is a topological space and $R$ is closed, and it is a compact topological tournament when, in addition, the space $X$ is compact.
A finite tournament is a tournament on a finite set, always a compact topological tournament with the discrete topology on $X$.

If $R$ is a topological tournament on $X$, then
\begin{itemize}
\item The reverse relation $R^{-1}$ is a topological tournament on $X$ with $(R^{-1})^{\circ} = (R^{\circ})^{-1}$ which we will therefore write as
$R^{\circ -1}$.

\item The arc-set relation $R^{\circ} = R \setminus 1_X = (X \times X) \setminus R^{-1}$  is an open relation.

\item For each $x \in X$, the sets $R(x)$ and $R^{-1}(x)$ are closed subsets and the \emph{outset}\index{tournament!outset}
 $R^{\circ}(x) = R(x) \setminus \{ x \}$ and
the \emph{inset}\index{tournament!inset} $(R^{-1})^{\circ}(x) = R^{-1}(x) \setminus \{ x \}$ are open subsets of $X$.
\end{itemize}

When the tournament $R$ is understood we will write $ x \ha y$ or $y \lha x$ when $(x,y) \in R^{\circ}$ and we write
$x \ \underline{\ha} \ y$ when $(x,y) \in R$. For subsets $A, B$ of $X$, we will write $A \ha B$, when $x \ha y$ for all
$x \in A, y \in B$, or, equivalently, when $A \times B \subset R^{\circ}$.

We will call a tournament $(X,R)$ \emph{arc cyclic}\index{tournament!arc cyclic} (or \emph{point cyclic} \index{tournament!point cyclic})
when every arc $(x_1,x_2) \in R^{\circ}$
 (respectively, every point $x_1 \in X$) is contained
in a $3$-cycle $\{x_1, x_2, x_3 \}$ for $R$.

Every finite regular tournament is arc cyclic, see, e.g. \cite{A20} Proposition 2.2 or
 \cite{CH} Proposition 5.1.  However, there exist finite tournaments which are arc cyclic but not regular.
A non-trivial arc cyclic tournament is clearly point cyclic. On the other hand, a trivial tournament is not point cyclic but is vacuously
arc cyclic.

  For a tournament $(X,R)$ we will call $A \subset X$ an \emph{arc cyclic subset}\index{arc cyclic subset}  when every arc $(x_1,x_2) \in R^{\circ}$
  with $x_1, x_2 \in A$ is contained in a $3$-cycle $\{x_1, x_2, x_3 \}$ for $R$. Note that $x_3$ need not be in $A$.
 Thus, if the restriction $R|A$ is an arc cyclic tournament, then $A$ is an arc cyclic subset, but the converse need not be true.
Clearly, $(X,R)$ is an arc cyclic tournament when $X$ is an arc cyclic subset.

We will call a topological tournament $(X,R)$ \emph{locally arc cyclic}\index{tournament!locally arc cyclic}\index{tournament!arc cyclic!locally}
when every point $x \in X$ has a neighborhood $U$ which is an arc cyclic subset. Since a trivial tournament is arc cyclic, the
singleton set containing an isolated point is an arc cyclic neighborhood of the point. In particular, every finite tournament is
locally arc cyclic. Of course, an arc cyclic topological tournament is locally arc cyclic.

For a tournament $(X,R)$ we call $x \in X$ a \emph{terminal point}\index{terminal point}
(or a \emph{initial point})\index{initial point} for $R$ when $R^{\circ}(x) = \emptyset$ (resp.
$R^{\circ -1}(x) = \emptyset$). A tournament has
 at most one terminal point since $R^{\circ}(x) = \emptyset$ and $x \not= y $ implies $x \in R^{\circ}(y)$
because  $R$ is total.  Similarly, there is at most one
 initial point. A tournament $R$ is a surjective relation if and only if it has neither a terminal point nor an initial point.

For a topological tournament $R$ on $X$ we define for a point $x \in X$
\begin{align}\label{eq1.05}\begin{split}
x \ \ \text{is right balanced} \quad &\Longleftrightarrow \quad \overline{R^{\circ}(x)} = R(x) \\
x \ \ \text{is left balanced} \quad &\Longleftrightarrow \quad \overline{R^{\circ -1}(x)} = R^{-1}(x)\\
x \ \ \text{is balanced} \quad &\Longleftrightarrow \quad x \ \ \text{is both left and right balanced}.
\end{split}\end{align}
\index{balanced point}\index{balanced point!left}\index{balanced point!right}

Note that if $x$ is not right balanced, if and only if $\overline{R^{\circ}(x)} = R^{\circ}(x)$ and so $R^{\circ}(x)$ is closed as well as open.
Hence,  $x$ is neither left nor right balanced, if and only if $\{ x \}$ is clopen and so $x$ is an isolated point.

We will call a topological tournament $(X,R)$  \emph{balanced}\index{tournament!balanced} when every point of $X$ is balanced.

We will call a topological tournament $(X,R)$ \emph{regular}\index{tournament!regular} when
for every $x \in X$ there is a homeomorphism $h_x$ from $X$ onto $X$ such that
$h_x(x) = x$ and $h_x(R(x)) = R^{-1}(x)$. Note that when $X$ is finite any bijection on
$X$ is a homeomorphism and so this concept agrees with the usual
notion of regularity when $X$ is finite.

If $(X_1,R_1)$ and $(X_2,R_2)$ are tournaments, and $h$ is a function from
$ X_2$ to $X_1$, then we call $h : (X_2,R_2) \to (X_1,R_1)$ a \emph{tournament map}\index{tournament map} (or, equivalently, $h$ is a tournament map
from $R_2$ to $R_1$)
when $h$ maps the relation $R_2$ to the relation $R_1$. Since
$h$ then maps $R_2^{-1}$ to $R_1^{-1}$ it follows that $(h(x),h(x')) \in R_1^{\circ}$ implies $(x,x') \in R_2^{\circ}$ or, equivalently
 \begin{equation}\label{eq1.06}
R_2^{\circ} \supset (h \times h)^{-1}(R_1^{\circ}).
\end{equation}

Thus, the preimage of $R_1^{\circ}$ is contained in $R_2^{\circ}$ and
$R_2$ is contained in the union of the preimage of $R_1^{\circ}$  and that of
$1_{X_1}$.

If $h$ is a bijection, then the inverse map $h^{-1} : X_1 \to X_2$ maps $R_1$ to $R_2$ and we call $h$ a
\emph{tournament isomorphism}\index{tournament!isomorphism}. When $(X_1,R_1)$ and $(X_2,R_2)$ are topological tournaments
and $h$ is a homeomorphism we call it a \emph{ topological tournament isomorphism}. \index{topological tournament!isomorphism}
It is a \emph{topological  tournament automorphism}\index{topological tournament!automorphism} when the domain and the range are the same.

If $Y \subset X$ and $R$ is a tournament on $X$, then the restriction $R|Y$ is a tournament on $Y$
and the inclusion map from $Y$ to $X$ is a tournament map
from $R|Y$ to $R$.  Conversely, if $h$ is a tournament map with $h: X_2 \to X_1$ is  injective,
then since $1_{X_2} = (h \times h)^{-1}(1_{X_1})$ it follows that $h$ is an
isomorphism from $R_2$ onto the restriction of $R_1$ to the image of $h$.

We will call a topological tournament \emph{rigid} \index{topological tournament!rigid} when the identity is the only automorphism.

\begin{prop}\label{prop1.01a} Let $h : (X_2,R_2) \to (X_1,R_1)$ be a tournament map and let $x_1, x_2, x_3$ be distinct points of $X_2$
with $y_1 = h(x_1), y_2 = h(x_2), y_3 = h(x_3)$.

If $\{ y_1, y_2, y_3 \}$ is a $3-$cycle in $X_1$, then $\{ x_1, x_2, x_3 \}$ is a $3-$cycle in $X_2$.

Conversely, if
$\{ x_1, x_2, x_3 \}$ is a $3-$cycle in $X_2$, then either $\{ y_1, y_2, y_3 \}$ is a $3-$cycle in $X_1$ or else $y_1 = y_2 = y_3$.
\end{prop}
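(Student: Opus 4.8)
The plan is to treat the two directions separately, each following directly from one of the two equivalent ways of encoding that $h$ is a tournament map.

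For the forward direction, I would invoke the pullback characterization \eqref{eq1.06}, namely $(h \times h)^{-1}(R_1^{\circ}) \subset R_2^{\circ}$. Assuming $\{y_1, y_2, y_3\}$ is a $3$-cycle in $X_1$, the three arcs $y_1 \ha y_2$, $y_2 \ha y_3$, $y_3 \ha y_1$ all lie in $R_1^{\circ}$; since $(h(x_i), h(x_{i+1})) = (y_i, y_{i+1}) \in R_1^{\circ}$ (indices mod $3$), \eqref{eq1.06} yields $(x_i, x_{i+1}) \in R_2^{\circ}$ for each $i$. As arcs join distinct points, we obtain $x_1 \ha x_2 \ha x_3 \ha x_1$, so $\{x_1, x_2, x_3\}$ is a $3$-cycle in $X_2$.

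For the converse, I would instead push the cycle forward using the defining containment $(h \times h)(R_2) \subset R_1$. From the $3$-cycle $(x_i, x_{i+1}) \in R_2^{\circ} \subset R_2$ we get $(y_i, y_{i+1}) \in R_1$ for $i = 1,2,3$ mod $3$. Now I would split into cases according to whether all three of these pairs are arcs. If each $(y_i, y_{i+1})$ lies in $R_1^{\circ}$, then the $y_i$ are pairwise distinct and $\{y_1, y_2, y_3\}$ is a $3$-cycle. Otherwise some pair lies on the diagonal, say $y_1 = y_2$; then the remaining relations $(y_2, y_3) \in R_1$ and $(y_3, y_1) = (y_3, y_2) \in R_1$ place $(y_2, y_3)$ in both $R_1$ and $R_1^{-1}$, so the antisymmetry $R_1 \cap R_1^{-1} = 1_{X_1}$ from \eqref{eq1.04} forces $y_2 = y_3$, whence $y_1 = y_2 = y_3$. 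The symmetric argument applies if instead $y_2 = y_3$ or $y_3 = y_1$, yielding the stated dichotomy.

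There is no genuine difficulty here; the only point requiring care is the degenerate-case bookkeeping in the converse, where I must use the antisymmetry of the tournament $R_1$ to conclude that the collapse of a single edge of the image triangle forces the entire triangle to degenerate to a single point rather than, say, to an arc.
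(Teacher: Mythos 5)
Your proof is correct and follows essentially the same route as the paper's: the forward direction via the pullback containment \eqref{eq1.06}, and the converse by pushing the cycle forward and using anti-symmetry of $R_1$ to show that one coincidence among $y_1, y_2, y_3$ collapses all three. The only cosmetic difference is that you spell out the degenerate-case bookkeeping slightly more explicitly than the paper does.
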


\begin{proof} That a cycle lifts to a cycle follows from (\ref{eq1.06}). If $\{ x_1, x_2, x_3 \}$ is a $3-$cycle in $X_2$,
then $y_1 \ \underline{\ha} \ y_2 \ \underline{\ha} \ y_3 \ \underline{\ha} \ y_1$. If two of the points are equal, then all three are.
For example, if $y_1 = y_2$
then $y_1  \ \underline{\ha} \  y_3$ and $y_3  \ \underline{\ha} \  y_1$ and so $y_1 = y_3$ by anti-symmetry.

\end{proof} \vspace{.5cm}

\begin{cor}\label{cor1.01b} Let $h : (X_2,R_2) \to (X_1,R_1)$ be a surjective tournament map and let $A$ be a subset of $X_1$.

The subset $h^{-1}(A)$ is an arc cyclic subset of $X_2$ if and only if
\begin{itemize}
\item[(i)] $A$ is an arc cyclic subset, and
\item[(ii)]  the restriction  $R_2|h^{-1}(y)$ is arc cyclic for every $y \in A$.
\end{itemize}
In particular,
the tournament $(X_2,R_2)$ is arc cyclic, if and
only if $(X_1,R_1)$ is arc cyclic and, in addition, the restriction  $R_2|h^{-1}(y)$ is arc cyclic for every $y \in X_1$.

The tournament $(X_2,R_2)$ is point cyclic, if either $(X_1,R_1)$ is point cyclic, or the restriction
$R_2|h^{-1}(y)$ is point cyclic for every $y \in X_1$.\end{cor}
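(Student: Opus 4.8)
The plan is to derive everything from Proposition \ref{prop1.01a}, which transfers $3$-cycles up and down along $h$, together with the defining inclusion (\ref{eq1.06}) and the surjectivity of $h$, which lets me select a preimage of any point. The only genuine content is the main biconditional; the two ``in particular'' assertions are then immediate specializations.

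For the forward implication, assume $h^{-1}(A)$ is an arc cyclic subset. To get (i), take an arc $(y_1,y_2) \in R_1^{\circ}$ with $y_1,y_2 \in A$, choose preimages $x_1 \in h^{-1}(y_1)$, $x_2 \in h^{-1}(y_2)$ in $h^{-1}(A)$, and note $(x_1,x_2) \in R_2^{\circ}$ by (\ref{eq1.06}). Arc cyclicity of $h^{-1}(A)$ yields a $3$-cycle $\{x_1,x_2,x_3\}$, and the converse half of Proposition \ref{prop1.01a} forces $\{y_1,y_2,h(x_3)\}$ to be a $3$-cycle, the degenerate alternative $y_1 = y_2 = h(x_3)$ being excluded since $y_1 \neq y_2$; so $(y_1,y_2)$ lies on a $3$-cycle and $A$ is arc cyclic. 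For (ii), fix $y \in A$ and an arc $(x_1,x_2) \in R_2^{\circ}$ inside $h^{-1}(y) \subset h^{-1}(A)$; arc cyclicity gives a $3$-cycle $\{x_1,x_2,x_3\}$, and now because $h(x_1) = h(x_2) = y$ the converse of Proposition \ref{prop1.01a} cannot report a $3$-cycle in $X_1$, so it must report $h(x_1) = h(x_2) = h(x_3)$; hence $x_3 \in h^{-1}(y)$ and the cycle lies in the restriction $R_2|h^{-1}(y)$. This last point, that the third vertex is trapped in the fibre, is the one place the argument genuinely exploits the dichotomy of Proposition \ref{prop1.01a}, and I regard it as the crux of the whole corollary.

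For the converse, assume (i) and (ii) and take an arc $(x_1,x_2) \in R_2^{\circ}$ with $x_1,x_2 \in h^{-1}(A)$, writing $y_i = h(x_i) \in A$. I split on whether the arc is vertical. If $y_1 = y_2 = y$, then $(x_1,x_2)$ is an arc of $R_2|h^{-1}(y)$, which is arc cyclic by (ii), giving a $3$-cycle inside $h^{-1}(y) \subset h^{-1}(A)$. If $y_1 \neq y_2$, then $(y_1,y_2) \in R_1^{\circ}$, and since $A$ is an arc cyclic subset there is a $3$-cycle $\{y_1,y_2,y_3\}$ in $X_1$; choosing any $x_3 \in h^{-1}(y_3)$ by surjectivity, the lifting half of Proposition \ref{prop1.01a} makes $\{x_1,x_2,x_3\}$ a $3$-cycle for $R_2$. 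Here $x_3$ need not lie in $h^{-1}(A)$, which is harmless since the third vertex of a witnessing cycle is not required to lie in the arc cyclic subset. Either way $(x_1,x_2)$ lies on a $3$-cycle, so $h^{-1}(A)$ is arc cyclic.

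Finally, taking $A = X_1$ gives $h^{-1}(A) = X_2$, and since a tournament is arc cyclic exactly when the whole space is an arc cyclic subset, the first ``in particular'' statement is the biconditional specialized to this case. For the point-cyclic assertion I argue directly: given $x \in X_2$ with $y = h(x)$, if $(X_1,R_1)$ is point cyclic I lift a $3$-cycle through $y$ to one through $x$ by the lifting half of Proposition \ref{prop1.01a}, using surjectivity to supply the other two preimages (distinct since their images are), while if each $R_2|h^{-1}(y)$ is point cyclic I directly obtain a $3$-cycle through $x$ inside $h^{-1}(y)$. In both cases every point of $X_2$ lies on a $3$-cycle, as required.
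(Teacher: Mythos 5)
Your proposal is correct and follows essentially the same route as the paper: both directions are reduced to Proposition \ref{prop1.01a} together with (\ref{eq1.06}) and surjectivity, with the same case split between fibre arcs (where the dichotomy of Proposition \ref{prop1.01a} traps the third vertex in $h^{-1}(y)$) and arcs over distinct base points (where the cycle is pushed down, respectively lifted), and the point-cyclic claim handled by the same lifting argument the paper leaves as "obvious."
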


\begin{proof} Assume $h^{-1}(A)$ is an arc cyclic subset.

If $y_1 \ha y_2$ with $y_1, y_2 \in A$, then because $h$ is surjective there exist
$x_1, x_2 \in h^{-1}(A)$ such that $y_1 = h(x_1), y_2 = h(x_2)$.
Because $h$ is a tournament map $x_1 \ha x_2$.  Because $h^{-1}(A)$ is an arc cyclic subset, there exists $x_3$ such that
$\{ x_1, x_2, x_3 \}$ is a $3-$cycle in $X_2$. Since $y_1 \not= y_2$, it follows from Proposition \ref{prop1.01a} that with $y_3 = h(x_3)$
$\{ y_1, y_2, y_3 \}$ is a $3-$cycle in $X_1$.

If $x_1 \ha x_2 $ and $h(x_1) = y = h(x_2)$ with $y \in A$, then  any  $3-$cycle $\{ x_1, x_2, x_3 \}$ in $X_2$ is contained in $h^{-1}(y)$
by Proposition \ref{prop1.01a}.
Since $h^{-1}(A)$ is an arc cyclic subset, it follows that $R_2|h^{-1}(y)$ is arc cyclic.

For the converse, suppose $ x_1 \ha x_2 $ with $x_1, x_2 \in h^{-1}(A)$. If $h(x_1) = y = h(x_2)$,
then there exists a $3-$cycle $\{ x_1, x_2, x_3 \}$  in $h^{-1}(y)$
by assumption. If $h(x_1) = y_1$ and $h(x_2) = y_2$ are distinct, then $y_1 \ha y_2$ with $y_1, y_2 \in A$. Because $A$ is
an arc cyclic subset, there exists $\{ y_1, y_2, y_3 \}$ a $3-$cycle in $X_2$ and so there exists  a $3-$cycle lift $\{ x_1, x_2, x_3 \}$ by
Proposition \ref{prop1.01a}again.

The point cyclicity result is obvious from Proposition \ref{prop1.01a}. Notice that if $(X_1,R_1)$ is trivial, then it is not
point cyclic even when $(X_2,R_2)$ is.

\end{proof} \vspace{.5cm}

The condition that $h$ be a continuous surjective tournament map between topological tournaments is rather restrictive.

\begin{theo}\label{theo1.02} With $(X_1,R_1)$ and $(X_2,R_2)$ topological tournaments, assume that
$h$ is a continuous tournament map from $(X_2,R_2)$ to $(X_1,R_1)$.  Let $y \in X_1$ and define
$h^*(y) = X_2 \setminus \overline{h^{-1}(R_1^{\circ}(y) \cup R_1^{\circ -1}(y))}$.
\begin{itemize}
\item[(i)] $h^*(y)$ is an open subset of $X_2$ with $h^*(y) \subset h^{-1}(y)$.

\item[(ii)] If $y$ is right balanced with respect to $R_1$, then
there exists at most one point $M \in \overline{h^{-1}(R_1^{\circ}(y))} \cap h^{-1}(y)$.
If the point $M$ exists, then it is a terminal point for the restriction $R_2|h^{-1}(y)$. If, in addition, $y$ is not left balanced, then
the open set $h^*(y)$ is $h^{-1}(y) \setminus \{ M \}$ or $h^{-1}(y)$ if $M$ does not exist.

\item[(iii)] If $y$ is left balanced with respect to $R_1$, then there exists at most one point
$m \in \overline{h^{-1}(R_1^{\circ -1}(y))} \cap h^{-1}(y)$.
If the point $m$ exists, then it is a initial point for the restriction $R_2|h^{-1}(y)$. If, in addition, $y$ is not right balanced, then
the open set $h^*(y)$ is $h^{-1}(y) \setminus \{ m \}$ or $h^{-1}(y)$ if $m$ does not exist.

\item[(iv)] If $y$ is balanced, then the open set $h^*(y)$ is $h^{-1}(y)$ with $m$ and $M$ removed when either exists.

\item[(v)] If $y$ is isolated, then $h^*(y) = h^{-1}(y)$ is a clopen subset of $X_2$.
\end{itemize}
\end{theo}

\begin{proof} (i) is clear since $\{ y \} = X_1 \setminus (R_1^{\circ}(y) \cup R_1^{\circ -1}(y))$.

(ii) If $M \in \overline{h^{-1}(R_1^{\circ}(y))} \cap h^{-1}(y)$ and $x \in h^{-1}(y)$, then for any
$z \in h^{-1}(R_1^{\circ}(y))$, $x \ha z$. Since $R_2$ is closed, $x \ \underline{\ha} \ M$. Hence, $M$
is the terminal point of $h^{-1}(y)$. If $y$ is not left balanced, then $R_1^{\circ -1}(y)$ is closed and so
$h^*(y) = h^{-1}(y) \setminus [\overline{h^{-1}(R_1^{\circ}(y))} \cap h^{-1}(y)]$.

(iii) Since $h$ maps $R_2^{-1}$ to $R_1^{-1}$, this follows from (ii).

(iv) and (v) are obvious.

\end{proof}

\begin{add}\label{add1.03} Let $h$ be a surjective continuous tournament map from the topological
tournament $(X_2,R_2)$  to $(X_1, R_1)$, with $X_2$ compact.

If $y \in X_1$ is right balanced (or left balanced) with respect to $R_1$, then $\overline{h^{-1}(R_1^{\circ}(y))} \cap h^{-1}(y)$
(resp. $\overline{h^{-1}(R_1^{\circ -1}(y))} \cap h^{-1}(y)$) is nonempty and so is a singleton.\end{add}

\begin{proof} By continuity and compactness, the surjective map $h$ sends
$\overline{h^{-1}(R_1^{\circ}(y))}$ onto a closed set which contains $R_1^{\circ}(y)$. If $y$ is
right balanced, then $\overline{R_1^{\circ}(y)} = R_1(y)$ which contains
$y$.  Thus, $\overline{h^{-1}(R_1^{\circ}(y))}$ meets $h^{-1}(y)$ and
from Theorem \ref{theo1.02} (ii) we see that the intersection is a singleton.

\end{proof}
\vspace{.5cm}

\begin{theo}\label{theo1.02a}  If $h : (X_2,R_2) \to (X_1,R_1)$ is a continuous tournament map with $X_1, X_2$ compact metric spaces,
then for every $\ep > 0$, the set
$\{ y \in X_1 : diam \ h^{-1}(y) \ge \ep \}$ is finite.\end{theo}

\begin{proof} Suppose there is is a sequence of triples  $\{ (y_n,x_n,z_n) \in X_1 \times X_2 \times X_2 \}$
with $\{ y_n \}$ distinct points, $h(z_n) = y_n = h(x_n)$ and
$d(z_n,x_n) \ge \ep $.  By going to a subsequence, we may assume that the sequence converges to $(y,x,z)$ so that $h(z) = y = h(x)$ and
$d(z,x) \ge \ep $. By going to a further subsequence, we may assume $y_n \in R_1^{\circ}(y)$ or $y_n \in R_1^{\circ -1}(y)$ for all $n$.
Suppose the latter holds.  Then $z_n \ha x$ and $x_n \ha z$ for all $n$ and so in the limit $z \ha x$ and $x \ha z$ contradicting anti-symmetry.

\end{proof}
\vspace{1cm}

\section{ \textbf{Lexicographic Products}}\vspace{.5cm}

Let $(X_1,R_1)$ be a tournament.  Assume that for each $x \in X_1$, $(Y_x,S_x)$ is a tournament.
The \emph{lexicographic product}\index{lexicographic product} is defined by:
\begin{align}\label{eqlex01}\begin{split}
X_2 \ = \ X_1 \times \{ Y_x \} \ = \  &\ \bigcup_{x \in X} \ \{ x \} \times Y_x, \\
R_2 \ = \ R_1 \ltimes \{ S_x \} \quad \text{where} \ &\text{for} \ (x_1,y_1), (x_2,y_2) \in X_2, \\
((x_1,y_1), (x_2,y_2)) \in R_2 \quad \Longleftrightarrow &\quad \begin{cases} (x_1,x_2) \in R_1^{\circ} \quad \text{or}\\
x_1 = x_2 \ \text{and} \ (y_1,y_2) \in S_{x_1}. \end{cases}
\end{split} \end{align}
It is clear that $(X_2,R_2)$ is a tournament and the first coordinate projection $\pi : X_2 \to X_1$
is a surjective tournament map from $(X_2,R_2)$ to $(X_1,R_1)$.
We call $X_2$ the \emph{total space}\index{total space}\index{space!total}, $X_1$ the \emph{base space}\index{base space}\index{space!base}
and the $Y_x$'s the \emph{fibers} \index{space!fiber} of the product.

\begin{prop}\label{proplex01a} If $R_1$ and each $S_x$ is transitive, then $R_2$ is transitive. \end{prop}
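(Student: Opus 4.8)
The plan is to verify transitivity directly from the defining equivalence (\ref{eqlex01}), i.e. to check that $R_2 \circ R_2 \subset R_2$. So I would fix a composable pair, say $((x_1,y_1),(x_2,y_2)) \in R_2$ and $((x_2,y_2),(x_3,y_3)) \in R_2$, and show $((x_1,y_1),(x_3,y_3)) \in R_2$. Since membership in $R_2$ splits into the \emph{horizontal} alternative $(x_i,x_j) \in R_1^{\circ}$ and the \emph{vertical} alternative $x_i = x_j$ with $(y_i,y_j) \in S_{x_i}$, the natural organization is a case analysis according to which alternative holds for each of the two given links, giving four cases.

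First I would dispose of the three cases in which at least one link is horizontal. If $(x_1,x_2) \in R_1^{\circ}$ and $(x_2,x_3) \in R_1^{\circ}$, transitivity of $R_1$ gives $(x_1,x_3) \in R_1$; I then use antisymmetry of $R_1$ (the identity $R_1 \cap R_1^{-1} = 1_{X_1}$) to rule out $x_1 = x_3$, since $x_1 = x_3$ would force both $(x_1,x_2)$ and $(x_2,x_1)$ into $R_1^{\circ}$ with $x_1 \neq x_2$, a contradiction. Hence $(x_1,x_3) \in R_1^{\circ}$ and the horizontal alternative for $R_2$ holds. In the two mixed cases --- one link horizontal and the other vertical --- the vertical link keeps the relevant first coordinate fixed (so $x_1 = x_2$ or $x_2 = x_3$), and the surviving horizontal relation is already $(x_1,x_3) \in R_1^{\circ}$; again the horizontal alternative holds.

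The remaining case is the genuinely vertical one: $x_1 = x_2$ and $x_2 = x_3$ with $(y_1,y_2),(y_2,y_3) \in S_{x_1}$. Here all three first coordinates coincide, and transitivity of the fiber tournament $S_{x_1}$ yields $(y_1,y_3) \in S_{x_1}$, so $((x_1,y_1),(x_3,y_3)) \in R_2$ via the vertical alternative. Since every case produces membership in $R_2$, transitivity of $R_2$ follows.

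I do not expect any genuine obstacle here; the computation is routine once the cases are set up. The only point I would flag is the use of antisymmetry of $R_1$ to conclude $x_1 \neq x_3$ in the all-horizontal case, which is precisely what prevents a would-be $3$-cycle in the base from invalidating the horizontal alternative.
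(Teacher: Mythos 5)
Your proof is correct and follows essentially the same route as the paper's: a direct case analysis on whether each link is horizontal (an arc in $R_1^{\circ}$) or vertical (within a fiber $S_{x_1}$), using transitivity of $R_1$ in the former cases and of $S_{x_1}$ in the purely vertical case. Your explicit use of antisymmetry to rule out $x_1 = x_3$ in the all-horizontal case is a detail the paper's proof glosses over, but it is the same argument.
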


 \begin{proof} Assume $((x_1,y_1), (x_2,y_2)), ((x_2,y_2), (x_3,y_2))  \in R_2$. If either \\$(x_1,x_2) \in R_1^{\circ}$ or $(x_2,x_3) \in R_1^{\circ}$
 then by transitivity of $R_1$, $(x_1,x_3) \in R_1^{\circ}$ and so $((x_1,y_1), (x_3,y_3)) \in R_2$. Otherwise, $x_1 = x_2 = x_3$ and
 $(y_1,y_2), (y_2,y_3) \in S_{x_1}$. By transitivity of $S_{x_1}$, $(y_1,y_3) \in S_{x_1}$ and so $((x_1,y_1), (x_3,y_3)) = ((x_1,y_1), (x_1,y_3)) \in R_2$.

 \end{proof}

 We will write $(X_2,R_2)$ as $(X_1,R_1) \ltimes \{ (Y_x,S_x) \}$.
 \vspace{.5cm}

For the special case when $(Y_x,S_x) = (Y,S)$ for all $x$, we have $X_2 \ = \ X_1 \times Y$ and we write $R_2 \ = \ R_1 \ltimes S$ and
 $(X_2,R_2) = (X_1,R_1) \ltimes (Y,S)$ is called the  \emph{lexicographic product} of $(X_1,R_1)$ and $(Y,S)$, see \cite{GM}. In that case,
 \begin{equation}\label{eqlex02}
R_1 \ltimes S \ = \ [R_1^{\circ} \times (Y \times Y)] \ \cup \ [1_X \times S], \hspace{1cm}
\end{equation}
i.e. the union of two product relations as in (\ref{eq1.03}).
If $R_1$ and $S$ are topological tournaments with $Y$ non-trivial, then $R_1 \ltimes S$ is
closed if and only if $R_1^{\circ}$ is closed and so $1_X$ is clopen,
which means that $X$ is discrete, i.e. every point of $X$ is isolated.

In particular, if $X_1$ is finite and $(Y,S)$ is a topological tournament, then $(X_1,R_1) \ltimes (Y,S)$ is a topological tournament
where $X_1 \times Y$ has the product topology.   I emphasize the topology on $X_1 \times Y$ because we will deal with the problem of obtaining
a closed relation for the lexicographic product by
adjusting the topology on the total space.

\begin{df}\label{dflex01a} Let $(X_1,R_1)$ and the members of $\{ (Y_x,S_x) : x \in X \}$ be topological tournaments. The product tournament
$(X_2,R_2) = (X_1,R_1) \ltimes \{ (Y_x,S_x) \}$ is  called the
\emph{topological lexicographic product}\index{lexicographic product!topological} when
the following conditions hold:
\begin{itemize}
\item[(i)] For each $x \in X_1$, either $x$ is an isolated point of $X_1$ or else $(Y_x,S_x)$ is a  trivial tournament.

\item[(ii)] The total space $X_2$ is given the topology with basis $\B$ where $U \in \B$ when
either $U = \pi^{-1}(V)$ for $V$ some open  subset of $X_1$, or
$U = \{ x \} \times V$ for $x$ isolated in $X_1$ and $V$ some open subset of $Y_x$.
\end{itemize} \end{df}

In particular, if $x \in X_1$ is non-isolated, then $\pi^{-1}(x)$ is a singleton subset of $X_2$ which we will identify with $\{ x \}$.

\begin{theo}\label{theolex01} The topological lexicographic product $(X_2,R_2) =$ \\ $ (X_1,R_1) \ltimes \{ (Y_x,S_x) \}$ is a
topological tournament which satisfies the following properties.

\begin{enumerate}
\item[(a)] The projection map
 $\pi$ is a continuous, open surjection mapping $R_2$ to $R_1$.

\item[(b)] A point $(x,y) \in X_2$ is isolated if and only if $x$ is isolated in $X_1$ and $y$ is isolated in $Y_x$.

\item[(c)] Assume $x \in X_1$ is an isolated point. The map $y \mapsto (x,y)$ is a homeomorphism from $Y_x$ onto the clopen subset $\{ x \} \times Y_x$ of $X_2$,
mapping $S_x$ isomorphically to the restriction $R_2|\{ x \} \times Y_x$. In particular, a point $y \in Y_x$ is left (or right) balanced for $S_x$
if and only if $(x,y)$ is left (resp. right) balanced for $R_2$.

\item[(d)] If $X_1$ and each $Y_x$ is compact, then $X_2$ is compact.

\item[(e)] If $X_1$ and each $Y_x$ is countable, then $X_2$ is countable.

\item[(f)] If $R_1$ and each $S_x$ is transitive, then $R_2$ is transitive.

\item[(g)] If $R_1$ and each $S_x$ is arc cyclic, then $R_2$ is arc cyclic.
If $R_1$ is point cyclic, then $R_2$ is point cyclic.
\end{enumerate}
\end{theo}

\begin{proof} (a): It is clear that the collection $\B$ is closed under intersection and so forms a basis for a topology with $\pi_X$ continuous.
Since $\pi(U)$ is open in $X_1$ for each $U \in \B$, it follows that $\pi$ is an open map. From (i) and (ii) it easily follows that the topology on
$X_2$ is Hausdorff.

Now suppose that $\{ ((x_k,y_k),(u_k,v_k)) \} $ is a net in $R_2$ converging to $((x,y),(u,v)) \in X_2 \times X_2$. Since $\pi$ is continuous,
$\{ (x_k,u_k) \}$ converges to $(x,u)$ in $X_1 \times X_1$.

Case 1: ($x \not= u$) If $(x,u) \in R_1^{\circ -1}$ then eventually $(x_k,u_k) \in R_1^{\circ -1}$ and so $((x_k,y_k),(u_k,v_k)) \in R_2^{\circ -1}$
contrary to hypothesis.  Hence, $(x,u) \in R_1^{\circ}$ and so $((x,y),(u,v)) \in R_2^{\circ}$.

Case 2a: ($x = u$ is isolated) In that case, eventually, $x_k = x$ and $u_k = u = x$ and so $\{(y_k,v_k) \}$ is eventually a net in
$S_x$. Hence, the limit point $(y,v) \in S_x$ which implies $((x,y),(u,v)) = ((x,y),(x,v))\in R_2$.

Case 2b: ($x = u$ is not isolated) In that case, $Y_x$ is a singleton and so $y = v$. That is, $(x,y) = (u,v)$ and so
$((x,y),(u,v)) \in R_2$.

Thus, $R_2$ is a closed relation and so $(X_2,R_2)$ is a topological tournament.

(b): Clearly, if $x$ is isolated in $X_1$ and $y$ is isolated in $Y_x$, then $\{ x \} \times \{ y \}$ is a basic open set in $X_2$ and so $(x,y)$ is isolated.

If $x$ is isolated and $\{ y_k \}$ is a net in $Y_x \setminus \{ y \}$ converging to $y$,
then $\{ (x,y_k) \}$ converges to $(x,y)$ and so $(x,y)$ is not isolated.

If $\{ x_k \}$ is a net in $X_1 \setminus \{ x \}$ converging to $x$ and $Y_x = \{ y \}$,
then for any $y_k \in Y_{x_k}$, the net $\{ (x_k,y_k) \}$ in $X_2$ converges to $(x,y)$ and so
$(x,y)$ is not isolated.

(c): That the injection from $Y_x$ onto $\{ x \} \times Y_x \subset X_2$ is a homeomorphism onto a clopen subset follows using the basis $\B$ in (ii).

(d): Now assume that $X_1$ and the $Y_x$'s are compact and that $\U$ is an open cover of $X_2$. Let $\U_1$ be the open cover of $X_1$ consisting of
the singleton isolated points together with open sets $V$ such that $\pi^{-1}(V)$ is contained in some member of $\U$. Because $X_1$ is compact,
$\U_1$ has a finite subcover consisting of
finitely many isolated point singletons $\{ x_j \}$ together with finitely many open sets $V_i$ with $\pi^{-1}(V_i) \subset U_i$.
For each $\{x_j \}$ there is a finite cover $\{ V_{jk} \}$ of $Y_{x_j}$ consisting of open sets with $\{x_j \} \times V_{jk}$ contained in
some member $U_{jk}$ of $\U$.  Then, $\{ U_i \}$ together with $\{ U_{jk} \}$ for each $x_j$ is a finite cover of $X_2$ by elements of $\U$.
It follows that $X_2$ is compact.

(e): The countability result is obvious.

(f): The transitivity result follows from Proposition \ref{proplex01a}.

(g): The cyclicity results follow from Corollary \ref{cor1.01b}

\end{proof}

\begin{add}\label{addlex01a} If $\{z_k \}$ is a net in $X_2$ and $z \in X_2$ with $\pi(z)$ non-isolated, then $\{ z_k \}$ converges
to $z$ if and only if the net $\{ \pi(z_k) \}$ in $X_1$ converges to $\pi(z) \in X_1$. In particular, $x = \pi(z)$ is left (or right) balanced for $R_1$
if and only if, identified
with the point in $\pi^{-1}(x)$  it is left (resp. right) balanced for $R_2$.
\end{add}

\begin{proof} Suppose $\{ \pi(z_k) \}$ converges to $\pi(z)$. If $U \subset X$ is an open set with $\pi(z) \in U$, then
eventually $\pi(z_k) \in U$ and so $z_k \in \pi^{-1}(U)$.  From the definition of the topology on $X_2$, it follows that
$\{ z_k \}$ converges to $z$.

The converse is obvious from the continuity of $\pi$.

The balance results follow because $(z_k,z) \in R_2^{\circ}$ if and only if \\ $(\pi(z_k),\pi(z)) \in R_1^{\circ}$.

\end{proof}

\vspace{.5cm}

For a topological lexicographic product $(X_2,R_2) = (X_1,R_1) \ltimes \{ (Y_x,S_x) \}$ a \emph{section}\index{section} is a
function $\xi : X_1 \to X_2$ such that $\pi \circ \xi \ = \ 1_{X_1}$.  That is, $\xi$ is essentially a choice function $\tilde \xi$ for
the family $\{ Y_x \}$ with $\xi(x) = (x,\tilde \xi(x))$.

\begin{lem}\label{lemlex01b} For a topological lexicographic product $(X_2,R_2) = (X_1,R_1)$ \\ $ \ltimes \{ (Y_x,S_x) \}$ any section
is continuous. Furthermore any section $\xi$ is a topological tournament isomorphism from $(X_1,R_1)$ onto the
restriction of $R_2$ to the image $\xi(X_1)$. \end{lem}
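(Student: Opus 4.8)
The plan is to treat the section as the choice function it really is: writing $\xi(x) = (x,\tilde\xi(x))$ with $\tilde\xi(x) \in Y_x$, the identity $\pi \circ \xi = 1_{X_1}$ shows at once that $\xi$ is injective and that $\pi$ restricted to the image $\xi(X_1)$ is a two-sided inverse for $\xi$. Consequently, once continuity of $\xi$ is established, the continuity of $\pi$ recorded in Theorem \ref{theolex01}(a) will immediately upgrade $\xi$ to a homeomorphism onto $\xi(X_1)$, since its inverse is $\pi|_{\xi(X_1)}$.

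For continuity I would check that $\xi^{-1}(U)$ is open for each $U$ in the basis $\B$ of Definition \ref{dflex01a}. If $U = \pi^{-1}(V)$ with $V$ open in $X_1$, then $\xi^{-1}(U) = (\pi\circ\xi)^{-1}(V) = V$ is open. If instead $U = \{x\}\times V$ with $x$ isolated in $X_1$ and $V$ open in $Y_x$, then $\xi(x') \in U$ forces $x' = x$, so $\xi^{-1}(U)$ equals $\{x\}$ or $\emptyset$ according to whether $\tilde\xi(x) \in V$; since $x$ is isolated, $\{x\}$ is open, and so $\xi^{-1}(U)$ is open in either case. This is the crux and the main obstacle one might naively worry about: an arbitrary section need not be ``nice'' in any sense, yet a genuine choice among more than one point is made only over the isolated points of $X_1$, and there continuity is automatic, while over a non-isolated $x$ the fiber $Y_x$ is a singleton so no choice occurs at all.

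Next I would verify that $\xi$ is a tournament map from $(X_1,R_1)$ to $(X_2,R_2)$. Given $(x,x') \in R_1$, either $x = x'$, in which case $\xi(x) = \xi(x')$ and the pair lies in $1_{X_2}\subset R_2$, or $(x,x')\in R_1^{\circ}$, in which case the defining condition (\ref{eqlex01}) for the lexicographic product places $(\xi(x),\xi(x')) = ((x,\tilde\xi(x)),(x',\tilde\xi(x'))) \in R_2$. Thus $(\xi\times\xi)(R_1)\subset R_2$.

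Finally, since $\xi$ is injective we have $(\xi\times\xi)^{-1}(1_{X_2}) = 1_{X_1}$, so by the observation recorded earlier in the section---that an injective tournament map is an isomorphism onto the restriction of its target to its image---$\xi$ is a tournament isomorphism from $(X_1,R_1)$ onto $(\xi(X_1),R_2|\xi(X_1))$. Combining this with the homeomorphism statement of the first paragraph makes $\xi$ a topological tournament isomorphism onto the restriction, as claimed. The only point still deserving a word is that $R_2|\xi(X_1)$ genuinely is a topological tournament, but this is automatic, since the restriction of the closed relation $R_2$ to the subspace $\xi(X_1)$ is again closed.
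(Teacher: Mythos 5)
Your proof is correct, and it reaches the conclusion by a somewhat different route than the paper. The paper proves continuity pointwise: at an isolated point $x$ of $X_1$ continuity is automatic because $\{x\}$ is open, and at a non-isolated point it invokes Addendum \ref{addlex01a}, the net-convergence criterion asserting that a net $\{z_k\}$ converges to $z$ in $X_2$ exactly when $\{\pi(z_k)\}$ converges to $\pi(z)$, whenever $\pi(z)$ is non-isolated. You instead check openness of $\xi^{-1}(U)$ for each basic set $U \in \B$, using $\pi \circ \xi = 1_{X_1}$ for the saturated sets $\pi^{-1}(V)$ and isolatedness of $x$ for the fiber sets $\{x\} \times V$; this is self-contained, avoids nets and the Addendum entirely, and displays the same underlying dichotomy (genuine choices occur only over isolated points, where continuity is free) at the level of the basis rather than of points. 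The paper's version buys brevity by reusing machinery already established for the product; yours buys independence from it. You are also more explicit about the endgame, which the paper compresses into ``it is clear that the injection $\xi$ maps $R_1$ to $R_2$'': you verify the tournament-map condition from (\ref{eqlex01}), cite the general fact that an injective tournament map is an isomorphism onto the restriction of the target to its image, and, importantly, observe that the inverse of $\xi$ is the restriction of the continuous map $\pi$, so $\xi$ is a homeomorphism onto $\xi(X_1)$ with no appeal to compactness --- a point worth making, since Definition \ref{dflex01a} does not assume the spaces involved are compact.
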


\begin{proof} Continuity at $x$ when $x$ is  isolated is obvious.  When $x$ is non-isolated, continuity follows from
Addendum \ref{addlex01a}. It is clear that the injection $\xi$ maps $R_1$ to $R_2$.

\end{proof}

\vspace{.5cm}

\begin{df}\label{dflex02} We call a topological tournament $(X,R)$ a \emph{brick}\index{brick} when it satisfies the following conditions.
\begin{itemize}
\item[(i)] The space $X$ is compact and the isolated points are dense in $X$.

\item[(ii)] If $x \in X$  is not isolated, then the point $x$ is balanced for $R$.
\end{itemize}

We call a brick \emph{isolated point cyclic}\index{brick!isolated point cyclic}, or \emph{ip cyclic}\index{brick!ip cyclic} when it satisfies, in addition,
\begin{itemize}
\item[(iii)] If $x$ is an isolated point, then there exists a $3-$cycle for $R$ which contains $x$.
 \end{itemize}
 \end{df}

 From the density of the isolated points, it follows that the $3-$cycle in (iii) can be chosen to consist of isolated points.
 \vspace{.5cm}

 If $X$ is finite then $(X,R)$ is a brick and if, in addition, $(X,R)$ is regular, then it is ip cyclic by \cite{HM} Theorem 7.

 \begin{theo}\label{theolex03} If $(X_1,R_1)$ and the members of $\{ (Y_x,S_x) : x \in X \}$ are all bricks, with $(Y_x,S_x)$ trivial when $x$ is
 not isolated in $X_1$, then the topological lexicographic
 product $(X_2,R_2) = (X_1,R_1) \ltimes \{ (Y_x,S_x) \}$ is a brick.  If, in addition, for each isolated point
 $x$, the brick $(Y_x,S_x)$ is ip cyclic, then
$(X_2,R_2)$ is ip cyclic. \end{theo}

\begin{proof} Compactness follows from Theorem \ref{theolex01}.

If $x$ is isolated but $y \in Y_x$ is not, then since $y$ is assumed balanced in $Y_x$, it follows that $(x,y)$ is balanced in $X_2$ by
Theorem \ref{theolex01} (c). Since $y$ is a limit of isolated points in $Y_x$, $(x,y)$ is a limit of  points isolated in $\{ x \} \times Y_x$ and
hence in $X_2$.

If $x$ is not isolated in $X_1$, then since $x$ is balanced in $X_1$, it is balanced in $X_2$ by  Addendum \ref{addlex01a}. If $x$ is the limit
of a net $\{ x_k \}$ of isolated points in $X_1$ and $y_k$ is an isolated point in $Y_{x_k}$ then $\{ (x_k,y_k) \}$ is a net of isolated points
in $X_2$ which converges to $x$ in $X_2$ by Addendum \ref{addlex01a} again.

Thus, $(Z,T)$ is a brick.

If $(x,y)$ is an isolated point and $(Y_x,S_x)$ is ip cyclic, then $y$ is contained in a $3-$cycle $\{ y, y', y'' \}$ in $Y_x$
and so $(x,y)$ is contained in the  $3-$cycle of  points $\{ (x,y), (x,y'), (x,y'') \}$ in $X_2$.

\end{proof}

In general, we will call a topological tournament $(X,R)$ \emph{ip cyclic}\index{tournament!ip cyclic} when every isolated point of $X$ is
contained in a $3-$cycle.

\vspace{1cm}

\section{ \textbf{Inverse Limits}}\vspace{.5cm}

An \emph{inverse system}\index{inverse system}  $\{ (X_i,f_i): i \in \N \}$ is a sequence with $f_i$ a function from $X_{i+1}$ to $X_i$ for all $i \in \N$.
The \emph{inverse limit}\index{inverse limit} $X = \overleftarrow{Lim} \{ (X_i,f_i) \}$ is given by
  \begin{equation}\label{eqinv01}
  X \ = \ \{ x \in \prod_{i \in \N} \ X_i :  f_i(x_{i+1}) \ = \ x_i \ \ \text{for all} \ \ i \in \N \},
  \end{equation}
The functions $\pi_i : X \to X_i$ and $\pi_{i,i+1} : X \to X_i \times X_{i+1}$ are the projection mappings. Clearly, for all $i$:
 \begin{equation}\label{eqinv01a}
 f_i \circ \pi_{i+1} \ = \ \pi_i \quad \text{on} \ \ X. \hspace{1cm}
 \end{equation}
 We call $\{ (X_i,f_i): i \in \N \}$ a \emph{surjective inverse system}\index{inverse system!surjective} when each $f_i$ is a surjective map.

\begin{prop}\label{propinv01} If $\{ (X_i,f_i): i \in \N \}$ is a surjective inverse system, then
for all $i \in \N$ $\pi_{i,i+1}$ maps $X$ onto $f_i^{-1}$ and
$\pi_i$ maps $X$ onto $X_i$. \end{prop}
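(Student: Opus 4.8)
The plan is to verify the two assertions in turn, starting with the claim that $\pi_{i,i+1}$ maps $X$ onto the reverse relation $f_i^{-1} \subset X_i \times X_{i+1}$. One inclusion is immediate from the definition of $X$: for any thread $x \in X$ the defining condition $f_i(x_{i+1}) = x_i$ says precisely that $(x_i, x_{i+1}) \in f_i^{-1}$, whence $\pi_{i,i+1}(X) \subset f_i^{-1}$.

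For the reverse inclusion I would fix $(y,z) \in f_i^{-1}$, so that $y \in X_i$, $z \in X_{i+1}$, and $f_i(z) = y$, and then construct a thread $x \in X$ with $x_i = y$ and $x_{i+1} = z$ by extending these two coordinates in both directions. The coordinates below index $i$ are forced: starting from $x_i = y$, set $x_j = f_j(x_{j+1})$ successively for $j = i-1, i-2, \dots, 1$; no hypothesis is needed here. The coordinates above index $i+1$ are not determined, and this is exactly where the surjectivity of the system is used. Since each $f_j$ is surjective, I can choose inductively, for $j = i+1, i+2, \dots$, a point $x_{j+1} \in X_{j+1}$ with $f_j(x_{j+1}) = x_j$, beginning from $x_{i+1} = z$. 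The resulting sequence $x = (x_1, x_2, \dots)$ then satisfies $f_j(x_{j+1}) = x_j$ for every $j \in \N$, hence lies in $X$, and by construction $\pi_{i,i+1}(x) = (y,z)$.

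The statement that $\pi_i$ maps $X$ onto $X_i$ then follows at once, since $\pi_i$ is the composite of $\pi_{i,i+1}$ with the first-coordinate projection $X_i \times X_{i+1} \to X_i$. Because $f_i$ is surjective, every $y \in X_i$ admits some $z$ with $(y,z) \in f_i^{-1}$, and the construction just given produces a thread $x \in X$ with $\pi_i(x) = y$.

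The argument is essentially routine, so there is no serious obstacle; the one point worth flagging is the forward extension. It invokes the axiom of choice, selecting a single preimage at each level, and it depends critically on the surjectivity hypothesis: without it the upward coordinates could fail to exist, so that $\pi_{i,i+1}(X)$ would be a proper subset of $f_i^{-1}$ (and $X$ could even be empty). Thus surjectivity is precisely what the proof consumes, and I would make that dependence explicit.
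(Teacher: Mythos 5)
Your proof is correct and follows essentially the same route as the paper's: downward coordinates are forced by applying the bonding maps, upward coordinates are chosen inductively using surjectivity, and the surjectivity of $\pi_i$ then follows from that of $\pi_{i,i+1}$ together with the surjectivity of $f_i$. Your explicit remarks about the axiom of choice and the necessity of the surjectivity hypothesis are accurate refinements of the same argument, not a departure from it.
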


\begin{proof} It is clear that $\pi_{i,i+1}$ maps into $f_i^{-1}$.

Let $(x_i,x_{i+1}) \in f_i^{-1}$. Inductively,
for $j$ with $1 \le j < i$, let $x_{i-j}$ be the point such that
$ f_{i-j}(x_{i-j+1}) \ = \ x_{i-j}.$ Because each $f_k$ is surjective, for $j$ with $1 < j$  we can choose, inductively, a point $x_{i+j}$ such that
 $f_{i+j-1}(x_{i+j}) \ = \ x_{i+j-1}$.
Thus, $\pi_{i,i+1}$ maps onto $f_i^{-1}$.  Since $f_i$ is surjective, it clearly follows that $\pi_i : X \to X_i$ is onto as well.

\end{proof}\vspace{.5cm}

If each $f_i$ is a continuous map, then $X$ is a closed subset of $\prod_{i \in \N} \ X_i$ with the latter given the product topology.
If, in addition, the spaces $X_i$ are compact, then the inverse limit space $X$ is compact by the Tychonoff Product Theorem.
In any case, the projection maps are continuous.

\begin{theo}\label{theoinv02} Assume that $\{ (X_i,f_i) \}$ is an inverse system
with  inverse limit $X$.
If, for each $i \in \N$, $R_i$ is a relation on
$X_i$ such that $f_i$ maps $R_{i+1}$ to $R_i$, then $\{ (R_i,f_i \times f_i) \}$ is an inverse system with inverse limit which we label $R$.

Identifying $\prod_{i \in \N} (X_i \times X_i)$ with $ (\prod_{i \in \N} X_i) \times (\prod_{i \in \N} X_i)$ we can regard $R$ as a relation on $X$ with
 \begin{equation}\label{eqinv02}
 R \ = \ \bigcap_{i \in \N} \ (\pi_i \times \pi_i)^{-1}(R_i). \hspace{2cm}
 \end{equation}

 If each $R_i$ is a tournament on $X_i$, then $R$ is a tournament on $X$ with $\pi_i$ mapping $R$ to $R_i$. For $x,x' \in X$, we have
 $(x,x') \in R^{\circ}$ if and only if there exists $i \in \N$ such that $x_j = x'_j$ for all
$j < i$ and $(x_i,x'_i) \in R_i^{\circ}$.

 If each $f_i : X_{i+1} \to X_i$ is a continuous map of topological spaces and each $R_i$ is a topological tournament on $X_i$, then $R$
 is a topological tournament on $X$.

  If each $R_i$ is a transitive tournament, then $R$ is transitive.
 \end{theo}

 \begin{proof} It is clear that $\{ (R_i,f_i \times f_i) \}$ is an inverse system and with the above identification we can regard $R$ as a
 relation on $X$ such that $\pi_i$ maps $R$ to $R_i$. Hence, $R \subset \bigcap_{i} \ (\pi_i \times \pi_i)^{-1}(R_i)$. On the other hand,
 if $(x,x') \in  \bigcap_{i} \ (\pi_i \times \pi_i)^{-1}(R_i)$, then $(x_i,x'_i) \in R_i$ and $x, x' \in X$ implies
 $(x_i,x'_i) = (f_i \times f_i)(x_{i+1},x'_{i+1})$.  Hence, $(x,x') \in R$, proving (\ref{eqinv02}).

 Now assume that each $R_i$ is a tournament.
 \begin{align}\label{eqinv03}\begin{split}
 R \cap R^{-1} \ = \ &\bigcap_{i} \ (\pi_i \times \pi_i)^{-1}(R_i \cap R_i^{-1}) \\
  = \ \bigcap_{i} \ &(\pi_i \times \pi_i)^{-1}(1_{X_i}) \ = \ 1_X.
 \end{split}\end{align}
 Therefore, $R$ is anti-symmetric.

 Now assume that $(x,x') \in (X \times X) \setminus R$. From (\ref{eqinv02}) it follows that for some $i_0, \ (x_{i_0},x'_{i_0}) \in R_{i_0}^{\circ -1}$.

 If for some $i_1, (x_{i_1},x'_{i_1}) \in R_{\circ i_1}$, then applying (\ref{eq1.06}) to the appropriate composition of the maps $f_k$
 with would obtain, with $i = \max(i_0,i_1)$ that $(x_{i},x'_{i}) \in R_{i}^{\circ -1}\cap R_{i}^{\circ}$ which is impossible.
 Hence, for all $i_1 \in \N, (x_{i_1},x'_{i_1}) \in R_{i_1}^{-1}$ and thus $(x,x') \in R^{-1}$.  That is, $R$ is total and so is a tournament.

 If $(x,x') \in R^{\circ}$, then since $\pi_j$ maps $R$ to $R_j$, we have $(x_j,x'_j) \in R_j$ for all $i$.  So if $i = \min \{ j : x_j \not= x'_j \}$,
 then $(x_i,x_i') \in R_i^{\circ}$. Conversely, if $(x_i,x_i') \in R_i^{\circ}$ then since $\pi_i$ maps $R$ to $R_i$, we have we have
 $(x,x') \in R^{\circ}$.

 Given the topological assumptions, it is clear that $R$ is closed and so is a topological tournament.

 Now assume that each $R_i$ is transitive and that $(x,x'),(x',x'') \in R$. We show that $(x,x'') \in R$. Clearly, we may assume that the
 three points are distinct so that $(x,x'),(x',x'') \in R^{\circ}$ There exists $i \in \N$
 such that $x_k = x_k'$ for all $k < i$ and $(x_i,x_i') \in R_i^{\circ}$. Similarly,
 for some $j, x_k' = x_k''$ for all $k < j$ and $(x_j',x_j'') \in R_j^{\circ}$. If $i > j$, then $x_k = x_k' = x_k''$ for all $k < j$ and
 $(x_j,x_j'') = (x_j',x_j'') \in R_j^{\circ}$ so that $(x,x'') \in R^{\circ}$. Similarly, if $i < j$, $(x,x'') \in R^{\circ}$.
 If $i = j$, then $x_k = x_k' = x_k''$ for all $k < i$ and $(x_i,x_i'), (x_i',x_i'') \in R_i^{\circ}$. By transitivity of
 $R_i$, we have $(x_i,x_i'') \in R_i$. Antisymmetry and  $(x_i,x_i'), (x_i',x_i'') \in R_i^{\circ}$ implies that $x_i \not= x_i''$.
 Hence, in this case as well $(x,x'') \in R^{\circ}$.

 \end{proof}

 We will call a sequence $\{ (X_i,R_i,f_i): i \in \N \}$ an inverse system of tournaments, when $\{ (X_i,f_i): i \in \N \}$ is an inverse system,
$R_i$ is a tournament on $X_i$ and $f_i$ maps $R_{i+1}$ to $R_i$ for all $i$. We call the
 tournament $(X,R)$ the inverse limit of this system when $X = \overleftarrow{Lim} \{ (X_i,f_i) \}$ and
 $R = \overleftarrow{Lim} \{ (R_i,f_i \times f_i) \}$.

 For every inverse system of tournaments, $\{ (X_i,R_i,f_i): i \in \N \}$ it will be convenient to assume that there
 is a zero level with $(X_0,R_0,f_0)$  with $(X_0,R_0)$ a trivial tournament, i.e. $X_0$ is a singleton,
 and $f_0$ is the unique function from $X_1$ to $X_0$.
 \vspace{.5cm}

 \begin{prop}\label{propinv01a} If $\{ (X_i,R_i,f_i): i \in \N \}$ is a surjective inverse system of tournaments, then
the limit tournament $(X,R)$ is arc cyclic, if and only if $(X_i,R_i)$ is arc cyclic for every $i \in \N$.\end{prop}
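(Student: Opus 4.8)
The plan is to prove the two implications separately, leaning on the fact that for a surjective inverse system each projection $\pi_i : (X,R) \to (X_i,R_i)$ is a surjective tournament map (Proposition \ref{propinv01} together with Theorem \ref{theoinv02}).

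For the forward direction, suppose $(X,R)$ is arc cyclic. Since each $\pi_i$ is a surjective tournament map, Corollary \ref{cor1.01b} immediately gives that every $(X_i,R_i)$ is arc cyclic. Concretely, one lifts an arc $(y,y') \in R_i^{\circ}$ through $\pi_i$ to an arc of $R^{\circ}$ using the preimage containment (\ref{eq1.06}), completes it to a $3$-cycle in $X$ by arc cyclicity, and pushes the cycle forward with Proposition \ref{prop1.01a}; the image cannot collapse because the two chosen vertices already have distinct images $y \neq y'$.

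For the converse, assume every $(X_i,R_i)$ is arc cyclic and fix $(x,x') \in R^{\circ}$. By the characterization of $R^{\circ}$ in Theorem \ref{theoinv02} there is a least index $i_0$ with $(x_{i_0},x'_{i_0}) \in R_{i_0}^{\circ}$ and $x_j = x'_j$ for $j < i_0$ (the zero-level convention makes $i_0 \ge 1$ well defined). Using arc cyclicity of $(X_{i_0},R_{i_0})$ I choose $z \in X_{i_0}$ completing the $3$-cycle $x_{i_0} \ha x'_{i_0} \ha z \ha x_{i_0}$, and I define a candidate completing point $x''$ by $x''_j = x_j$ for $j < i_0$, $x''_{i_0} = z$, and for $j > i_0$ by choosing successive $f$-preimages, which exist because the system is surjective.

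The crux of the argument is to verify that $x''$ is genuinely a point of $X$ and that $\{x,x',x''\}$ is a $3$-cycle in $R$. Coherence $f_j(x''_{j+1}) = x''_j$ is immediate from the definition for $j \neq i_0-1$ and from surjectivity for $j > i_0$; at the delicate level $j = i_0 - 1$ I need $f_{i_0-1}(z) = x_{i_0-1}$, and here I apply Proposition \ref{prop1.01a} to $f_{i_0-1}$. The $3$-cycle $\{x_{i_0},x'_{i_0},z\}$ maps to $\{x_{i_0-1}, x'_{i_0-1}, f_{i_0-1}(z)\}$, and since $x_{i_0-1} = x'_{i_0-1}$ (trivially so when $i_0 = 1$, as $X_0$ is a singleton) the image cannot be a $3$-cycle, so all three images coincide and $f_{i_0-1}(z) = x_{i_0-1}$. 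Finally $i_0$ is the first level at which $x''$ differs from $x'$, where $x'_{i_0} \ha z = x''_{i_0}$, and likewise the first level at which $x''$ differs from $x$, where $x''_{i_0} = z \ha x_{i_0}$; so Theorem \ref{theoinv02} yields $(x',x'') \in R^{\circ}$ and $(x'',x) \in R^{\circ}$, completing the $3$-cycle. I expect the main obstacle to be exactly this coherence check at level $i_0$: the surjectivity hypothesis only provides lifts at higher levels, whereas all the arc relations are forced at level $i_0$, and it is Proposition \ref{prop1.01a} — rather than any topology — that guarantees the chosen completion descends correctly one level down.
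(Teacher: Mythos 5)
Your proof is correct and follows essentially the same route as the paper: the forward direction via Corollary \ref{cor1.01b} applied to the surjective projections, and the converse by completing the arc to a $3$-cycle at the first level $i_0$ where the coordinates differ and lifting it to the limit using surjectivity. The only difference is bookkeeping: the paper takes an arbitrary $x''$ with $\pi_{i_0}(x'') = z$ (Proposition \ref{propinv01}) and invokes the cycle-lifting half of Proposition \ref{prop1.01a}, whereas you rebuild the lift coordinate-by-coordinate, which creates the coherence obligation at level $i_0 - 1$ that you then correctly discharge with the collapse direction of Proposition \ref{prop1.01a}.
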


\begin{proof} If $(X,R)$ is arc cyclic, then since $\pi_i$ maps $(X,R)$ onto $(X_i,R_i)$, the latter is arc cyclic by Corollary \ref{cor1.01b}.

Now assume that all the $(X_i,R_i)$'s are arc cyclic. If $(x,x') \in R^{\circ}$, then there exists $i \in \N$ such that $x_j = x'_j$ for all
$j < i$ and $(x_i,x'_i) \in R_i^{\circ}$. Since $(X_i,R_i)$ is arc cyclic, there exists $z \in X_i$ such that $\{ x_i,x'_i,z \}$ is a $3-$cycle
in $X_i$.  Since $\pi_i$ is surjective, there exists $x''$ such that $\pi_i(x'') = z$.  Then $\{x,x',x'' \}$ is a $3-$cycle in $X$.

 \end{proof} \vspace{.5cm}

 We call $\{(X_i,R_i,f_i) : i \in \N \}$ an \emph{inverse system of topological tournaments}\index{inverse system of topological tournaments}
  when $\{ (X_i,R_i, f_i) \}$ is an inverse system of tournaments with each $(X_i,R_i)$ a topological tournament and each $f_i$ continuous.
The inverse limit $(X,R)$ is then topological.

 \begin{add}\label{addinv03} Assume that $\{ (X_i,R_i,f_i) \}$ and $\{ (Y_i,S_i,g_i)\}$ are inverse systems of topological
 tournaments with limits $(X,R)$ and $(Y,S)$. If for each $i$, the continuous function $h_i : X_i \to Y_i$
maps $R_i$ to $S_i$ and $g_i \circ h_{i+1} = h_i \circ f_i$, then the product map
$\prod_{i} \ h_i : \prod_i X_i \to \prod_i Y_i$  defined by $h(x)_i = h_i(x_i)$ restricts to a continuous function
$h : X \to Y$ which maps  $(X,R)$ to $(Y,S)$.
\end{add}

\begin{proof} Just as the family $\{ h_i \}$ maps  $\{ (X_i,f_i) \}$ to $\{ (Y_i,g_i)\}$, the family $\{ h_i \times h_i \}$ maps
 $\{ (R_i,f_i \times f_i) \}$ to $\{ (S_i,g_i \times g_i)\}$ and hence $(h \times h)(R) \subset S$.

 \end{proof}  \vspace{.5cm}

\begin{exes}\label{exes01} \end{exes} (a) Let $\{ K_i \}$ be a decreasing sequence of subsets of a set $X$ with $k_i : K_{i+1} \to K_i$ the inclusion map.
If $K \ = \ \bigcap_{i \in \N} \ K_i$, then map which associates to $x \in K$ the constant sequence at $x$ is an identification of
$K$ with the inverse limit of $\{ (K_i,k_i) \}$.  The inverse map for this identification equals $\pi_i$ for every $i$.

If $R$ is a tournament on $X$, then the identification is an isomorphism from the restriction $R|K$ to the inverse limit of the system
$\{ (K_i, R|K_i, k_i) \}$. \vspace{.25cm}

(b) With $\{ Y_i \}$ a sequence of spaces, let $X_i = \prod_{1 \le j \le i} \ Y_j$, $f_i : X_{i+1} \to X_i$ be the projection on the first
$i$ coordinates and $g_i : X_i \to Y_i$ be the $i^{th}$ coordinate projection. Let $Y \ = \ \prod_{i \in \N} \ Y_i$.
 The map $q : X \to Y$ defined by $q(x)_i = g_i(\pi_i(x))$ is an identification of $Y$ with
the inverse limit $X$ of $\{ (X_i,f_i) \}$. \vspace{.5cm}

For $\{(X_i,R_i,f_i) : i \in \N \}$ an inverse system of topological tournaments with limit the topological tournament $(X,R)$ we let
$IS = \{ x \in X : x_i $  is an isolated point of $X_i$ for all $i \in \N \}$ \index{IS}.
\vspace{.5cm}

Now assume that  $(X_1,R_1)$ is a compact topological tournament.
Inductively we define  $(X_{i+i},R_{i+1})$ to be the topological lexicographic product
$(X_i,R_i) \ltimes \{ (Y_{iz},S_{iz}) : z \in X_i \}$ with each $(Y_{iz},S_{iz})$ a compact tournament and
with $(Y_{iz},S_{iz})$ trivial when $z$ is not isolated in
$X_i$.  Let $f_i : X_{i+1} \to X_i$ be the first coordinate projection. By
Theorem \ref{theolex01} each $(X_i,R_i)$ is a compact topological tournament. Thus,
$\{ (X_i,R_i,f_i) \}$ is a surjective inverse system of topological tournaments which we will call a \emph{lexicographic inverse system}
\index{lexicographic inverse system}.\index{inverse system!lexicographic}
The limit system $(X,R)$ is a compact topological tournament by Theorem \ref{theoinv02}.

If $(X_1,R_1)$ and each $(Y_{iz},S_{iz})$ is a brick, then, inductively,$(X_i,R_i)$ is a brick and
we will call  $\{ (X_i,R_i,f_i) \}$  a \emph{lexicographic inverse system of bricks}
\index{lexicographic inverse system of bricks}\index{inverse system!lexicographic!of bricks}.

\begin{theo}\label{theoinv03} Assume that $\{(X_i,R_i,f_i) \}$ is a lexicographic inverse system with limit tournament $(X,R)$.
\begin{itemize}
\item[(a)] For each $i \in \N$ the projection map $\pi_i : X \to X_i$ is a continuous, open surjection.

\item[(b)]  If $z \in X_i$ is not isolated, then $\{ \pi_i^{-1}(z) \}$ is a singleton subset $ \{ x \} $ of $X$, and a net $\{ x_k \}$
 in $X$ converges to $x$ in $X$ if and only if $\{ \pi_i(x_k) \}$ converges to $z$ in $X_i$.
\end{itemize}

Now assume that $\{(X_i,R_i,f_i) \}$ is a lexicographic inverse system of bricks.
\begin{itemize}
\item[(c)] The set $IS$  is residual in $X$.  That is, it is a dense $G_{\d}$ subset of $X$.

 \item[(d)]  If for infinitely many   $i \in \N$ and  the tournament $(Y_{iz},S_{iz})$ has no
terminal  point for each isolated point $z \in X_i$, and for infinitely many   $i \in \N$  the tournament $(Y_{iz},S_{iz})$ has no
 initial point each isolated point $z \in X_i$, then the limit tournament $(X,R)$ is balanced.
If, in addition, $X_1$ and each $Y_{iz}$ is countable, then $X$ is a Cantor Set.

\end{itemize}
\end{theo}

\begin{proof}  (a): The $\pi_j$'s are surjective by Proposition \ref{propinv01}.  The basic open subsets of $X$
can be written $\pi_j^{-1}(U)$ for $j$ arbitrarily large and $U$  open in $X_j$.  Because each $\pi_j$ is
surjective by Proposition \ref{propinv01}, $\pi_j(\pi_j^{-1}(U)) = U$. Choose $j > i$.  Using (\ref{eqinv01a}) and induction we see that
$\pi_i(U) = f_i \circ \dots \circ f_{j-1}(U)$. This is open because each $f_j$ is an open map by Theorem \ref{theolex01}(a). Hence, $\pi_i$ is an open
map.  It is clearly continuous.

(b):  If $z \in X_i$ is not isolated, then $Y_{iz}$ is a singleton and so $\{ f_i^{-1}(z) \}$ is a singleton $\{ z' \}$ in $X_{i+1}$, and by
Theorem \ref{theolex01} $z'$ is not isolated in $X_{i+1}$. Proceeding upwards by induction we see that there is only one point
$x$ with $x_i = z$.  If $\{ x_k \}$ is a net in $X$ such that $\{ \pi_i(x_k) \}$ converges to $z$ in $X_i$, then Addendum
\ref{addlex01a}  implies that $\{ \pi_{i+1}(x_k) \}$ converges to $z'$.  Of course, by continuity
$\{ \pi_{i-1}(x_k) = f_{i-1}(\pi_i(x_k)) \}$ converges to $ f_{i-1}(\pi_i(x)) = \pi_{i-1}(x)$.  Proceeding upwards and downwards by induction
we see that $\{ x_k \}$ converges coordinatewise to $x$.

(c): If $Iso(X_i)$ is the set of isolated points of $X_i$, then it is a open subset of $X_i$ which is dense in $X_i$ because $X_i$ is a brick.
Because $\pi_i$ is continuous and open, the set $\pi_i^{-1}(Iso(X_i))$ is open and dense in $X$. By the Baire Category Theorem, the set
$IS \ = \ \bigcap_i \  \pi_i^{-1}(Iso(X_i))$ is a dense $G_{\d}$ subset of $X$.

(d):  Let $x \in X$.

If $x_i \in X_i$ is not isolated, then since $(X_i,R_i)$ is a brick,  $x_i \in X_i$ is balanced and so we can choose a net $ \{ z_k \}$ in $R_i^{\circ}(x_i)$ which
converges to $x_i$ in $X_i$. Choose $x_k$ so that $\pi_i(x_k) = z_k$.  Then $\{ x_k \}$ is a net in $X$ which converges to $x$. Because
$\pi_i$ maps $R$ to $R_i$, we have $x_k \in R^{\circ}(x)$.  Hence, $x$ is right balanced and similarly it is left balanced.

Thus, if $x \not\in IS$, then it balanced in any case.

Now assume that $x \in IS$.

Fix $i$ arbitrarily large such that no $(Y_{iz},S_{iz})$ with $z$ isolated has a terminal point
 and let $z = x_i$. The point $x_{i+1} = (z,y)$ with $y \in Y_{iz}$. Since
$(Y_{iz},S_{iz})$ has no terminal point, there exists $y' \in S_{iz}^{\circ}(y)$ and so $(z,y') \in R_{i+1}^{\circ}(z,y)$.
There exists $x' \in X$ with $x'_{i+1} = (z,y')$ and so $x' \in R^{\circ}(x)$. Furthermore, $x'_i = z = x_i$ and so
$x_j = x'_j$ for all $j \le i$. As $i$ was arbitrarily large, $x'$ is arbitrarily close to $x$ and so $x$ is right balanced.
Similarly, $x$ is left balanced.

If $X_1$ and each $Y_{iz}$ is countable, then Theorem \ref{theolex01} and induction imply each $X_i$ is a countable brick and certainly $X_j$
is not trivial for $j > 1$.  Hence, the space $\prod_i \ X_i$ is a countable product of compact, metrizable, totally disconnected spaces and so is
a Cantor set.  The subset $X$ is therefore a compact, metrizable, totally disconnected space. Since the tournament $R$ is balanced, $X$ has no
isolated points and so is itself a Cantor set.

\end{proof}
\vspace{.5cm}

When $X_1$ and all the $Y_{iz}$'s are finite, then each $X_i$ is finite and so consists of isolated points. We consider the case when
for each $i$ the $(Y_{iz},S_{iz})$'s are the same for all $z \in X_i$.

 On a finite set of cardinality $n$ there are $2^{n(n-1)/2}$ tournaments.  When $n \ge 4$ the majority of these have no terminal nor initial point.

 Let $S_i$ for $ i \in \\Z_+ = \{ 0 \} \cup \N $ be a tournament  on a finite set $Y_i$ with infinitely many having no
 terminal point and with infinitely many having no initial point.
  On the infinite product $Y =\prod_{i \in \Z_+} Y_i$ define $S \ = \ \ltimes_{i \in \Z_+} \ S_i$ by
    \begin{equation}\label{eq3.01}
  (y,z) \in S^{\circ} \quad \Longleftrightarrow \quad (y_i,z_i) \in S_i \ \ \text{for} \ \ i = \min \{ j : y_j \not= z_j \},
  \end{equation}
 for $y, z$ distinct points of $Y$.

 On the other hand, we can let $(X_1,R_1) = (Y_0, S_0)$  and inductively for $i \in \N$ define
 $(X_{i+1},R_{i+1}) = (X_{i},R_{i}) \ltimes (Y_{i},S_{i})$  with $f_{i} : X_{i+1} \to X_{i}$
  the first coordinate projection. Let $g_1 : X_1 \to Y_0$ be the identity and for $i \in \N$ let
  $g_{i+1} : X_{i+1} \to Y_i$ be the second coordinate projection.

  It is clear that $\{ (X_i,R_i,f_i) \}$ an inverse system of topological tournaments which is a lexicographic inverse system of bricks
  Let $(X,R)$ be the limit. Thus, by Theorem \ref{theoinv03} $(X,R)$ is
  a balanced tournament on a Cantor set. Furthermore, the following is easy to check.

 \begin{theo}\label{theo3.01}
 If we define $q: X \to Y$ by
  \begin{equation}\label{eq3.01a}
  q(x)_{i} \ = \ g_{i+1}(\pi_{i+1}(x)), \ \ \text{for} \ \ i \in \Z_+
  \end{equation}
  then $q$ is a homeomorphism from $X$ onto $Y$ which is a tournament isomorphism from $R$ to $S$.

  Thus, the relation $S$ is a balanced topological tournament on the Cantor set $Y$.
 \end{theo}
%
%
  \vspace{1cm}

\section{ \textbf{Connectedness and Compactness}}\vspace{.5cm}

A \emph{ linear order}\index{linear order} is exactly a transitive tournament.
When the space is connected, a topological tournament is necessarily a linear order.

\begin{theo}\label{theo2.01} Let $R$ be a topological tournament on a space $X$.
\begin{itemize}
\item[(a)] If $A$ is a connected subset of $X$ and $x \in X \setminus A$, then either $A \subset R^{\circ}(x)$ or $A \subset R^{\circ -1}(x)$.

\item[(b)] If $X$ is connected, then for all $x \in X$ the sets $R(x)$ and $R^{-1}(x)$ are connected.  Furthermore, $R$ is transitive and so is
a linear order on $X$.
\end{itemize}
\end{theo}

\begin{proof} (a) Since $x \not\in A$, $A$ is the disjoint union of the relatively open subsets $A \cap R^{\circ}(x)$ and $A \cap R^{\circ -1}(x)$.
So if $A$ is connected, one of these is empty.

(b) If $R(x)$ is not connected, then it contains a proper subset $A$ which is clopen in the relative topology on $R(x)$.  Replacing $A$ by its
complement if necessary, we may assume $x \not\in A$. Since $A$ is a closed subset of $R(x)$, it is closed in $X$.
Since $A$ is an open subset of $R^{\circ}(x)$, it is open in $X$. Since $A$ is nonempty, $X$ is not connected. Applying the result to $R^{-1}$ we
see that $R^{-1}(x)$ is connected as well when $X$ is connected.

If $y \in R^{\circ}(x)$, then $R(y)$ is a connected set which meets  $R^{\circ}(x)$.  By anti-symmetry, $x \not\in R(y)$. So (a) implies
that $R(y) \subset R^{\circ}(x)$. Hence, $R$ is transitive and so is a linear order.

\end{proof}

It follows that if $R$ is a topological tournament on $X$, then the restriction of $R$ to any connected subset $A$ of $X$ is a linear order on $A$.

\begin{cor}\label{cor2.02} For a compact space $X$, the set $E = \{ (x,y) : x,y \in A$ with $A$ a connected
subset of $X \}$ is a closed equivalence relation with equivalence classes the components of $X$.
The quotient space $X/E$ is totally disconnected.  Let $\pi : X \to X/E$ be the quotient map.

Assume that $X$ admits a topological tournament $R$.  The relation $R_E = (\pi \times \pi)(R) \subset X/E \times X/E$ is a topological
tournament on $X/E$ with $\pi$ a continuous, surjective
tournament map from $R$ to $R_E$. For every non-trivial component $A$ of $X$ there is an open subset $A^{\circ}$ of
$X$ which is contained in $A$ and with the cardinality
of $A \setminus A^{\circ}$ at most two.

If, in addition, $X$ is metrizable, then for every $\ep > 0$ the set of
components of $X$ with diameter at least $\ep$ is finite and so
the set of non-trivial components of $X$ is countable.

If $R$ is balanced, then $R_E$ is balanced and so $X/E$ has no isolated points. If, in addition, $X$ is metrizable, then $X/E$ is a Cantor set.
\end{cor}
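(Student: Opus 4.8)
The plan is to establish the six assertions roughly in the order stated, reducing each claim about $X/E$ to a property of the surjective continuous tournament map $\pi$ together with the linear-order structure that Theorem \ref{theo2.01} imposes on the components of $X$. First I would verify the set-theoretic and topological claims about $E$. Reflexivity, symmetry and transitivity are immediate once one notes that if $x,y$ lie in a connected set $A$ and $y,z$ in a connected set $B$ then $A\cup B$ is connected; the $E$-class of $x$ is then the union of all connected sets through $x$, i.e. its component. For closedness I would use Background (2): the component of $x$ is the intersection of the clopen sets containing it, so $(x,y)\in E$ exactly when every clopen $C$ contains $x$ iff it contains $y$. Hence $E=\bigcap_C\,[(C\times C)\cup((X\setminus C)\times(X\setminus C))]$ over clopen $C\subset X$, an intersection of clopen subsets of $X\times X$, so $E$ is closed. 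Consequently $X/E$ is compact Hausdorff, and since each clopen set of $X$ is $E$-saturated it descends to a clopen set of $X/E$; these separate points, so $X/E$ is totally disconnected.

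Next comes the tournament $R_E=(\pi\times\pi)(R)$. Closedness is automatic, since $\pi\times\pi$ is a closed map of compact spaces and so carries the closed set $R$ to a closed set. Totality follows from $R\cup R^{-1}=X\times X$ and surjectivity of $\pi\times\pi$, giving $R_E\cup R_E^{-1}=X/E\times X/E$, while $1_{X/E}=(\pi\times\pi)(1_X)\subset R_E$. The crucial point is anti-symmetry, and here I would exploit Theorem \ref{theo2.01}(a). For distinct components $A,B$ and a fixed $b_0\in B$, the sets $A\cap R^{\circ-1}(b_0)$ and $A\cap R^{\circ}(b_0)$ are relatively open, disjoint, and cover the connected set $A$ (as $b_0\notin A$), so one is empty; thus every point of $A$ is related to $b_0$ in a single direction, and by Theorem \ref{theo2.01}(a) applied to the connected set $B$ this direction is the same for every point of $B$. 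Hence either $A\times B\subset R^{\circ}$ or $B\times A\subset R^{\circ}$, which gives $R_E\cap R_E^{-1}=1_{X/E}$. Thus $R_E$ is a topological tournament and $\pi$ is a continuous surjective tournament map.

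For the remaining quantitative statements I would feed $\pi$ into the earlier machinery. Writing $y=\pi(A)$, so that $\pi^{-1}(y)=A$, I set $A^{\circ}=\pi^{*}(y)$ as in Theorem \ref{theo1.02}: part (i) gives $A^{\circ}$ open with $A^{\circ}\subset A$, and parts (ii)--(v), together with Addendum \ref{add1.03}, show that $A\setminus A^{\circ}$ consists of at most the terminal point $M$ and the initial point $m$ of the linear order $R|A$, hence has cardinality at most two. When $X$ is metrizable, $X/E$ is a continuous Hausdorff image of a compact metric space and so is itself compact metrizable; applying Theorem \ref{theo1.02a} to $\pi$ then shows $\{y: diam\ \pi^{-1}(y)\ge\ep\}$ is finite for each $\ep>0$, i.e. only finitely many components have diameter $\ge\ep$, and taking $\ep=1/n$ and unioning shows the non-trivial components are countable.

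Finally, for the balanced case I would transfer balance through $\pi$ by choosing the right representative in each fibre. Since $R|A$ is a compact linear order it has a maximum $M_A$, which is terminal in $R|A$, so $R^{\circ}(M_A)\cap A=\emptyset$ and therefore $R^{\circ}(M_A)=\pi^{-1}(R_E^{\circ}(y))$. As $R$ is balanced, $M_A\in\overline{R^{\circ}(M_A)}$, and applying the continuous surjection $\pi$ gives $y=\pi(M_A)\in\overline{R_E^{\circ}(y)}$, so $y$ is right balanced; using the minimum $m_A$ symmetrically shows $y$ is left balanced. Hence $R_E$ is balanced, so, since a balanced point cannot be isolated, $X/E$ has no isolated points; when $X$ is metrizable the facts already established make $X/E$ a compact metrizable totally disconnected space with no isolated points, i.e. a Cantor set by Background (4). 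I expect the anti-symmetry argument for $R_E$ — the proof that the arc direction between two components is globally consistent — to be the main obstacle, with the balance transfer (the choice of $M_A$ and $m_A$) the second delicate point; everything else is bookkeeping built on Theorems \ref{theo1.02}, \ref{theo1.02a} and \ref{theo2.01}.
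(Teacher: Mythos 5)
Your proposal is correct and follows essentially the same route as the paper: Theorem \ref{theo2.01}(a) for the uniform arc direction between distinct components, Theorem \ref{theo1.02} (with Addendum \ref{add1.03}) to get $A^{\circ} = \pi^{*}(y)$ differing from $A$ by at most two points, Theorem \ref{theo1.02a} for the metric finiteness (which the paper itself offers as an alternative to its total-boundedness argument), and the maximum/minimum of the compact order $R|A$ to transfer balance to $R_E$. Your only cosmetic deviations are proving closedness of $R_E$ via the closed-map property of $\pi \times \pi$ rather than via the saturation identity $R \cup E = (\pi \times \pi)^{-1}(R_E)$, and running the balance transfer directly instead of contrapositively.
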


\begin{proof} The equivalence classes of $E$ are clearly the components of $X$. For each
component $A$, the collection of clopen sets which contain $A$ form a base for the neighborhood system of $A$.  Any component which meets a
clopen set is contained in it.  It follows that $E = \bigcap \{ B \times B \cup (X \setminus B) \times (X \setminus B) \}$ where $B$ varies over the
clopen subsets of $X$.  Hence, $E$ is closed and $X/E$ is totally disconnected.

Obviously $R_E \cup R_E^{-1} = X/E$.  By Theorem \ref{theo2.01}(a) if
$A$ and $B$ are distinct components of $X$ then either $A \times B \subset R^{\circ}$ or $B \times A \subset R^{\circ}$.
It follows that $R \cup E = (\pi \times \pi)^{-1}(R_E)$. Thus, $(\pi \times \pi)^{-1}(R_E \cap R_E^{-1}) = E$.  It follows that
$R_E$ is a topological tournament and that $\pi$ is a tournament map.

We apply Theorem \ref{theo1.02} to the surjective map $\pi$.  Assume for $x \in X/E, \pi^{-1}(x)$ is a non-trivial component $A$.

It then follows that
$A^{\circ} = \pi^*(x)$ is an open subset of $X$ which differs from $A$ by at most two points. Thus, the
collection $\{ A^{\circ} \}$ with $A$ varying over the non-trivial
components of $X$ is a pairwise disjoint collection of nonempty open subsets.  If $X$ is metrizable, then it is totally bounded and so for any
$\ep > 0$ for at most finitely many $A$ is it true that $diam A^{\circ} \ge \ep$. Since $A$ is connected and so has no isolated points,
$diam A^{\circ} = diam A$. The metric result also follows directly from Theorem \ref{theo1.02a}.

If  $M$ is the maximum  for the compact linear order $R|A$, then $M$ is a terminal point for $R|A$ and so $R^{\circ}(M) = \pi^{-1}(R_E^{\circ}(x))$.
If $x$ is not right balanced, then $R_E^{\circ}(x)$, and hence
$R^{\circ}(M)$ as well, are clopen sets and so $M$ is not right balanced. With a similar argument when
$x$ is not left balanced, we see that if $R$ is balanced,
then $R_E$ is balanced. If $X$ is metrizable, then then $X_E$ is metrizable, see \cite{K} Theorem 5.20.
Since $X/E$ is totally disconnected, it is a Cantor set when it has no
isolated points.

\end{proof}
\vspace{.5cm}

For a closed relation $R$ on a compact metric space $X$, the map  $\overrightarrow{R} : X  \to 2^X$ defined
by $x \mapsto R(x)$ is upper semicontinuous, where $2^X$ is the compact space of closed subsets of $X$ equipped
with the Hausdorff metric, see, e.g.
\cite{A93} Proposition 7.11.

\begin{theo}\label{theo2.03} If $R$ is a topological tournament on a compact metric space, then the map $\overrightarrow{R}$
is an embedding, i.e. it is a homeomorphism onto its image in $2^X$.  In particular, it is
lower semicontinuous as well as upper semicontinuous. \end{theo}

\begin{proof} If $\{ x_n \}$ is a sequence converging to $x \in X$ and $y \in R^{\circ}(x)$, then since $R^{\circ}$ is open,
eventually $(x_n,y) \in R^{\circ}$.  On the other hand, if $y = x$, then $(x_n,x_n) \in R$. Each sequence converges to $(x,y)$.
It follows from \cite{A93} Exercise 7.4 and Proposition 7.11 that the map $\overrightarrow{R}$
is lower semicontinuous and so is continuous.

If $y \in R^{\circ}(x)$, then $x \in R(x) \setminus R(y)$ by anti-symmetry and so
$R(x) \not= R(y)$.  It follows that the map $\overrightarrow{R}$ is injective and so is a homeomorphism onto its image by compactness.

\end{proof}
\vspace{.5cm}

\begin{theo}\label{theo2.03a} If $R$ is a topological tournament on a Cantor set $X$, then $R$ is regular if and only if it is balanced. \end{theo}

\begin{proof} Clearly, if $R$ admits a terminal point or a initial point, then it is neither regular nor balanced. So we may assume
that for every $x \in X$, $R^{\circ}(x)$ and $R^{\circ -1}(x)$ are nonempty open subsets and since
$x$ is not isolated, it is either left or right balanced.

If $x$ is right balanced but not left balanced then $R(x)$ is a Cantor set while $R^{-1}(x)$ consists of the Cantor set $R^{\circ -1}(x)$ together
with an isolated point $x$.  Hence, $R$ is not regular.  Similarly, if there exists a point which is left balanced but not right balanced.
It follows that if $R$ is regular, then it is balanced.

Finally, if $x$ is balanced, then  $R(x)$ and $R^{-1}(x)$ are Cantor sets and so there is a homeomorphism $h_x : R(x) \to R^{-1}(x)$ with $h_x(x) = x$.
Define $h_x$ on $R^{-1}(x)$ to be $h_x^{-1}$.

\end{proof}

We will see that, in contrast with the finite case, an infinite regular topological tournament need not be arc cyclic.
\vspace{.5cm}

For a topological tournament, $R$ on $X$ and $x \in X$ the set $R(x)$ is clopen if and only if
$\overline{R^{\circ -1}(x)} \not= R^{-1}(x)$, i.e. $x$ is not left balanced.

\begin{theo}\label{theo2.04} Let $R$ be a topological tournament on a compact metric space $X$.
The set of points $x$ which are not left balanced, i.e. for which $ R(x)$ is clopen, is countable. Similarly the set of points which are
not right balanced is countable. If $X$ has no isolated points, then the set of balanced points is
residual, i.e. it is a dense $G_{\d}$ subset of $X$. \end{theo}

\begin{proof} As described in Background \ref{back} (3) a compact metric space
 has only countably many clopen subsets. By Theorem \ref{theo2.03} the map $\overrightarrow{R}$ is injective and so
$\{ x : R(x)$ is clopen$\}$ is countable. The union of this set and the corresponding set for $R^{-1}$ is countable and so if $X$ has no
isolated points, the complement is a dense $G_{\d}$ set by the Baire Category Theorem.

\end{proof}

Without metrizability  this result  may fail.

On the real line $\R$, the linear order $L_{\R} = \{(t,t') : t \le t' \}$ is a transitive topological
tournament. Its restriction to $\{ \pm 1 \} (= \{-1, +1 \})$ is an arc.
By Proposition \ref {proplex01a} $L = L_{\R} \ltimes (L_{\R}|\{ \pm 1 \})$ is a linear order on $\R \times \{ \pm 1\}$.
When we use the associated order topology, instead of the product topology, we obtain  the
\emph{Sorgenfrey Double Arrow}\index{Sorgenfrey Double Arrow}. For every $t \in \R$ let $t+ = (t,+1), t- = (t,-1)$.
Each $L(t+)$ and each $L^{-1}(t-)$ is clopen. It follows that the space is not metrizable, see Background \ref{back} (3).
The first coordinate projection to $\R$ is a continuous surjective tournament mapping from $L$
to $L_{\R}$. The subset $X = L(0+) \cap L^{-1}(1-)$ is compact with no isolated points and the first coordinate projection is a
continuous surjective tournament mapping from the restriction of $L|X$ to the restriction of $L_{\R}|I$ with $I$ the
unit interval in $\R$.

The set of left balanced points and the set of right balanced points are disjoint. Each is dense and the union is
all of $X$. In particular, there are no balanced points.

On the subset $\R \times \{ -1 \}$ the relative topology is not the order topology.  Instead the basis consists of half-open intervals
$(s_-,t_-]$ with $s < t$. The space is non-metrizable and non-compact, but it is separable and with no isolated points.
When we restrict $L$ to this subset, we obtain a topological tournament such that every $L^{-1}(t_-)$ is clopen. That is, there are no
right balanced points. Every point is left balanced.

On the other hand, we do have the following result in the general compact case.

\begin{theo}\label{theo2.05} Let $R$ be a topological tournament on a compact space $X$. If the isolated points are not dense, then
the set of right balanced points is nonempty, in fact, it is dense in the complement of the closure of the set of isolated points.
Similarly, the set of left balanced points is dense in the complement of the closure of the set of isolated points.
\end{theo}

\begin{proof}  Let $U$ be a nonempty open subset of $X$ which contains no isolated points and let $U_1$ be a nonempty open subset with
$\overline{U_1} \subset U$. We show that there exists $x \in U$ such that $R(x) = \overline{R^{\circ}(x)}$
or, equivalently, $R^{-1}(x)$ is not clopen.

We may assume that $G = \{ x \in U_1:  R^{-1}(x)$ is clopen $ \}$ is dense in $U_1$.  For if not, the required $x$ exists in $U_1$.

Choose $x_1 \in G $. Assume we have constructed inductively $x_1, x_2, \dots, x_n $ distinct points in $G$ such that for each $i$ with $1 < i \le n$,
$x_i \in \bigcap_{j < i} R^{-1}(x_j)$. Hence, $U_1 \cap \bigcap_{j \le n} R^{-1}(x_j)$ is an open subset of $U_1$ which
contains $x_n$. Since $U_1$ contains no isolated
points, there exists $x_{n+1} \in G \cap [(U_1 \cap \bigcap_{j \le n} R^{-1}(x_j)) \setminus \{ x_1, \dots, x_n \}]$.

Let $x$ be a limit point of the sequence $\{ x_n \}$ so that $x \in U \supset \overline{U_1}$.
By excluding one $x_i$ if necessary, we may assume $x \not= x_n$ for any $n$.
Since $x_j \in R^{-1}(x_n)$ for all $j > n$, it follows that $x \in R^{-1}(x_n)$ and so $x_n \in R^{\circ}(x)$. Thus, $x \in \overline{R^{\circ}(x)}$.
Thus, $x$ is right balanced.

\end{proof}

 \vspace{.5cm}

   We conclude this section with a useful tool.

  \begin{df}\label{def2.06} Let $(X,R)$ be a topological tournament and let $F = \{x_1, x_2, \dots, x_n \}$ be a list of distinct points in $X$.
  A \emph{thickening}\index{thickening} of $F$ is a list $U_F = \{ U_1, U_2, \dots, U_n \}$ of open subsets of $X$ such that
  \begin{itemize}
  \item For $i = 1,\dots,n, \ \ x_i \in U_i$.

   \item For $i,j = 1,\dots,n,$ with $i \not= j$ and $z_i \in U_i, z_j \in U_j$, we have $z_i \ha z_j$ if
   $x_i \ha x_j$.
   \end{itemize}
   In particular, the open sets in $U_F$ are pairwise disjoint.

   We call $U_F$ a \emph{clopen thickening}\index{thickening!clopen} when every $U_i$ is clopen.
   \end{df}

 \vspace{.5cm}

  If $(X,R)$ is a compact topological tournament and $F = \{x_1, x_2, \dots, x_n \}$ is any list of distinct points in $X$,
 then there exists a thickening for $F$. In fact, the thickening can be chosen uniformly.

 \begin{theo}\label{theo2.07} If $(X,R)$ is a compact topological tournament, then
for any neighborhood $V_1$ of the diagonal $1_X$
 there exists a neighborhood $V$ of the diagonal such that
 whenever $F$ is a finite subset such  that $(x_i, x_j) \not\in V_1$ when $i \not= j$, then
 $\{ V(x_1), \dots, V(x_n) \}$ is a thickening of $F$.

 When $X$ is metrizable with metric $d$, then for every $\ep >0$, there
exists $\d > 0$ such that $d(x_i, x_j) \ge \ep$ when $i \not= j$ implies that $\{ V_{\d}(x_1), \dots, V_{\d}(x_n) \}$ is a thickening of $F$.

 If $X$ is totally disconnected, then we may choose $V$ to be a clopen equivalence relation and so obtain a clopen thickening. \end{theo}

 \begin{proof} We use induction on $n$.  The result for $n = 1$ is vacuous.  We may use any neighborhood of $1_X$.

Now assume that $V_2 \subset V_1$ is a neighborhood of the diagonal
such that $\{ V_2(x_1), \dots, V_2(x_{n-1}) \}$  is a thickening of $\{ x_1, \dots, x_{n-1} \}$
whenever $(x_i,x_j) \not\in V_1$ for $i \not= j \le n-1$. Note that the set of diagonal neighborhoods $V \subset V_2$ is directed with intersection
the diagonal.  Suppose there existed $F = \{x_1, \dots, x_n \}$ such that no $V$ exists.
Then for any such $V$ because $\{ V(x_1), \dots, V(x_{n-1}) \}$  is a thickening of $\{ x_1, \dots, x_{n-1} \}$ there must exist
$x_1(V),\dots, x_n(V)$ such that $(x_i,x_i(V)) \in V$ for $i = 1, \dots, n$, but for some $j_V < n$,
$x_{j_V}(V) \ha x_n(V)$  while $x_{j(V)} \lha x_n$  or vice-versa. Assume the
first. By restricting to a cofinal subset
we may assume that for some fixed $j < n$, $j_V = j$ for all $V$. Each net $\{ x_i(V) \}$ has limit $x_i$ since $(x_i,x_i(V)) \in V$. By assumption,
$x_{j} \lha x_n $. But  $x_{j}(V) \ha x_n(V)$ implies, in the limit, $x_{j} \ \underline{\ha} \ x$ violating
anti-symmetry. The argument for the reverse assumption is similar.

When $X$ is totally disconnected, the clopen equivalence relations form a neighborhood base for the diagonal and so we may choose $V$ to be such.

\end{proof}

 \vspace{1cm}

  \section{ \textbf{Group Tournaments}}\vspace{.5cm}

  For a subset $A$ of a group $G$ we let $A^{-1} = \{ x^{-1} : x \in A \}$. We let $e$ denote the identity element.

\begin{df}\label{def3.02} For a group $G$, a \emph{game subset}\index{game subset} $A$ for $G$ is a subset such that
\begin{equation}\label{eq3.03}
A \cap A^{-1} \ = \ \{ e \}, \quad \text{and} \quad A \cup A^{-1} \ = \ G.
\end{equation}
We let $A^{\circ} = A \setminus \{ e \}$.\end{df}

If $A$ is a game subset, then $A^{-1}$ is the reverse game subset. If $G$ is a topological group and $A$ is closed,
then $A^{\circ} = X \setminus A^{-1}$ is open.

Clearly, a group admits a game subset if and only if it has no elements of order two. In the finite case this says that $G$ has odd order.

If $h : G_2 \to G_1$ is a group homomorphism and $A_1, A_2$ are game subsets for $G_1$ and $G_2$, respectively, then $h$ maps $A_2$ to $A_1$
when $h(A_2) \subset A_1$ or, equivalently, $A_2 \subset h^{-1}(A_1)$.  Since $h$ maps $A_2^{-1}$ to $A_1^{-1}$ it follows that
 \begin{equation}\label{eq3.04}
A_2^{\circ} \  \supset \ h^{-1}(A_1^{\circ}). \hspace{2cm}
\end{equation}
Thus, the preimage of $A_1^{\circ}$ is contained in $A_2^{\circ}$ and $A_2$ is contained in the union of the preimage of $A_1^{\circ}$  and the
kernel $h^{-1}(e)$ of $h$.

If $H$ is a subgroup of $G$, then $H \cap A$ is a game subset for $H$ and the inclusion maps $H \cap A$ to $A$.

If $A$ is a game subset for a group $G$ then the associated tournament $\widehat{A}$ is defined by
 \begin{align}\label{eq3.05} \begin{split}
\widehat{A} \ = \ \{ (x,y) : x^{-1}y &\in A \} \quad \text{so that} \ \ \widehat{A}^{-1} \ = \ \widehat{A^{-1}},\\
 \text{and so} \ \  \widehat{A}^{\circ} \  &= \ \{ (x,y) : x^{-1}y \in A^{\circ} \}.
 \end{split}\end{align}

 Thus, $A = \widehat{A}(e)$ and $A^{\circ} = \widehat{A}^{\circ}(e)$. If $h : G_2 \to G_1$ is a group homomorphism, then
 $h$ maps the game subset $A_2$ to the game subset $A_1$ if and only if it is a tournament map from $\widehat{A_2}$ to  $\widehat{A_1}$.

 If $G$ is a topological group, then $A$ is a closed game subset if and only if $\widehat{A}$ is a topological tournament.

 For the results on the finite case of group games, see, e.g. \cite{A20}.

 The tournament $\widehat{A}$ is regular.  Define for $x,y \in G$
  \begin{align}\label{eq3.06} \begin{split}
 h_x(y) \ = \ &x y^{-1} x \quad \text{so that} \ \ h_x(x) \ = \ x, \\
 \text{and} \quad &x^{-1}h_x(y) \ = \ y^{-1} x, \\
 \text{and} \quad &h_x \circ h_x = 1_G.
 \end{split}\end{align}
Thus, $(x,y) \mapsto (x,xy^{-1}x)$ maps $\widehat{A}$ to $\widehat{A}^{-1}$ and so $h_x(\widehat{A}(x)) = \widehat{A}^{-1}(x)$.

 The tournament $\widehat{A}$ on the group $G$ is \emph{homogeneous}\index{tournament!homogeneous}.
 For $x \in G$, the left translation map $\ell_x$, defined
 by $\ell_x(y) = xy$, is an automorphism of $\widehat{A}$. That is, $\ell_x$ is a bijection on $G$ mapping $\widehat{A}$ to itself.

 For a topological group, the maps $h_x$ and $\ell_x$ are homeomorphisms.

\begin{prop}\label{prop3.03} Let $h : G_2 \to G_1$ be a group homomorphism and $A_1$ be a game subset for $G_1$. Then $A_2 \subset G_2$
 is a game subset for $G_2$ which is mapped to $A_1$ by $h$ if and only if $A_2$ is the union of the disjoint sets $h^{-1}(A_1^{\circ})$ and
 $B$ with $B$ a game subset for the kernel of $h$, $H \ = \ h^{-1}(e)$.

 If, in addition, $h$ is surjective, then there exists a retraction $p : G_2 \to H$ such that
 the product map $h \times p : G_2 \to G_1 \times H$ is a bijection mapping $\widehat{A_2}$ isomorphically
 onto the lexicographic product $\widehat{A_1} \ltimes \widehat{B}$.

 If $h$ is a continuous group homomorphism between topological groups with non-trivial kernel and $A_1$ is closed, then
 $A_2 = B \cup h^{-1}(A_1^{\circ})$ is closed if and only if $B$ is closed and, in addition, the kernel $H$ is a clopen subgroup.\end{prop}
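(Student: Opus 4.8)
The plan is to handle the three assertions in order, the first two by a coset-by-coset algebraic analysis and the last by a neighborhood argument, which is where the real work lies.

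\textbf{The decomposition.} First I would prove the equivalence in the first assertion, writing $H = h^{-1}(e)$. Suppose $A_2$ is a game subset mapped to $A_1$. By (\ref{eq3.04}) we have $h^{-1}(A_1^{\circ}) \subset A_2$, and since $e \notin A_1^{\circ}$ this inclusion lands in $G_2 \setminus H$. For $x \notin H$ exactly one of $h(x), h(x^{-1})$ lies in $A_1^{\circ}$ because $A_1$ is a game subset; combined with $h^{-1}(A_1^{\circ}) \subset A_2$ and anti-symmetry $A_2 \cap A_2^{-1} = \{e\}$, this forces $A_2 \setminus H = h^{-1}(A_1^{\circ})$. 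Setting $B = A_2 \cap H$, the two axioms (\ref{eq3.03}) for $A_2$ restrict to the corresponding axioms for $B$ inside $H$, so $B$ is a game subset of $H$ and $A_2 = h^{-1}(A_1^{\circ}) \sqcup B$. Conversely, given a game subset $B$ of $H$, I would check directly that $A_2 = h^{-1}(A_1^{\circ}) \cup B$ satisfies (\ref{eq3.03}): totality is verified separately on $H$ (using that $B$ is a game subset) and off $H$ (using that $A_1$ is a game subset and $h(x^{-1}) = h(x)^{-1}$), while anti-symmetry follows from $A_1^{\circ} \cap (A_1^{\circ})^{-1} = \emptyset$ together with $B \cap B^{-1} = \{e\}$.

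\textbf{The isomorphism.} For the second assertion I would pick a set-theoretic section $s : G_1 \to G_2$ of the surjection $h$ normalized by $s(e) = e$, and define $p : G_2 \to H$ by $p(x) = s(h(x))^{-1} x$. Then $p$ fixes $H$ pointwise, so it is a retraction, and $h \times p$ is a bijection onto $G_1 \times H$ with inverse $(g,k) \mapsto s(g)k$. To see that $h \times p$ carries $\widehat{A_2}$ to $\widehat{A_1} \ltimes \widehat{B}$, I would run through the defining clauses (\ref{eqlex01}): from the decomposition, $x^{-1}y \in A_2$ holds either when $h(x)^{-1}h(y) \in A_1^{\circ}$, which is exactly the first-coordinate clause $(h(x),h(y)) \in \widehat{A_1}^{\circ}$, or when $h(x) = h(y)$ and $x^{-1}y \in B$; the key computation here is that when $h(x) = h(y)$ one has $p(x)^{-1}p(y) = x^{-1}y$, so the second clause $p(x)^{-1}p(y) \in B$ matches. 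Since $h \times p$ is a bijection and both relations are tournaments, a bijective tournament map is automatically an isomorphism.

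\textbf{Closedness, the main point.} The third assertion is where the effort concentrates. The reverse implication is formal: if $H$ is clopen and $B$ is closed, then $A_1^{\circ} = G_1 \setminus A_1^{-1}$ is open and $h^{-1}(A_1) $ is closed, so $h^{-1}(A_1^{\circ}) = h^{-1}(A_1) \cap (G_2 \setminus H)$ is an intersection of two closed sets, whence $A_2 = h^{-1}(A_1^{\circ}) \cup B$ is a union of two closed sets. For the forward implication, $B = A_2 \cap H$ is immediately closed since $H = h^{-1}(e)$ is closed; the hard part is showing that $H$ is \emph{open}. Here I would exploit a nontrivial $k \in B^{\circ}$, which exists because the kernel is nontrivial and admits the game subset $B$. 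Since $k \in A_2^{\circ} = G_2 \setminus A_2^{-1}$, which is open, the neighborhood $U = A_2^{\circ}$ of $k$ satisfies $h(U) \subset A_1$; since $k^{-1} \in G_2 \setminus A_2$, which is also open, the neighborhood $U' = G_2 \setminus A_2$ of $k^{-1}$ satisfies $h(U') \subset A_1^{-1}$. Translating, $k^{-1}U$ and $kU'$ are neighborhoods of $e$ whose images under $h$ lie in $A_1$ and in $A_1^{-1}$ respectively, so their intersection is a neighborhood of $e$ mapping into $A_1 \cap A_1^{-1} = \{e\}$, i.e. contained in $H$. Thus $H$ is a neighborhood of $e$ and, being a subgroup, is open. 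I expect this last neighborhood-translation step to be the main obstacle, since it is the only place where the group structure, the game-subset axioms for $A_1$, and the topology of $G_2$ must all be combined at once.
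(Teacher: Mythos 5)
Your proposal is correct, and for the two algebraic assertions it follows the paper's proof almost exactly: the same use of (\ref{eq3.04}) to force $h^{-1}(A_1^{\circ}) \subset A_2$ and to identify $B = A_2 \cap H$ (the paper compresses the uniqueness step into the remark that a game subset containing another game subset must equal it, while you argue coset-by-coset, but the content is the same), and the same retraction $p(x) = j(h(x))^{-1}x$ with the identical two-case verification of the lexicographic isomorphism. Where you take a genuinely different route is the forward half of the closedness assertion. The paper argues by contradiction using nets: if $H$ were not open, a net in $G_2 \setminus H$ converging into $H$ may be assumed, after inverting and passing to a subnet, to lie in $h^{-1}(A_1^{\circ})$; translating by kernel elements then puts all of $H$ inside $\overline{h^{-1}(A_1^{\circ})} \subset A_2$, contradicting the fact that $B^{\circ -1}$ is a nonempty subset of $H$ disjoint from $A_2$. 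You instead build an open neighborhood of $e$ inside $H$ directly: taking $k \in B^{\circ}$ (this is where nontriviality of the kernel enters, just as it does in the paper), the open sets $A_2^{\circ} = G_2 \setminus A_2^{-1}$ and $G_2 \setminus A_2$ are neighborhoods of $k$ and $k^{-1}$ whose $h$-images lie in $A_1$ and, by totality of $A_2$, in $A_1^{-1}$; since $h(k) = e$, the translates $k^{-1}A_2^{\circ}$ and $k(G_2 \setminus A_2)$ are neighborhoods of $e$ whose intersection maps into $A_1 \cap A_1^{-1} = \{ e \}$ and hence lies in $H$, so the subgroup $H$ is open. Both arguments pivot on the same two facts---a nontrivial kernel element on the wrong side of the game subset, and the dichotomy $A_2 \cup A_2^{-1} = G_2$---but your version is more elementary, avoiding nets and subnets entirely, while the paper's version establishes along the way the slightly stronger fact that a non-open kernel would be contained in the closure of $h^{-1}(A_1^{\circ})$. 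You also make explicit the easy converse (clopen $H$ plus closed $B$ gives closed $A_2$), which the paper treats as immediate.
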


 \begin{proof} It is easy to check that if $B$ is a game subset for the kernel of $h$, then $A_2 = h^{-1}(A_1^{\circ}) \cup B$ satisfies
 the conditions of (\ref{eq3.03}) and is mapped by $h$ to $A_1$.

 Conversely, if $A_2$ is a game subset for $G_2$, then $B = A_2 \cap h^{-1}(e)$ is a game subset for the kernel and if $h$ maps $A_2$ to
 $A_1$, then (\ref{eq3.04}) implies that $A_2$ contains the game subset $h^{-1}(A_1^{\circ}) \cup B$.  Clearly, if one game subset
 for $G_2$ includes another such, then the two are equal.

If $h$ is surjective, we can define a (not necessarily continuous) map $j : G_1 \to G_2$ such that $h \circ j = 1_{G_1}$ with $j(e_1) = e_2$.
Define $p(x) = j(h(x))^{-1}x$ so that $p$ maps $G_2$ into $H$ with $p = 1_H$ on $H$. Since $j(h(x))p(x) = x$, the inverse map
to $h \times p$ is given by $(z,b) \mapsto j(z)b$. So
 $h \times p : G_2 \to G_1 \times B$ is a bijection.

If $h(x) \not= h(y)$, then $x^{-1}y \in A_2$ if and only if $h(x^{-1}y) = h(x)^{-1}h(y) \in A_1$,
i.e. $(x,y) \in \widehat{A_2}^{\circ}$ if and only if $(h(x),h(y)) \in \widehat{A_1}^{\circ}$.

If $h(x) = h(y)$,
then $j(h(x)) = j(h(y))$ and so $x^{-1}y = p(x)^{-1}p(y)$. Hence, $(x,y) \in \widehat{A_2}$ if and only if $(p(x),p(y)) \in \widehat{B}$.

It follows that $h \times p$ maps $\widehat{A_2}$ isomorphically
 onto $\widehat{A_1} \ltimes \widehat{B}$.

In the topological case, the kernel $h^{-1}(e)$ is a closed subgroup because of our standing assumption that all spaces are Hausdorff.
If it is not open then there exists a net $\{ a_k \}$ in $G_2 \setminus h^{-1}(e)$ which converges to a point $x$ in the kernel.
Replacing $a_k$ by $a_k^{-1}$ if necessary and by going to a subnet we may assume that $a_k \in h^{-1}(A_1^{\circ})$ for all $k$.
If $x, y$ lie in the kernel with $x$ the limit point of the net, then $\{ yx^{-1}a_i \}$ is a net in $h^{-1}(A_1^{\circ})$ which
converges to $y$. Thus, all of $h^{-1}(e)$ is contained in the closure of  $h^{-1}(A_1^{\circ})$ which is contained in $A_2$ when
the latter is closed. If the kernel is non-trivial, then $B^{\circ -1}$ is a nonempty subset of the kernel which is disjoint
from $A_2$.  The contradiction shows that the kernel must be clopen.

\end{proof}

\begin{prop}\label{propcycle01} For $x \in G$ with $G$ a compact topological group, the set
\begin{equation}\label{eqcycle01}
\om(x) \ = \ \bigcap_{n \in \N} \ \overline{\{ x^i : i \ge n \}}
\end{equation}
is a nonempty closed subgroup of $G$. \end{prop}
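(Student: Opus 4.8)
The plan is to establish, in order, that $\om(x)$ is closed, nonempty, and a subgroup, with essentially all the work going into showing that the identity $e$ belongs to $\om(x)$.

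Closedness and nonemptiness come directly from compactness. Each set $F_n = \overline{\{ x^i : i \ge n \}}$ is closed, hence compact, and the family $\{F_n\}_{n \in \N}$ is decreasing; so $\om(x) = \bigcap_n F_n$ is closed, and nonempty by the finite intersection property. Unwinding the definition gives the working criterion I will use repeatedly: $y \in \om(x)$ if and only if for every neighborhood $U$ of $y$ and every $n \in \N$ there exists $i \ge n$ with $x^i \in U$ (equivalently, $y$ is a cluster point of the sequence $(x^i)_{i \ge 1}$).

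The crux, and the step I expect to be the main obstacle, is that $e \in \om(x)$; this is the topological-group recurrence phenomenon, and it is where compactness really enters. Fix a neighborhood $U$ of $e$ and an $n \in \N$. By continuity of $(a,b) \mapsto a^{-1}b$ at $(e,e)$, choose a neighborhood $V$ of $e$ with $V^{-1}V \subseteq U$. Since $\om(x) \neq \emptyset$, pick any $y \in \om(x)$; by the criterion the set $\{ i \ge 1 : x^i \in yV \}$ is infinite, so it contains two elements $k < \ell$ with $\ell - k \ge n$. Writing $x^k = yv$ and $x^\ell = yw$ with $v,w \in V$, we get $x^{\ell - k} = x^{-k}x^\ell = v^{-1}w \in V^{-1}V \subseteq U$ with $\ell - k \ge n$. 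As $U$ and $n$ were arbitrary, $e \in \om(x)$.

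It remains to see that $\om(x)$ is a subgroup, which now follows formally. First, $\om(x)$ is invariant under translation by $x^{\pm 1}$: if $y \in \om(x)$ then for a neighborhood $U$ of $xy$ and any $n$, applying the criterion to the neighborhood $x^{-1}U$ of $y$ produces $i \ge n$ with $x^{i+1} \in U$, so $xy \in \om(x)$; the symmetric argument, applied to the neighborhood $xU$ of $y$ with bound $n+1$, gives $x^{-1}y \in \om(x)$. Hence $x\,\om(x) = \om(x)$, so $x^m\,\om(x) = \om(x)$ for all $m \in \Z$, and in particular $x^m = x^m e \in \om(x)$ since $e \in \om(x)$. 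Thus $\langle x \rangle \subseteq \om(x)$, and as $\om(x)$ is closed, $\overline{\langle x \rangle} \subseteq \om(x)$. The reverse inclusion is immediate from $\om(x) \subseteq \overline{\{x^i : i \ge 1\}} \subseteq \overline{\langle x \rangle}$, so $\om(x) = \overline{\langle x \rangle}$. Being the closure in the topological group $G$ of the subgroup $\langle x \rangle$, this is a closed subgroup (by continuity of multiplication and inversion), completing the proof.
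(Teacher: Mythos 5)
Your proof is correct, but it reaches the subgroup property by a genuinely different mechanism than the paper. The paper never isolates the identity at all: it notes that $\om(x)$ is the set of cluster points of $\{ x^i \}$, so $z \in \om(x)$ implies $x^{-1}z \in \om(x)$, and then runs a closed-set argument — for fixed $z \in \om(x)$ the set $\{ y : y^{-1}z \in \om(x) \}$ is closed and, by iterating the shift-invariance, contains every $x^i$, hence contains $\overline{\{ x^i : i \ge 1 \}} \supset \om(x)$ — which gives $\om(x)^{-1}\om(x) \subset \om(x)$ and therefore a subgroup by the standard criterion, with $e \in \om(x)$ emerging as a corollary rather than an input. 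You invert this: your central step is a recurrence argument producing $e \in \om(x)$ directly (two returns $x^k, x^\ell \in yV$ with $\ell - k \ge n$, so $x^{\ell-k} = v^{-1}w \in V^{-1}V \subset U$), after which translation invariance upgrades to the identification $\om(x) = \overline{\langle x \rangle}$. The paper's route is shorter and purely formal once shift-invariance is observed; yours costs the pigeonhole step but yields strictly more — an explicit identification of $\om(x)$ as the closed subgroup generated by $x$ — and it foregrounds exactly the fact that gets used downstream: in the proof of Theorem \ref{theocycle02} the subgroup property is invoked only to conclude that $e \in \om(y) \subset yA$, which is precisely what your key step establishes.
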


\begin{proof} Since $\om(x)$ is the intersection of a decreasing sequence of non-empty compacta, it is nonempty and compact.
It consists of the set of limit points of the sequence $\{ x^i : i \in \N \}$. So if $z \in \om(x)$, then $x^{-1}z$ is also a
limit point of the sequence and so lies in $\om(x)$. Thus, $\{ y : y^{-1}z \in \om(x) \}$ is closed and contains $x^i$ for
all $i \in \N$. In particular, it contains $\om(x)$.  That is, $\om(x)^{-1} \om(x) \subset \om(x)$ and so $\om(x)$ is a subgroup.

\end{proof}
\vspace{.5cm}

\begin{theo}\label{theocycle02} If $A$ is a closed game subset on a compact topological group $G$, then $(G,\widehat{A})$ is
an arc cyclic tournament. \end{theo}

\begin{proof} By homogeneity it suffices to consider arcs with $x = e$ and so $y \in A^{\circ}$. The arc $(e,y)$ is contained in a $3-$cycle
if and only if $yA$ meets $A^{\circ -1}$.  Assume now that $y \in A$ with  $yA$ is disjoint from $A^{\circ -1}$ and so
$yA \subset A$. Inductively,
for all $i \in \N, \ y^i \in y^i A \subset y^{i-1} A$. In particular, the sequence $\{ y^i \}$ is contained in $yA$ and so $\om(y)$ is contained
in the closed set $yA $. However, Proposition \ref{propcycle01} implies that $\om(y)$ is a subgroup and this yields
$e \in yA$ or, equivalently, $y \in A^{-1}$. Since $y \in A \cap A^{-1}$ we have $y = e$.  Thus, if $y \in A^{\circ}$ it must happen that
$yA$ meets $(A^{-1})^{\circ}$.

\end{proof} \vspace{.5cm}

The following is a topological version of the proof of \cite{M} Theorem 3, which in turn is an extension of \cite{HM} Theorem 7.

 \begin{cor}\label{corcycle03} If $A$ is a closed game subset on an infinite compact topological group $G$, then for every $n \ge 3$ each point
 of $G$ is contained in an $n-$cycle.  \end{cor}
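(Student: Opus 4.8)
The plan is to reduce everything to cycles through the identity $e$ and then combine two ingredients: a strong-connectivity statement for $\widehat{A}$, and the cycle-extension induction of Moon. By homogeneity (each left translation $\ell_x$ is an automorphism of $\widehat{A}$), it suffices to produce, for every $n \ge 3$, an $n$-cycle containing $e$; a cycle through an arbitrary point is then obtained by translating.

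First I would establish that $\widehat{A}$ is strongly connected, i.e. that there is a directed path between any two points. Translating, this amounts to showing that every $g \in G$ is reachable from $e$, and an element is reachable from $e$ precisely when it is a finite product of elements of $A^{\circ}$. So let $H$ be the subsemigroup of $G$ generated by $A^{\circ}$; the goal is $H = G$. Here Theorem \ref{theocycle02} does the real work: arc cyclicity says that for $a \in A^{\circ}$ the $3$-cycle through $\{e,a\}$ furnishes $a_2, a_3 \in A^{\circ}$ with $a a_2 a_3 = e$. Hence $e \in H$ and $a^{-1} = a_2 a_3 \in H$ for every generator $a$; since $H$ is a semigroup closed under products and now under inversion of generators, it is a subgroup, and as $A^{\circ} \cup (A^{\circ})^{-1} = G \setminus \{e\}$ generates $G$ as a group we get $H = G$.

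With strong connectivity in hand I would run the extension induction. The base case $n = 3$ is arc cyclicity together with the fact that, $G$ being infinite, $e$ is not terminal and so lies on an arc, hence on a $3$-cycle. For the inductive step, suppose $C : e = g_0 \ha g_1 \ha \cdots \ha g_{n-1} \ha g_0$ is an $n$-cycle; since $G$ is infinite there are points outside $C$. If some outside point $w$ has both an in-neighbour and an out-neighbour on $C$, then traversing $C$ the arc-direction between $w$ and the $g_i$ must switch at some consecutive pair, giving $g_i \ha w \ha g_{i+1}$, and inserting $w$ yields an $(n+1)$-cycle still containing $e$. Otherwise every outside point either dominates all of $C$ (call the set $D$) or is dominated by all of $C$ (the set $S$). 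Strong connectivity forces both $D$ and $S$ to be nonempty (a path from an outside point into $C$ needs an arc entering $C$, and symmetrically) and forces an arc $s \ha d$ with $s \in S, d \in D$ (otherwise $S$ would have no out-arc reaching $C \cup D$). Then $e = g_0 \ha g_1 \ha \cdots \ha g_{n-2} \ha s \ha d \ha g_0$ is the desired $(n+1)$-cycle through $e$.

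The main obstacle is the strong-connectivity step, and more precisely the passage from the finite argument to the compact setting: in Moon's proof one uses finiteness to guarantee that the subsemigroup generated by the arcs is a subgroup, whereas here I instead extract the single relation $a a_2 a_3 = e$ directly from Theorem \ref{theocycle02}, which makes $H$ closed under inversion with no counting argument. The remaining subtlety is that ``there are points outside $C$'' must be supplied by the infinitude of $G$ rather than by $n < |X|$, but this holds for every finite $n$, so the induction never terminates and yields cycles of all lengths.
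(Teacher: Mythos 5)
Your proposal is correct, and it reaches the result by a somewhat different decomposition than the paper, although both are topological adaptations of Moon's extension induction and both lean on Theorem \ref{theocycle02} as the essential compactness input. The difference lies entirely in how the second case of the induction (every point off the cycle $C$ either dominates $C$ or is dominated by $C$) is handled. You first prove a standalone strong-connectivity lemma: the subsemigroup $H$ generated by $A^{\circ}$ equals $G$, because the $3$-cycle $e \ha a \ha b \ha e$ gives $a^{-1} = (a^{-1}b)b^{-1} \in H$ for each generator $a$, so $H \supset A^{\circ} \cup (A^{\circ})^{-1} \cup \{e\} = G$; strong connectivity then yields both the nonemptiness of the dominating set $D$ and dominated set $S$ and the existence of an arc $s \ha d$ from $S$ to $D$. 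The paper instead argues inside the case analysis: nonemptiness of $Z_{\pm}$ follows because $Z_- = \emptyset$ would force $\widehat{A}(x_i) = x_iA \subset C$ for every cycle point, making $A$ and hence $G = A \cup A^{-1}$ finite; and the correctly directed arc from $Z_-$ to $Z_+$ is produced by applying arc cyclicity to a misdirected arc $z_1 \ha z_2$ and observing that the third point of the resulting $3$-cycle lies off $C$, hence in $Z_+$ or $Z_-$, permitting a replacement. Your route is closer to Moon's original organization (where strong connectivity is the standing hypothesis for vertex-pancyclicity) and isolates a reusable fact of independent interest, proved by a clean algebraic argument with no counting; the paper's route avoids setting up reachability and the semigroup machinery altogether, at the cost of one extra application of arc cyclicity and a cardinality observation specific to the group setting. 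Both are complete proofs.
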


\begin{proof} The result for $n = 3$ follows from Theorem \ref{theocycle02}. Now assume that $C = \{x_1, \dots, x_n \}$ is an $n-$cycle with $n \ge 3$.
We may assume, by multiplying by $x_1^{-1}$ if necessary, that $x_1 = e$. We will construct an $n+1-$cycle through $x_1$. \vspace{.25cm}

Case 1 Assume there exists $x \in G \setminus C$ such that $\widehat{A}(x)$ and $\widehat{A}^{-1}(x)$ both meet $C$.
By renumbering we may assume
$x_1 \ha x$. Let $k = \max \{ i : x_j \ha x$
for all $j \le i \}$. By assumption, $k < n$ and by definition
$x \ha  x_{k+1} $. Hence,
$\{ x_1, \dots, x_k, x, x_{k+1}, \dots, x_n \}$ is an $n+1-$cycle which contains all the points of $C$ and
so includes the point previously labelled $x_1$. \vspace{.25cm}

Case 2: Assume instead that with $Z_+ = \{ x : C \subset \widehat{A}(x)\}$ and $Z_- = \{ x : C \subset \widehat{A}^{-1}(x)\}$ we have
$Z_+ \cup Z_- = G \setminus C$. Notice that in any case $Z_+ \cup Z_-$ is disjoint from $C$ since the points of $C$ lie on a cycle.

If $Z_-$ were empty, then for every point $x_i \in C$, we would have $\widehat{A}(x_i) = x_i A \subset C$. Thus would imply that $A$ is finite
and so $G = A \cup A^{-1}$ is finite.  Similarly, $Z_+$ is nonempty.

Choose $z_1 \in Z_+, z_2 \in Z_-$. We may assume that $z_2 \ha z_1$. If instead $z_1 \ha z_2$, then
Corollary  \ref{theocycle02} there exits $z_3 \in G$ such that $\{ z_1, z_2, z_3 \}$ is a $3-$cycle. Because $z_2 \ha z_3 $ it cannot
happen that $z_3 \in C$. If $z_3 \in Z_+$ then replace $z_1$ by $z_3$. If $z_3 \in Z_-$, then replace $z_2$ by $z_3$.

Assuming that $z_2 \ha z_1$ we obtain $\{ x_1, z_2, z_1,x_3, \dots x_n \}$ (omitting $x_2$) an $n+1-$cycle containing $x_1$.

\end{proof}

 \begin{theo}\label{theocycle04} If $A$ is a closed game subset on an infinite compact topological group $G$, then $\widehat{A}$ is a balanced
 topological tournament. \end{theo}

 \begin{proof} By homogeneity it suffices to show that $e$ is a balanced point. If it were not then either $A^{\circ}$ or
 $A^{\circ -1}$ would be clopen and so both would be clopen since the map $x \mapsto x^{-1}$ is a homeomorphism.
  In that case $e$ is an isolated point. By homogeneity all the points
 of $X$ would be isolated and so, by compactness, $X$ would be finite.

 \end{proof}

 Now let $\{ G_i : i \in \N \} $ be a sequence of finite groups of odd order with $f_i : G_{i+1} \to G_{i}$
 surjective group homomorphisms each with
 non-trivial kernel $H_{i+1}$ so that the sequence of orders $\{ |G_i| \}$ is strictly increasing. Let $H_1 = G_1$.
 Choose $A_1 = B_1$ a game subset for $G_1 = H_1$
 and $B_{i+1}$ a game subset
 for the kernel $H_{i+1}$. Inductively, let $A_{i+1} \ = \ B_{i+1} \cup (f_i)^{-1}(A_i^{\circ})$ which is a game subset
 for $G_{i+1}$ mapped onto $A_i$ by $f_i$.

  \begin{theo}\label{theo3.04} The sequence $\{ (G_i,\widehat{A_i},f_i) : i \in \N \}$ is a surjective inverse
  system of topological tournaments with limit
 $(G,R)$ a compact, topological
 tournament and $G$ a Cantor set.

 The space $G$ is a closed subgroup of the product topological group $\prod_{i \in \N} \ G_i$ with closed game subset
 \begin{equation}\label{eq3.07}
 A \ = \ \bigcap_{i \in \N} \ \pi_{i}^{-1}(A_i) \ = \ \overleftarrow{Lim} \{ A_i \}
 \end{equation}
 such that $R = \widehat{A}$.
 \end{theo}

 \begin{proof} That $R$ is a topological tournament on the inverse limit $G$ follows from Theorem \ref{theo3.01}.

 It is clear that $G$ is a closed subgroup of the product group.  It is easy to check that the closed subset $A$ is
 a game subset for $G$ and that $R = \widehat{A}$.

 \end{proof}

\begin{ex}\label{exes02}The $3-$adic integers.\end{ex} Consider the $3-$adic integers, with $ \Z/3^i \Z$ and the projection $f_i$ reduction mod $3^i$. The
kernel of each $f_i$ is isomorphic to $\Z/3 \Z = \{ 0, 1, 2 \}$. We can identify
 $ \Z/3^i \Z$ with the product $ \{ 0, 1, 2 \}^{ \{1,\dots,i  \} }$ with addition of two sequences pointwise (mod 3) but with carrying to the right.
 The projection $f_i : \Z/3^{i+1} \Z \to \Z/3^i \Z$ is a surjective group homomorphism.  So $\{ (\Z/3^i \Z,f_i) \}$ is
 an inverse system of finite groups. As an additive topological group, the inverse limit is identified with $ \{ 0. 1, 2 \}^{\N}$ with addition of
 two sequences pointwise (mod 3) but with carrying to the right. We label this, the group of $3-$adic integers by $\Z[3]$. \index{$\Z[3]$}

  The identity element $e$ has $e_i = 0$ for all $i$.

 An example of a closed game subset $A$, let $A^{\circ} = \{ y \in G \setminus e : y_j = 1 $ for $ j = \min \{ k : y_k \not= 0 \} \}$.
For each $ \Z/3^i \Z$ we let $A_i^{\circ} = \{ y \in  \Z/3^i \Z \setminus e : y_j = 1 $ for $ j = \min \{ k : y_k \not= 0 \} \}$.

  Equipped with this game subset we will
  refer to  the tournament $(\Z[3], \widehat{A})$  as the \emph{standard $3-$adic example} \index{standard $3-$adic example}.
  It is the inverse limit of the system $\{ ( \Z/3^i \Z,\widehat{A_i},f_i) \}$.

  \vspace{.5cm}

 \begin{lem}\label{lem3.04a} If $w$ is a homeomorphism on a Cantor set $X$ which induces a free $\Z/2\Z$ action, i.e. $w \circ w = 1_X$ and
 $w(x) \not= x$ for all $x \in X$, then there exists a clopen subset $A$ of $X$ such that $X$ is the disjoint union of $A$ and $w(A)$. \end{lem}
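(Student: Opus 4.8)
The plan is to produce a clopen \emph{fundamental domain} for the involution $w$, that is, a clopen set $A$ meeting each two-point orbit $\{x, w(x)\}$ in exactly one point; this is precisely the condition $A \cap w(A) = \emptyset$ together with $A \cup w(A) = X$. Since $w$ is a homeomorphism, $w(U)$ is clopen whenever $U$ is, and this is what lets every set built below remain clopen.

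First I would separate each orbit by a clopen set disjoint from its own image. Fix $x \in X$. Because the action is free, $w(x) \neq x$, and since a Cantor set is totally disconnected the clopen sets form a base (Background \ref{back}(2)), so there are disjoint clopen sets $U \ni x$ and $V \ni w(x)$. Replacing $U$ by $W_x = U \cap w(V)$ (using $w = w^{-1}$, so that $x \in w(V)$) yields a clopen $W_x$ with $x \in W_x \subset U$ and $w(W_x) \subset V$; as $U \cap V = \emptyset$ this forces $W_x \cap w(W_x) = \emptyset$. The family $\{ W_x : x \in X \}$ is an open cover of the compact space $X$, so I extract a finite subcover $W_1, \dots, W_n$, each still satisfying $W_i \cap w(W_i) = \emptyset$.

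The remaining step is to glue these local fundamental domains into a single global one, and this bookkeeping is the only real content of the argument. I would pass to the $w$-saturations $E_i = W_i \cup w(W_i)$, which are clopen, $w$-invariant, and still cover $X$, and then disjointify them by setting $F_i = E_i \setminus \bigcup_{j < i} E_j$. The $F_i$ are pairwise disjoint clopen sets covering $X$, and each is $w$-invariant because a difference of $w$-invariant sets is $w$-invariant. On each piece put $A_i = F_i \cap W_i$ and define $A = \bigcup_i A_i$. Using $w(F_i) = F_i$ and $W_i \cap w(W_i) = \emptyset$, one checks that $A_i \cap w(A_i) = \emptyset$ while $A_i \cup w(A_i) = F_i \cap E_i = F_i$, so each $A_i$ is a fundamental domain on $F_i$.

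Finally I would verify the two required identities globally. Summing over $i$ gives $A \cup w(A) = \bigcup_i F_i = X$. For disjointness, any cross term $A_i \cap w(A_j)$ with $i \neq j$ is empty because $A_i \subset F_i$, $w(A_j) \subset F_j$, and the $F_i$ are disjoint, while the diagonal terms $A_i \cap w(A_i)$ vanish by construction; hence $A \cap w(A) = \emptyset$. The set $A$ is clopen as a finite union of the clopen sets $A_i$, which completes the proof. I expect the only place that demands care is keeping the pieces $w$-invariant through the disjointification, since that invariance is exactly what guarantees the assembled $A$ has empty intersection with $w(A)$.
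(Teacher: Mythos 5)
Your proof is correct, and it takes a genuinely different route from the paper's. The paper invokes Background \ref{back} (9)--(10) to put a $w$-invariant ultra-metric $u$ on $X$; for $0 < \ep < \min_x u(x,w(x))$ the relation $V_{\ep}$ is a clopen equivalence relation whose finitely many classes satisfy $w(V_{\ep}(x)) = V_{\ep}(w(x))$ and $V_{\ep}(x) \cap V_{\ep}(w(x)) = \emptyset$, so the classes fall into pairs $\{ C, w(C) \}$ and $A$ is obtained by selecting one whole class from each pair. You instead build local clopen fundamental domains $W_x$ with $W_x \cap w(W_x) = \emptyset$ directly from Hausdorff separation and the clopen base, pass to a finite subcover, and then do the gluing by hand: saturating to $E_i = W_i \cup w(W_i)$, disjointifying to a $w$-invariant clopen partition $\{ F_i \}$, and taking $A = \bigcup_i F_i \cap W_i$ (your verification that $A_i \cup w(A_i) = F_i$ and $A_i \cap w(A_i) = \emptyset$ is exactly right, and $w$-invariance of the $F_i$ is indeed the crux). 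What each approach buys: yours is more elementary and more general --- it uses no metric at all, so it proves the statement for any compact, totally disconnected Hausdorff space with a free involution, not just metrizable ones, which is in the spirit of the paper's non-metrizable concerns elsewhere. The paper's invariant-metric argument, on the other hand, gives as a byproduct the clean pairing of equivalence classes, hence the count of $2^n$ distinct admissible sets $A$ (growing as $\ep$ shrinks) noted after the proof, and it is the same technique that the paper later upgrades to arbitrary compact group actions in Corollary \ref{Cantor2}.
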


 \begin{proof} We may choose a $w$ invariant ultra-metric $u$ on $X$, see Background \ref{back} (9).

Because $u$ is an ultra-metric, the relation $V_{\ep} = \{ (x,y) : u(x,y) < \ep \}$ is a clopen equivalence relation for every $\ep > 0$.
Because $u$ is $w$ invariant, we have $h(V_{\ep}(x)) = V_{\ep}(w(x))$.

Choose $\ep$ so that $0  <  \ep  <  \min_{x \in X} \ u(x,w(x))$.  The equivalence classes $\{ V_{\ep}(x) : x \in X \}$ form a finite
cover of $X$ by clopen sets.  By choice of $\ep, \ V_{\ep}(w(x))$ is disjoint from $V_{\ep}(x)$. So we can partition the cover by the
collection of pairs $\{ \{ V_{\ep}(x), V_{\ep}((w(x)) \} \}$. Choose one member from each pair and take the union to define $A$. Observe that
there are $2^n$ choices leading to distinct sets $A$ with $2n = |\{ V_{\ep}(x) \}|$.  By shrinking $\ep$ we can increase the number of
alternative sets $A$.

\end{proof} \vspace{.5cm}

\begin{theo}\label{theo3.04b} Let $G$ be a topological group with the underlying space a Cantor set. There exists
a closed game subset $A$ for $G$ if and only if $G$ contains no
elements of order $2$. \end{theo}

\begin{proof} Clearly if $G$ contains an element of order $2$, then there is no game subset.  Now assume there are no such elements
so that $w(x) = x^{-1}$ defines a homeomorphism of $X$ which induces a free $\Z/2\Z$ action except at the point $e$ where $w(e) = e$.

Choose $\{U_i : i \in \N \}$ a decreasing sequence of clopen neighborhoods of $e$ with intersection $e$.  For example, with $u$ the
ultrametric of the previous proof we may use $U_i = V_{1/i}(e)$. Replacing $U_i$ by $U_i \cap U_i^{-1}$ for all $i$, we may assume that
$(U_i)^{-1} = U_i$ for all $i$. Let $U_0 = G$.  By renumbering we may assume that the sequence $\{ U_0, U_1, \dots \}$ is strictly decreasing
so that $\{ X_i = U_{i-1} \setminus U_{i} : i \in \N \}$ is a sequence of nonempty clopen subsets which partition $G \setminus \{ e \}$ and
each of which is $w$ invariant.

For each $i$ use Lemma \ref{lem3.04a} to choose $A_i$ clopen in $X_i$ with $\{ A_i, w(A_i) \}$ a partition of $X_i$.
Let $A^{\circ} \ = \ \bigcup_i \ A_i$. This is an open subset of $X$ with $A = \overline{A^{\circ}} = A^{\circ} \cup \{ e \}$. Thus,
$A$ is a closed game subset for $X$.

\end{proof}\vspace{.5cm}

\begin{ex}The $2-$adic integers.\label{exes03}\end{ex} Consider the $2-$adic integers, with $ \Z/2^i \Z$ and the projection $f_i$ reduction mod $2^i$. The
kernel of each $f_i$ is isomorphic to $\Z/2 \Z = \{ 0, 1 \}$. We can identify
 $ \Z/2^i \Z$ with the product $ \{ 0, 1 \}^{ \{1,\dots,i  \} }$ with addition of two sequences pointwise (mod 2) but with carrying to the right.
 The projection $f_i : \Z/2^{i+1} \Z \to \Z/2^i \Z$ is a surjective group homomorphism.  So $\{ (\Z/2^i \Z,f_i) \}$ is
 an inverse system of finite groups. As an additive topological group, the inverse limit is identified with $ \{ 0. 1 \}^{\N}$ with addition of
 two sequences pointwise (mod 2) but with carrying to the right. We label this, the group of $2-$adic integers by $\Z[2]$. \index{$\Z[2]$}
   Note that since $ \Z/2^i \Z$ has even order it does not admit a game subset.

   With $\bar 0 = 1, \bar 1 = 0$ we define $\bar y$ for $y$ in  $\Z[2]$ by $(\bar y)_i = \overline{y_i}$.
   With $\1 = 1000\dots$, it is clear that
   $y + \bar y + \1 = \0 $ where $\0 = 0000\dots$ is  the zero element of the additive group. So if $y = 0^{i-1} 1 z$, then $-y = 0^{i-1} 1 \bar z$.
   Define $A_i = \{ 0^{i-1} 1 0 z : z \in \Z[2] \}$. This is a clopen
   subset with $-A_i = \{ 0^{i-1} 1 1 \bar z : z \in Y \}$. So $A = \{ e \} \cup (\bigcup_{i} A_i)$ is a game subset.

    Equipped with this game subset we will
  refer to  the tournament $(\Z[2], \widehat{A})$   as the \emph{standard $2-$adic example}\index{standard $2-$adic example}.

   The map $m_k$, multiplication by $k$ on $\Z[2]$, for any $k \in \N$  odd, is an automorphism of the additive group
   $Y$. 

   It follows that if $h: G \to H$ is a surjective group homomorphism with $H$ a finite group, then the order of $H$ is a power of $2$.
   For if not, since it is necessarily abelian, it has a quotient group of odd order and so we may assume that $H$ has odd order $k$.
   If $x \not= 0$ in $H$, then there exists $y_1 \in G$ with $h(y_1) = x$ and since $m_k$ is an automorphism of $G$ there exists
   $y_2 \in Y$ with $k y_2 = y_1$.  then $k h(y_2) = h(y_1) = x$. On the other hand, $k z = 0$ for all $z \in H$.

   While every topological group on a Cantor set is an inverse limit of a sequence of finite quotient groups, the $2-$adics provides an
   example where no game subset can be obtained as a limit of game subsets from a sequence of quotient groups.

   \vspace{1cm}

  \section{ \textbf{Cycle Points}}\vspace{.5cm}

  A tournament $R$ is transitive, and so is a linear order, if and
  only if contains no $3-$cycle.

  \begin{df}\label{def3.05} Let $(X,R)$ be a topological tournament.

  We say that $R$ is
  \emph{nowhere locally transitive} when there does not exist a nonempty open subset $U$ of
  $X$ such that the restriction $R|U$ is transitive, or, equivalently, when every
  nonempty open subset of $X$ contains a $3-$cycle.

  We call $x \in X$ a \emph{cycle point} when
 every open set containing $x$ contains a $3-$cycle which includes $x$. \end{df} \vspace{.5cm}

  Clearly a
 cycle point is balanced.

 \begin{lem}\label{lemcycle05} Let $(X,R)$ be a compact topological tournament.

If $V_1$ is a neighborhood of the diagonal $1_X$, then there exists a neighborhood of the diagonal
 $V$ such that if $\{ x, y, z\}$ is a $3-$cycle with $(x,y) \in V$, then $(x,z), (y,z) \in V_1$.
  If $X$ is metrizable with metric $d$ and $\ep > 0$ there exists $\d > 0$ such that if $\{ x, y, z\}$ is a $3-$cycle with $d(x,y) < \d$,
 then $d(x,z)  < \ep$ and $d(y,z)  < \ep$

 Assume that $(x_k, y_k,z_k)$ is a net in $X \times X \times X$
 such that for each $k$, $\{ x_k,y_k,z_k \}$ is a $3-$cycle. If $\{ y_k \}$ and $\{x_k \}$ both converge to a point $x$,
 then $\{ z_k \}$ converges to $x$ as well.
 \end{lem}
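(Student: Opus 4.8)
The plan is to treat the third, net-theoretic assertion as the engine and to derive the two uniform statements from it by compactness and antisymmetry. The geometric content is simply that in a $3$-cycle the third vertex $z_k$ is caught between the other two: one of the arcs incident to $z_k$ enters it and the other leaves it, so once $x_k$ and $y_k$ are driven together, $z_k$ is squeezed to the same limit.

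First I would prove the net version. It suffices to show that every convergent subnet of $\{z_k\}$ has limit $x$, since in a compact space a net all of whose convergent subnets share the limit $x$ converges to $x$. So by compactness of $X$ pass to a subnet along which $z_k \to z$; passing to a further subnet, arrange that the cyclic orientation of $\{x_k,y_k,z_k\}$ is the same for every index, which is possible because there are only the two orientations $x_k \ha y_k \ha z_k \ha x_k$ and $x_k \ha z_k \ha y_k \ha x_k$, so one of them is cofinal. In either case one arc incident to $z_k$ enters it and the other leaves it; using that $R$ is closed and that $x_k \to x$ and $y_k \to x$, the two limits give both $(x,z) \in R$ and $(z,x) \in R$. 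Antisymmetry, $R \cap R^{-1} = 1_X$, then forces $z = x$, completing this part.

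Next I would establish the diagonal-neighborhood version by contradiction, in the same spirit as the thickening argument of Theorem \ref{theo2.07}. Suppose the conclusion fails for some neighborhood $V_1$ of $1_X$. Then for every neighborhood $V$ of $1_X$ there is a $3$-cycle $\{x_V,y_V,z_V\}$ with $(x_V,y_V) \in V$ but with $(x_V,z_V) \notin V_1$ or $(y_V,z_V) \notin V_1$. Index these triples by the neighborhoods of $1_X$ directed by reverse inclusion and, using compactness of $X \times X \times X$, pass to a convergent subnet $(x_V,y_V,z_V) \to (x_0,y_0,z_0)$. Because $(x_V,y_V) \in V$ and the subnet still runs cofinally through arbitrarily small $V$, the pair $(x_V,y_V)$ tends to the diagonal, so $x_0 = y_0 =: x$ and hence $x_V \to x$ and $y_V \to x$. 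The net version just proved then yields $z_V \to x$, so $(x_V,z_V) \to (x,x)$ and $(y_V,z_V) \to (x,x)$; since $V_1$ contains an open neighborhood of $1_X$ and $(x,x)$ lies on the diagonal, eventually both $(x_V,z_V)$ and $(y_V,z_V)$ lie in $V_1$, contradicting their choice. Finally, the metric statement is the specialization of this to $V_1 = V_{\ep}$: for compact metric $X$ the sets $\{(x,y): d(x,y) < \ep\}$ form a base of neighborhoods of $1_X$, so the uniform statement furnishes a diagonal neighborhood $V$, and choosing $\d$ with $\{d(x,y)<\d\} \subset V$—possible since $1_X$ and $(X\times X)\setminus V$ are disjoint compacta—gives the claim.

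The hard part will not be conceptual but organizational: fixing the orientation of the $3$-cycle along a subnet so that the two arcs meeting $z_k$ genuinely point in opposite senses, and confirming in the contradiction argument that the chosen subnet still runs through cofinally small diagonal neighborhoods, so that $(x_V,y_V)$ is indeed forced onto the diagonal. Once those bookkeeping points are pinned down, everything reduces to the closedness of $R$, the antisymmetry $R \cap R^{-1} = 1_X$, and compactness.
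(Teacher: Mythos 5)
Your proposal is correct, but it reverses the paper's decomposition. The paper proves the uniform statement first and directly: assuming failure for some $V_1$, it forms the net of bad $3$-cycles $\{(x_V,y_V,z_V)\}$ indexed by the diagonal neighborhoods $V$ and passes to a limit point $(x,y,z)$; there $x=y$ while $x\neq z$, and closedness of $R$ gives $(y,z),(z,x)\in R$, so antisymmetry is violated. The net statement is then deduced from the uniform one in two lines: eventually $(x_k,y_k)\in V$, hence eventually $(x_k,z_k)\in V_1$, hence $z_k\to x$. You do the opposite: you prove the net statement directly (compactness, subnets with fixed cyclic orientation, closedness of $R$, antisymmetry) and then run the contradiction for the uniform statement through it. Both arguments share the same kernel -- a limit of $3$-cycles whose first two vertices merge yields both $(x,z)$ and $(z,x)$ in $R$ -- so nothing essential differs. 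The paper's order is slightly more economical: one limit-point computation does all the work, and it needs no appeal to the fact that a net in a compact Hausdorff space converges to $x$ once every convergent subnet does. Your order buys a self-contained net lemma and, by treating both orientations of $\{x_k,y_k,z_k\}$, independence from the paper's convention that this notation already fixes the cycle $x_k \ha y_k \ha z_k \ha x_k$. Two minor points of care: in the metric step, $(X\times X)\setminus V$ need not be compact unless $V$ is open, so take the minimum of $d$ over $(X\times X)\setminus \mathrm{int}(V)$ (or shrink $V$ to its interior first); and in the uniform step, under the standard definition of subnet your subnet is eventually, not merely cofinally, inside each diagonal neighborhood -- though either property suffices to force $x_0=y_0$.
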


 \begin{proof} Suppose instead that
   for some $V_1 > 0$,  we could construct for each $V$ a $3-$cycles $\{x_V, y_V,z_V \}$ with
 $(x_V,y_V) \in V$  but with $(x_V,z_V) \not\in V_1$.  The collection of neighborhoods $V$ is directed by
 inclusion with intersection the diagonal $1_X$. So we can regard $\{ (x_V,y_V,z_V) \}$ as a net indexed by $V$. A limit
 point $(x,y,z)$ would satisfy $x = y$ but $x \not= z$.  Since $(y,z), (z,x) \in R$, this would violate anti-symmetry.
 In the metric case, the neighborhoods $V_{\ep} = \{ d(x,y) < \ep \}$ generate the neighborhoods of the diagonal.

 For the net $\{ (x_k, y_k, z_k) \}$, eventually $\{ (x_k,y_k) \}$ enters $V$ and so eventually $\{ (x_k, z_k) \}$ enters $V_1$.
 Since $\{ x_k \}$ converges to $x$, $\{ z_k \}$ does as well.

 \end{proof} \vspace{.5cm}

 \begin{theo}\label{theocycle06} Let $(X,R)$ be a compact topological tournament. If  $x$ is a non-isolated point of $X$ and it has an
 arc cyclic neighborhood,   then it is a cycle point and so is balanced. So if $(X,R)$ is
 locally arc cyclic and $X$ has no isolated points, then every point is a cycle point and $(X,R)$ is balanced.
 \end{theo}

 \begin{proof} If $x$ is non-isolated, then it is either left or right balanced. If $x \in X$ is right balanced,
 there exists a net $\{ y_k \}$ in $R^{\circ}(x)$ which
 converges to $x$ and we may assume the net lies in an arc cyclic neighborhood $U$.
  Because $U$ is an arc cyclic subset, we can choose for each $k$, a point $z_k \in U$ such that $\{ x, y_k, z_k \}$ is a
 $3-$cycle. By Lemma \ref{lemcycle05}, $\{ z_k \}$ converges to $x$. So if $U_1$ is any neighborhood of $x$, eventually, the
 cycle $\{ x, y_k, z_k \}$ is contained in $U_1$. Thus, $x$ is a cycle point. Similarly, if $x$ is left balanced, it is a cycle point.
 Since a cycle point is balanced, it follows that $(X,R)$ is balanced when it is locally arc cyclic and there are no isolated points.

 \end{proof}\vspace{.5cm}

\begin{cor} \label{corcycle06a} If $A$ is a closed game subset for an infinite compact group $X$, then every point of $X$ is a cycle point.\end{cor}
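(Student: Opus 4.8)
The plan is to deduce this corollary directly from the two main results of the section, with only a short preliminary observation about the topology of $X$. First I would establish that an infinite compact topological group $X$ has no isolated points: a topological group containing an isolated point is homogeneous under left translation, hence discrete, and a discrete compact space is finite. Since $X$ is assumed infinite, it therefore has no isolated points. (This is exactly the remark recorded after Theorem \ref{theointro01}.)

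Next I would invoke Theorem \ref{theocycle02}, which asserts that for a closed game subset $A$ on a compact topological group, the tournament $(X,\widehat{A})$ is arc cyclic. An arc cyclic topological tournament is locally arc cyclic, since one may take the arc cyclic neighborhood of any point to be the whole space $X$ itself; this is the observation made when local arc cyclicity is introduced. Thus $(X,\widehat{A})$ is a locally arc cyclic topological tournament on a space with no isolated points.

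With these two facts in hand, the conclusion is immediate from the second assertion of Theorem \ref{theocycle06}: a locally arc cyclic compact topological tournament whose underlying space has no isolated points has the property that every point is a cycle point. Applying this to $(X,\widehat{A})$ yields that every point of $X$ is a cycle point, as claimed.

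I do not expect any genuine obstacle in assembling the corollary, since it is a straightforward synthesis. The substantive content lies entirely in the results being cited: the arc cyclicity in Theorem \ref{theocycle02}, proved via the $\om(y)$ subgroup argument of Proposition \ref{propcycle01}, and the passage from local arc cyclicity to cycle points in Theorem \ref{theocycle06}, which rests on the net/thickening estimate of Lemma \ref{lemcycle05}. The only point requiring care is to verify that the hypotheses of Theorem \ref{theocycle06} are met, namely the absence of isolated points, which is the sole place where the hypothesis that $X$ is infinite is used.
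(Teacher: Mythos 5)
Your proof is correct and follows essentially the same route as the paper, which deduces the corollary immediately from Theorem \ref{theocycle02} and Theorem \ref{theocycle06}. The only cosmetic difference is that the paper cites Theorem \ref{theocycle04} (balancedness) to rule out isolated points, whereas you argue this directly from homogeneity and compactness --- which is precisely the argument inside the proof of Theorem \ref{theocycle04}, so the content is identical.
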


\begin{proof} Immediate from Theorem \ref{theocycle02}, Theorem \ref{theocycle04} and Theorem \ref{theocycle06}.

 \end{proof}\vspace{.5cm}

We have the following sharpening of Theorem \ref{theoinv03}.

\begin{theo}\label{theo3.07a} Assume that $\{(X_i,R_i,f_i) \}$ is a lexicographic inverse system of topological tournaments with limit tournament $(X,R)$.
If for infinitely many $i \in \N$  the fiber $(Y_{iz},S_{iz})$ is ip cyclic for each $z$ an isolated point of $X_i$, then every point $x$ of
the  subset $IS$ of $X$ is a cycle point.

In particular, if  $X_1$ is finite, and every $(Y_{iz},S_{iz})$ is finite and point cyclic, then every point of $X = IS$ is
a cycle point. \end{theo}

\begin{proof} Let $x \in IS$.  Fix $i$ arbitrarily large so that the fibers $(Y_{iz},S_{iz})$ are ip cyclic for the isolated points
$z$ of $X_i$, and let $z = \pi_i(x)$.
The point $\pi_{i+1}(x) = (z,y)$ with $y$ an isolated point in $Y_{iz}$. Since
$(Y_{iz},S_{iz})$ is  ip cyclic, there exist $y', y'' \in Y_{iz}$ so
that $\{ y', y, y'' \}$ is a $3-$cycle for $(Y_{iz},S_{iz})$.
There exist $x', x'' \in X$ with $\pi_{i+1}(x') = (z,y'), \pi_{i+1}(x'') = (z,y'')$ and so $\{ x', x, x'' \}$ is a $3-$cycle for
$(X,R)$. Furthermore, $\pi_i(x'') = \pi_i(x') = z = \pi_i(x)$ and so
$\pi_j(x) = \pi_j(x') = \pi_j(x'')$ for all $j \le i$. As $i$ was arbitrarily large,
$x'$ and $x''$ are arbitrarily close to $x$ and so $x$ is
a cycle point.

\end{proof}

Recall that if $\{(X_i,R_i,f_i) \}$ is a lexicographic inverse system of bricks, then $IS$ is a dense $G_{\d}$ subset of $X$.
\vspace{.5cm}

\begin{theo}\label{theo3.08} Let $h : (X_2,R_2) \to (X_1,R_1)$  be a continuous, surjective tournament map of compact tournaments.
 If $y \in X_1$ is a cycle point, then
$h^{-1}(y)$ is a singleton $\{ x \}$ and $x \in X_2$ is a cycle point.

If every point of $X_1$ is a cycle point, then $h$ is a homeomorphism mapping $R_2$ isomorphically onto $R_1$. \end{theo}

\begin{proof} Assume that $\{ (y'_k,y''_k) \in X_1 \times X_1 \}$ is a net converging to $(y,y)$ with $\{ y'_k, y, y''_k \}$ a $3-$cycle for all $k$.
Since $h$ is surjective, we can choose $(x'_k,x''_k) \in  X_2 \times X_2$ with $h(x'_k) = y'_k, h(x''_k) = y''_k$.  Since $h$ is a tournament map,
$x''_k \ha x'_k $.  By Theorem \ref{theo1.02} and Addendum \ref{add1.03} $h^{-1}(y)$ has a terminal point $M$ and a initial point $m$ and
every convergent subnet of $\{ x'_k \}$ converges to $m$ and so, by compactness, $\{x'_k \}$ converges to $m$.  Similarly, $\{ x''_k \}$ converges to $M$.
Since $x''_k \ha x'_k $ it follows that $M \ \underline{\ha} \ m$. But $m$ is a initial point for $h^{-1}(x)$ and so $m \ \underline{\ha} \ M$.
It follows from
anti-symmetry that $m = M$ and so $h^{-1}(y)$ is a singleton.

If $h^{-1}(y) = \{ x \}$ and $U$ is an open set containing $x$, then, by compactness, there exists an open set $U_1$ containing $y$ with
$h^{-1}(U_1) \subset U$. Any $3-$cyle containing $y$ in $U_1$ lifts to a $3-$cycle in $U$ containing $x$. Hence, $x$ is a cycle point.

If every point of $X$ is a cycle point, then $h$ is a bijection and so is a homeomorphism by compactness.

\end{proof}

\vspace{.5cm}

\begin{theo}\label{theocycle07} Let $(X,R)$ be a compact topological tournament.  If $x$ is a cycle point, then the singleton $\{ x \}$ is a $G_{\d}$
set which is a component of  $X$. \end{theo}

\begin{proof} Let $\pi : X \to X_E$ be the quotient map of Corollary \ref{cor2.02}. From Theorem \ref{theo2.01} it follows that a $3-$cycle in $X$
can meet a component in at most one point.  Hence, if $x$ is a cycle point in $X$, then $\pi(x)$ is a cycle point in $X/E$. From
 Theorem \ref{theo3.08} it follows that $\pi^{-1}(\{ \pi(x) \})$ is the singleton $\{ x \}$ and so $\{ x \}$ is a component.

 Now assume that $x$ is a cycle point. There exists a $3-$cycle $\{ a_1, x, b_1 \}$ in $X$. $R^{\circ}(a_1) \cap R^{\circ -1}(b_1)$ is an
 open set which contains $x$. Let $U_1$ be an open set with $x \in U_1$ and with closure contained in $R^{\circ}(a_1) \cap R^{\circ -1}(b_1)$.
 Thus, for every $z \in \overline{U_1}, \ \{ a_1, z, b_1 \}$ is a $3$-cycle,  Inductively, we define points $\{a_1, \dots, a_n \}, \{b_1, \dots, b_n \}$
 and open sets $\{ U_1, \dots, U_n \}$ such that for $i = 2, \dots n,$
 \begin{align}\label{eqcycle09} \begin{split}
  a_i, b_i \in &U_{i-1},\quad \overline{U_{i}} \subset U_{i-1}, \\
  \{ a_i, z, b_i \} \ \text{ is a } \ &3-\text{cycle, for all} \ z \in \overline{U_i}.
  \end{split}\end{align}
  Then choose $\{ a_{n+1}, x, b_{n+1} \}$ a $3-$cycle in $U_n$ and thicken $x$ to an open set $U_{n+1}$ with closure contained in
   $U_n \cap R^{\circ}(a_{n+1}) \cap R^{\circ -1}(b_{n+1})$.

   Let $(a,b)$ be a limit point of the sequence $\{ (a_n,b_n) \}$  in $X \times X$ and let $K = \bigcap_n U_n = \bigcap_n \overline{U_n}$.
   Since $a_i, b_i \in U_n$ for all $i > n$ it follows that $a, b \in K$. For all $z \in K \subset U_n, \ \{a_n, z, b_n \}$ is a $3-$cycle.
   So in the limit $(a,z), (z,b), (b,a) \in R$ for all $z \in K$.  In particular, since $a, b \in K, \ (a,b), (b,a) \in R$ and so $a = b$ by
   anti-symmetry. Similarly, $(a,z), (z,b) \in R$ and $a = b$ implies $a = b = z$ for all $z \in K$.  That is, $K$ is a singleton.  Since
   $x \in K, \ K = \{ x \}$.  Thus, $\{ x \}$ is a $G_{\d}$ set.

   \end{proof}

   For a compact space, a point is a $G_{\d}$ point if and only if it has a countable neighborhood base.

   \vspace{.5cm}

  \begin{theo}\label{theo3.06} Assume $(X,R)$ is a  compact topological tournament.  If $R$ is nowhere
  locally transitive, then $X$ is a totally disconnected space with no isolated points and
  every nonempty open set contains a compact subset $K$ such that $R|K$ is isomorphic to the standard $3-$adic example. Every
  point of $K$ is a cycle point and so is $G_{\d}$ point.

  If, in addition, $x \in X$ is a cycle point, then for every open set $U$ with $x \in U$, the compact set
  $K$ can be chosen with $x \in K$.

  Conversely, if the cycle points for $R$ are dense in $X$, then $R$ is nowhere locally transitive. \end{theo}

  \begin{proof} Any non-trivial component of $X$ contains a nonempty open
  subset of $X$ by Corollary \ref{cor2.02} and by Theorem \ref{theo2.01} the restriction of $R$ to this
  open set is transitive. Hence, for a nowhere locally transitive tournament every component is trivial. If $x$ were an isolated point, then
  $\{ x \}$ would be an open subset on which $R$ is trivially transitive. Hence,
 $X$ has no isolated points.

  Let $U$ be a nonempty open subset.

  Because $R|U$ is not transitive and $R^{\circ}$ is open, we can choose a $3-$cycle in $U$ and thicken it, using
  to get disjoint, nonempty, clopen sets $K_1^{\ep} \subset U$ for
  $\ep = 0, 1, 2$ so that $x \mapsto \ep$ for $x \in K_1^{\ep}$ defines a function $h_1$ from $K_1 = \bigcup_{\ep = 0,1,2} K_1^{\ep}$ to
  $\Z/ 3 \Z  = \{ 0, 1, 2 \}$ which maps $R|K_1$ to $\widehat{A_1}$ on $\Z/ 3 \Z $.

  Assume that, inductively, we have defined $K_i$ a disjoint union of nonempty clopen
  subsets $K_i^y$ for $y \in \Z/ 3^i \Z  = \{ 0, 1, 2 \}^{ \{1,\dots,i  \} }$
 so that $x \mapsto y$ for $x \in K_i^{y}$ defines a function $h_i : K_i \to \Z/ 3^i \Z $ mapping $R|K_i$ to $\widehat{A_i}$
 and for $y = z\ep$ with $z \in \Z/ 3^{i-1} \Z $
 and $\ep = 0,1,2$  $K_i^{y} \subset K_{i-1}^z$.

 For the inductive step, for each $y \in Y_i$ choose a $3-$cycle in $K_i^y$ and thicken it, using Theorem \ref{theo2.07}, to
 obtain disjoint nonempty clopen subsets
 $K_{i+1}^{y\ep} \subset K_i^y$ for $\ep \in Y_1$ such that $x \mapsto \ep$ defines a function from $\bigcup_{\ep = 0,1,2} \ K_{i+1}^{y\ep}$ to $\Z/ 3 \Z $
 which maps $R|\bigcup_{\ep = 0,1,2} \ K_{i+1}^{y\ep}$ to $\widehat{A_1}$. With $K_{i+1} = \bigcup_{y\ep \in Y_{i+1}} \ K_{i+1}^{y\ep}$
 $x \mapsto y\ep$ for $x \in  K_{i+1}^{y\ep}$ defines the required function from $K_{i+1}$ to $\Z/ 3^{i+1} \Z $ taking $R|K_{i+1}$ to $\widehat{A_{i+1}}$.

 Let $K $ be the intersection $ \bigcap_{i \in \N} \  K_i \subset U$.

 If $x \in U$ is a cyclic point, then we can make the choice so that $x \in K_i^{0^i}$. In that case, $x \in K$.

 With $K  =  \bigcap_i \  K_i $ we have that the restriction $R|K$ is identified with the inverse limit of $ (R|K_i,k_i \times k_i)$
 with $k_i$ the inclusion map from $K_{i+1}$ to $K_i$. Hence, the maps $h_i : K_i \to \Z/ 3^i \Z $ defines the continuous limit map
 $h : K \to \Z[3]$ which maps $R|K$ to $\widehat{A}$.

 In the standard $3-$adic example every point $y \in \Z[3]$ is a cycle point by Corollary \ref{corcycle06a}.
 Hence, by Theorem \ref{theo3.08}, $h$ is a homeomorphism
 mapping $R|K$ isomorphically onto the standard $3-$adic example.  From it follows that
 every point of $K$ is a cycle point for $R|K$. The $3-$cycles
 in $K$ through a point $x \in K$ are $3-$cycles in $X$ and so each point of $K$ is a cycle point for $R$.

 The converse result is obvious.

  \end{proof}  \vspace{.5cm}

  \vspace{.5cm}

\begin{theo}\label{theo3.07} If $(X,R)$ is a balanced, compact topological tournament, then $R$ is nowhere locally transitive if and only if
the space $X$ is totally disconnected.
\end{theo}

\begin{proof}
If a compact tournament $(X,R)$ is nowhere locally transitive, then
by Theorem \ref{theo3.06} $X$ is totally disconnected.

Conversely, if $R$ is balanced and $U$ is a clopen subset of $X$, then the restriction
$R|U$ is balanced and so has no terminal or initial point.  In particular,
since $U$ is compact, $R|U$ is not transitive. If the compact space $X$ is totally disconnected, then every nonempty open subset contains
a nonempty clopen subset and so $R$ is nowhere locally transitive.

\end{proof}

This completes the proof of Theorem \ref{theointro02}.
  \vspace{.5cm}

\begin{cor}\label{cor3.08} If an infinite compact group $X$ admits a closed game subset $A$, then $X$ is a Cantor set.
\end{cor}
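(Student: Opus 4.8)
The plan is to verify the four defining properties of a Cantor set recorded in Background \ref{back} (4): that $X$ is compact, metrizable, totally disconnected, and has no isolated points. Compactness is part of the hypothesis, and since an infinite compact topological group is never discrete, it has no isolated points, as observed following Theorem \ref{theointro01}. The remaining two properties I would extract from the cycle-point structure of the associated tournament $\widehat{A}$, which is a topological tournament because $A$ is a closed game subset.

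The key input is Corollary \ref{corcycle06a}, which asserts that every point of $X$ is a cycle point for $\widehat{A}$. I would then apply Theorem \ref{theocycle07} at each point $x \in X$: the singleton $\{ x \}$ is simultaneously a $G_{\d}$ subset and a component of $X$. Since every component is a singleton, $X$ is totally disconnected, which settles the third property.

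For metrizability I would combine the $G_{\d}$ conclusion at the identity with homogeneity of the group. By Theorem \ref{theocycle07} the singleton $\{ e \}$ is a $G_{\d}$ subset, so write $\{ e \} = \bigcap_n U_n$ with each $U_n$ open. The continuous map $\mu : X \times X \to X$ defined by $\mu(x,y) = x^{-1}y$ has the diagonal as its fiber over the identity, so
\[
1_X \ = \ \mu^{-1}(e) \ = \ \bigcap_n \ \mu^{-1}(U_n),
\]
exhibiting $1_X$ as a $G_{\d}$ subset of $X \times X$. By the metrization criterion in Background \ref{back} (6), the compact space $X$ is then metrizable. Assembling the four properties, $X$ is a Cantor set.

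None of the individual steps is long once Corollary \ref{corcycle06a} and Theorem \ref{theocycle07} are available, so I do not expect a serious obstacle. The one place that requires a little care is passing from the $G_{\d}$ property of the single point $\{ e \}$ to that of the entire diagonal $1_X$; this is precisely where the group structure enters, via the continuous map $\mu$, and it lets us conclude metrizability directly rather than invoking general metrization theorems for topological groups.
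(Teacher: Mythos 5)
Your proposal is correct and follows essentially the same route as the paper: both extract total disconnectedness, absence of isolated points, and the $G_{\d}$ property of points from Corollary \ref{corcycle06a} together with Theorem \ref{theocycle07}, and both then use the group operation to transfer the $G_{\d}$ property at $e$ to the diagonal. Indeed, your sets $\mu^{-1}(U_n)$ are exactly the paper's $V_n = \{ (x,y) : x^{-1}y \in U_n \}$; the only cosmetic difference is that you invoke the diagonal-$G_{\d}$ metrization criterion of Background \ref{back} (6), whereas the paper cites the underlying uniformity metrization theorem (Kelley, Theorem 6.13) directly, which is the same fact in different packaging.
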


\begin{proof}  That $X$ is totally disconnected with no isolated points
follows from Corollary \ref{corcycle06a} together with Theorem \ref{theocycle07}, which also implies that the points of $X$ are $G_{\d}$ points.
Hence, $e$ has a countable neighborhood base of clopen subsets $U_n$. It follows that $V_n = \{ (x,y) : x^{-1}y \in U_n \}$ is a countable neighborhood base
for $1_X$ by clopen subsets of $X \times X$. For a compact space $X$ the set of neighborhoods of $1_X$ is a uniformity which is metrizable if
it has a countable base, see \cite{K} Chapter 6 and in particular, Theorem 6.13. Since $X$ is metrizable, it is a Cantor set.

\end{proof}

 Together with Corollary \ref{corcycle06a}  and Theorem \ref{theo3.04b} this completes the proof of Theorem \ref{theointro01}.

 \vspace{.5cm}

 It follows that if $G$ is a nontrivial finite group of odd order and $K$ is an uncountable set, then the product group $G^{K}$ is totally disconnected,
 with no isolated points, and with no elements of order two, but since it is not metrizable, it does not admit a closed game subset.
 Of course, since there is no element of order two,
 there are many game subsets (none of which is closed). In fact since such a product contains no $G_{\d}$ points, it follows from
 Theorem \ref{theo3.07} and Theorem \ref{theo3.06} that it admits no balanced tournament. When the cardinality of $K$ at least ${\mathbf c}$,
the cardinality of the continuum,
 we will see below that the product group admits no topological tournaments at all.

 \begin{ques}\label{ques01} Let $(X,R)$ be a compact topological tournament.  If every point of $X$ is a cycle point, does it follow that
 $X$ is metrizable and so is a Cantor set? \end{ques}

 I conjecture that the answer is affirmative.

   \vspace{1cm}

  \section{ \textbf{LOTS Constructions}}\vspace{.5cm}

We have seen that a linear order on a set is exactly a tournament which is transitive. If $L$ is a linear order on a set $X$ (usually written $\le$),
then $L^{\circ}(x)$ is the set of points larger than $x$,  and
$L^{\circ -1}(x)$ is the set of points smaller than $x$. We omit the usual interval notation to avoid confusion with ordered pairs.

  A linearly ordered topological space, or \emph{LOTS}, $X$, is a space  with a linear order $L$,
equipped with the \emph{order topology}\index{order topology} which has subbase $\{ L^{\circ}(x) : x \in X \} \cup \{ L^{\circ -1}(x) : x \in X \}$.

\begin{theo}\label{theo4.01a} If $L$ linear order on $X$, then the order topology is Hausdorff and with respect to the order topology $L$ is closed,
and so is a topological tournament on $X$.

If $X$ is compact and $L$ is closed, i.e. it is a topological tournament which is transitive, then the topology on $X$ is the
order topology obtained from $L$.  In particular, $X$ is a LOTS. \end{theo}

\begin{proof} Assume $b \in L^{\circ}(a)$. The pair $a,b$ is a \emph{gap pair}\index{gap pair} when there is no point between them,
i.e. $L^{\circ}(a) \ \cap \ L^{\circ -1}(b) \ = \ \emptyset$.  In that case, $L^{\circ -1}(b) = L^{-1}(a)$ and
$L^{\circ}(a) = L(b)$ are disjoint neighborhoods of $a$ and $b$, respectively. Furthermore, $L^{\circ -1}(b) \times L^{\circ}(a) \subset L^{\circ}$.

If $c \ \in \ L^{\circ}(a) \ \cap \ L^{\circ -1}(b)$ then $L^{\circ -1}(c)$ and $L^{\circ}(c)$
are disjoint neighborhoods of $a$ and $b$, respectively and $L^{\circ -1}(c) \times L^{\circ}(c) \subset L^{\circ}$.

Thus, the LOTS $X$ is Hausdorff and $L^{\circ}$ is open. Hence, $L = (X \times X) \setminus L^{\circ -1}$ is closed.

Conversely, if $L$ is a topological tournament, then each $L^{\circ}(x)$ and $L^{\circ -1}(x)$ is an open subset of $X$.
If $X_{ord}$ is the set $X$ with the order topology, then the identity map $X \to X_{ord}$ is a continuous bijection.  If $X$ is compact, then since
$X_{ord}$ is Hausdorff, the map is a homeomorphism.  That is, $X$ has the order topology.

\end{proof}
\vspace{.5cm}

A LOTS is \emph{complete} when every bounded, nonempty subset $A$, has a supremum $\sup A$ and an infimum $\inf A$. The LOTS $X$ is compact if and only
if it is complete and has a maximum point $M$ and a minimum point $m$. For details about LOTS, see, e.g. \cite{AH02} and its extension \cite{AH21}.
Regarding the order $L$ as a topological tournament, a maximum is a terminal point and a minimum is an initial point.

\begin{lem}\label{lem4.01aa} If $X$ is a complete LOTS, then every bounded sequence in $X$ has a convergent subsequence. \end{lem}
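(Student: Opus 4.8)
The plan is to reproduce the Bolzano--Weierstrass argument in the order-theoretic setting, splitting the proof into two independent steps. First I would extract a monotone subsequence using only the linear order $L$ (no topology at all), and then I would show that a bounded monotone sequence converges, which is the point at which completeness and the order topology genuinely enter.

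For the first step I would use the standard peak-index argument. Writing the order $L$ as $\le$, call an index $n \in \N$ a \emph{peak} if $x_m \le x_n$ for every $m > n$. If there are infinitely many peaks $n_1 < n_2 < \cdots$, then $x_{n_1} \ge x_{n_2} \ge \cdots$ is a non-increasing subsequence. If instead there are only finitely many peaks, choose $N$ exceeding all of them; then no $n \ge N$ is a peak, so, using totality of $L$ to turn the failure of $x_m \le x_n$ into $x_n < x_m$, one can recursively select $N \le n_1 < n_2 < \cdots$ with $x_{n_1} < x_{n_2} < \cdots$, a strictly increasing subsequence. Either way we obtain a bounded monotone subsequence $\{ y_k \}$.

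For the second step, suppose $\{ y_k \}$ is non-decreasing (the non-increasing case being symmetric, with the infimum replacing the supremum). Since $\{ y_k : k \in \N \}$ is nonempty and bounded, completeness provides $s = \sup_k y_k \in X$, and I claim $y_k \to s$ in the order topology. It suffices to check eventual membership in each subbasic neighborhood of $s$. Because $y_k \le s$ for all $k$, every $y_k$ already lies in $L^{\circ -1}(b)$ whenever $b \in L^{\circ}(s)$, so the subbasic sets of that form cause no difficulty. For a subbasic neighborhood $L^{\circ}(a)$ with $a \in L^{\circ -1}(s)$, the point $a$ fails to be an upper bound for $\{ y_k \}$, so some $y_{k_0}$ satisfies $a < y_{k_0}$, and monotonicity then gives $a < y_k$ for all $k \ge k_0$. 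Hence $\{ y_k \}$ is eventually contained in every basic neighborhood of $s$.

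The main obstacle is the bookkeeping in this last step: one must argue against the order topology through its subbase $\{ L^{\circ}(x) \} \cup \{ L^{\circ -1}(x) \}$ and handle the degenerate configurations, namely when $s$ is the maximum of $X$ (so only subbasic sets $L^{\circ}(a)$ appear in a neighborhood base of $s$) and when $s$ is or is not actually attained by the sequence. In each case the single inequality $y_k \le s$ together with the defining property of the supremum pins down the eventual behavior, and Hausdorffness, established in Theorem \ref{theo4.01a}, guarantees the limit is unique. Everything else is the routine transcription of the classical real-variable proof.
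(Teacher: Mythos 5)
Your proof is correct and follows essentially the same route as the paper: extract a monotone subsequence via the peak-index argument, then use completeness to conclude convergence to the supremum or infimum. The only difference is that the paper states the final step ("a bounded monotone sequence converges to its supremum or infimum") as a one-line assertion, whereas you verify it explicitly against the subbase of the order topology, which is a harmless elaboration of the same idea.
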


\begin{proof} It suffices to recall the proof that a sequence $\{ x_n \}$ in $X$ has a monotone subsequence.

Call $n \in \N$ dominating in the sequence, if for all $m > n \ \ x_n \ha x_m$.

If there are infinitely many dominating indices, then the restriction to those indices is a monotone decreasing sequence.
If there are only finitely many dominating indices and $N$ is the largest such, then let $n_1 = N+1$ and inductively
choose $n_{k+1} > n_k$ with $x_{n_{k+1}} \ \underline{\ha} \ x_{n_{k}}$ which exists because $n_k$ is not dominating.
This is a monotone non-decreasing sequence.

A bounded monotone sequence converges to its supremum or infimum.

\end{proof}

\begin{theo}\label{theo4.01ab} If $Y$ is a non-trivial compact space and $I$ has cardinality at least ${\mathbf c}$,
the cardinality of the continuum, then the compact
product space $Y^I$ does not admit any topological tournament. \end{theo}

\begin{proof} Let $P$ be the power set of $\N$. There is a surjection from $I$ onto $P$ and an injection from the two point set $\{ 0, 1 \}$
into $Y$. This induces a continuous embedding of $X = \{ 0, 1 \}^P$ into $Y^I$. It suffices to show that $X$ does not admit a topological tournament.

Observe that $X$ contains no $G_{\d}$ points. By Theorem \ref{theo3.06} is will suffice to show that any topological tournament on $X$ would have to be
nowhere locally transitive.

Suppose instead that on some non-empty clopen subset of $X$ there exists a closed, transitive tournament. By restricting further to a basic open set
obtained by fixing finitely many coordinates we obtain a subset homeomorphic to $X$ itself. It suffices to contradict the assumption that $X$ admits
a closed transitive tournament. By
Theorem \ref{theo4.01a} the topology on $X$ is the associated LOTS topology.
From Lemma \ref{lem4.01aa} it will suffice to produce
a sequence in $X$ with no convergent subsequence.

Each $A \in P$ is a subset of $\N$. Define $\{ x_n \in X \}$ by
\begin{equation}\label{eqnotourn}
(x_n)_A \ = \ \begin{cases} 1 \ \ \text{if} \ \ n \in A, \\ 0 \ \ \text{if} \ \ n \not\in A. \end{cases}
\end{equation}

 Suppose that some subsequence $\{ x_{n_k} \}$ converges. By going to a further subsequence, we may assume $\{ n_k \}$ is strictly
 increasing varying over a subset $B$ of $\N$. The sequence  $\{ (x_{n_k})_A \}$ converges to $1$ if and only if
 $A \cap B$ is a cofinite subset of $B$ and  $\{ (x_{n_k})_A \}$ converges to $0$ if and only if
 $(\N \setminus A) \cap B$ is a cofinite subset of $B$. Write $B$ as the disjoint union of two infinite subsets $B_1$ and $B_2$.
 Let $A = B_1$. Since neither $A \cap B = B_1$ nor $(\N \setminus A) \cap B = B_2$ is a cofinite subset of $B$, it follows that
 $\{ (x_{n_k})_A \}$ does not converge.

\end{proof}
\vspace{.5cm}

In a LOTS $X$ let $\{ x_k \}$ be a net indexed by the directed set $D$ and converging to $x$.  The index set $D$ is partitioned by three subsets:
$\{ k : x_k \in L^{\circ -1}(x)\}, \ \{ k : x_k \in L^{\circ}(x) \}, \ \{ k : x_k = x \}$.  At least one of these sets is
cofinal in $D$ and so by going to a subnet we
may assume that either
\begin{itemize}
\item[$-$]:  $x_k \in L^{\circ -1}(x)$ for all $k$ in which case the net converges to $x$ from the left and $x$ is left balanced for $L$.
\item[+]:  $x_k \in L^{\circ}(x)$ for all $k$ in which case the net converges to $x$ from the right and $x$ is right balanced for $L$.
\item[0]:  $x_k = x$ for all $k$ so that the net is constant at $x$.
\end{itemize}

If $(X_1,L_1)$ and $\{ (Y_x,L_{x}) : x \in X \}$ are LOTS, then we will denote by $(X_2,L_2)$ the lexicographic product
$(X_1,L_1) \ltimes \{ (Y_x, L_x) \}$ as in (\ref{eqlex01}). From Theorem \ref{theolex01}
 the product tournament  is transitive and
so, when equipped with the order topology, $X_2$ is a LOTS.

\begin{prop}\label{prop4.01}  If $(X_1,L_1)$ is a complete LOTS and for each $x \in X_1, (Y_x,L_x)$ is a
compact LOTS with minimum $m_x$ and maximum $M_x$, then
the LOTS $(X_2,L_2) = (X_1 \times \{ Y_x \}, L_1 \ltimes \{ L_{x}) \})$ is complete and the
 projection map $\pi : X_2 \to X_1$ is a continuous, surjective topological tournament map from $L_2$ on
  to $L_1$.\end{prop}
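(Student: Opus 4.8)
The plan is to dispatch the structural claims quickly and then concentrate on completeness, which is the real content. Since $L_1$ and each $L_x$ is transitive (being linear orders), Proposition \ref{proplex01a} gives that $L_2$ is transitive, hence itself a linear order; equipping $X_2$ with the order topology makes $(X_2,L_2)$ a LOTS, and Theorem \ref{theo4.01a} then guarantees that $L_2$ is closed, i.e. a topological tournament. Surjectivity of $\pi$ is immediate because each fiber $Y_x$ is nonempty (it contains $m_x$), and $\pi$ maps $L_2$ to $L_1$ directly from (\ref{eqlex01}): if $((x_1,y_1),(x_2,y_2)) \in L_2$ then either $(x_1,x_2) \in L_1^{\circ}$ or $x_1 = x_2$, so in either case $(x_1,x_2) \in L_1$.

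For continuity I would exploit the endpoints $m_x, M_x$ of the fibers. The order topology on $X_1$ has subbase $\{ L_1^{\circ}(x) \} \cup \{ L_1^{\circ -1}(x) \}$, so it suffices to check that the preimages of these subbasic sets are open in $X_2$. The key observation is that $(x,M_x) \ \underline{\ha} \ (x',y')$ holds exactly when $x \ha x'$ in $X_1$, the fiber case $x = x'$ being excluded because $M_x$ is maximal; this yields $\pi^{-1}(L_1^{\circ}(x)) = L_2^{\circ}((x,M_x))$, and symmetrically $\pi^{-1}(L_1^{\circ -1}(x)) = L_2^{\circ -1}((x,m_x))$ using the minimum. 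Both right-hand sides are subbasic open sets in the order topology of $X_2$, so $\pi$ is continuous.

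The main work is completeness, which I would reduce to the completeness of $X_1$ and of the fibers. Let $A \subset X_2$ be nonempty and bounded above; then $\pi(A)$ is nonempty and, since $\pi$ maps $L_2$ to $L_1$, bounded above, so $x^* = \sup \pi(A)$ exists in the complete LOTS $X_1$. I would split according to whether this supremum of first coordinates is attained. If $x^* \in \pi(A)$, set $B = \{ y \in Y_{x^*} : (x^*,y) \in A \}$; this is nonempty and bounded above in the compact (hence complete) LOTS $Y_{x^*}$, so $y^* = \sup B$ exists, and I would verify $(x^*,y^*) = \sup A$. If $x^* \notin \pi(A)$, then every first coordinate occurring in $A$ lies strictly below $x^*$, and I would check $(x^*,m_{x^*}) = \sup A$, where the minimum $m_{x^*}$ is precisely what makes this element sit just above all of $A$. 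Each verification is a short argument that the candidate is an upper bound and that nothing strictly smaller can be: a purported upper bound with first coordinate below $x^*$ is defeated using $x^* = \sup \pi(A)$, while one with first coordinate $x^*$ is defeated using $y^* = \sup B$ in the first case and is ruled out by minimality of $m_{x^*}$ in the second. Existence of infima follows by the order-reversing symmetry of the construction, now using the fiber maxima $M_x$ in place of the minima.

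The step I expect to be the main obstacle is the bookkeeping in the completeness argument, specifically handling the two cases cleanly and invoking $m_{x^*}$ (respectively $M_{x^*}$) exactly in the boundary case where the base supremum (respectively infimum) is not attained; this is where the compactness of the fibers, through their completeness together with the presence of $m_x$ and $M_x$, is genuinely used.
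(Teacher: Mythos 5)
Your proposal is correct and follows essentially the same route as the paper: completeness via the supremum $x^*$ of $\pi(A)$ with the same two-case split (supremum attained, use $\sup$ in the fiber $Y_{x^*}$; not attained, use $(x^*,m_{x^*})$), and continuity via the identities $\pi^{-1}(L_1^{\circ}(x)) = L_2^{\circ}((x,M_x))$ and $\pi^{-1}(L_1^{\circ -1}(x)) = L_2^{\circ -1}((x,m_x))$. The only difference is that you spell out the least-upper-bound verifications and the infimum symmetry that the paper leaves implicit.
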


\begin{proof} For $A$  a bounded subset of $X_2$, the set $A_1 = \pi(A)$ is a bounded subset of $X_1$ and so it has a supremum, $a_1$.

If $a_1 \in A_1$, then $\{ y \in Y_{a_1}: (a_1,y) \in A \}$ is nonempty and so has a supremum $y_1 \in Y_{a_1}$.
In that case, $(a_1,y_1)$ is the supremum of $A$.

If $a_1 \not\in A_1$, then $(a_1,m_{a_1})$ is the supremum of $A$ where $m_{a_1}$ is the minimum of $Y_{a_1}$.

The $\pi$ preimage of $L_1^{\circ}(x) \subset X_1$ is $L_2^{\circ}(x,M_x) \subset X_2$ and the preimage of $L_1^{\circ -1}(x)$ is $L_2^{\circ -1}(x,m_x)$.
Hence, $\pi$ is continuous.  It is clearly a tournament map.

\end{proof}

Notice that , in contrast with the topological
lexicographic products of Theorem \ref{theolex01} the LOTS $(Y_x,L_{x})$ can be non-trivial for any point $x$. If, however, the LOTS
is only non-trivial when $x$ is isolated in $X$, then the order topology agrees with the topology in Theorem \ref{theolex01} when
$X_1$ is complete and each $Y_x$ is compact.

From the definition of the lexicographic product and the order topology, the following is obvious.

\begin{add}\label{add4.02} If in $X_2$ a net $\{ (x_k,y_k) \}$ converges to $(x,y)$, then $\{ x_k \}$ converges to $x$ in $X_1$.

\begin{itemize}
\item[$-$]: If $\{ x_k \}$ converges to $x$ from the left, then $\{ (x_k,y_k) \}$ converges to $(x,y)$ if and only if $y = m_x$, i.e. $y$ is the
minimum for $Y_x$.
\item[+]: If $\{ x_k \}$ converges to $x$ from the right, then $\{ (x_k,y_k) \}$ converges to $(x,y)$ if and only if $y = M_x$, i.e. $y$ is the
maximum for $Y_x$.
\item[0]: If  $\{ x_k \}$ is constant at $x$, then $\{ (x_k,y_k) \}$ converges to $(x,y)$ if and only if $\{ y_k \}$ converges to $y$ in $Y_x$.
\end{itemize}
\end{add}

\vspace{.5cm}

When $(Y_x,L_{x}) = (Y, L)$ for all $x \in X$, then we write $X_1 \ltimes Y$ for the product set equipped with linear order $L_1 \ltimes L$.  Notice
that the order topology is usually not the same
as the product topology.  For example, if $b \in L^{\circ}(a) $ in $Y$, then for any $x \in X$, the set
$\{ x \} \times (L^{\circ}(a) \cap L^{\circ -1}(b)$ is the
interval $L^{\circ}(x,a) \cap L^{\circ -1}(x,b))$ in $X \ltimes Y$
and so is open. The Sorgenfrey Double Arrow described above is an example of such a lexicographic product.
\vspace{.5cm}

For our construction we begin with $S$ the topological tournament on a Cantor set $C$ obtained from a closed game subset for a topological
group structure on $C$. From Corollary \ref{corcycle06a} it follows that every point of $C$ is a cycle point for $S$.

Next, use the linear order $L_{\N}$ on the discrete set of natural numbers $\N$.
To the topological tournament $L_{\N} \ltimes S$ on $\N \times C$ we adjoin a terminal point
$M$ to obtain the one-point compactification of $\N \times C$.  Every point of the resulting
topological tournament $S_1$ is a cycle point except the terminal point $M$
which is only left balanced. For its inverse $S_0$ every point is a cycle point except for the initial point which is right balanced.

Because the one-point compactification is itself a Cantor set, we can use a homeomorphism to move $S_0$ and $S_1$ and so
obtain the tournaments on the standard Cantor Set $C$ in the unit interval
with the maximum $1$ the terminal point for the tournament $S_1$ and with the minimum $0$ the initial point for the tournament $S_0$.

Similarly,  use the linear order $L_{\Z}$ on the discrete set of integers $\Z$. To the topological tournament
$L_{\Z} \ltimes S$ on $\Z \times C$ we adjoin a terminal point
$M$ and initial point $m$ to obtain the two-point compactification of $\Z \times C$. We can
use a homeomorphism to obtain the topological tournament
$S_{01}$ on the standard Cantor Set $C$ with initial point equal to the minimum $0$ and terminal point equal to the maximum $1$.  Every point is
a cycle point except for the left balanced point $1$ and the
right balanced point $0$.

Thus, on the Cantor Set $C$ we have four topological tournaments: $S, \ S_0, \ S_1, \ S_{01}$.  Every point
is a cycle point, and so is balanced,  for each of these tournaments except the the
right balanced initial point $0$ for $S_0$ and $S_{01}$ and the left balanced terminal point $1$ for $S_1$ and $S_{01}$.

\begin{df}\label{df4.01a} Let $(Y,L)$ be a non-trivial, compact LOTS with maximum $M$ and minimum $m$. For each of the following types,
$S$ is assumed to be a topological tournament on $Y$ such that every point of $Y$ is balanced except for the terminal or initial points for $S$
when such exist.
\begin{itemize}
\item The tournament $S$ is type 0 when $S$ has no terminal point and the minimum $m$ is an initial point for $S$ which is right balanced with respect to $S$.
\item The tournament $S$ is type 1 when $S$ has no initial point and the maximum $M$ is a terminal point for $S$ which is left balanced with respect to $S$.
\item The tournament $S$ is type 01 when $M$ is a terminal point for $S$ which is left balanced with respect to $S$
 and $m$ is a initial point for $S$ which is right balanced with respect to $S$.
\end{itemize} \end{df}

The existence of tournaments of each of these types for $Y$ requires that  the minimum $m$ be right balanced with respect to the order $L$ on $Y$
and that the maximum $M$ be left balanced with respect to $L$, i.e. neither extremum is isolated.
\vspace{.5cm}

Let $X_1$ be a compact LOTS with  order $L_1$ and with minimum $m$ and maximum $M$. Of course, $m$ is not left balanced and $M$ is not right balanced
for $L_1$.
For each point $x \in X$ we choose  a compact LOTS $Y_x$ with order $L_x$ and a topological
tournament $S_x$ on  $Y_x$ which satisfies the following rules:
\begin{itemize}
\item[(i)]: If $x$ is balanced for $L_1$, then either $Y_x$ is a singleton with $Y_x = \{ m_x \} = \{ M_x \}$ and so $L_x$ and $S_{x}$
are trivial, or else $S_x$ is of type 01.
\item[(ii)] If $x$ is left balanced for $L_1$, but not right balanced for $L_1$, then $S_x$ is of type 0.
\item[(iii)] If $x$ is right balanced, but not left balanced for $L_1$, then $S_x$ is of type 1.
\item[(iv)] If $x$ is an isolated point in $X$, then $S_x$ is balanced and so has no terminal or initial point.
\end{itemize}

Thus, a terminal (or initial) point for the tournament $S_x$, when it exists, coincides with the maximum (resp. the minimum) for the order $L_x$.

Now on the LOTS $(X_2,L_2) = (X_1,R_1) \ltimes \{ (Y_x, L_x) \}$ we define the tournament $R_2$ by:
\begin{equation}\label{eq4.02}
((x,t),(y,s)) \in R_2^{\circ} \quad \Longleftrightarrow \quad \begin{cases} \qquad (x,y) \in L_1^{\circ}, \quad \text{or} \\
x = y \ \  \text{and} \ \ (t,s) \in S_x^{\circ}. \end{cases}
\end{equation}

It is clear the $R_2$ is just the tournament $L_1 \ltimes \{ S_x \}$ on $X_2$ and
the continuous  surjection $\pi : X_2 \to X_1$
is a tournament map from  $R_2$  to $L_1$. Notice  that on $X_2$ we are using the order topology obtained from $L_2$.

\begin{theo}\label{theo4.03} The tournament $R_2$ is a balanced topological tournament on the compact LOTS $X_2$.\end{theo}

\begin{proof}  Let $\{ ((x_k,t_k),(y_k,s_k))\}$ be a net in  $ R_2^{\circ}$ which converges to
\\$((x,t),(y,s))$ in $X_2 \times X_2$.

First observe that it cannot happen that $y \in L_1^{\circ -1}(x)$ since $L_1^{\circ}$ is open and this would imply that
 eventually $y_k \in L_1^{\circ -1}(x_k)$ and
so eventually $\{ ((x_k,t_k),(y_k,s_k)) \in R_2^{\circ -1}$ violating anti-symmetry.

If $(x,y) \in L_1^{\circ}$, then $((x,t),(y,s)) \in R_2^{\circ}$ as required.

We are left with the cases when $x = y$. It cannot happen that at the same time $\{ y_k \} $ converges to $y = x$ from the left
and $\{ x_k \}$ converges to
$x$ from the right, because then $y = x$ and transitivity imply $y_k \in L_1^{\circ -1}(x_k)$ for all $k$
and so again $\{ ((x_k,t_k),(y_k,s_k)) \in R_2^{\circ -1}$.
Similarly, it cannot happen that
$\{ x_k \}$ is constant and $\{ y_k \}$ converges from the left, and it cannot happen that $\{ x_k \}$
converges from the right and $\{ y_k \}$ is constant.

If $\{ x_k\}$ converges to $x$ from the left, then by Addendum \ref{add4.02} $t = m_x$ .  Because $x$ is left balanced for $L_1$, $S_x$ is of type 0
or type 01 and so
$m_x$ is an initial point for $S_x$.  Hence, $((x,t),(y,s)) = ((x,m_x),(x,s)) \in R_2$.

Similarly, if $\{ y_k \}$ converges to $y$ from the right, then $((x,t),(y,s)) = ((y,t),(y,M_x)) \in R_2$.

The remaining possibility is that both $\{ x_k \}$ and $\{ y_k \}$ are constant at $x$. In that case, $\{(t_k,s_k) \in S_x^{\circ}\}$ converges to
$(t,s)$ in $Y_x \times Y_x$ by Addendum \ref{add4.02} and so $(t,s) \in S_x$ which implies $(x,t),(y,s) \in R_2$.

We have proved that $R_2$ is closed.

Now let $(x,t) \in X_2$. If $t$ is neither a initial nor a terminal point for $S_x$, then it is balanced
for $S_x$ and so $(x,t)$ is balanced for $R_2$. Note
that on each $Y_x$ the relative topology induced from $X_2$ is that of $Y_x$.

If $t = m_x$ is a initial point for $S_x$, then $x$ is left balanced for $L_1$ and so there exists a
net $\{ x_k \}$ which converges to $x$ from the left.
For any $y_k \in Y_{x_k}$, $(x_k,x) \in L_1^{\circ}$ implies $\{((x_k,y_k),(x,m_x)) \in R_2^{\circ}\}$ and $\{ (x_k,y_k) \}$ converges to $(x,m_x)$.
Hence, $(x,m_x)$ is left balanced.

Similarly, if $t = M_x$ is a terminal point for $S_x$, then $(x,M_x)$ is right balanced for $R_2$.

In particular, if $Y_x$ is trivial, then $(x,m_x) = (x,M_x)$ is balanced for $R_2$.

Finally, assume that $Y_x$ is non-trivial.

In that case, the initial $m_x$ for $S_x$, when it exists,  is right balanced for  $S_x$
and so there exists a net $t_k \in S_x^{\circ}(0)$ which converges to $m_x$ which implies that the net
$\{ (x,t_k) \in R_2^{\circ}((x,m_x)) \}$ converges to $(x,m_x)$.  That is, $(x,m_x)$ is right balanced and so is balanced.

Similarly, if $t = M_x$ is a terminal point for $S_x$, then $(x,M_x)$ is balanced for $R_2$.

\end{proof}
\vspace{.5cm}

We can make the following alterations in our choice for $S_x$
\begin{itemize}
\item[(v)]: If the minimum $m$ is right balanced for $L_1$, let $S_m$ be type 01
instead of type 1. If $m$ is isolated, let $S_m$ be type 0 instead of balanced.
\item[(vi)]: If the maximum $M$ is left balanced for $L_1$,
let $S_M $ be type 01 instead of type 0.  If $M$ is isolated, let $S_M$ be type 1 instead of balanced.
\end{itemize}

It is easy to check the following.

\begin{add}\label{add4.04}  If we alter our choices according to (v) we obtain a topological tournament $R_0$ on $X_2$ which is
of type 0.

If we alter our choices according to (vi) we obtain a topological tournament $R_1$ on $X_2$ which is
of type 1.

If we alter our choices according to both (v) and (vi) we obtain a topological tournament $R_{01}$ on $X_2$ which is
of type 01.
\end{add}
\vspace{.5cm}

\begin{exes}\label{exes04}  Nonseparable Examples\end{exes}

(a) In \cite{AH02} and \cite{AH21} there is a rich supply of connected, complete, first countable LOTS $X$ which are
not separable. For example, let $I$ be the closed interval in $\R$ with end-points $\pm 1$. The LOTS
$(X,L) = (\R, L_{\R}) \ltimes (I,L_{\R}|I)$ is the product set
$\R \times I$ equipped with the order topology from the lexicographic product order $L$. Restrict to the compact subset $X_1$ which is the closed
interval in $\R \times I$ with minimum $m = (0,1)$ and maximum $M = (1,0)$. For every $t \in \R$ with $0 < t < 1$, let
$A_t$ be the interval in $\{ t \} \times I$ with end-points $(t,-\frac{1}{2})$ and $(t,+\frac{1}{2})$. Thus, $\{ A_t \}$ is an
uncountable collection of pair-wise disjoint, non-trivial intervals, illustrating that $X_1$ is not separable. The LOTS is connected, equivalently
every point of $X_1$ is balanced with respect to the order, except for the right balanced minimum and the left balanced maximum.

For all $x \in \ A \ = \ \bigcup_t \ A_t$ we let $Y_x$ be trivial set $\{ 0 \}$. For $x \in X_1 \setminus A$ we let $Y_x$ be
the standard Cantor Set $C$ with order $L_C$ and tournaments $S_0, S_1$ and $S_{01}$ chosen as above.
There are no isolated points in $X$ and so rule (iv) does not apply.
Let $(X_2,L_2) $ be the LOTS $(X_1,L|X_1) \ltimes \{ (Y_x,L_x \}$ and let $R_2$ be the tournament on $X_2$ given by (\ref{eq4.02}).
For each $t$ between $0$ and $1$,
$A_t \times \{ 0 \}$ is a closed interval in $X_2$ on which the tournament $R_2$ is isomorphic to the order $L_1$ on $A_t$ and each of these
is a component of $X_2$.  Contrast this with the
countability result in the metric case given in Corollary \ref{cor2.02}.
\vspace{.5cm}

(b) Let $X_1$ be the unit interval in $\R$ with end-points $m = 0$ and $M = 1$. For all $x \in X$ we let $Y_x$ be
the standard Cantor Set $C$ which is a LOTS with order $L_C$ inherited from $\R$. Let $S_0, S_1$ and $S_{01}$ be
tournaments on $C$ chosen as above.  Let $(X_2,L_2)$ be the LOTS $(X_1,L_{\R}|X_1) \ltimes (C,L_C)$ and let $R_2$ be
the tournament on $X_2$ given by (\ref{eq4.02}). Every point which is not either equal to $(t,0)$ for some $t$ with $0 < t \le 1$ or equal to
$(t,1)$ for some $t$ with $0 \le t < 1$ is a cycle point for $R_2$ and each of these points has a Cantor set neighborhood in $X_2$.
Each of the remaining, exceptional, points is balanced and with a countable neighborhood base, but with no separable neighborhood.

We can alter $R_2$ to convert some of these exceptional points to cycle points.

Fix $0 < t < 1 $ and a strictly decreasing sequence $\{ t_n \}$ in $X_1$ converging to $t$. Let $\{ A_n \}$ be a sequence of pair-wise disjoint
clopen sets in $C$ arranged with $A_n < A_{n+1}$ and so that the sets converge to the point $1$. Define the subset  $Q$ of
$R^{\circ} \subset X_2 \times X_2$ by
\begin{equation}\label{eq4.03}
Q \ = \ \bigcup_{n=2}^{\infty} \ ( \{ t \} \times A_n) \times (\{ t_n \} \times A_1).
\end{equation}

Observe that $Q$ is a clopen subset of $R^{\circ}$ with closure in $X_2 \times X_2$ given by $Q \cup \{ ((t,1),(t,1)) \}$. So with
\begin{equation}\label{eq4.04}
R_Q \ = \ (R \setminus Q) \cup Q^{-1}, \hspace{2cm}
\end{equation}
we obtain a topological tournament on $X_2$. All of the cycle points for $R$ are still cycle points for $R_Q$, but in addition if $a_n \in A_n$
for each $n$, then for $n \ge 2, \ \{ (t,a_n), (t,1), (t_n,a_1) \}$ is a $3-$cycle for $R_Q$ and so $(t,1)$ is a cycle point with no
separable neighborhood.

It is possible to use this procedure to convert a discrete countable collection of exceptional points to become cycle points. However,
this method can't be used to convert all of the exceptional points to cycle points.  Notice that if the question \ref{ques01} has an affirmative answer, then
no topological tournament on the non-metric space $X_2$ could consist entirely of cycle points.
\vspace{.5cm}

A space $X$ has a LOTS topology if there exists a linear order on $X$ such that the topology on $X$ is the order topology. A Cantor set and a
finite set have LOTS topologies and the results of our lexicographic product and inverse limit constructions all have LOTS topologies. We
saw in the proof of Theorem \ref{theo4.01ab} that if $Y$ is a non-trivial compact space and $I$ has cardinality at least ${\mathbf c}$,
the cardinality of the continuum, then the compact
product space $Y^I$ does not have a LOTS topology and does not admit any topological tournament.

\begin{ques}\label{ques02} If $(X,R)$ is a compact tournament, does the underlying space $X$ have to have a LOTS topology?\end{ques}

\vspace{1cm}

 \section{ \textbf{Big Examples}}\vspace{.5cm}

In this section we perform the construction leading to Theorem \ref{theointro03}.
We use ordinal numbers, see \cite{J} or \cite{H}. What we need is also in \cite{AH21}.

 An ordinal number is a well-ordered set which is equal to the set of its predecessors, beginning with $0 = \emptyset$.  That is, $\a = \{ \b < \a \}$. The
 successor ordinal $\a + 1 = \a \cup \{ \a \}$. With its order topology a successor ordinal $\a + 1$ is a compact LOTS with minimum $0$ and
 maximum $\a$. The successor ordinals in $\a + 1$ form a dense set of isolated points. The remaining, limit, ordinals are left balanced with respect
 to the order.  By well-ordering, no point is right balanced.

 Let $\alp$ be a  limit ordinal so that the successor $\alp + 1$ is a compact LOTS with minimum $0$ and maximum
 $\alp$. We write $L_{\alp}$ for the order on $\alp + 1$. Let $A = \{ 0 \} \cup \{ 1/n : n \in \N \}$ regarded as a compact LOTS with the order $L_A$
 from $\R$.
 Let $A_{\a} = \{ 0 \}$ for any non-limit ordinal, i.e. $0$ and
 all the successor ordinals less than $\alp$ and let $A_{\a} = A$ for all limit ordinals contained in $\alp + 1$, including $\alp$ itself.
 Let $(X_0,L_0)$ be the LOTS $ (\alp + 1, L_{\alp}) \ltimes \{ (A_{\a},L_A)  \}$. The projection
 map to $\alp + 1$ is continuous, and in this case, the injective map
 $\a \to (\a,0)$ is continuous as well. So we will identify $\alp + 1$ with the subset $(\alp + 1) \times \{ 0 \} \subset X_0$. Thus,
 $X_0$ consists of $\alp + 1$ together with, for each limit ordinal $\a$, a decreasing sequence of isolated points converging to  $\a$.
 In the LOTS $X_0$ the isolated points are dense and each non-isolated point is balanced with respect to the
 order $L_0$.  Thus, the LOTS $(X_0,L_0)$ is a brick in the sense of
 Definition \ref{dflex02}.

 Now let $B$ be a finite set with odd cardinality and $L_B$ be a linear order on $B$. Fix $S_B$ an arc cyclic tournament on $B$
  so that $(B,S_B)$ is an ip cyclic brick. The LOTS $(B,L_B)$ is also a brick, but not, of course, ip cyclic.

  For each isolated point $y \in X_0$ let $(B_y,L_y) = (B,L_B)$ and $(B_y,S_y) = (B,S_B)$.
 for each non-isolated point $y$ we let $(B_y,S_y) = (B_y,L_y)$ be trivial. Let the compact LOTS $(X_1,L_1)$ be the  lexicographic product
 $(X_0, L_0) \ltimes \{ (B_y, L_y)  \}$. In this case it is also topological lexicographic product.
 On $X_1$ we define $R_1 =  L_0 \ltimes \{ S_y \}$ the topological lexicographic product using the cycle tournaments.
 By Theorem \ref{theolex03} $(X_1,R_1)$ is a ip cyclic brick. We call $(X_1,R_1)$ a Big Brick. The underlying space $X_1$ is a compact LOTS
 with the ordering $L_1$.

 If $p : X_1 \to X_0$ is the first coordinate projection, then for each limit ordinal $\a$ in $X_0$, $p^{-1}(\a)$ is a singleton set
 and we label the point in this set by $\a$ as well. That is, we regard the limit ordinals $\a \le \alp$ as points of $X_1$. Because the tournament
 on $X_0$ is an order, it follows that no limit ordinal $\a$ in $X_1$ is contained in a cycle in $X_1$.

 We now perform a lexicographic inverse system of bricks using the Big Brick.

Begin with $(X_1,R_1)$ as above so that $X_1$ is a LOTS with order $L_1$. Inductively, we are given the LOTS $(X_i,L_i)$
 with the tournament $R_i$ so that  $(X_i,R_i)$ is a brick. We let $(Y_{iz},S_{iz},L_{iz}) = (X_1,R_1,L_1)$ for all $z$ isolated in $X_i$ and
 $(Y_{iz},S_{iz},L_{iz})$ trivial for all $z$ non-isolated in $X_i$.

 We let $(X_{i+1},L_{i+1}) = (X_i,L_i) \ltimes \{ (Y_{iz}, L_{iz} )\}$,
 i.e. the LOTS which is the lexicographic product. Since $(Y_{iz},S_{iz},L_{iz})$ trivial for all $z$ non-isolated in $X_i$, this
 lexicographic product is  at the same time the topological lexicographic product. $(X_{i+1},R_{i+1}) = (X_i,R_i) \ltimes \{ (Y_{iz}, R_{iz}) \}$.
 Since both of these are topological lexicographic products, the space $X_{i+1}$ is the same for both products and so the topology on
 $X_{i+1}$ is the LOTS topology obtained from $L_{i+1}$.
Let $f_i:  X_{i+1} \to X_i$ be the first coordinate projection which is an open, continuous surjection mapping $L_{i+1}$ to $L_i$ and $R_{i+1}$ to $R_i$.

 We let $(X,R,L)$ be the inverse limit of the inverse sequence \\ $\{ (X_i,R_i,L_i,f_i) \}$, i.e. $(X,L)$ is the inverse limit of the
 tournament inverse sequence $\{ (X_i,L_i,f_i) \}$ and $(X,R)$ is the inverse limit of the
 tournament inverse sequence $\{ (X_i,R_i,f_i) \}$.

 \begin{theo}\label{5.01} The ordered space $(X,L)$ is a compact, totally disconnected LOTS with no isolated points and $R$ is a balanced
 topological tournament on $X$.  The set of cycle points for $R$ is the dense $G_{\d}$ subset $IS$ of points
 $x \in X$ such that each $x_i$ is isolated in $X_i$.
 In addition, every nonempty open subset of $X$ has cardinality at least that of $\alp$.

If the cardinality of $\alp$ is countable, then $X$ is a Cantor set.

If $\alp$ is uncountable, then
there is a dense set of points which are not $G_{\d}$ points and no open subset is separable.

\end{theo}

 \begin{proof} By Theorem \ref{theoinv03} $(X,R)$ is a balanced tournament with $X$ compact and totally disconnected and similarly $L$ is a
 transitive topological tournament on $X$ so that the topology on $X$ is the order topology by Theorem \ref{theo4.01a}.

 By Theorem \ref{theoinv03} again the subset
 $IS$ is a dense $G_{\d}$ set and it consists of cycle points by Theorem \ref{theo3.07a}.
Also by Theorem \ref{theoinv03} each projection map $\pi_i : X \to X_i$ is
 open as well as continuous.

 A nonempty open subset contains some $\pi_i^{-1}(U)$ with $U$ an open nonempty subset of $X_i$. There exists an $IS$ point $x$ in $\pi_i^{-1}(U)$.
 So $x_i$ is isolated in $X_i$ and for every $y\in Y_{ix_i}$, there exists a point $x' \in X$ with $x'_i = x_i$ and $x'_{i+1} = (x_i,y)$.
 Thus, $\pi_i^{-1}(x) \subset \pi_i^{-1}(U)$ contains a set of cardinality at least that of $\alp$.

 Notice that if $x_i$ is an isolated point of $X_i$, then the map $y \mapsto (x_i,y)$ is a homeomorphism from $X_1$ onto a clopen subset
 of $X_{i+1}$, inducing a tournament isomorphism from $(X_1,R_1)$ onto the restriction $R_{i+1}|(\{ x_i \} \times Y_{ix_i})$.

We saw above that if $\a$ is a limit ordinal in $Y$ then it is not contained in a cycle in $Y$. It follows that the unique point $x' \in X$ with
$x'_{i+1} = (x_i,\a)$ is not in any cycle contained in the clopen set $\pi_i^{-1}(\{ x_i \})$. Thus, $x'$ is not a cycle point.

It follows that the points of $IS$ are the only cycle points. By Theorem \ref{theocycle07} every cycle point is a $G_{\d}$ point.

Furthermore, because $\pi_{i+1}$ is a continuous, open map and because for the above point $x'$, $\pi_{i+1}^{-1}(x')$ is a singleton, it
follows that $x'$ is a $G_{\d}$ point if an only if $x'_{i+1}$ is a $G_{\d}$ point in the clopen set $\pi_i^{-1}(\{ x_i \})$ and so
if and only if $\a$ is a $G_{\d}$ point in $X_1$. This is true if and only if the limit ordinal $\a$ is countable. Such a point is a $G_{\d}$ point
but is not a cycle point.

If $\alp$ is countable, then the bricks are countable and so $X$ is a Cantor set by Theorem \ref{theoinv03}.

If $\alp$ is uncountable, the point $x'$ with $x'_{i+1} = (x_i,\alp)$ is not a $G_{\d}$ point.
Thus, the points which are not $G_{\d}$ form a dense subset of $X$.

A nonempty clopen subset of $X$  is a compact LOTS with respect to the restriction of the order $L$.
A compact, separable LOTS can be embedded in $\R$ and so every
  interior point would be a $G_{\d}$ point. It follows that no open subset is separable when $\alp$ is uncountable.
 \end{proof}

For example, if $\alp$ is an uncountable ordinal but with cardinality less than or equal to ${\mathbf c}$, the cardinality of the continuum,
then $X$ has cardinality ${\mathbf c}$, that of the Cantor set, but contains no separable nonempty open subset.

\vspace{1cm}

  \section{\textbf{WAC Tournaments and Prime Quotients}}\vspace{.5cm}

  We begin with an extension of the concept of arc cyclicity.

   \begin{df}\label{wacdef01} A topological tournament $(X,R)$ is called \emph{weakly arc cyclic},\index{tournament!weakly arc cyclic}
   \index{tournament!arc cyclic!weakly}
   or just a \emph{wac tournament},\index{wac tournament}\index{tournament!wac} when $X$ is
   compact and every non-isolated point of $X$ is a cycle point for $R$.

  The topological tournament $(X,R)$ an \emph{almost wac tournament}\index{almost wac tournament}\index{tournament!almost wac} when $X$ is compact and
   every point of $X$ is either isolated, initial,
  terminal or a cycle point.  \end{df} \vspace{.5cm}

  We will write that the tournament $R$ is wac or almost wac when the underlying space is understood.

   \begin{theo}\label{wactheo02} (a) If $(X,R)$ is an almost wac tournament, then $X$ is totally disconnected.

   (b) If $R$ is a compact locally arc cyclic tournament,e.g. a finite tournament, then it is wac.

   (c) If $R$ is a wac (or almost wac) tournament, then the reverse tournament $R^{-1}$ is wac (resp. almost wac).

      (d) If $R$ is a wac tournament (or an almost wac tournament) and $A$ is a nonempty clopen subset of $X$,
      then the restriction $R|A$ is  wac (resp. almost wac).  If $R$ is a wac tournament and $x \in X$, then the
      restriction $R|R(x)$ is almost wac with initial point $x$.

   (e) If $R$ is a wac tournament, then any initial or terminal point for $R$ is an isolated point.

   (f) Let $h : (X_2,R_2) \to (X_1,R_1)$ be a surjective continuous map of topological tournaments. If $(X_2,R_2)$  is wac (or almost wac), then
   $(X_1,R_1)$ is wac (resp. almost wac) and for every non-isolated point $y$ of $X_1$, the set $h^{-1}(y)$ is a singleton subset $\{ x \}$  with $x$ non-isolated in $X_2$.

   If $x$ is a terminal (or initial) point for $R_2$, then $h(x)$ is a terminal (resp. initial) point for $R_1$.
   If $y$ is a terminal (or initial) point for $R_1$ and $y$ is not isolated, then $h^{-1}(y) = \{ x \}$ with $x$ terminal (resp. initial) for $R_2$.

   If $y$ is an isolated point of $X_1$, then the restriction $R_2|h^{-1}(y))$ is wac (resp. almost wac).

   There
    is an isomorphism  to the topological lexicographic product
    $$\hat h : (X_2,R_2)\ \to \ (X_1,R_1)  \ltimes  \{ (h^{-1}(y),R_2|h^{-1}(y)) : y \in X_1 \}$$
    such that
    $\pi \circ \hat h = h$ where $\pi$ is the coordinate projection to $(X_1,R_1)$. In particular, $h$ is an open map.

    If $(X_2,R_2)$ is arc cyclic or locally arc cyclic, then $(X_1,R_1)$ satisfies the corresponding condition.
   \end{theo}

   \begin{proof} (a):  If $A$ were a non-trivial component of $X$, then it contains no isolated points and by Corollary \ref{cor2.02}
   it contains a non-empty open subset $U$ of $X$. Furthermore,
   the restriction $R|A$ is an order and so the infinite open set $U$ contains no cycle points. As at most two points are initial or terminal,
   the tournament cannot be almost wac.

   (b): A compact locally arc cyclic tournament is wac by Theorem \ref{theocycle06}.

   (c): is obvious.

   (d): If $x \in A$ and $A$ is an arbitrary subset, then $x$ non-isolated in $A$ or $x$  a cyclic point for $R|A$ implies the corresponding
   condition for $X$. If $A$ is open, then the converse holds. If $A$ is clopen, then it is compact as well.

   So a point $x' \in R^{\circ}(x)$ is isolated or a cycle point in $R(x)$ if and only if satisfies the corresponding property in $X$. Clearly,
   $x$ is initial for the restriction to $R(x)$. Hence, $R|R(x)$ is almost wac when $R$ is wac.

   (e): An initial or terminal point is not balanced and so is not a cycle point. Such a point in a wac tournament is therefore isolated.

   (f): Clearly, if $M$ is terminal for $X_2$, then $x \ \underline{\ha} \ M$ for all $x \in X$ implies
  $h(x) \ \underline{\ha} \ h(M)$.  Since $h$ is surjective, $h(M)$ is terminal for $X_1$. Similarly, $m$ initial for $X_2$ implies
  $h(m)$ is initial for $X_1$.

   If $y$ is an isolated point in $X_1$, then $h^{-1}(y)$ is clopen and so the restriction  $R_2|h^{-1}(y)$ is wac or almost wac by (c).

   For $y$ a non-isolated point of $X_1$, let $U$ be an open set containing $y$.
  We may assume $y$ is left balanced as the right balanced case is similar.  As it is left balanced, it is not an initial point for $X_1$. By
   Theorem \ref{theo1.02} and Addendum \ref{add1.03}, there is a unique point $x \in h^{-1}(y) \cap \overline{h^{-1}(R_1^{\circ}(y))}$ and
   $x$ is an initial point for the restriction $R_2|h^{-1}(y)$. So $x$ is a non-isolated point for the wac tournament $(X_2,R_2)$.
   Since it is left balanced, it is not an initial point for $X_2$.

   Case 1 ($y$ is not terminal in $X_1$):  In that case, $x$ is not terminal in $X_2$. As it is not initial and not isolated, it is a cycle point
 Therefore, there  exists a $3-$cycle $\{ x, x', x'' \}$ in $h^{-1}(U)$. Because $x$ is an initial point for $R_2|h^{-1}(y)$, the
   cycle is not contained in $h^{-1}(y)$. So by Proposition \ref{prop1.01a} it is mapped by $h$ to a $3-$cycle $\{ y, y', y'' \}$ in $U$.
   Thus, $y$ is a cycle point and from Theorem \ref{theo3.08} it follows that $h^{-1}(y)$ is the singleton $\{ x \}$.
\vspace{.25cm}

Case 2 ($y = M_1$ is terminal in $X_1$):  If $h^{-1}(M_1)$ were to contain more than one point, then $x$ is not terminal
  in $X_2$ since it is initial in $h^{-1}(M_1)$. Because $x$ is not isolated and neither initial nor terminal, it would have to be
  a cycle point. As in Case 1, we would obtain a $3-$cycle $\{ M_1, y', y'' \}$ in $U$. This is impossible because $M_1$ is terminal.
  Hence, $h^{-1}(M_1)$ is the singleton $\{ x \}$ in this case as well. Finally, for all $x' \in X_2 \setminus h^{-1}(M_1)$,
  $h(x') \ha M_1$ implies $x' \ha x''$ for all $x'' \in h^{-1}(M_1)$.  As the latter set is the singleton $\{ x \}$ it follows that
  $x$ is terminal in $X_2$.
  \vspace{.25cm}

   Since $h^{-1}(y)$ is a singleton whenever $y$ is non-isolated,  the topological lexicographic product
    $(X_1,R_1) \ltimes \{ (h^{-1}(y),R_2|h^{-1}(y)) \}$ can be defined according to
   Definition \ref{dflex01a}. The map $\hat h$ defined by $\hat h(x) = (h(x),x)$ with inverse $(y,x) \mapsto x$ is a bijection
   providing a tournament isomorphism. From the definition of the basis in Definition \ref{dflex01a} it is clear that $\hat h$ is continuous.
   So it is a homeomorphism by compactness.  Form Theorem \ref{theolex01}(a) it follows that $h$ is an open map.

   If a neighborhood $U$ of the point in $h^{-1}(y)$ is an arc cyclic subset, then since $h$ is an open map,
   $h(U)$ is a neighborhood of $y$ and Corollary \ref{cor1.01b} implies that it is an arc cyclic subset of $X_1$.
   In particular, if $X_2$ is arc cyclic, then $X_1 = h(X_2)$ is arc cyclic.

   \end{proof} \vspace{.5cm}

   When $h : (X,R) \to (Y,S)$ is a surjective, continuous map of compact topological
   tournaments, we will call it a \emph{quotient map}\index{quotient map}.

  Notice that if $(X,R)$ is a wac tournament
   and $(Y,S)$ is any arc cyclic, compact topological tournament on
  a Cantor set, e.g. a tournament obtained from a closed game subset on an infinite, compact topological group, then we can perform a topological
  lexicographic product with base $(X,R)$ and with $(Y_x,S_x) = (Y,S)$ for every isolated point $x \in X$. In the resulting compact tournament
  every point is a cycle point and the tournament maps onto
  $(X,R)$. If the answer to Question \ref{ques01} is affirmative, then the lift is metrizable and so $(X,R)$ is metrizable as well. Thus, it
  would then follow that for any wac tournament the underlying space is
  metrizable and so has only countably many isolated points.
\vspace{.5cm}

  We will later see that there exist wac tournaments which are not locally arc cyclic. We pause to consider the stronger condition.
Recall that for a tournament $(X,R)$ a subset $A $ of $X$ is arc cyclic when every arc contained in $A$ is contained in a $3-$cycle in $X$.
Since the $3-$cycle need not be contained in $A$, this condition is weaker than the assumption that the restriction $R|A$ is an arc cyclic tournament
on $A$.

  \begin{prop}\label{arccycprop01} Let $(X,R)$ be a topological tournament.

  (a) If $A$ is an arc cyclic subset, then its closure $\overline{A}$ is arc cyclic. Any subset of $A$ is arc cyclic.

  (b) If $\{ A_i \}$ is a monotone family of arc cyclic subsets, then the union $\bigcup_i A_i$ is arc cyclic.

  (c) If $A$ is an arc cyclic subset and $x \in A$ has an arc cyclic neighborhood, then there exists a neighborhood $U$ of $x$
  such that $A \cup U$ is arc cyclic.

  (d) Any arc cyclic subset, e.g. a singleton set, is contained in a maximal arc cyclic subset which is a closed subset of $X$.
   If $A$ is a maximal arc cyclic subset and $x \in A$ has an arc cyclic neighborhood, then $x$ is in the interior of $A$.
  \end{prop}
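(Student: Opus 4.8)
The plan is to establish the four parts in order, bootstrapping each from the previous ones, with the openness of the arc-set relation $R^{\circ}$ and a net/compactness argument doing the real work.

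First I would dispatch (a) and (b), which are nearly formal. For the subset claim in (a): if $B\subseteq A$ and $(x_1,x_2)\in R^{\circ}$ with $x_1,x_2\in B$, then $x_1,x_2\in A$, so the required $3$-cycle exists by arc cyclicity of $A$. For the closure: given $x_1,x_2\in\overline A$ with $x_1\ha x_2$, I would pick nets $\{a_k\},\{b_k\}$ in $A$ with $a_k\to x_1$, $b_k\to x_2$; since $R^{\circ}$ is open and $(x_1,x_2)\in R^{\circ}$, eventually $a_k\ha b_k$, so arc cyclicity of $A$ yields $c_k$ with $a_k\ha b_k\ha c_k\ha a_k$. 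Passing to a convergent subnet $c_k\to c$ (here compactness of $X$ is used), closedness of $R$ gives $x_2\,\underline{\ha}\,c$ and $c\,\underline{\ha}\,x_1$; antisymmetry together with $x_1\ha x_2$ rules out $c=x_1$ and $c=x_2$, so $\{x_1,x_2,c\}$ is a genuine $3$-cycle. Part (b) is immediate: for an arc $(p,q)$ in $\bigcup_i A_i$ one has $p\in A_i$, $q\in A_j$, and monotonicity places both in the larger of $A_i,A_j$, which is arc cyclic.

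The crux is (c), and I expect it to be the main obstacle. Let $N$ be an arc cyclic neighborhood of $x$; replacing candidate neighborhoods by their intersection with $N$, I may restrict attention to open $U$ with $x\in U\subseteq N$. Arcs of $A\cup U$ lying entirely in $A$ or entirely in $U$ are already in $3$-cycles (the latter by (a), since $U\subseteq N$), so a ``bad'' arc must join a point $p\in A$ to a point $q\in U$. Arguing by contradiction, I would suppose that for every such $U$ there is a bad arc $(p_U,q_U)$ contained in no $3$-cycle, regard these as a net indexed by the neighborhood filter of $x$ restricted to subsets of $N$, and pass to a subnet on which the arc direction is constant, say $p_U\ha q_U$, and on which $p_U\to p^{*}\in\overline A$ (compactness again), while $q_U\to x$. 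If $p^{*}=x$ then eventually both endpoints lie in $N$, contradicting that $N$ is arc cyclic. Otherwise $p^{*}\ha x$ strictly, and since $\overline A$ is arc cyclic by (a), there is a $3$-cycle $\{p^{*},x,r\}$ with $p^{*}\ha x\ha r\ha p^{*}$. Now openness of $R^{\circ}$ forces, eventually, $q_U\ha r$ (from $x\ha r$) and $r\ha p_U$ (from $r\ha p^{*}$); combined with $p_U\ha q_U$ this exhibits $\{p_U,q_U,r\}$ as a $3$-cycle through the supposedly bad arc, a contradiction. The reverse direction $q_U\ha p_U$ is handled symmetrically by taking a $3$-cycle through the arc $(x,p^{*})$. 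Hence some $U\subseteq N$ makes $A\cup U$ arc cyclic.

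Finally, (d) follows formally. For the existence of a maximal arc cyclic subset containing a given one $A_0$, I would order the arc cyclic subsets containing $A_0$ by inclusion; every chain has the upper bound furnished by its union, which is arc cyclic by (b), so Zorn's Lemma yields a maximal arc cyclic subset $A\supseteq A_0$. By (a) its closure $\overline A$ is arc cyclic and contains $A$, so maximality gives $\overline A=A$, i.e.\ $A$ is closed. For the interior claim, if $A$ is maximal arc cyclic and $x\in A$ has an arc cyclic neighborhood, then (c) provides a neighborhood $U$ of $x$ with $A\cup U$ arc cyclic; maximality forces $A\cup U=A$, whence $U\subseteq A$ and $x$ lies in the interior of $A$.
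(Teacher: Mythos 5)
Your proposal is correct, and parts (a), (b), and (d) run essentially the same way as the paper's proof: a net plus compactness plus closedness-of-$R$ argument for the closure in (a), monotonicity for (b), and Zorn's Lemma via (b) together with maximality applied to (a) and (c) for (d). Part (c), which is indeed the crux, is where you genuinely diverge. The paper argues directly and uniformly: it invokes Lemma \ref{lemcycle05} and the thickening machinery of Theorem \ref{theo2.07} to produce a diagonal neighborhood $V$ such that for every $y \in A \setminus V_1(x)$ the $3$-cycle $\{x,y,z\}$ through the arc joining $x$ and $y$ thickens, so that the single explicit set $V(x) \cup A$ is arc cyclic. You instead argue by contradiction with a net of ``bad'' arcs $(p_U,q_U)$ indexed by the neighborhood filter at $x$, splitting into the cases $p^*=x$ (absorbed into the arc cyclic neighborhood $N$) and $p^* \neq x$ (where you take a $3$-cycle through the limit arc in $\overline A$, using part (a), and push it back along the net via openness of $R^{\circ}$). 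Your route is more self-contained -- it avoids the thickening lemmas entirely and recycles (a) -- while the paper's route buys an explicit, uniform neighborhood tied to the thickening apparatus that the paper reuses in later arguments (e.g.\ in Theorems \ref{qtheo09b} and \ref{qtheo11}). One caveat applies equally to both: the statement says only ``topological tournament,'' but your proof, like the paper's, uses compactness of $X$ (for convergent subnets and, in the paper's case, for Theorem \ref{theo2.07}); this is consistent with the standing compactness context of the section rather than a defect of your argument.
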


  \begin{proof} (a): If $x \ha x'$ with $x, x' \in \overline{A}$, then there is a net
  $\{(x_i,x_i') \in R^{\circ} \cap (A \times A)$ converging to $(x,x')$. By arc cyclicity, there exists $x_i'' \in X$
  such that $\{ x_i, x'_i, x_i'' \}$ is a $3-$cycle. By compactness we may assume $\{ x_i'' \}$ converges to $x'' \in  X$.
  Since $R$ is closed, $(x', x''), (x'',x) \in R$ and so by asymmetry, $x''$ cannot equal either $x$ or $x'$.  Hence,
  $\{ x, x', x'' \}$ is a $3-$cycle. The subset result is obvious.

  (b): If $x \ha x'$ with $x, x' \in \bigcup_i A_i$, then for some $i$, $x, x' \in A_i$ by monotonicity.
 Since $A_i$ is arc cyclic there exists $x''$ such that $\{ x, x', x'' \}$ is a $3-$cycle.

  (c): Suppose that $V_1$ is a neighborhood of the diagonal so that $V_1(x)$ is an arc cyclic subset. By Lemma \ref{lemcycle05}
    there exists a symmetric diagonal neighborhood
  $V_2 \subset V_1$ such that if $\{ x, y, z \}$ is a $3-$cycle with $(x,y) \in V_2$, then $(x,z), (y,z) \in V_1$ and so $(x,z) \not\in V_1$ or
  $(y,z) \not\in V_1$   implies $(x,y) \not\in V_2$. By Theorem \ref{theo2.07} there exists a symmetric neighborhood $V$ of the diagonal such that
  $(x,y),(y,z), (x,z) \not\in V_2$ implies that $\{ V(x), V(y), V(z) \}$ is a thickening of $\{x, y, z\}$. I claim that $V(x) \cup A$ is an arc cyclic
  subset.

  If $x_1, y_1 \in V_1(x)$, then there exists $z_1$ such that $\{x_1, y_1, z_1 \}$ is a $3-$cycle, because $V_1(x)$ is an arc cyclic set.
  Hence, $V(x) \cup (A \cap V_1(x)) \subset V_1(x)$ is an arc cyclic subset. Now suppose that $y \in A \setminus V_1(x)$. Because
  $A$ is an arc cyclic subset, there exists $z$ such that $\{x, y, z \}$ is a $3-$cycle. It cannot happen that $(x,z) \in V_2$ or $(y,z) \in V_2$ because
  either would imply $(x,y) \in V_1$. Hence, $(x,y),(y,z), (x,z) \not\in V_2$. Hence,  $\{ V(x), V(y), V(z) \}$ is a thickening of $\{x, y, z\}$.
  Thus, if $x_1 \in V(x)$, $\{x_1, y, z \}$ is a $3-$cycle. That is, $V(x) \cup A$ is an arc cyclic
  subset.

   (d): Immediate from (b) and Zorn's Lemma. By maximality, (a) implies that a maximal arc cyclic set is closed.
  If $x \in A$ has a neighborhood which is an arc cyclic subset, then, by maximality (c) implies that $A$ contains some neighborhood of $x$.

  \end{proof}

  \begin{cor}\label{arccyccor02} If $(X,R)$ be a locally arc cyclic topological tournament, then any maximal arc cyclic subset is clopen. \end{cor}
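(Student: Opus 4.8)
The plan is to read this off almost directly from Proposition \ref{arccycprop01}(d), so the work is essentially bookkeeping rather than new argument. Recall that $(X,R)$ being locally arc cyclic means, by definition, that \emph{every} point $x \in X$ has a neighborhood which is an arc cyclic subset. This is exactly the hypothesis that activates the second clause of Proposition \ref{arccycprop01}(d).

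First I would let $A$ be an arbitrary maximal arc cyclic subset of $X$. Closedness is immediate: Proposition \ref{arccycprop01}(d) asserts that any maximal arc cyclic subset is closed (this in turn comes from part (a), since $\overline{A}$ is arc cyclic and contains $A$, so maximality forces $A = \overline{A}$). That disposes of half of the ``clopen'' conclusion without any extra input.

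For openness I would argue pointwise. Fix $x \in A$. Since $X$ is locally arc cyclic, $x$ has an arc cyclic neighborhood, so the hypothesis of the final sentence of Proposition \ref{arccycprop01}(d) is satisfied at $x$. That sentence then gives that $x$ lies in the interior of $A$. As $x \in A$ was arbitrary, every point of $A$ is an interior point, i.e. $A$ is open. Combining with the previous paragraph, $A$ is clopen, which is what we want.

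There is no real obstacle here; the substantive content has already been carried out in Proposition \ref{arccycprop01}, whose part (c) supplies the thickening construction ($V(x) \cup A$ arc cyclic) that makes the ``interior'' statement in (d) work. The only thing to be careful about is invoking the right clause of (d) for each of the two halves (the first sentence for closedness, the second for openness) and noting explicitly that local arc cyclicity guarantees the arc cyclic neighborhood needed at every point of $A$, not merely at some points.
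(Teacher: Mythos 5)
Your proof is correct and follows exactly the paper's route: the paper also derives the corollary immediately from Proposition \ref{arccycprop01}(d), using the first sentence of (d) for closedness and the second (activated at every point of $A$ by local arc cyclicity) for openness. Your write-up simply makes explicit the bookkeeping the paper leaves implicit.
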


  \begin{proof} This is immediate from Proposition \ref{arccycprop01}(d).

  \end{proof}

  \begin{cor}\label{arccyccor03} If $h : (X_2,R_2) \to (X_1,R_1)$ is a quotient map  such that
  $h^{-1}(y)$ is an arc cyclic subset of $X_2$ for all $y \in X_1$, then $(X_2,R_2)$ is locally arc cyclic if and only if $(X_1,R_1)$ is locally
  arc cyclic.   \end{cor}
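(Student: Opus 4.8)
The plan is to reduce everything to the equivalence furnished by Corollary \ref{cor1.01b}. First I would observe that the hypothesis can be upgraded: although $h^{-1}(y)$ is only assumed to be an arc cyclic \emph{subset}, Proposition \ref{prop1.01a} shows that any $3$-cycle meeting a single fiber $h^{-1}(y)$ in two points must lie entirely in $h^{-1}(y)$ (its image cannot be a $3$-cycle since two of the image points coincide, so all three images coincide). Hence the restriction $R_2|h^{-1}(y)$ is an arc cyclic tournament for every $y$, and condition (ii) of Corollary \ref{cor1.01b} holds automatically. Consequently, for every subset $A \subseteq X_1$ one gets the clean equivalence: $h^{-1}(A)$ is an arc cyclic subset of $X_2$ if and only if $A$ is an arc cyclic subset of $X_1$. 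This equivalence is the engine of both directions.

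The reverse implication is then immediate. If $(X_1,R_1)$ is locally arc cyclic, take $x \in X_2$ and set $y = h(x)$. Choose a neighborhood $V$ of $y$ that is an arc cyclic subset of $X_1$. By the equivalence above $h^{-1}(V)$ is an arc cyclic subset of $X_2$, and by continuity of $h$ it is an open set containing $x$. Thus $x$ has an arc cyclic neighborhood, so $(X_2,R_2)$ is locally arc cyclic.

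For the forward implication I would work a little harder, in order to produce a \emph{saturated} neighborhood downstairs. Assume $(X_2,R_2)$ is locally arc cyclic and fix $y \in X_1$. The fiber $h^{-1}(y)$ is an arc cyclic subset, so by Proposition \ref{arccycprop01}(d) it is contained in a maximal arc cyclic subset $W$, and by Corollary \ref{arccyccor02} the set $W$ is clopen. Since $X_2 \setminus W$ is compact, its image $h(X_2 \setminus W)$ is closed, so $V = X_1 \setminus h(X_2 \setminus W)$ is open; and $h^{-1}(y) \subseteq W$ forces $y \in V$. The defining property of $V$ gives $h^{-1}(V) \subseteq W$, whence $h^{-1}(V)$ is an arc cyclic subset by Proposition \ref{arccycprop01}(a). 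Applying the equivalence once more, $V$ is an arc cyclic subset of $X_1$ which is a neighborhood of $y$, so $(X_1,R_1)$ is locally arc cyclic.

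The one genuine obstacle is this forward direction: a point-by-point choice of arc cyclic neighborhoods in $X_2$ only sees individual points of a fiber, whereas local arc cyclicity of $X_1$ requires an arc cyclic neighborhood that is a union of whole fibers. The maximal arc cyclic subset $W$ is clopen but need not be saturated, so the key device is to pass to the open saturated set $V = X_1 \setminus h(X_2 \setminus W)$, using compactness to keep $h(X_2 \setminus W)$ closed; this sidesteps any need to know in advance that $h$ is an open map.
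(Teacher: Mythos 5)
Your proof is correct and follows essentially the same route as the paper's: upgrade the fibers to arc cyclic tournaments via Proposition \ref{prop1.01a}, use Corollary \ref{cor1.01b} as the two-way engine, and for the forward direction pass to a clopen maximal arc cyclic subset (Proposition \ref{arccycprop01}(d) and Corollary \ref{arccyccor02}) containing the fiber. The only difference is cosmetic: you spell out the compactness construction of the saturated open set $V = X_1 \setminus h(X_2 \setminus W)$, which the paper leaves implicit in the phrase ``it then follows that there exists an open subset $U$ with $h^{-1}(U) \subset A$.''
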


  \begin{proof}  Note first that if $h^{-1}(y)$ is an arc cyclic subset and $\{ x, x', x'' \}$ is a $3-$cycle with $x, x' \in h^{-1}(y)$, then
  by Proposition \ref{prop1.01a}, $x'' \in h^{-1}(y)$. Thus the restriction $R_2|h^{-1}(y)$ is an arc cyclic tournament.

  If $U$ is an open subset of $X_1$, then Corollary \ref{cor1.01b} and the restriction assumptions imply that $U$ is
  an arc cyclic subset of $X_1$ if and only if $h^{-1}(U)$ is an arc cyclic subset of $X_2$. Thus, if $(X_1,R_1)$ is locally arc cyclic, then
  $(X_2,R_2)$ is because $h$ is surjective.

  If $y \in X_1$, then the arc cyclic subset $h^{-1}(y)$ is contained in a maximal arc cyclic subset $A$ of $X_2$.   By Corollary
  \ref{arccyccor02} $A$ is clopen if $(X_2,R_2)$ is locally arc cyclic.  It then follows that there exists an open subset
  $U$ of $X$ with $y \in U$ and such that $h^{-1}(U) \subset A$. Thus, $h^{-1}(U)$ is arc cyclic.  By Corollary \ref{cor1.01b} again it follows that
  $U$ is arc cyclic.  Hence, $(X_1,R_1)$ is locally arc cyclic.

  \end{proof} \vspace{.5cm}

  For a compact topological tournament $(X,R)$ we define the subsets $Q, Q^{\circ}$ of $ X \times X \times X$;
  \begin{align}\label{qeq01}\begin{split}
  Q \ = \ \{ (x,y,z) : &(x,z), (z,y) \in R \} \ \cup \\ &\{ (x,y,z) : (y,z),(z,x)  \in R \}, \\
 Q^{\circ} \ = \ \{ (x,y,z) :  &(x,z), (z,y) \in R^{\circ} \} \ \cup \\ &\{ (x,y,z) : (y,z),(z,x)  \in R^{\circ} \}
 \end{split}\end{align}
 We regard $Q$ and $Q^{\circ}$ as relations from $X \times X$ to $X$. Clearly, $Q$ is closed and $Q^{\circ}$ is open.

 The following properties are easy to check.
 \begin{align} \begin{split}
 \{ x \} \ = \ \ &Q(x,x), \qquad \emptyset \ = \ Q^{\circ}(x,x). \\
  \{ x, y \} \ \subset \ \ &Q(x,y) \ = \ Q(\{ x,y \} \times \{ x,y \}). \\
  Q(x,y) \setminus \{ x,y \} \ = \ \ &Q^{\circ}(x,y) \ = \ Q^{\circ}(\{ x,y \} \times \{ x,y \}).
 \end{split}
 \end{align}

 For any subset $A$ of $X, \ \ Q^{\circ}(A \times A) $ is open. $Q(A \times A) \ = \ Q^{\circ}(A \times A) \cup A$.
 So, if $A$ is closed, then $Q(A \times A)$ is closed and
 $Q(A \times A) \setminus A = Q^{\circ}(A \times A) \setminus A$ is open.

 \begin{lem}\label{qlem01} $Q^{\circ}(\overline{A} \times \overline{A}) = Q^{\circ}(A \times A)$. \end{lem}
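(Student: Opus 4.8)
The plan is to prove the two inclusions separately, the forward one being immediate and the reverse one being the only thing requiring an argument.

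First I would record the trivial inclusion $Q^{\circ}(A \times A) \subseteq Q^{\circ}(\overline{A} \times \overline{A})$. Since $A \subseteq \overline{A}$ we have $A \times A \subseteq \overline{A} \times \overline{A}$, and for any relation the image map $B \mapsto Q^{\circ}(B)$ is monotone (it is the union of the fibers $Q^{\circ}(x,y)$ over $(x,y) \in B$), so this inclusion holds with no use of the tournament structure.

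The content is the reverse inclusion $Q^{\circ}(\overline{A} \times \overline{A}) \subseteq Q^{\circ}(A \times A)$. The key point is that the defining conditions for $z \in Q^{\circ}(x,y)$ are open in the pair $(x,y)$ once $z$ is held fixed, so I can perturb $x$ and $y$ into $A$ without disturbing $z$. Concretely, suppose $z \in Q^{\circ}(x,y)$ with $x, y \in \overline{A}$. By the symmetric form of the definition of $Q^{\circ}$ I may assume, after possibly swapping the roles of $x$ and $y$, that $x \ha z$ and $z \ha y$, i.e.\ $x \in R^{\circ -1}(z)$ and $y \in R^{\circ}(z)$. Now I invoke the fact, recorded for topological tournaments earlier, that the inset $R^{\circ -1}(z)$ and the outset $R^{\circ}(z)$ are open subsets of $X$. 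Since $x$ lies in the open set $R^{\circ -1}(z)$ and $x \in \overline{A}$, that open set meets $A$; choose $x' \in A$ with $x' \ha z$. Symmetrically, since $y \in R^{\circ}(z) \cap \overline{A}$ and the outset is open, choose $y' \in A$ with $z \ha y'$. Then $(x',z), (z,y') \in R^{\circ}$ witnesses $z \in Q^{\circ}(x', y')$ with $(x',y') \in A \times A$, so $z \in Q^{\circ}(A \times A)$, as wanted.

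There is essentially no deep obstacle here; the only points demanding care are (i) handling the two disjuncts in the definition of $Q^{\circ}$ via the symmetry observation, so that a single argument covers both, and (ii) using the correct openness property, namely openness of the inset and outset of the single point $z$, rather than openness of $R^{\circ}$ as a subset of $X \times X$, which is precisely what legitimizes the one-point perturbations. I note also that anti-symmetry forces the chosen $x', y'$ to be distinct (else $(x',z),(z,x') \in R^{\circ}$ would violate it), though this is not needed for the conclusion.
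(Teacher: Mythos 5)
Your proof is correct and is essentially the paper's argument: the paper's one-line proof ("If $R^{\circ}(z)$ and $R^{\circ -1}(z)$ meet $\overline{A}$, then they meet $A$") is exactly your observation that the inset and outset of $z$ are open, so meeting the closure of $A$ forces meeting $A$ itself. You have simply written out in full the perturbation of $(x,y)$ to $(x',y') \in A \times A$ that the paper leaves implicit.
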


 \begin{proof} If $R^{\circ}(z)$ and $R^{\circ -1}(z)$ meet $\overline{A}$, then they meet $A$.

 \end{proof}

\begin{lem}\label{qlem01a} For a compact tournament $(X,R)$ assume  $x \ha y$ in $X$.

If $y$ is left balanced or $x$ is right balanced, then
the open set $R^{\circ}(x) \cap R^{\circ -1}(y)$ is nonempty and is contained in $Q^{\circ}(x,y)$.
\end{lem}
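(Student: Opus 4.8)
The plan is to notice that the asserted containment is purely formal, so that the whole content of the statement is the nonemptiness, which then follows from a short neighborhood argument using one-sided balance together with the openness of insets and outsets established earlier.

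First I would dispose of the containment. By the definition of $Q^{\circ}$ in (\ref{qeq01}), its first defining piece is $\{ z : (x,z), (z,y) \in R^{\circ} \}$, which is exactly $R^{\circ}(x) \cap R^{\circ -1}(y)$. Hence $R^{\circ}(x) \cap R^{\circ -1}(y) \subset Q^{\circ}(x,y)$ with nothing to prove; note also that any such $z$ satisfies $x \ha z \ha y$ and so is automatically distinct from both $x$ and $y$.

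All that remains is to show the set is nonempty. Here I would use two facts recorded earlier: for a topological tournament each outset $R^{\circ}(x)$ and each inset $R^{\circ -1}(y)$ is an \emph{open} subset of $X$ (as noted following Definition \ref{def1.01}), and a point lies in the closure of a set precisely when every open neighborhood of it meets that set. Since $x \ha y$, we have $x \in R^{\circ -1}(y)$ and $y \in R^{\circ}(x)$, and both of these are open sets. If $x$ is right balanced, then $x \in \ol{R^{\circ}(x)}$, so the open neighborhood $R^{\circ -1}(y)$ of $x$ must meet $R^{\circ}(x)$; a point of the intersection lies in $R^{\circ}(x) \cap R^{\circ -1}(y)$. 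Symmetrically, if $y$ is left balanced, then $y \in \ol{R^{\circ -1}(y)}$, so the open neighborhood $R^{\circ}(x)$ of $y$ must meet $R^{\circ -1}(y)$, again yielding a point of $R^{\circ}(x) \cap R^{\circ -1}(y)$. In either case the set is nonempty, which finishes the argument.

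I expect no genuine obstacle here. The one thing to get right is to express the balance hypothesis through the ``every open neighborhood meets the set'' characterization of closure points, and to keep track that outsets and insets are open. An equivalent net formulation would take a net in $R^{\circ}(x)$ converging to $x$ (respectively a net in $R^{\circ -1}(y)$ converging to $y$) and observe that it eventually enters the open set $R^{\circ -1}(y)$ (resp. $R^{\circ}(x)$), giving the same conclusion.
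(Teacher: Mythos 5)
Your proof is correct and follows essentially the same route as the paper's: the containment in $Q^{\circ}(x,y)$ is immediate from the definition in (\ref{qeq01}), and nonemptiness comes from the fact that $R^{\circ}(x)$ (resp. $R^{\circ -1}(y)$) is an open neighborhood of $y$ (resp. $x$), which must meet $R^{\circ -1}(y)$ (resp. $R^{\circ}(x)$) by the closure characterization of left (resp. right) balance. The only difference is that you spell out the closure-point argument that the paper leaves implicit.
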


\begin{proof} The open set $R^{\circ}(x)$ contains $y$.  If $y$ is left balanced, then $R^{\circ -1}(y)$ meets $R^{\circ}(x)$ and
the intersection is clearly contained in $Q^{\circ}(x,y)$.  Similarly, if $x$ is right balanced.

\end{proof}
\vspace{.5cm}

 \begin{df}\label{qdef02} For a compact topological tournament $(X,R)$, a subset $A$ of $X$ is called \emph{$Q$ invariant}\index{$Q$ invariant subset}
 when $Q(A \times A) \subset A$, or, equivalently, when $Q^{\circ}(A \times A) \subset A$. \end{df}
 \vspace{.5cm}

 Clearly, every singleton subset is $Q$ invariant.

 \begin{prop}\label{qprop02} Let $(X,R)$ be a compact topological tournament.
 \begin{itemize}
 \item[(a)]  A subset $A$ is $Q$ invariant if and only if for all $z \in X \setminus A$, either $A \subset R(z)$ or $A \subset R^{-1}(z)$.

 \item[(b)] If $A$ is $Q$ invariant, then the closure $\overline{A}$ is $Q$ invariant.

 \item[(c)] If $\{ A_i \}$ is a family of $Q$ invariant sets, then the intersection $\bigcap \{ A_i \}$ is $Q$ invariant.

 \item[(d)] If $\{ A_i \}$ is a monotone family of $Q$ invariant sets,
 then the union $\bigcup \{ A_i \}$ is $Q$ invariant.

 \item[(e)] If $(X,R)$ is arc cyclic and $A$ is a  $Q$ invariant set, then the restriction $(A,R|A)$ is arc cyclic.

  \end{itemize}
 \end{prop}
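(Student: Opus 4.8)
The plan is to make part (a) the structural heart of the argument and then obtain (b)--(e) quickly from it, from Lemma \ref{qlem01}, and from arc cyclicity. Throughout I will use the identities recorded just before the proposition, in particular $Q(A \times A) = Q^{\circ}(A \times A) \cup A$, which shows that $Q(A \times A) \subset A$ and $Q^{\circ}(A \times A) \subset A$ are equivalent; hence to test $Q$ invariance it suffices to control which points of $X \setminus A$ can land in $Q(A \times A)$.

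For (a) I would fix $z \in X \setminus A$ and unwind the definition. For $x, y \in A$ we automatically have $z \neq x$ and $z \neq y$, so by totality and antisymmetry each of $(x,z), (z,x)$ and each of $(y,z), (z,y)$ is determined: $(x,z) \in R \iff x \ha z$, and so on. Reading off the two disjuncts in the definition of $Q(x,y)$, I would show that $z \in Q(A \times A)$ holds exactly when there exist $x, y \in A$ with $x \ha z$ and $z \ha y$; both disjuncts collapse to this single condition, namely that $A$ contains both an in-neighbor and an out-neighbor of $z$. In the language of insets and outsets this says $A \not\subset R(z)$ and $A \not\subset R^{-1}(z)$, since $z \notin A$ forces $A \subset R(z)$ to mean ``no in-neighbor of $z$ in $A$'' and $A \subset R^{-1}(z)$ to mean ``no out-neighbor of $z$ in $A$''. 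Negating, $z \notin Q(A \times A) \iff A \subset R(z)$ or $A \subset R^{-1}(z)$, which is precisely the stated criterion. The only real obstacle here is the bookkeeping in this case analysis, and I expect (a) to be the main (indeed essentially the only) point of substance in the proposition.

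Part (b) is then immediate and does not even need (a): by Lemma \ref{qlem01}, $Q^{\circ}(\overline{A} \times \overline{A}) = Q^{\circ}(A \times A) \subset A \subset \overline{A}$, so $\overline{A}$ is $Q$ invariant. For (c) and (d) I would argue directly from the definition rather than from (a). If $z \in Q\big((\bigcap_i A_i) \times (\bigcap_i A_i)\big)$, then $z \in Q(x,y)$ for some $x, y$ lying in every $A_i$, whence $z \in Q(A_i \times A_i) \subset A_i$ for each $i$ and so $z \in \bigcap_i A_i$; this gives (c). For (d), a point of $Q\big((\bigcup_i A_i) \times (\bigcup_i A_i)\big)$ lies in some $Q(x,y)$ with $x, y$ in the union, and monotonicity of the family places both $x$ and $y$ in a single $A_i$, so $z \in Q(A_i \times A_i) \subset A_i \subset \bigcup_i A_i$.

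Finally, for (e) I would take an arc $x \ha y$ with $x, y \in A$. Arc cyclicity of $(X,R)$ produces a $3$-cycle $\{x,y,z\}$, which given $x \ha y$ must be $x \ha y \ha z \ha x$; thus $(y,z), (z,x) \in R^{\circ}$, which is exactly the second defining condition for $z \in Q^{\circ}(x,y)$. Since $A$ is $Q$ invariant, $z \in Q^{\circ}(A \times A) \subset A$, so the whole $3$-cycle lies in $A$ and the arc is cyclic inside $A$. Hence $(A, R|A)$ is arc cyclic, completing the proof.
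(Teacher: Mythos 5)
Your proof is correct and follows essentially the same route as the paper's: (a) by direct unwinding of the definition (which the paper dismisses as obvious), (b) from Lemma \ref{qlem01}, (c) directly, (d) via monotonicity placing both points of the arc in a single $A_i$, and (e) by observing that the $3$-cycle closing an arc in $A$ lies in $A$ by $Q$ invariance. No gaps; your write-up simply supplies the bookkeeping the paper leaves implicit.
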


 \begin{proof} (a) is obvious and (b) follows from Lemma \ref{qlem01}.

 (c) is obvious, and for (d) if $x,y \in \bigcup \{ A_i \}$, then monotonicity implies that for some $i$, $x,y \in A_i$.
 Hence, $Q(x,y) \subset A_i$.

 (e): If $(x,y)$ is an arc in $A$, then there is a $3-$cycle $\{ x, y, z \}$ in $X$. By $Q$ invariance, $z \in A$.

 \end{proof}

  \begin{theo}\label{qtheo03} If $h : (X_2,R_2) \to (X_1,R_1)$ is  continuous map of compact topological tournaments,
  then for every $Q$ invariant subset $B$ of $X_1$, the pre-image $h^{-1}(B)$ is a $Q$ invariant subset of $X_2$. In particular
  for every $y \in X_1$, the set $h^{-1}(y)$ is a closed, $Q$ invariant subset of $X_2$.

  If $A \subset X_2$ is $Q$ invariant and $h$ is surjective, then the image $h(A)$ is a $Q$ invariant subset of $X_1$. If $A$ is a proper subset of $X_2$
  and $X_1$ has no initial nor terminal point, then $h(A)$ is a proper subset of $X_1$.

  Conversely, if $A$ is a nonempty, closed subset of a compact space $X$, then $1_X \cup (A \times A)$  is a closed equivalence relation, with
  $\pi_A : X \to X/A$ the quotient map to the space with $A$ smashed to a point.\index{smashed to a point}
   If $A$ is clopen, then the point $\pi_A(A)$ is isolated in $X/A$.
  If $A$ is $Q$ invariant for the topological tournament
  $R$ on $X$, then $R_A = (\pi_A \times \pi_A)(R)$ is a topological tournament on $X/A$ and $\pi_A$ maps $R$ to $R_A$.
  \end{theo}

  \begin{proof}  If $x, x' \in h^{-1}(B)$ and $z \in X_2$ with $x' \ha z \ha x $, then \\$h(x') \ \underline{\ha} \  h(z)  \ \underline{\ha} \  h(x)$ and
  so $h(z) \in B$ because $B$ is $Q$ invariant. Hence, $z \in h^{-1}(B)$.

  Now assume $h$ is surjective. If  $x \in X_2 \setminus A$ and $z = h(x)$, then since
  $A$ is $Q$ invariant
  either $A \subset R_2(x)$ or $A \subset R_2^{-1}(x)$. Hence, $h(A) \subset R_1(z)$ or $h(A) \subset R_1^{-1}(z)$.
  If $h(A) = X_1$, then $z = h(x)$ is either an initial or terminal point for $X_1$. If
  $z \in X_1 \setminus h(A)$, then there exists $x $ such that $h(x) = z$ and, necessarily, $x \in X_2 \setminus A$.
  So $h(A) \subset R_1(z)$ or $h(A) \subset R_1^{-1}(z)$ implies that $h(A)$ is $Q$ invariant.

  If $A$ is a closed, $Q$ invariant subset, then (a) of Proposition \ref{qprop02} implies that $R_A$ is a tournament. It is closed by compactness.
  If $A = \pi_A^{-1}(\pi_A(A))$ is clopen, it follows that the point $\pi_A(A)$ is clopen by definition of the quotient topology.

  \end{proof}
 \vspace{.5cm}

 Now let $(X,R)$ be a compact tournament and let $A_0$ be a non-trivial subset of $X$.  Inductively, define $A_{n+1} = Q(A_n \times A_n)$.
 If $A_0$ is closed, then, inductively, we see that $A_n$ is closed for all $n$.  Clearly, if $A_0$ is $Q$ invariant, then
 $A_n = A_0$ for all $n$.

  \begin{prop}\label{qprop03} For $(X,R)$ a compact tournament and  $A_0$ be a non-trivial  subset of $X$,  $\{ A_n \}$
  is a non-decreasing sequence. The union $\bigcup_n \{ A_n \}$ is the
  smallest $Q$ invariant subset of $X$ which contains $A_0$, its closure $\overline{\bigcup_n \{ A_n \}}$ is the smallest
  closed, $Q$ invariant subset of $X$ which contains $A_0$.

  If $A_0$ is closed, then all of the $A_n$'s are closed and each $A_n \setminus A_0$ is open.

\end{prop}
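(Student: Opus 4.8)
The plan is to verify the four assertions in turn, relying on the formal properties of $Q$ and $Q^{\circ}$ recorded just above the statement and on Proposition \ref{qprop02}. None of the steps is deep; the work is in invoking the right recorded property at the right moment.

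First I would establish monotonicity. Since $(x,x) \in A_n \times A_n$ for every $x \in A_n$ and $Q(x,x) = \{ x \}$, we get $\{ x \} \subset Q(A_n \times A_n) = A_{n+1}$, whence $A_n \subset A_{n+1}$; in particular $A_0 \subset \bigcup_n A_n$, which I write as $U$. To see that $U$ is $Q$ invariant, take $x, y \in U$; by monotonicity $x, y \in A_n$ for some common $n$, so $Q(x,y) \subset Q(A_n \times A_n) = A_{n+1} \subset U$, giving $Q(U \times U) \subset U$. That $U$ is the smallest $Q$ invariant set containing $A_0$ follows by a one-line induction: if $B$ is $Q$ invariant with $A_0 \subset B$ and $A_n \subset B$, then $A_{n+1} = Q(A_n \times A_n) \subset Q(B \times B) \subset B$, so $A_n \subset B$ for all $n$ and $U \subset B$.

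Next I would treat the closure. By Proposition \ref{qprop02}(b) the closure $\overline{U}$ is again $Q$ invariant, and it is closed and contains $A_0$. For minimality, if $B$ is any closed, $Q$ invariant set with $A_0 \subset B$, then $U \subset B$ by the previous paragraph, and closedness of $B$ gives $\overline{U} \subset B$. Hence $\overline{U}$ is the smallest closed $Q$ invariant set containing $A_0$.

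Finally I would treat the case where $A_0$ is closed. Closedness of every $A_n$ follows by induction from the recorded fact that $Q(A \times A)$ is closed whenever $A$ is closed. For the openness of $A_n \setminus A_0$ the key observation is the telescoping decomposition $A_n \setminus A_0 = \bigcup_{k=0}^{n-1} (A_{k+1} \setminus A_k)$, valid because the sequence is non-decreasing. For each $k$ the set $A_k$ is closed, so $A_{k+1} \setminus A_k = Q(A_k \times A_k) \setminus A_k = Q^{\circ}(A_k \times A_k) \setminus A_k$ is open by the recorded property, and a finite union of open sets is open. I expect this last step to be the only point requiring care: the difference $A_n \setminus A_0$ is not itself of the form $Q(A \times A) \setminus A$, so one must route through the intermediate differences $A_{k+1} \setminus A_k$, each of which is open precisely because the relevant $A_k$ has already been shown to be closed.
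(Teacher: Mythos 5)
Your proof is correct and follows essentially the same route as the paper's: monotonicity from the recorded containment $A \subset Q(A\times A)$, $Q$ invariance of the union via a common index, minimality by induction, closure via Proposition \ref{qprop02}(b), and openness of $A_n \setminus A_0$ through the telescoping union of the successive differences $A_{k+1}\setminus A_k$, which is exactly the paper's decomposition $\bigcup_{i=1}^n A_i \setminus A_{i-1}$. No gaps; you have merely spelled out the inductions the paper labels as clear.
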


 \begin{proof} Since $A$ is contained in  $Q(A \times A)$, it follows that $\{ A_n \}$ is an
  increasing sequence.

  If $z \in Q(x,y)$ with $x, y \in \bigcup_n \{ A_n \}$, then for some $n$ $x, y \in A_n$ and so $z \in A_{n+1}$. It follows
  that $\bigcup_n \{ A_n \}$ is $Q$ invariant and is clearly the  smallest $Q$ invariant subset of $X$ which contains $A_0$.
  Hence,$\overline{\bigcup_n \{ A_n \}}$ is the smallest
  closed, $Q$ invariant subset of $X$ which contains $A_0$.

   If  $A$ is closed, then $A$ is contained in the closed set $Q(A \times A)$ and $Q(A \times A) \setminus A
  = Q^{\circ}(A \times A) \setminus A$ so that $Q(A \times A) \setminus A$ is open.

  Thus, if $A_0$ is closed, then $\{ A_n \}$ is an
  increasing sequence of closed sets and each $A_{n} \setminus A_{n-1}$ is open.
  Hence, $A_n \setminus A_0 = \bigcup_{i=1}^n A_i \setminus A_{i-1}$ is open.

 \end{proof}

   \begin{lem}\label{qlem03a} Let $(X,R)$ be a compact tournament and $x$ be an isolated point or a cycle point of $X$.
  For all $n \ge 3$ if $x$ is
   contained in the closure of  $A_{n-3}$, then $x$ is contained
   in the interior of $A_n$.

   In particular, if $(X,R)$ is wac, then the closure of
    of  $A_{n-3}$ is contained
   in the interior of $A_n$ for all $n\ge 3$. \end{lem}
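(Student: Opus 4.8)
The plan is to treat the isolated case in one line, reduce the cycle–point case to producing a single $3$-cycle through $x$ whose other two vertices already lie in $A_{n-1}$, and then capture those two vertices one per iteration, the third iteration being an \emph{assembly} step. First, the trivial observations: if $x$ is isolated then $\{x\}$ is open, so $x\in\overline{A_{n-3}}$ forces $x\in A_{n-3}\subset A_n$ (Proposition \ref{qprop03}), and $\{x\}\subset\operatorname{int}(A_n)$; here $n\ge 3$ is not even used. The ``in particular'' statement for a wac tournament then follows at once, since every point of $\overline{A_{n-3}}$ is either isolated or (being non-isolated) a cycle point, so the first part puts each such point in $\operatorname{int}(A_n)$, giving $\overline{A_{n-3}}\subset\operatorname{int}(A_n)$.

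Now let $x$ be a cycle point. The clean endgame I would use is the following \emph{assembly step}: if $\{x,p,q\}$ is a $3$-cycle with $x\ha p\ha q\ha x$ and $p,q\in A_{n-1}$, then $x\in\operatorname{int}(A_n)$. Indeed, $U=R^{\circ-1}(p)\cap R^{\circ}(q)$ is open (insets and outsets are open) and contains $x$, because $x\ha p$ and $q\ha x$; and for every $w\in U$ one has $w\ha p\ha q\ha w$, so $w\in Q^{\circ}(p,q)\subset Q^{\circ}(A_{n-1}\times A_{n-1})\subset A_n$. Hence $U\subset A_n$. So everything reduces to producing a $3$-cycle through $x$ with both other vertices in $A_{n-1}$, and here is where the two intermediate iterations are spent.

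To build the cycle, I would fix once and for all a point $c\in A_{n-3}\setminus\{x\}$ (available since $A_{n-3}\supset A_0$ is non-trivial), say with $c\ha x$ (the case $x\ha c$ is symmetric, capturing $p$ before $q$). Since $(c,x)\in R^{\circ}$ is interior, there is a neighborhood $W$ of $x$ with $c\ha w$ for all $w\in W$; now use that $x$ is a cycle point (Definition \ref{def3.05}) to choose a $3$-cycle $\{x,p,q\}\subset W$ with $x\ha p\ha q\ha x$. Then $c\ha p$ and $c\ha q$, while $q\ha x$ and $p\ha q$. The first capture: from $c\ha q\ha x$ we get $q\in Q^{\circ}(c,x)$, and since $c,x\in\overline{A_{n-3}}$, Lemma \ref{qlem01} gives $Q^{\circ}(c,x)\subset Q^{\circ}(\overline{A_{n-3}}\times\overline{A_{n-3}})=Q^{\circ}(A_{n-3}\times A_{n-3})\subset A_{n-2}$, so $q\in A_{n-2}$. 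The second capture: from $c\ha p\ha q$ with $c\in A_{n-3}\subset A_{n-2}$ and $q\in A_{n-2}$ we get $p\in Q^{\circ}(c,q)\subset A_{n-1}$. Thus $p,q\in A_{n-1}$, and the assembly step finishes the proof.

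The step I expect to be the only real obstacle is the interleaving that makes the first capture work, i.e.\ forcing a cycle vertex into $\overline{A_{n-2}}$ even when $A_{n-3}$ clusters at $x$ from a single side. The resolution above hinges on two things working in tandem: the \emph{openness} of $R^{\circ}$ (which lets me shrink the cycle $\{x,p,q\}$ into the neighborhood $W$ where the fixed base point $c$ dominates everything, so that $c$ interleaves with both $p$ and $q$), and the closure identity of Lemma \ref{qlem01} (which lets the limit point $x$ act as an honorary element of $A_{n-3}$ inside $Q^{\circ}$). The delicate bookkeeping is precisely the ordering of scales — choosing the cycle small \emph{relative to} $c$ rather than the other way round — which is what guarantees $c\ha q$ and $c\ha p$ and thereby places $q$ in $A_{n-2}$ and $p$ in $A_{n-1}$ using exactly the two available iterations before assembly.
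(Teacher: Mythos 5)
Your proof is correct, and its skeleton matches the paper's: both arguments fix a dominating base point $c$ (the paper's $y$) in $A_{n-3}$ with, say, $c \ha x$, use the cycle-point hypothesis to place a $3$-cycle through $x$ inside the open outset $R^{\circ}(c)$, and then spend exactly three applications of $Q$ to climb up to $A_n$. The execution differs in a genuine way, though. The paper (after reducing to $n=3$ by renaming) thickens the whole cycle $\{x, x', x''\}$ inside $R^{\circ}(y)$ using Theorem \ref{theo2.07}, picks a genuine point $z \in A_0 \cap U_x$ standing in for $x$, and then captures open sets wholesale: $U_{x''} \subset A_1$, then $U_{x'} \subset A_2$, then $U_x \subset A_3$; openness is present at every stage and Lemma \ref{qlem01} is never invoked. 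You never thicken: you let $x$ itself act as an honorary element of $A_{n-3}$ via the closure identity of Lemma \ref{qlem01}, capture the actual cycle vertices one at a time ($q \in A_{n-2}$, then $p \in A_{n-1}$), and only manufacture an open set at the very end, namely $R^{\circ -1}(p) \cap R^{\circ}(q) \subset A_n$. Your route trades the uniform thickening machinery for the lighter pair of facts that insets and outsets are open plus Lemma \ref{qlem01}, which is arguably more elementary; the paper's route keeps every intermediate set open, and that open-set template is what gets reused almost verbatim later (in the proof of Theorem \ref{qtheo09b}(a) and in the Claim inside Theorem \ref{qtheo11}), so it earns its extra machinery. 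Your treatment of the isolated case, of the symmetric case $x \ha c$, and of the wac corollary are all correct as written.
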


   \begin{proof} If $n > 3$ and we define $A_0' = A_{n-3}$ then in the associated sequence $A_n'$ we have $A_3' = A_n$ and so it suffices to
   prove the result for $n = 3$.

   Let $x \in \overline{A_0}$. If $x$ is isolated, then is in the interior of $A_0$ and so of that of $A_{3}$.

   Assume $x$ is a cycle point and choose $y \in A_0$ with $y \not= x$. Assume that $ y \ha x $. There exists a
   $3-$cycle $\{ x, x', x'' \}$ contained in $R^{\circ}(y)$.
   By  Theorem \ref{theo2.07}, there exists $\{ U_x, U_{x'}, U_{x''} \}$  a
   thickening contained in $R^{\circ}(y)$. Let $z \in A_0 \cap U_x$. Because
   $U_{x''} \subset R^{-1}(z) \cap R(y)$ it follows that $U_{x''} \subset A_1$. Since $\{ z, z', x'' \}$ is a $3-$cycle
   for any $z' \in U_{x'}$ it follows that $U_{x'} \subset A_2$. Similarly, $U_x \subset A_3$. Hence, $x$ is in the
   interior of $A_3$.

    \end{proof}
  \vspace{.5cm}

   \begin{theo}\label{qtheo04} If  $(X,R)$ is a wac tournament, then any non-trivial $Q$ invariant subset $A$ of $X$ is clopen.

   If $A_0$ is a non-trivial closed subset of $X$, then for the increasing sequence $\{ A_n \}$ of closed sets with $A_n = Q(A_{n-1} \times A_{n-1})$
   for $n \ge 3, \ \ A_n $ is clopen. Furthermore, for  sufficiently large $n$, $A_n$ is $Q$ invariant and
  so equals  $\overline{\bigcup_n \{ A_n \}}$.\end{theo}

  \begin{proof} If $A$ is $Q$ invariant, then with $A_0 = A$, $A_3 = A$. So Lemma \ref{qlem03a} implies that the closure of $A$ is contained in
  the interior of $A$, i.e. $A$ is clopen.

  If $A_0$ is an arbitrary non-trivial closed set, then for the sequence of closed sets, $A_n$,
  each $A_n \setminus A_0$ is open by Proposition \ref{qprop03}. But from Lemma \ref{qlem03a}, for $n \ge 3$
  $A_n = A_n \setminus A_0 \cup Int A_3$ and so it is open and therefore clopen.

  The union $\bigcup_n \{ A_n \}$ is  $Q$ invariant and so   it is clopen and  equals its closure.
 The sequence $\{ A_n \}$ is an open cover of the closed set $\bigcup_n \{ A_n \}$ and so it has a finite subcover.
 Hence, for large enough $n$, $A_n = \bigcup_n \{ A_n \} = \overline{\bigcup_n \{ A_n \}}$.

     \end{proof}
  \vspace{.5cm}

 Recall that $(X,R)$  is almost wac when   every point of $X$ is either isolated, initial,
  terminal or a cycle point.

   \begin{add}\label{qadd04a} Let   $(X,R)$ be an almost wac tournament. Assume that $A$ is a non-trivial
   $Q$ invariant subset $A$ of $X$. If for $x$ terminal or initial, either $x \in A$ or
   $x \not\in \overline{A}$, then $A$ is clopen.  In particular, if $A$ is closed, then it is clopen. \end{add}

   \begin{proof} By Lemma \ref{qlem03a} if $x$ is an isolated point or cycle point with $x \in \overline{A}$,
   then $x$ is in the interior of $A$.  By assumption on
   $(X,R)$ this applies to every point which is not initial or terminal. If $M$ is a terminal point in  $\overline{A}$ then
   by assumption $M \in A$. If $x \in A \setminus \{ M \}$, then for every point $x' \in R^{\circ}(x) \setminus \{ M \}$ we
   have $x \ha x' \ha M$ and so $x' \in A$ because $A$ is $Q$ invariant. Hence, $M \in R^{\circ}(x) \subset A$ and so
   $M$ is in the interior of $A$. Similarly, for an initial point $m$.

       \end{proof}

      \textbf{ Remark:} Note that if $M$ is a terminal point, then $X \setminus \{ M \}$ is a proper $Q$ invariant subset which
      is not closed unless $M$ is isolated.
  \vspace{.5cm}

 \begin{df}\label{qdef05} For $(X,R)$ a non-trivial compact tournament, a subset $A$ is a \emph{maximal $Q$ invariant subset},
 \index{$Q$ invariant subset!maximal} when it is a proper, closed, $Q$ invariant subset of $X$ such
 that $X$ is the only closed, $Q$ invariant subset which
 properly contains $A$, i.e. $A \subset A'$ with $A'$ a closed, $Q$ invariant subset, then either $A' = A$ or $A' = X$.\end{df}

 Since $\emptyset$ is contained in every singleton and $X$ is non-trivial, $\emptyset$ is never maximal.
  \vspace{.5cm}

  \begin{theo}\label{qtheo06} Let $h : (X_2,R_2) \to (X_1,R_1)$ be a quotient map with $X_1$ non-trivial.

If $A \subset X$ is maximal $Q$ invariant of $X_2$ , then either the
image $h(A)$ equals $X_1$ or else $h(A)$ is a maximal $Q$ invariant of  $X_1$.
  In the latter case, $A = h^{-1}(h(A))$. If $X_1$ has no initial or terminal point, then $h(A)$
  is a maximal $Q$ invariant of  $X_1$.

  If $B$ is a maximal $Q$ invariant subset $B$ of $X_1$, then  any closed $Q$ invariant subset $A$ of $X_2$ which properly
  contains the pre-image $h^{-1}(B)$ maps onto $X_1$, i.e. $h(A) = X_1$. In particular, if $X_1$ has no initial or
  terminal point, then $h^{-1}(B)$ is a maximal $Q$ invariant subset of $X_2$.
 \end{theo}

  \begin{proof} Assume $A$ is maximal. If $B$ is a proper, closed, $Q$ invariant subset of $X_1$ which contains $h(A)$, then $h^{-1}(B)$ is a proper,
  closed $Q$ invariant subset of $X_2$ which contains $A$ and so equals $A$. Applied to $B = h(A)$ when it is a proper subset of $X_1$,
  we obtain $A = h^{-1}(h(A))$. If $X_1$ has no initial or terminal point, then by Theorem \ref{qtheo03}, $h(A)$ is a proper subset of $X_1$.

  If $A$ is a closed, $Q$ invariant set which properly contains $h^{-1}(B)$, then $h(A)$ is a closed, $Q$
  invariant set of $X_1$ which properly contains $B$.  Hence, by maximality
  $h(A) = X_1$. If $X_1$ has no initial or terminal point, then by Theorem \ref{qtheo03} again, $A$ cannot be a proper subset of $X_2$. Hence,
  $h^{-1}(B)$ is maximal.

 \end{proof}
  \vspace{.5cm}

  \begin{theo}\label{qtheo07} For $(X,R)$ a non-trivial, wac tournament, every proper  $Q$ invariant subset is contained in
  a maximal $Q$ invariant subset. \end{theo}

  \begin{proof} By Theorem \ref{qtheo04} every non-trivial $Q$ invariant subset is clopen. If $\{ A_i \}$ is a monotone family of proper
  non-trivial $Q$ invariant subsets, then by Proposition \ref{qprop02}(d) the union $\bigcup \{ A_i \}$ is $Q$ invariant and so is clopen.
  The cover  $\{ A_i \}$ has a finite subcover and so, by monotonicity the union equals $A_i$ for some $i$. This implies that
  the union is proper.  It follows that if $A$ is a proper non-trivial $Q$ invariant subset, we can apply Zorn's Lemma to the family
  of proper  $Q$ invariant subsets which contain $A$ and so obtain a maximal element.

  For a singleton $\{ x \}$ either it is contained in a non-trivial $Q$ invariant subset which is then contained in a maximal subset, or else
  the singleton itself is maximal.

 \end{proof}
  \vspace{.5cm}

  \begin{df}\label{qdef08} A tournament $(Y,P)$ is called a \emph{prime topological tournament}\index{topological tournament!prime}
  \index{prime topological tournament} when it is compact and non-trivial and
  every singleton subset is a maximal $Q$ invariant subset.  That is, $Y$ itself is the
  only closed, non-trivial $Q$ invariant subset.

  If $(X,R)$ is a compact topological tournament, then a surjective map $\pi : (X,R) \to (Y,P)$ of topological tournaments with
  $(Y,P)$ prime is called a \emph{prime quotient map}\index{prime quotient map} and $(Y,P)$ is called a
  \emph{prime quotient}\index{prime quotient}\index{quotient!prime} for $(X,R)$. \end{df}
    \vspace{.5cm}

    An arc $(Y_0,P_0)$, i.e. a tournament on a two point set,   is prime. When $(X,R)$ admits a quotient map onto an arc then we say it has an
    \emph{arc quotient}\index{arc quotient}\index{quotient!arc}.

    \begin{prop}\label{qprop09a} If $(X,R)$ has an  initial or terminal point which is either isolated or contained in a
    non-trivial, proper, closed $Q$ invariant subset, then $(X,R)$ has an arc quotient.  In particular,
    if $(X,R)$ is wac and has an initial or terminal point, then it has an arc quotient. \end{prop}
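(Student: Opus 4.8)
The plan is to produce, in each case, a nonempty proper clopen subset $B \subset X$ whose complement satisfies $(X \setminus B) \ha B$; the two-valued map collapsing $X \setminus B$ to $0$ and $B$ to $1$ is then a continuous surjection onto the arc with $0 \ha 1$, and it is a tournament map precisely because every cross-arc runs from $X \setminus B$ into $B$. We may assume $X$ is non-trivial, since a trivial tournament has no arc quotient. First I would reduce to the case of a terminal point. Replacing $R$ by $R^{-1}$ leaves the set $Q$ unchanged, since $Q$ is defined symmetrically in $R$ (the two defining clauses of (\ref{qeq01}) merely switch), so the $Q$-invariant subsets of $R$ and of $R^{-1}$ coincide; this replacement interchanges initial and terminal points and carries an arc quotient of $R^{-1}$ to one of $R$, the reverse of an arc being an arc. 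Hence I may assume the distinguished point is a terminal point $M$, i.e. $x \ha M$ for all $x \neq M$, equivalently $(X \setminus \{ M \}) \ha \{ M \}$.

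In the isolated case this is immediate: if $M$ is isolated, then $B = \{ M \}$ is clopen, $X \setminus \{ M \}$ is clopen and nonempty, and $(X \setminus \{ M \}) \ha \{ M \}$, so $B = \{ M \}$ is the required clopen set and the collapsing map is an arc quotient. This already settles the final ``in particular'' clause: if $(X,R)$ is wac and has a terminal (or initial) point, then by Theorem \ref{wactheo02}(e) that point is isolated, so the isolated case applies verbatim, and no $Q$-invariant subset is needed.

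For the $Q$-invariant case, let $A$ be the given non-trivial, proper, closed $Q$-invariant subset with $M \in A$. The first key step is to show $(X \setminus A) \ha A$. Fix $z \in X \setminus A$. By Proposition \ref{qprop02}(a) either $A \subset R(z)$ or $A \subset R^{-1}(z)$; but $z \ha M$ (as $M$ is terminal and $z \neq M$) gives $M \in R(z)$, whereas $M \in R^{-1}(z)$ would force $M \ \underline{\ha} \ z$, contradicting terminality of $M$. Hence $A \subset R(z)$, i.e. $z \ha a$ for every $a \in A$; as $z$ was arbitrary, $(X \setminus A) \ha A$. Taking $B = A$, the collapsing map $X \to \{ 0, 1 \}$ is a surjective tournament map, and the only remaining issue is continuity, namely that $A$ is clopen.

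The main obstacle is exactly this clopenness of $A$: a closed $Q$-invariant set need not be open in a general compact tournament (in a connected LOTS an order-convex closed set is $Q$-invariant but is rarely clopen), so the $Q$-invariance must be combined with the structure at the terminal point. Here I would invoke Addendum \ref{qadd04a}: since $M \in A$ with $A$ closed, the hypothesis of that addendum holds at the terminal point, so $A$ is clopen, which completes the construction of the arc quotient $X \to \{ 0, 1 \}$. Equivalently, via the smashing viewpoint of Theorem \ref{qtheo03}, $\pi_A(A)$ is then an isolated terminal point of $X/A$, and the isolated case applied to $X/A$ followed by composition with $\pi_A$ yields the arc quotient. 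Thus the crux of the argument is establishing that the given $Q$-invariant set is in fact clopen; once that is in hand, the arc quotient is assembled exactly as in the isolated case.
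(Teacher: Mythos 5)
Your proposal is correct and takes essentially the same route as the paper's own proof: collapse the isolated terminal (or initial) point, respectively the $Q$ invariant set $A$, to a point to get the arc quotient, with the clopenness of $A$ supplied by Addendum \ref{qadd04a} and the wac case reduced to the isolated case by Theorem \ref{wactheo02}(e). Your explicit verification that $(X \setminus A) \ha A$ (via Proposition \ref{qprop02}(a) and terminality of $M$) fills in a step the paper leaves implicit, and your citation of part (e) of Theorem \ref{wactheo02} is the correct label for the statement the paper's proof cites as (d).
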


    \begin{proof} If $M$ is an isolated terminal point, then $M \mapsto 1$ and $x \mapsto 0$ for all $x \not= M$ defines a quotient map to the
    arc on $\{ 0, 1 \}$ with $0 \ha 1$. If $A$ is a non-trivial, proper, closed $Q$ invariant subset which contains $M$, then by Addendum \ref{qadd04a},
     $A$ is clopen.
    $x \mapsto 1$ for $x \in A$ and $x \mapsto 0$ otherwise defines a quotient map to the arc. Similarly for an initial point $m$.

     An initial or terminal point for a wac tournament is isolated by Theorem \ref{wactheo02}(d).

 \end{proof}

\begin{theo}\label{qtheo09b} Assume $(X,R)$ is a non-trivial almost wac tournament with no arc quotient.

(a) Every non-trivial, proper, closed  $Q$ invariant subset is contained in a maximal $Q$ invariant subset which is clopen and does not
contain an initial or terminal point. In particular, if $x$ is an initial or terminal point, then $\{ x \}$ is a
maximal $Q$ invariant set.

(b) $h : (X,R) \to (X_1,R_1)$ is a quotient map with $(X_1,R_1)$ non-trivial, then $(X_1,R_1)$ is almost wac. If $x$ is a
terminal (or initial) point of $X$, then $h(x)$ is terminal (resp. initial) point of $X_2$. If $y$ is a terminal
(or initial) point of $X_1$, then $h^{-1}(y)$ is a singleton $\{ x \}$ with $x$ terminal (resp. initial) point of $X$.

 If $A $ is a proper, closed $Q$ invariant subset of $X$ , then $h(A)$ is a proper, closed $Q$ invariant subset of $X_1$.
\end{theo}

\begin{proof} (a): If $X$ has no initial or terminal point then it is wac and we apply Theorem \ref{qtheo07} directly.
We may assume that $X$ has a terminal point $M$.

Proposition \ref{qprop09a} implies that an initial or terminal point is not contained in a non-trivial proper, closed $Q$
invariant set since $(X,R)$ does not admit an arc quotient.  In particular, an initial or terminal point is not isolated. Hence,
for such a point $x, \ \ \{ x \}$  is a maximal $Q$ invariant subset.

As in the proof of Theorem \ref{qtheo07} we consider $\A = \{ A_i \}$ a monotone family of proper
  non-trivial, closed $Q$ invariant subsets. By Addendum \ref{qadd04a} each $A_i$ is clopen.
  By Proposition \ref{qprop02}(b) and (d) the closure of the union
  $\overline{\bigcup  A_i }$ is $Q$ invariant. When we show it is proper, we can apply Zorn's Lemma as in
  Theorem \ref{qtheo07}. Assume instead that $\overline{\bigcup_i  A_i } = X$. Fix $y \in \bigcap_i A_i$.

  Let $x$ be a point of $ \overline{\bigcup_i  A_i }$.

  If $x \in X$ is isolated, then $x \in \overline{\bigcup_i  A_i }$ implies $x \in \bigcup_i  A_i $. If $x$ is a cycle point,
  we may assume $y \ha x$.  We follow the proof of Lemma \ref{qlem03a}. There exists a
   $3-$cycle $\{ x, x', x'' \}$ contained in $R^{\circ}(y)$.
   By  Theorem \ref{theo2.07}, there exists $\{ U_x, U_{x'}, U_{x''} \}$  a thickening contained in $R^{\circ}(y)$. For some $i$
   there exists  $z \in A_i\cap U_x$. It then follows as in the Lemma that $U_x \subset A_i$ and, in particular, $x \in A_i$.
   Thus, the union contains every point of $X$ except for terminal and initial points.

  Observe first that the terminal point $M$ is not in $A_i$ for any $i$ since these are proper
  non-trivial, closed $Q$ invariant subsets.

  Since $M$ is terminal, $M \in R^{\circ}(y)$. Define the clopen sets
    \begin{displaymath}\begin{split}
  B_i = R^{\circ}(y) \setminus A_i = R(y) \setminus A_i, \\
   B'_i = R^{\circ -1}(y) \setminus A_i = R^{-1}(y) \setminus A_i.
   \end{split} \end{displaymath},

 Because the family $\A = \{ A_i \}$ is monotone, it is directed by inclusion.

  For all $i,$ $M \in B_i$. If $m$ is an initial point, then $m \in B_i'$ for all $i$. If there is no initial point, then
 eventually $B'_i$ is empty because then $\{ A_i \}$ is a covering of $R^{-1}(y)$ and so has a finite subcover.  By monotonicity
 $R^{-1}(y)$ will then be contained in $A_i$ for some $i$.

 Because $A_i$ is $Q$ invariant
  we have for every $z \in B_i$ either $A_i \subset R^{\circ}(z)$ or $A_i \subset R^{\circ -1}(z)$. I claim that for some
  $i_0, \ \ A_i \subset R^{\circ -1}(z)$ for all $z \in B_i$ and for all $A_i \supset A_{i_0}$.

  If not, then for a cofinal collection of  $A_i$'s there exists $z_i \in B_i$
  such that $z_i \ha x$. The only possible limit points of this net are $M$ or an initial point. However, an initial
  point does not lie in the closed set $R(y)$. Hence, the net $\{ z_i \}$ (indexed by the cofinal subset of the monotone family $\A$)
  converges to $M$. Since $z_i \ha x$, this would yield $M \ha x$, violating the condition that $M$ be terminal.
  It then follows that for $A_i \supset A_{i_0}, \ \ x' \ha z$ for all $x' \in A_i$ and $z \in B_i$.

  Now I claim that for some $i_1$ with $A_{i_1} \supset A_{i_0}$, $z' \ha z$ for all $z' \in B'_i$ and $z \in B_i$. This is vacuously
  true if there is no initial point and so $B'_i$ is eventually empty. If not, then
  we can choose $(z_i, z'_i) \in R^{\circ} \cap (B_i \times B'_i)$, indexed again by a cofinal subset of $\A$. The limit
  is $(M,m)$ with $m$ an initial point.  However, this pair is in $R^{\circ -1}$ rather than in $R$.

  Thus, for $A_i \supset A_{i_1}$ we have $x \ha z$ for all $(x,z) \in (X \setminus B_i) \times B_i$. This implies that such $B_i$'s
  are proper clopen $Q$ invariant subsets which contain $M$. This contradicts the assumption that $(X,R)$ has no arc quotient.

  This contradiction
  implies that $\overline{\bigcup_i  A_i }$ is a proper subset of $X$.  So, at long last, we may apply Zorn's Lemma and show that
 every non-trivial, proper, closed  $Q$ invariant subset is contained in a maximal $Q$ invariant subset. Since the maximal $Q$ invariant
 subsets are closed, they are clopen by Addendum \ref{qadd04a}.

   (b):   By Theorem \ref{wactheo02}(f), $(X_1.R_1)$  is almost wac when $(X,R)$ is. Since $(X,R)$ does not have an arc quotient, $(X_1,R_1)$
       does not. Hence an initial point or terminal point in $X_1$ is not isolated. It follows from Theorem \ref{wactheo02}(f) again
       that the pre-image of a terminal point (or initial point) is a singleton terminal point (resp. a singleton initial point).

      If $A$ is a proper, closed $Q$ invariant subset of $X$, then by Theorem \ref{qtheo03} $h(A)$ is a $Q$ invariant subset of $X_1$.  If
      $A$ is a singleton, then since $X_1$ is non-trivial, $h(A)$ is a proper subset. So we may assume $A$ is non-trivial and so it is
      clopen by Addendum \ref{qadd04a}. If
      $h(A) = X_1$, then the proof of Theorem \ref{qtheo03} shows that for $x \not\in A, \ \ h(x)$ is either an initial or terminal point of
      $X_1$. We have seen that the pre-image of an initial or terminal point is a singleton.  This would imply that
      $X \setminus A$ consists of at least one and at most two points and these are isolated.  Thus, $X$ would have an isolated
      initial or terminal point and so $(X,R)$ would have an arc quotient.

 \end{proof}

  \begin{theo}\label{qtheo09}(a) A compact, non-trivial tournament $(Y,P)$ is prime if and only if whenever
  $h : (Y,P) \to (Z,T)$ is a quotient map with $(Z,T)$ non-trivial, $h$ is a homeomorphism and so
  is an isomorphism from $(Y,P)$ to $(Z,T)$.

  (b) Assume that $h : (X_1,R_1) \to (X_2,R_2)$ is a quotient map. Let
  $(Y,P)$ is a prime tournament such that either $Y$ had no initial or terminal point, or else
  $(Y,P)$ is almost wac but not an arc. If $\pi_1 : (X_1,R_1) \to (Y,P)$ is a
  quotient map, then $\pi_1$ factors through $h$ to uniquely
  define the continuous surjection $\pi_2 : X_2 \to Y$ such that $\pi_2 \circ h = \pi_1$. Furthermore, $\pi_2 : (X_2,R_2) \to (Y,P)$ is a
  quotient map.

  (c) If a compact tournament $(X,R)$ admits a prime quotient map $\pi : (X,R) \to (Y,P)$ such that either $(Y,P)$ has no initial nor
  terminal point or else $(Y,P)$ is almost wac but not an arc, then $\pi$ is unique up to isomorphism.
   That is, if $\pi_1 : (X,R) \to (Y_1,P_1)$ is a prime quotient map,
  then there exists a homeomorphism $h : Y_1 \to Y$ such that $\pi = h \circ \pi_1$ and $h : (Y_1,P_1) \to (Y,P)$ is
  a tournament isomorphism.
  \end{theo}

  \begin{proof} (a): Assume $(Y,P)$ is prime.  For $y \in Y$, $\{ y \}$ is a maximal $Q$ invariant subset and so by
  Theorem \ref{qtheo06}, $h(y) \not= Z$ (because $Z$ is not trivial) implies $y = h^{-1}(h(y))$ and so $h$ is bijective.
  By compactness it is a homeomorphism and so is an isomorphism from $(Y,P)$ to $(Z,T)$.

  If $(Y,P)$ is not prime, then it contains a proper, closed $Q$ invariant subset $A$. The projection $\pi_A$ obtained by smashing $A$
  to a point as in Theorem \ref{qtheo03} provides a quotient map on $(Y,P)$ which is not an isomorphism.

  (b): For $y \in X_2, \ h^{-1}(y)$ is closed and $Q$ invariant in $X_1$ and so $\pi_1(h^{-1}(y))$ is closed and $Q$ invariant in $Y$, see
  Theorem \ref{qtheo03}. Because $Y$ is prime either $\pi_1(h^{-1}(y))$  is a singleton or else $\pi_1(h^{-1}(y)) = Y$. By Theorem \ref{qtheo03}
  $\pi_1(h^{-1}(x))$ is a proper subset of $Y$ if it has no initial nor terminal point. By Theorem \ref{qtheo09b}
  $\pi_1(h^{-1}(x))$ is a proper subset of $Y$ if $(Y,P)$ is wac and is not an arc (and so does not have an arc quotient by (a)).
  By assumption on $Y$ it follows that $\pi_1(h^{-1}(y))$ is a singleton for
  every $y \in X_2$. Hence, there is a, necessarily unique, map $\pi_2 : X_2 \to Y$ such that $\pi_2 \circ h = \pi_1$. By compactness
  $\pi_2$ is continuous.  It is clearly surjective and maps $R_2$ to $P$.

  (c): The existence of the continuous surjection $h$ follows from (b).
  Since $(Y_1,P_1)$ is prime, (a) implies that $h$ is an isomorphism.

 \end{proof}

    \begin{prop}\label{qprop09aa} If $(Y,P)$ is  a prime topological tournament, then $Y$ is totally disconnected. \end{prop}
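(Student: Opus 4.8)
The plan is to link the topological notion of connectedness with the combinatorial notion of $Q$ invariance. The key observation I would isolate first is that \emph{every connected subset of $Y$ is automatically $Q$ invariant}. Indeed, if $A$ is connected and $z \in Y \setminus A$, then Theorem \ref{theo2.01}(a) gives either $A \subset P^{\circ}(z)$ or $A \subset P^{\circ -1}(z)$; since $P^{\circ}(z) \subset P(z)$ and $P^{\circ -1}(z) \subset P^{-1}(z)$, this says $A \subset P(z)$ or $A \subset P^{-1}(z)$, which is exactly the criterion for $Q$ invariance in Proposition \ref{qprop02}(a). Because $Y$ is compact Hausdorff, each connected component $A$ of $Y$ is closed as well as connected, hence is a closed $Q$ invariant subset.

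Next I would invoke primeness. By Definition \ref{qdef08}, $Y$ itself is the only closed, non-trivial $Q$ invariant subset of $Y$. Thus each component $A$ is either a singleton or all of $Y$, and it remains only to exclude the possibility that some component equals $Y$, i.e. that $Y$ is connected. Suppose it were. Then by Theorem \ref{theo2.01}(b) the tournament $P$ is transitive, so $(Y,P)$ is a compact linear order, and I must produce a proper, closed, non-trivial $Q$ invariant subset to contradict primeness.

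Since $Y$ is connected with at least two points, the order is densely ordered: were there a gap pair $b \ha c$ with no point between them, the set $P^{\circ -1}(c) = P^{-1}(b)$ would be a nonempty clopen subset with nonempty clopen complement $P^{\circ}(b) = P(c)$, disconnecting $Y$. Hence, choosing any $b \ha c$ and a point $a$ with $b \ha a \ha c$, the inset $A = P^{-1}(a)$ is closed (insets are closed for a topological tournament), contains the two distinct points $b$ and $a$ so is non-trivial, and omits $c$ so is proper. Moreover $A$ is $Q$ invariant: for $z \notin A$ we have $a \ha z$, so every $w \in A$ satisfies $w \ \underline{\ha} \ a \ha z$, giving $A \subset P^{-1}(z)$ and the criterion of Proposition \ref{qprop02}(a). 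This contradicts primeness, so no component equals $Y$; every component is a singleton, and $Y$ is totally disconnected.

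The step I expect to require the most care is the density argument in the last paragraph: verifying that connectedness of the compact linear order forces a dense order, so that a genuinely non-trivial proper initial segment exists. The two-point arc, which is prime yet totally disconnected, shows that one cannot bypass the reduction to components but must rule out connectedness separately.
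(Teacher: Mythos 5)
Your proof is correct and follows essentially the same route as the paper's: both use Theorem \ref{theo2.01} to see that a connected piece of $Y$ is $Q$ invariant and linearly ordered by $P$, and then contradict primeness by exhibiting a proper, closed, non-trivial $Q$ invariant interval of that order. The only difference is organizational --- the paper takes the subinterval directly inside an arbitrary non-trivial component, while you first reduce to the case where $Y$ itself is connected and spell out, via the gap-pair argument, why a point strictly between $b$ and $c$ exists, a detail the paper leaves implicit.
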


    \begin{proof} From Theorem \ref{theo2.01} it follows that any component $A$ of $Y$ is a
    $Q$ invariant subset on which $P$ restricts to an order.
    If $A$ were not trivial, then it would contain a non-trivial proper subinterval, $B$, which is itself a closed, $Q$ invariant subset.
   This would imply that $(Y,P)$ is not prime.

 \end{proof}

  \begin{theo}\label{qtheo10} Assume that $\pi : (X,R) \to (Y,P)$ is a prime quotient map with $(X,R)$ a wac tournament with no arc quotient.
  \begin{itemize}
  \item[(a)]  The tournament $(Y,P)$ is wac with no initial or terminal point.

  \item[(b)]  If $(X,R)$ is arc cyclic, then $(Y,P)$ is arc cyclic.

  \item[(c)] If $y$ is an isolated point of $Y$, then $\pi^{-1}(y)$ is a maximal $Q$ invariant subset of $X$.

  \item[(d)]  If $y$ is a non-isolated point of $Y$, then $\pi^{-1}(y)$ is a singleton subset $\{ x \}$
  which is a maximal $Q$ invariant subset of $X$. The point $x$ is non-isolated in $X$ and so the points
  $x$ and $y$ are cycle points.
  \end{itemize}
  \end{theo}

  \begin{proof} (a), (b): By Theorem \ref{wactheo02}(f) , $(Y,P)$ is wac.  If it had an initial or terminal point, then it would have an
  arc quotient by Proposition \ref{qprop09a}.  Composing with $\pi$ we would obtain an arc quotient map for $(X,R)$, contra assumption.

  Corollary \ref{cor1.01b} implies that $(Y,P)$ is arc cyclic when $(X,R)$ is.

  (c), (d): The set $\pi^{-1}(y)$ is a proper $Q$ invariant subset of $X$ by Theorem \ref{qtheo03}. Let $A$ be a proper $Q$ invariant subset of $X$
  which contains $\pi^{-1}(y)$. By Theorem \ref{qtheo03}, again, $\pi(A)$ is a proper $Q$ invariant subset of $Y$ which
  contains $y$. Because $Y$ is prime, it equals $\{ y \}$ and so $A = \pi^{-1}(y)$. Thus, $\pi^{-1}(y)$ is maximal.
  The remaining results follow from Theorem \ref{wactheo02}(f)  again.

 \end{proof}

 \begin{theo}\label{qtheo11} Assume $(X,R)$ is a almost wac tournament which does not have an arc quotient.
 The tournament $(X,R)$ admits a prime quotient map,
 unique up to isomorphism.

 If $(X,R)$ is arc cyclic, then it does not have an arc quotient and its prime quotient is arc cyclic.\end{theo}

 \begin{proof} If $(X,R)$ contains no proper, non-trivial $Q$ invariant subset, then it is already prime and the identity is a
 prime quotient map. Otherwise Theorem \ref{qtheo09b}(a) implies there exists a maximal, non-trivial $Q$ invariant subset $A$.
 We can use Zorn's Lemma to obtain a
 maximal collection $\A = \{ A_i \}$ of pair-wise disjoint, maximal non-trivial $Q$ invariant subsets which contains $A$.
 By Theorem \ref{qtheo09b}(a) the sets $A_i$ are clopen. Since $(X,R)$ does
 not have an arc quotient, no $A_i$ contains a terminal or initial point
by Proposition \ref{qprop09a}.
 \vspace{.25cm}

 Claim: Let $d$ be any continuous pseudo-metric on $X$. For any $\ep > 0$, there are only finitely many $A_i$'s with $d-$diameter greater than $\ep$.
\vspace{.25cm}

 \begin{proof} If not
 we can choose two sequences $\{ x_n \}, \{ y_n \}$ such that
 \begin{itemize}
 \item $x_n, y_n \in A_{i_n}$ with $A_{i_n}$ disjoint from $A_{i_m}$ when $n \not= m$.

 \item $d(x_n,y_n) \ge \ep$

 \item The sequence of pairs $\{ (x_n,y_n) \}$ has a limit point $(x,y)$  with $d(x,y) \ge \ep$ and so we may assume $x \in R^{\circ}(y)$.
  \end{itemize}

   First we eliminate the possibility that $y$ is initial and $x$ is terminal. Were this so, then for
   any $z \in X \setminus \{ x, y \}$, e.g. $z \in A$, we would have $(x,y) \in R^{\circ}(z) \times R^{\circ -1}(z)$. Then for
   infinitely many $n, \ \ (x_n,y_n) \in R^{\circ}(z) \times R^{\circ -1}(z)$ which implies $z \in A_{i_n}$ since $A_{i_n}$ is $Q$ invariant.
   This is impossible since the $A_n$'s are pairwise disjoint.

   Now assume that $x$ is not terminal. We follow the proof of Lemma \ref{qlem03a}. As $x$ is not initial
   because $y \ha x$ and it is not isolated as it is a limit point, it is
   a cycle point and we may choose a $3-$cycle $\{ x, x', x'' \}$ contained in $R^{\circ}(y)$.
 We thicken $\{y,  x, x', x'' \}$ to $\{ U_y,  U_x, U_{x'}, U_{x''} \}$. Then for
   infinitely many $n, \ \  (x_n, y_n) \in U_x \times U_y$ . For any $z \in U_{x''}$, $z \in R^{\circ}(y_n) \cap R^{\circ -1}(x_n)$. Because these
   $A_{n}$'s are  $Q$ invariant, we have $z \in A_{n}$ for infinitely many  $n$.  Again, this is impossible.

 We use a similar argument if $y$ is not initial and so is a cycle point.

 \end{proof}
\vspace{.25cm}

  Now define the equivalence relation $E_{\A} = 1_X \cup \bigcup_i \{ A_i \times A_i \}$. If $\{ (x_k,y_k) \}$ is a net in
  $E_A$ converging to $(x,y)$, then either for some cofinal set of indices $k$ the pairs $(x_k,y_k)$
  lie in a single $A_i \times A_i$, in which case $(x,y) \in E_{\A}$
  since the $A_i$'s are closed, or else eventually the  sequence leaves any finite
  collection of $A_i$'s. The Claim implies that for any continuous pseudo-metric $d \ \ \lim_n \ d(x_n,y_n) = 0$ and so $d(x,y) = 0$.
 Since the continuous pseudo-metrics generate the unique uniformity on $X$, see \cite{K} Chapter 6, it follows that
   $x = y$ and so $(x,y) \in E_{\A}$.

  Now as in Theorem \ref{qtheo03} define the quotient map $\pi$ from $ X$ to $X_{\A}$, the space of $E_{\A}$ equivalence classes with the quotient topology.
  Let $R_{\A} = (\pi \times \pi)(R)$ and as before it is a topological tournament with $\pi : (X,R) \to (X_{\A},R_{\A})$ a continuous surjective
  tournament map. Since each $A_i$ is proper, $X_{\A}$ is non-trivial and by Theorem \ref{qtheo09b}(b)
  it is almost wac.  Since $(X,R)$ has no arc quotient,
  neither does $(X_{\A},R_{\A})$.

  By definition of the quotient topology, $A_i = \pi^{-1}(\pi(A_i))$ clopen implies that the point
  $\pi(A_i)$ is an isolated point. Because $A_i$ is maximal, Theorem \ref{qtheo06} and Theorem \ref{qtheo09b}(b) implies that $\{ \pi(A_i) \}$ is maximal.

  Now assume that $B$ is a proper, closed $Q$ invariant subset of $X_{\A}$. If $B$ contains some $\pi(A_i)$, then it equals
  $\{ \pi(A_i) \}$ by maximality.

  Now suppose that $B$ is disjoint from all of the $\pi(A_i)$'s. Assume $B$ is non-trivial, so that  $\pi^{-1}(B)$ is  a non-trivial proper $Q$ invariant
  subset of $X$. Hence, it is contained in a maximal $Q$ invariant subset $A'$.  It follows from Theorem \ref{qtheo06} and
  Theorem \ref{qtheo09b}(b) again that $\pi(A')$ is a
   maximal $Q$ invariant subset of $X_{\A}$. As it contains
  $B$, it is not equal to any $\pi(A_i)$ and so is disjoint from all of them. It follows that $A'$ is a non-trivial maximal $Q$ invariant subset
  of $X$ which is disjoint from all the $A_i$'s.  This contradicts the maximality of the family $\{ A_i \}$. So we see that $B$ had to be trivial.

  Thus, every proper $Q$ invariant subset of $X_{\A}$ is a singleton and so $(X_{\A},R_{\A})$ is prime.

  Uniqueness up to isomorphism follows from Theorem \ref{qtheo09}.

  If $(X,R)$ is arc cyclic, then every quotient is arc cyclic by Corollary \ref{cor1.01b}. Since the arc is not arc cyclic, $(X,R)$ does not
  have an arc quotient and its prime quotient is arc cyclic.

   \end{proof}
  \vspace{.5cm}

  From the proof we obtain the following.

   \begin{add}\label{qadd12} If $(X,R)$ is an almost wac tournament which does not
   admit an arc quotient, then the maximal $Q$ invariant subsets
   of $X$ are the elements of $\A = \{ A_i \}$ together with the singletons $\{ x \}$ for $x \in X \setminus \bigcup_i \{ A_i \}$.
   In particular, any two distinct maximal $Q$ invariant subsets are disjoint.\end{add}

   \begin{proof} Let $A'$ be an arbitrary non-trivial maximal $Q$ invariant subset and let $\A'$ be a maximal collection of pair-wise disjoint
   non-trivial maximal $Q$ invariant subsets which includes $A'$. We obtain the
   prime quotient map $\pi' : X \to X_{\A'}$ as before.  By Theorem \ref{qtheo09} we obtain
   the homeomorphism $h : X_{\A'} \to X_{\A}$ such that $\pi = h \circ \pi'$. It follows that $(\pi')^{-1}(h^{-1}(z)) = \pi^{-1}(z)$
   for all $z \in X_{\A}$. Hence, $\A = \A'$ and so $A' \in \A$. We saw in the proof above that for each $x \not\in \bigcup_i A_i$
   the singleton $\{ x \}$ is maximal.

   \end{proof}
  \vspace{.5cm}

  Now we consider what happens when a wac tournament has an arc quotient.

    \begin{theo}\label{qtheo13} If a wac tournament $(X,R)$ has an arc quotient, then any prime quotient of $(X,R)$ is an arc. \end{theo}

    \begin{proof}  Suppose that $\pi_0 : (X,R) \to (Y_0,P_0)$ and $\pi_1 (X,R) \to (Y_1,P_1)$ are prime quotient maps with $(Y_0,P_0)$ an
    arc.

    We first observe that $(Y_1,P_1)$ has an initial or a terminal point.  Were this not true then by Theorem \ref{qtheo09}(b) there would
    be a surjective map from $(Y_0,P_0)$ onto $(Y_1,P_1)$. Since $|Y_0| = 2$ and $Y_1$ is non-trivial, the map would have to be an isomorphism
    and so $(Y_1,P_1)$ would have both an initial and a terminal point.

    Since $(Y_1,P_1)$ is wac, the existence of an initial or terminal point implies that $(Y_1,P_1)$ has a quotient map onto an arc $(Y_3,P_3)$.
    Since $(Y_1,P_1)$ is prime, this quotient map is an isomorphism and so $(Y_1,P_1)$ is an arc.

   \end{proof}
  \vspace{.5cm}

  Thus, in any case, a wac  tournament has a prime quotient, unique up to isomorphism.  However, when the tournament has an arc quotient,
  the quotient map need not be unique up to isomorphism.

   \begin{df}\label{qdef14}Suppose that $L$ is a linear order on a non-trivial finite set $I$
   so that $(I,L)$ is a non-trivial finite, transitive tournament. A
   quotient map $\pi: (X,R) \to (I,L)$ is called an \emph{order quotient map}\index{order quotient map}\index{quotient!order}.
   It is called a \emph{maximum order quotient map}\index{maximum order quotient map}\index{order quotient map!maximum} when for
   each $i \in I$, the restriction $R|\pi^{-1}(i)$ does not have an arc quotient. \end{df}
  \vspace{.5cm}

  \begin{theo}\label{qtheo15} Assume that for a compact tournament $(X,R), \ \ \pi: (X,R) \to (I,L)$ and $\pi_1: (X,R) \to (I_1,L_1)$
  are order quotient maps with $\pi$ maximum. There exists a surjective tournament map (i.e. an order-preserving surjection)
  $h : (I,L) \to (I_1,L_1)$ such that $\pi_1 = h \circ \pi$. In particular, $|L| \ge |L_1|$.

  If $\pi_1$ is also maximum, then $h$ is an isomorphism and so $|I| = |I_1|$.  Conversely, $h$ is an isomorphism if $|I| = |I_1|$.

   Thus, the maximum quotient map, if it exists, is unique up to isomorphism.\end{theo}

  \begin{proof} If $\pi_1(\pi^{-1}(i))$ contains more than one point for any $i \in I$, then clearly, the restriction $R|\pi^{-1}(i)$
  admits an arc quotient. Because $\pi$ is maximum, it follows that each $\pi_1(\pi^{-1}(i))$ is a singleton and so the map $h$ is defined as usual.

  Since $h$ is a surjection between finite sets, it follows that $|I| \ge |I_1|$. If $\pi_1$ is also maximum, then $|I_1| \ge |I|$
  and so $|I| =|I_1|$. Since $h$ is a surjection, it is a bijection (and so an isomorphism) if and only if $|I| = |I_1|$.

   \end{proof}
  \vspace{.5cm}

  \begin{theo}\label{qtheo16} If $(X,R)$ is a wac tournament which admits an arc quotient, then it admits a maximum order quotient map, unique
  up to isomorphism. \end{theo}

  \begin{proof} Begin with $\pi_0 : (X,R) \to (I_0,L_0)$, with $(I_0,L_0)$ an arc. If it is not maximum, then we construct, inductively, a finite or
  infinite sequence of finite orders $(I_k,L_k)$ surjective, but not bijective tournament maps $f_k : (I_{k+1},L_{k+1})) \to (I_k,L_k)$ and
  continuous tournament surjections $\pi_k : (X,R) \to (I_k,L_k)$ such that $f_k \circ \pi_{k+1} = \pi_k$.

  If at stage $k$, the map $\pi_k$ is not maximum, then for some $i \in L_k$, the restriction $(\pi_k^{-1}(i), R|\pi_k^{-1}(i))$ admits
  an arc quotient.  It is easy to see that we can split the point $i$, to obtain $f_k :(I_{k+1},L_{k+1}) \to (I_k,L_k)$ with $|f_k^{-1}(i)| = 2$
  and a lift $\pi_{k+1} :  (X,R) \to (I_{k+1},L_{k+1})$.

  This process terminates when $\pi_k$ is maximum.

  In fact it must terminate.  If it did not, then $\{ (I_k,L_k,f_k) \}$ would be an inverse system of finite, transitive tournaments with
  the inverse limit $(I,L)$ an infinite compact LOTS. Furthermore, the maps $\pi_k$ would induce a quotient map
  $\pi : (X,R) \to (I,L)$. A LOTS has no cycle points. An infinite compact LOTS has some non-isolated points. Since the continuous surjective
  image of a wac tournament is wac, it follows that a wac tournament cannot map onto an infinite compact LOTS.

   \end{proof}

   Notice that $X \ = \ \{ 0 \} \cup \{ 1/n : n \in \N \}$ with the order $L$ inherited from $\R$ is an almost wac tournament which does not
   admit a maximum order quotient.

\vspace{1cm}

  \section{ \textbf{Classification of WAC Tournaments}}\vspace{.5cm}

  We first separate the class of wac tournaments into three types \index{tournament!wac!types}\index{wac tournament!types}.
  \begin{itemize}
  \item  Type 1: $(X,R)$ is Type 1 when it is non-trivial and does not have an arc quotient.

  \item Type 2:  $(X,R)$ is Type 2 when it has an arc quotient (and so is non-trivial).

  \item Type 3: $(X,R)$ is Type 3 when it is trivial.
  \end{itemize}

 Each wac tournament has a so-called \emph{base quotient map} \index{quotient map!base} a continuous, surjective
 surjective tournament map $\pi : (X,R) \to (Y,P)$ which is unique up to isomorphism. If $(X,R)$ is Type 1, then its base quotient map is its
prime quotient map, as in Theorem \ref{qtheo11}.  If $(X,R)$ is Type 2, then the base quotient map is its
  maximum order quotient map, as in Theorem \ref{qtheo16}.  If $(X,R)$ is Type 3, then the base quotient map is the isomorphism onto any trivial
  tournament.

 For a quotient map $h :(X,R) \to (Y,P)$ of topological tournaments, if $(X,R)$ is wac, then $(Y,P)$ is wac by Theorem \ref{wactheo02}(f)
 and for every  $y \in Y$, the restriction $R|h^{-1}(y)$ is wac because if the $Q$ invariant set $h^{-1}(y)$ is not trivial, then it is clopen.

 \begin{df}\label{classdef01} For a wac tournament $(X,R)$ the \emph{classifier system}\index{wac tournament!classifier system}
 \index{tournament!wac!!classifier system}\index{inverse system!classifier system} is an inverse system $\{(X_i,R_i,f_i) \}$ of
 topological tournaments, together with quotient maps $h_i : (X,R) \to (X_i,R_i)$ which satisfy the
 following properties.
 \begin{itemize}
 \item[(i)]  $h_i = f_i \circ h_{i+1}$.
 \item[(ii)] $h_1 : (X,R) \to (X_1,R_1)$ is a base quotient map.
 \item[(iii)] For each $x_i \in X_i$, the restriction $(h_i^{-1}(x_i),R|h^{-1}_i(x_i))$ is a wac tournament and
 the map $$h_{i+1} : (h_i^{-1}(x_i),R|h^{-1}_i(x_i)) \to (f_i^{-1}(x_i),R_{i+1}|f^{-1}_i(x_i))$$ is a base quotient map.
 \end{itemize}
 \end{df}
 \vspace{.5cm}

  \begin{theo}\label{classtheo02} A wac tournament $(X,R)$ admits a classifier system $\{(X_i,R_i,f_i) \}$ with maps $\{ h_i \}$.

  If  $\{(X_i,R_i,f_i) \}$ is a classifier system with maps $\{ h_i \}$, then the map $h : (X,R) \to \overleftarrow{Lim} \{ (X_i,R_i,f_i) \}$
  given by $h(x)_i = h_i(x)$ is an isomorphism.

  If $\{(X_i',R_i',f_i') \}$ with maps $\{ h_i' \}$ is another classifier system for $(X,R)$, then there exist isomorphisms
  $q_i : (X_i,R_i) \to (X_i',R_i')$ such that for all $i$:
  \begin{equation}\label{classeq01}
   f'_i \circ q_{i+1} \ = \ q_i \circ f_i, \quad \text{and} \quad q_i \circ h_i \ = \ h_i'.
   \end{equation}
   \end{theo}

   \begin{proof} Begin with $h_1 : (X,R) \to (X_1,R_1)$ a base quotient map.
Assume that $(X_i,R_i)$ with maps $h_i$ have been constructed for $i \le n$ and with projection $f_i$ for $i \le n-1$ so that
conditions (i) and (iii) of Definition \ref{classdef01} hold for $i \le n-1$.

If $x_n \in X_n$ is a non-isolated point, then by Theorem \ref{wactheo02}(f) , $h_n^{-1}(x_n)$ is a singleton set and
so $(h_n^{-1}(x_n),R|h^{-1}_n(x_n))$ is a type 3, trivial, wac. If $x_n$ is isolated, then $h_n^{-1}(x_n)$ is clopen and
$(h_n^{-1}(x_n),R|h^{-1}_n(x_n))$ is a wac. For any $x_n$, let
$\pi_{nx_n}: (h_n^{-1}(x_n),R|h^{-1}_n(x_n)) \to (Y_{nx_n},P_{nx_n})$ be a base quotient map.
Let $(X_{n+1},R_{n+1}) = (X_n,R_n) \ltimes \{ (Y_{nx_n},P_{nx_n}) \}$ be the lexicographic product. It is a topological lexicographic product
because $(Y_{nx_n},P_{nx_n})$ is trivial whenever $x_n$ is non-isolated in $X_n$. The map $f_n$ is the first coordinate projection
for the lexicographic product. The map $h_{n+1}$ is defined by
 \begin{equation}\label{classeq02}
  h_{n+1}(x) \ = \ (h_n(x),\pi_{nh_n(x)}(x)) \quad \text{for all} \ \ x \in X.
  \end{equation}

  Because $h_n$ and each $\pi_{xn}$ is a quotient map, it easily follows that $h_{n+1}$ is a quotient map.
  It follows from Theorem \ref{wactheo02}(f) again that $(X_{n+1},R_{n+1})$ is a wac tournament.
  Finally, conditions (i) and (iii) of Definition \ref{classdef01} hold for $i = n$.

  Thus, by inductive construction, we obtain  $\{(X_i,R_i,f_i) \}$  a classifier system with maps $\{ h_i \}$.

  Clearly, the map $h : (X,R) \to \overleftarrow{Lim} \{ (X_i,R_i,f_i) \}$ is a quotient map and so the inverse limit space is wac.  It suffices
  to prove that $h$ is injective, i.e. the pre-image of every point is a singleton. If $x \in \overleftarrow{Lim} \{ (X_i,R_i,f_i) \}$  is
  non-isolated, this follows from Theorem \ref{wactheo02}(f) again. In particular, if  $x_n \in X_n$ is non-isolated for any
  $n$, then with $g_i : \overleftarrow{Lim} \{ (X_i,R_i,f_i) \} \to (X_i,R_i)$
  is the projection given by $x \mapsto x_i$, the sets $g_n^{-1}(x_n)$ and $h_n^{-1}(x_n) = h^{-1}(g_n^{-1}(x_n))$ are singletons consisting
  of non-isolated points by Theorem \ref{wactheo02}(f) yet again.

  Now assume that $x \in \overleftarrow{Lim} \{ (X_i,R_i,f_i) \}$  is isolated so that $x_i$ is isolated in $X_i$ for all $i$.
  For any $x \in \overleftarrow{Lim} \{ (X_i,R_i,f_i) \}, \ \  \{ g_i^{-1}(x_i)\} $ is a decreasing sequence of closed sets with intersection
  $\{ x \} $. When $x$ is isolated, $\{ x \}$ is clopen and so the sequence stabilizes and so for some $n$ , $g_i^{-1}(x_i) = \{ x \}$ for
  all $i \ge n$. In particular, $(Y_{nx_n},P_{nx_n})$ is trivial because $g_{n+1}$ maps the singleton $\{ x \}$ onto
  $f_n^{-1}(x_n) = \{ x_n \} \times Y_{nx_n}$. Now by construction $\pi_{nx_n} : (h_n^{-1}(x_n),R|h_n^{-1}(x_n)) \to (Y_{nx_n},P_{nx_n})$
  is a base quotient map. Since $(Y_{nx_n},P_{nx_n})$ is trivial, the tournament $(h_n^{-1}(x_n),R|h_n^{-1}(x_n))$ is type 3 and so
  $h_n^{-1}(x_n)$, which contains (and so equals) $h^{-1}(x)$ is a singleton.

  Thus, we can use $h$ to identify $(X,R)$ with the inverse limit \\ $\overleftarrow{Lim} \{ (X_i,R_i,f_i) \}$ so that $h_i$ is identified with
  the projection map $g_i$.  That is, for $x \in X$, $h_i(x) = x_i$ in $X_i$.

Given two different classifiers, the construction of the maps $\{ q_i \}$ is an obvious induction
using the uniqueness up to isomorphism of the base quotient maps.
Observe that by Theorem \ref{wactheo02}(f), the quotient maps $f_i$ and $f'_i$ are isomorphic to projections from topological lexicographic products
and so they are open maps. Hence,
continuity of $q_{i+1}$ follows from that of $q_i$.

\end{proof}

\textbf{Remark:} Thus, the classifier system for a wac tournament is unique up to isomorphism.

If $(X,R)$ is an almost wac tournament which does not have an arc quotient, so that an initial or terminal point is non-isolated,
then $(X,R)$ also has a classifier, unique up to isomorphism.

\vspace{.5cm}

  \begin{add}\label{classadd03} For a wac tournament $(X,R)$ the topological inverse system of topological lexicographic products
used to construct the classifier satisfies the following properties for every point $x$ in $X$ (which we identify with
the inverse limit space) and for every $n \in \N$, .
\begin{itemize}
\item[(i)] If  $(Y_{(n-1)x_{n-1}},P_{(n-1)x_{n-1}})$ is
a non-trivial, finite linear order, then $(Y_{nx_n},P_{nx_n})$ does not have an arc quotient.

\item[(ii)] If  $(Y_{(n-1)x_{n-1}},P_{(n-1)x_{n-1}})$ is
a trivial tournament, then $(Y_{nx_n},P_{nx_n})$ is a trivial tournament.
\end{itemize} \end{add}

\begin{proof} (i): If  $(Y_{(n-1)x_{n-1}},P_{(n-1)x_{n-1}})$ is
a non-trivial, finite linear order, then
$$\pi_{(n-1)x_{n-1}}: (h_{n-1}^{-1}(x_{n-1}),R|h^{-1}_{n-1}(x_{n-1})) \to (Y_{(n-1)x_{n-1}},P_{(n-1)x_{n-1}})$$
is a maximum order quotient and with $x_n = (x_{n-1},y)$ we
have $h_n^{-1}(x_n) = (\pi_{(n-1)x_{n-1}})^{-1}(y)$. The restriction of $R$ to this
set does not have an arc quotient by definition of the maximum order quotient.
Since $\pi_{nx_n} : (h_n^{-1}(x_n),R|h_n^{-1}(x_n)) \to (Y_{nx_n},P_{nx_n})$
  is a base quotient map, it follows that $(Y_{nx_n},P_{nx_n})$ does not have an arc quotient.

  (ii): If  $(Y_{(n-1)x_{n-1}},P_{(n-1)x_{n-1}})$ is
trivial, i.e. type 3, then since
$$\pi_{(n-1)x_{n-1}}: (h_{n-1}^{-1}(x_{n-1}),R|h^{-1}_{n-1}(x_{n-1})) \to (Y_{(n-1)x_{n-1}},P_{(n-1)x_{n-1}})$$
is a base quotient map, it follows that $(h_{n-1}^{-1}(x_{n-1}),R|h^{-1}_{n-1}(x_{n-1}))$ is trivial and so also is $(Y_{nx_n},P_{nx_n})$.
 If $(X_1,R_1)$ is trivial, then $(X,R)$ is trivial and so every $(Y_{nx_n},P_{nx_n})$ is trivial.

 \end{proof}
 \vspace{.5cm}

\begin{cor}\label{classcor04} Let $(X,R)$ be a wac tournament with  classifier system $\{(X_i,R_i,f_i) \}$ with maps $\{ h_i \}$.
The following conditions are equivalent.
\begin{itemize}
\item[(i)]  The tournament $(X,R)$ is arc cyclic.

\item[(ii)] For every $i \in \N$ the tournament $(X_i,R_i)$ is arc cyclic.

\item[(iii)] The base tournament $(X_1,R_1)$ is arc cyclic and for every $x \in X$ and $i \in \N$,
the restriction $(h_i^{-1}(x_i),R|h_i^{-1}(x_i))$ has an arc cyclic base quotient.

\item[(iv)] For every non-trivial $Q$ invariant subset $A$ of $X$, the restriction $(A,R|A)$ has an arc cyclic prime quotient.
\end{itemize}

The following conditions are equivalent.
\begin{itemize}
\item[(i')]  The tournament $(X,R)$ is locally arc cyclic.

\item[(ii')] There exists $i \in \N$ such that $(X_i,R_i)$ is locally arc cyclic and for every $x \in X$ and $j \ge i$,
the restriction $(h_j^{-1}(x_j),R|h_j^{-1}(x_j))$ has an arc cyclic base quotient.

\item[(iii')] There exists $i \in \N$ such that such that $(X_i,R_i)$ is locally arc cyclic and for every $x \in X$
the restriction $(h_i^{-1}(x_i),R|h_i^{-1}(x_i))$ is arc cyclic.
\end{itemize}
\end{cor}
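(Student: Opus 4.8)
The plan is to handle the two lists of equivalences separately, throughout identifying $(X,R)$ with the inverse limit $\overleftarrow{Lim}\{(X_i,R_i,f_i)\}$ via Theorem \ref{classtheo02}, and using the fact from the construction of the classifier in Theorem \ref{classtheo02} that the base quotient of the fiber $(h_i^{-1}(x_i),R|h_i^{-1}(x_i))$ is exactly the fiber tournament $(Y_{ix_i},P_{ix_i})$ occurring in the lexicographic step $(X_{i+1},R_{i+1})=(X_i,R_i)\ltimes\{(Y_{ix_i},P_{ix_i})\}$. For the first list, (i)$\Leftrightarrow$(ii) is immediate from Proposition \ref{propinv01a}, since the classifier is a surjective inverse system with limit $(X,R)$. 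For (ii)$\Leftrightarrow$(iii) I would apply Corollary \ref{cor1.01b} to each quotient map $f_i:(X_{i+1},R_{i+1})\to(X_i,R_i)$: the total space is arc cyclic if and only if $(X_i,R_i)$ is arc cyclic and each fiber $R_{i+1}|f_i^{-1}(x_i)\cong(Y_{ix_i},P_{ix_i})$ is arc cyclic. An induction then shows that all $(X_i,R_i)$ are arc cyclic precisely when $(X_1,R_1)$ is arc cyclic and every $(Y_{ix_i},P_{ix_i})$, that is, every base quotient of a fiber, is arc cyclic, which is (iii).

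The equivalence (i)$\Leftrightarrow$(iv) uses the $Q$-invariant machinery. For (i)$\Rightarrow$(iv), a non-trivial $Q$-invariant subset $A$ is clopen by Theorem \ref{qtheo04} and hence wac, while $(A,R|A)$ is arc cyclic by Proposition \ref{qprop02}(e); an arc cyclic tournament has no arc quotient, since an arc is not arc cyclic and quotients of arc cyclic tournaments are arc cyclic by Corollary \ref{cor1.01b}, so Theorem \ref{qtheo11} gives that its prime quotient is arc cyclic. For (iv)$\Rightarrow$(i), if some arc $(x_1,x_2)\in R^{\circ}$ lies in no $3$-cycle, then $Q^{\circ}(x_1,x_2)=\emptyset$, so $Q(\{x_1,x_2\}\times\{x_1,x_2\})=\{x_1,x_2\}$ and $\{x_1,x_2\}$ is a non-trivial $Q$-invariant subset whose restriction is an arc; by Theorem \ref{qtheo13} the prime quotient of an arc is the arc itself, which is not arc cyclic, contradicting (iv).

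For the second list I would first prove (i')$\Leftrightarrow$(iii'). Note that each fiber $h_i^{-1}(x_i)$ is $Q$-invariant, so $R|h_i^{-1}(x_i)$ arc cyclic is the same as $h_i^{-1}(x_i)$ being an arc cyclic subset; hence Corollary \ref{arccyccor03} applied to $h_i$ shows $(X,R)$ is locally arc cyclic if and only if $(X_i,R_i)$ is, giving (iii')$\Rightarrow$(i'). To produce the level $i$ from (i'): by Corollary \ref{arccyccor02} the maximal arc cyclic subsets are clopen, so compactness yields a finite cover $X=A_1\cup\dots\cup A_m$ by clopen arc cyclic subsets. Each $A_k$, being clopen in the inverse limit, equals $h_i^{-1}(W_k)$ for one common large $i$ and clopen $W_k\subseteq X_i$; since the $W_k$ cover $X_i$, every fiber $h_i^{-1}(x_i)$ lies inside some $A_k$ and is therefore arc cyclic by Proposition \ref{arccycprop01}(a), whence $(X_i,R_i)$ is locally arc cyclic by Corollary \ref{arccyccor03}. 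This gives (iii').

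Finally I would relate (ii') and (iii'). The direction (iii')$\Rightarrow$(ii') is routine: for $j\ge i$ the fiber $h_j^{-1}(x_j)$ is a $Q$-invariant subset of the arc cyclic $h_i^{-1}(x_i)$, hence arc cyclic by Proposition \ref{qprop02}(e), so its base (equivalently prime) quotient is arc cyclic by Theorem \ref{qtheo11}. The hard direction, and the main obstacle, is (ii')$\Rightarrow$(iii'): fixing $x_i$, I would apply the already-established equivalence (iii) of the first list to the fiber tournament $T=(h_i^{-1}(x_i),R|h_i^{-1}(x_i))$. The delicate point to verify is that the classifier of $T$ is precisely the tail of the classifier of $X$ beginning at level $i$, with $m$-th space $h_{i+m}(h_i^{-1}(x_i))$ and restricted maps; this follows from property (iii) of Definition \ref{classdef01} for $X$ together with the uniqueness in Theorem \ref{classtheo02}. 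Granting this, the base quotient of $T$ is $(Y_{ix_i},P_{ix_i})$ and the base quotients of the deeper fibers of $T$ are those of the $h_{i+m}^{-1}(x_{i+m})$, all of which (ii') asserts are arc cyclic; these are exactly the hypotheses of (iii) for $T$, so $T$ is arc cyclic. As this holds for every $x_i$, and $(X_i,R_i)$ is locally arc cyclic by (ii'), we obtain (iii').
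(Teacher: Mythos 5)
Your overall architecture is fine, and most of it tracks the paper's own proof: (i)$\Leftrightarrow$(ii) via Proposition \ref{propinv01a}, (ii)$\Leftrightarrow$(iii) by induction using Corollary \ref{cor1.01b} and the identification (via uniqueness) of the fiber base quotients with the $(Y_{ix_i},P_{ix_i})$, (i)$\Rightarrow$(iv) exactly as in the paper, and the second list, where your tail-classifier argument for (ii')$\Rightarrow$(iii') is precisely the paper's step, and your (i')$\Rightarrow$(iii') via clopen maximal arc cyclic subsets is a harmless variant of the paper's exhaustion argument with the open sets $G_j = \bigcup_t X \setminus h_j(X \setminus U_t)$. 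However, there is a genuine gap in your proof of (iv)$\Rightarrow$(i). You claim that if an arc $(x_1,x_2) \in R^{\circ}$ lies in no $3$-cycle then $Q^{\circ}(x_1,x_2) = \emptyset$, so that $\{x_1,x_2\}$ is a non-trivial $Q$ invariant set restricting to an arc. This is false: by (\ref{qeq01}), $Q^{\circ}(x_1,x_2)$ is the union of \emph{two} sets, the points $z$ with $x_2 \ha z \ha x_1$ (which complete a $3$-cycle) and the points $z$ with $x_1 \ha z \ha x_2$ (transitive interpolants). The absence of a $3$-cycle only empties the first set. For example, in a three-point linear order $x_1 \ha z \ha x_2$ the arc $(x_1,x_2)$ lies in no $3$-cycle, yet $z \in Q^{\circ}(x_1,x_2)$, so $\{x_1,x_2\}$ is not $Q$ invariant and your contradiction with (iv) never materializes.

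The repair is available inside your own framework, and it is what the paper does: prove (iv)$\Rightarrow$(iii) instead, and then invoke your chain (iii)$\Rightarrow$(ii)$\Rightarrow$(i). Indeed, $X$ itself and each non-trivial fiber $h_i^{-1}(x_i)$ is a closed $Q$ invariant subset (Theorem \ref{qtheo03}), so by (iv) each of these wac tournaments has an arc cyclic prime quotient. Since an arc is not arc cyclic, Theorem \ref{qtheo13} (in contrapositive form) shows that none of these tournaments can have an arc quotient; hence each of their base quotients coincides with the prime quotient and is arc cyclic, while trivial fibers have trivial, vacuously arc cyclic base quotients. This is exactly condition (iii). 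With this substitution your proof is complete and agrees in substance with the paper's.
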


\begin{proof} (i) $\Rightarrow$ (iv):  By Proposition \ref{qprop02}(e) the restriction $R|A$ is arc cyclic when
$R$ is arc cyclic and $A$ is $Q$ invariant. Hence, when $A$ is non-trivial it has a unique prime quotient which is arc cyclic by
Corollary \ref{cor1.01b}.

(iv)  $\Rightarrow$ (iii): Since $X$ itself is $Q$ invariant, the base quotient $(X_1,R_1)$ is arc cyclic.
Since a trivial tournament is vacuously arc cyclic and  the restriction to any of
 $Q$ invariant subsets $h_i^{-1}(x_i)$ has an arc cyclic base quotient.

(iii) $\Rightarrow$ (ii): By uniqueness we may assume that $\{(X_i,R_i,f_i) \}$ is given by the inductive construction in the proof of
Theorem \ref{classtheo02}. Proceed by induction.  By assumption the base quotient $(X_1,R_1)$ is arc cyclic.

Now assume that $(X_n,R_n)$ is arc cyclic. For every $x \in X$, the base for  $(h_n^{-1}(x_n),R|h_n^{-1}(x_n)$ is
$(Y_{nx},P_{nx})$ which is an arc cyclic by assumption.  From Corollary \ref{cor1.01b} applied to $f_n$ it follows $(X_{n+1},R_{n+1})$ is
arc cyclic.

(ii)  $\Rightarrow$ (i):  By Proposition \ref{propinv01a} the inverse limit of an inverse system of  arc cyclic tournaments is arc cyclic.

(i') $\Leftrightarrow$ (iii'): If $(X_i,R_i)$ is locally arc cyclic, and each of the restrictions is arc cyclic, then
$(X,R)$ is locally arc cyclic by Corollary \ref{arccyccor03} applied to the quotient map $h_i$.

Now assume that $(X,R)$ is locally arc cyclic. Let $\{ U_1,\dots, U_m \}$ be an open cover of $X$ by arc cyclic subsets. Define $G_j =
\{ x \in X : h_j^{-1}(x_j) \subset U_t$ for some $t = 1, \dots, n \}$. Since
$$G_j \ = \ \bigcup_{t=1}^{n} \ X \setminus h_j(X \setminus U_t)$$
it follows that $G_j$ is open. Since $h_{j+1}^{-1}(x_{j+1}) \subset h_j^{-1}(x_j)$ it follows that $G_j \subset G_{j+1}$.
Because $x = \bigcap_j h_j^{-1}(x_j)$ it follows that each $x$ is contained in some $G_j$.

From compactness, it follows that for some $i \ \ G_i = X$. That is, each $h_i^{-1}(x)$ is a $Q$ invariant arc cyclic subset and so
the restriction $(h_i^{-1}(x_i),R|h_i^{-1}(x_i))$ is arc cyclic. By Corollary \ref{arccyccor03} again it follows that $(X_i,R_i)$ is locally arc cyclic.

(iii') $\Leftrightarrow$ (ii'): It is clear that we can construct the
classifier for $(h_i^{-1}(x_i),R|h_i^{-1}(x_i))$ by starting with $(f_i^{-1}(x_i),R_{i+1}|f_i^{-1}(x_i))$, which is isomorphic to
the  base for $(h_i^{-1}(x_i),R|h_i^{-1}(x_i))$ and then by using the same choices for the restricted lexicographic
construction which uses the bases for the restrictions  $(h_j^{-1}(z_j),R|h_j^{-1}(z_j))$ with $z \in h_i^{-1}(x_i)$.
The equivalence then follows from the equivalence of (i) and (iii) applied to $(h_i^{-1}(x_i),R|h_i^{-1}(x_i))$.

 \end{proof}
 \vspace{.5cm}

 Above we began with a wac tournament.  Now we would like to build the classifier system directly, achieving the wac tournament as the limit.

  \begin{lem}\label{classlem05a} Let  $(X_2,R_2)$ be the topological lexicographic product $(X_1,R_1) \ltimes \{ (Y_x,S_x) : x \in X \}$.
  The tournament $(X_2,R_2)$ is wac if and only if $(X_1,R_1)$ and each $(Y_x,S_x)$ is wac. \end{lem}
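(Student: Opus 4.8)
The plan is to prove the two directions separately, in both cases exploiting the projection $\pi : X_2 \to X_1$, which by Theorem \ref{theolex01}(a) is a continuous, open, surjective tournament map from $R_2$ to $R_1$, together with the fact (Theorem \ref{theolex01}(d)) that $X_2$ is compact whenever $X_1$ and each $Y_x$ are.

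For the forward direction I would assume $(X_2,R_2)$ is wac. Since $\pi$ is a surjective continuous tournament map, Theorem \ref{wactheo02}(f) immediately gives that $(X_1,R_1)$ is wac and that for each isolated point $x$ of $X_1$ the restriction $R_2|\pi^{-1}(x)$ is wac. Now $\pi^{-1}(x) = \{ x \} \times Y_x$, and by Theorem \ref{theolex01}(c) the map $y \mapsto (x,y)$ is a topological tournament isomorphism from $(Y_x,S_x)$ onto this restriction; hence $(Y_x,S_x)$ is wac. For a non-isolated $x$, the defining condition of the topological lexicographic product (Definition \ref{dflex01a}) forces $(Y_x,S_x)$ to be trivial, and a trivial tournament is vacuously wac, since its only point is isolated. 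Thus every $(Y_x,S_x)$ is wac.

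For the converse I would assume $(X_1,R_1)$ and each $(Y_x,S_x)$ are wac; in particular all the relevant spaces are compact, so $X_2$ is compact. It then remains to show that every non-isolated point $(x,y)$ of $X_2$ is a cycle point. By Theorem \ref{theolex01}(b), $(x,y)$ being non-isolated means either that $x$ is non-isolated in $X_1$ (Case 1), or that $x$ is isolated but $y$ is non-isolated in $Y_x$ (Case 2). In Case 1, $x$ is a cycle point of the wac tournament $(X_1,R_1)$, so Theorem \ref{theo3.08} applied to $\pi$ shows that $\pi^{-1}(x)$ is a singleton, necessarily $\{ (x,y) \}$, and that $(x,y)$ is a cycle point of $X_2$. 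In Case 2, $\{ x \} \times Y_x$ is a clopen subset of $X_2$ and the isomorphism of Theorem \ref{theolex01}(c) carries the cycle point $y$ of $(Y_x,S_x)$ to $(x,y)$: given any open $U \ni (x,y)$, the set $U \cap (\{ x \} \times Y_x)$ corresponds to an open neighborhood $V$ of $y$ in $Y_x$, and a $3$-cycle of $S_x$ through $y$ inside $V$ maps to a $3$-cycle of $R_2$ through $(x,y)$ inside $U$, so $(x,y)$ is a cycle point.

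I expect the only delicate point to be the bookkeeping in Case 2, namely checking that openness transfers correctly between $X_2$ and the clopen fiber $\{ x \} \times Y_x$ so that the $3$-cycle produced by cyclicity of $y$ genuinely lies in the prescribed neighborhood $U$ of $(x,y)$; this is exactly what the clopenness of $\{ x \} \times Y_x$ together with the restriction isomorphism of Theorem \ref{theolex01}(c) provides. Everything else is a routine application of the cited results, so no separate estimates or constructions should be needed.
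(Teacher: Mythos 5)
Your proof is correct and follows essentially the same route as the paper's: the forward direction via Theorem \ref{wactheo02} applied to the projection $\pi$ together with the fiber identification of Theorem \ref{theolex01}(c), and the converse by splitting non-isolated points of $X_2$ according to whether the base point is isolated and lifting $3$-cycles in each case. The only cosmetic difference is that in your Case 1 you invoke Theorem \ref{theo3.08} to get the singleton fiber and the cycle point at once, where the paper argues the lift of cycles directly from the lexicographic structure; both are valid instances of the same idea.
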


  \begin{proof}  If $(X_2,R_2)$ is wac, then the quotient $(X_1,R_1)$ and the restriction to the clopen subsets
  $\{ x \} \times Y_x$ are wac by Theorem \ref{wactheo02}.

  Now assume that $R_1$ and each $S_x$  is wac. If $x$ is non-isolated in $X_1$ then it is a cycle point and each $3-$cycle containing $x$
  lifts to a $3-$cycle containing the unique point in $\pi^{-1}(x)$. If $x$ is isolated, then $(x,y)$ is non-isolated in $X_2$ if and
  only if $y$ is non-isolated in $Y_x$ and if $\{y, y', y'' \}$ is a $3-$cycle in
  $Y_x$ then $\{(x,y), (x,y'), (x,y'') \}$ is a $3-$cycle in $X_2$.

  \end{proof}

 Recall that when $X$ is the limit of an inverse system the set $IS$ consists of those points $x \in X$ such that $x_i$ is isolated for all $i \in \N$.

  \begin{theo}\label{classtheo05}  Let $\{ (X_i,R_i,f_i) \} $ be an inverse lexicographic system so that for each
 $i \in \N, \ \ (X_{i+1},R_{i+1})$ is the topological lexicographic product $(X_i,R_i) \ltimes \{ (Y_{ix_i},P_{ix_i}) \}$ with $f_i$
 the first coordinate projection.  Thus,
 if $x_i$ is non-isolated in $X_i$, then $(Y_{ix_i},P_{ix_i}) $ is trivial.

 We assume that conditions (i) and (ii) of Addendum \ref{classadd03}
 hold and, in addition,
 \begin{itemize}
 \item[(iii)] The tournaments  $(X_1,R_1)$ and each tournament $(Y_{ix_i},P_{ix_i})$ is either trivial, a
 non-trivial finite order, or a prime wac which not an arc.
 \end{itemize}

 Let $(X,R) = \overleftarrow{Lim} \{ (X_i,R_i,f_i) \}$  with $h_i : (X,R) \to (X_i,R_i)$ the coordinate projection map.

 A point $x \in IS$ is a cycle point if and only if it satisfies the following:
    \begin{itemize}
 \item[(iv)] For  infinitely many $i \in \N$, with $x_{i+1} = (x_i,y_i), \ \ y_i$ is contained in a $3-$cycle in $Y_{ix_i}$.
 \end{itemize}

 A point $x \in IS$ is an isolated point in $X$ if and only if it satisfies the following:
  \begin{itemize}
 \item[(iv')]  There exists $i \in \N$ such that $(Y_{ix_i},P_{ix_i})$ is trivial.
 \end{itemize}

The limit tournament $(X,R)$ is wac if and only if (iv) or (iv') holds for every $x \in IS$.
 In particular, if $(Y_{ix_i},R_{ix_i})$ is either trivial or point cyclic for every $x \in X$ and $i \in \N$, then $(X,R)$ is wac.

 If there exists $k \in N$  such that for every $x \in X$ and $i \ge k, \ \ (Y_{ix_i},P_{ix_i})$ is arc cyclic, $(X,R)$ is
 locally arc cyclic and so is wac.

 If $(X_1,R_1)$ is arc cyclic and for every $x \in X$ and $i \in \N, \ \ (Y_{ix_i},P_{ix_i})$ is arc cyclic, $(X,R)$ is arc cyclic and so is wac.

 When $(X,R)$ is wac, the inverse system $\{ (X_i,R_i,f_i) \}$  is a classifier for $(X,R)$.
 \end{theo}

 \begin{proof} First assume that $x \not\in IS$ and let $n$ be the smallest value such that $x_n$ is not isolated in $X_n$.
 If $n = 1$, then since $(X_1,R_1)$ is wac, it follows that $x_1$ is a cycle point in $X_1$. If $n > 1$, then
 $x_{n-1}$ is isolated in $X_{n-1}$ and with $x_n = (x_{n-1},y)$ we have that $y$ is non-isolated in the wac
 tournament $(Y_{(n-1)x_{n-1}},P_{(n-1)x_{n-1}})$ and so $y$ is a cycle point in $Y_{(n-1)x_{n-1}}$. Consequently,
 $x_n$ is a cycle point in $X_n$. By Theorem \ref{theo3.08}, $\{ x \} = h_n^{-1}(x_n)$ and $x$ is a cycle point in $X$.

Now let $x$ be a point of $IS$.

 If  for infinitely many $i \in \N$, with $x_{i+1} = (x_i,y_i), \ \ y_i$ is contained in a $3-$cycle in $Y_{ix_i}$,
 then it  follows, as in the proof of Theorem \ref{theo3.07a} that $x$ is a cycle point.

Conversely, assume that $x$ is a cycle point. For $j$ arbitrarily large, $x_j$ isolated implies that
 $h_j^{-1}(x_j)$ is a neighborhood of $x$ and so contains a $3-$cycle $\{ x, x', x'' \}$. Let $k+1$ be the minimum index $i$ such that
 $x_i \not= x'_i$ so that $k \ge j$. Because $h_{k+1}^{-1}(x_{k+1})$ and $h_{k+1}^{-1}(x'_{k+1})$ are disjoint and $Q$ invariant,
 it cannot happen that $x''$ lies in
 either them. Hence, $\{ x_{k+1}, x'_{k+1}, x''_{k+1} \}$ is a $3-$cycle in $X_{k+1}$. If $x_{k+1} = (x_k,y), x'_{k+1} = (x_k,y')$ and
 $x''_{k+1} = (x_k,y'')$, then $\{ y, y', y'' \}$ is a $3-$cycle in $Y_{kx_k}$ as required.

 If, instead, $(Y_{ix_i},P_{ix_i})$ is trivial for some $i$, then Condition (ii) implies, inductively, that
$(Y_{jx_j},P_{jx_j})$ is trivial for all $j \ge i$. It follows that $\{ x \} = h_i^{-1}(x_i)$ and so
  $x$ is isolated in $X$.

 Conversely, if $x \in IS$ is isolated, i.e. $\{ x \}$ is clopen,
 then $\{ x \} = \bigcap_i h_i^{-1}(x_i)$ implies
 that for some $i, \ \ \{ x \} = h_j^{-1}(x_j)$ for all $j \ge i$.  So
 $\{ x_j \} \times Y_{jx_j} = f_j^{-1}(x_j) = \{ x_{j+1} \}$ for all $j \ge i$.
 Thus, $(Y_{jx_j},P_{jx_j})$ is trivial for all $j \ge i$.

 If $(Y_{ix_i},P_{ix_i})$ trivial, then $f_i^{-1}(x_i)$ is a singleton. If $(Y_{ix_i},P_{ix_i})$ is point cyclic, then
 with $x_{i+1} = (x_i,y_i), \ \ y_i$ is contained in a $3-$cycle in $Y_{ix_i}$.  Hence, the point cyclic assumption implies
 Condition (iv).

 The arc cyclicity results follow from
 Corollary \ref{classcor04}.

 If $\{ (X'_i,R'_i,f'_i) \}$  is a classifier for $(X,R)$, we use Conditions (i) and (ii) to inductively construct the isomorphisms
  $q_i : (X_i,R_i) \to (X_i',R_i')$ which satisfy
  (\ref{classeq01}).  Observe that Conditions (i)-(iii) imply that if $(Y_{ix_i},P_{ix_i})$ is a non-trivial finite order, then
$(Y_{(i+1)x_{i+1}},P_{(i+1)x_{i+1}})$ is either a prime tournament and not an arc, or else it is trivial.  Either of these implies that
$$h_{i+1} : (h_i^{-1}(x_i),R|h^{-1}_i(x_i)) \to (f_i^{-1}(x_i),R_{i+1}|f^{-1}_i(x_i))$$
is a maximum order quotient map.
 \end{proof}
 \vspace{.5cm}

 \begin{lem}\label{classlem06} Assume that $(X,R)$ is a prime tournament which is not an arc and that $x \in X$.
 If $x$ is not initial, terminal or balanced, then it is contained in a cycle.  In particular,
 if $X$ is finite, then $R$  is point cyclic. \end{lem}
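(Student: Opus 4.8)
The plan is to prove the contrapositive: I will show that a point $x$ lying on no $3$-cycle must be initial or terminal. The starting observation is that $x$ is contained in a $3$-cycle exactly when there exist $y \in R^{\circ}(x)$ and $z \in R^{\circ -1}(x)$ with $y \ha z$, giving the cycle $x \ha y \ha z \ha x$. Hence the failure of this condition says precisely that $R^{\circ -1}(x) \ha R^{\circ}(x)$, i.e. every in-neighbor of $x$ beats every out-neighbor of $x$.

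Granting this, I would form the closed set $A = R^{-1}(x) = \{ x \} \cup R^{\circ -1}(x)$ and show it is $Q$ invariant by the criterion of Proposition \ref{qprop02}(a). So for each $z \in X \setminus A = R^{\circ}(x)$ I must verify $A \subset R(z)$ or $A \subset R^{-1}(z)$. Since $x \in A$ and $x \ha z$, the point $x$ does not lie in $R(z)$, ruling out the first alternative; and the second, $A \subset R^{-1}(z)$, amounts to $w \ \underline{\ha} \ z$ for every $w \in A$, which holds for $w = x$ (as $x \ha z$) and for $w \in R^{\circ -1}(x)$ by the relation $R^{\circ -1}(x) \ha R^{\circ}(x)$. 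Thus $A$ is a closed $Q$ invariant subset. By Definition \ref{qdef08} the only closed $Q$ invariant subsets of a prime tournament are the singletons and the whole space, so $A = \{ x \}$, forcing $R^{\circ -1}(x) = \emptyset$ and $x$ initial, or $A = X$, forcing $R^{\circ}(x) = \emptyset$ and $x$ terminal. Both contradict the hypothesis, so $x$ lies on a $3$-cycle. (This implication in fact uses only that $x$ is neither initial nor terminal; the balancedness assumption and the exclusion of the arc are not needed here, but are harmless.)

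For the final clause, suppose $X$ is finite; being prime and not an arc it has at least three points, and it has no balanced point since balanced points occur only in infinite spaces. It has no terminal point $M$ either: otherwise $X \setminus \{ M \}$, which is closed in the discrete topology and satisfies $X \setminus \{ M \} \subset R^{-1}(M)$ because $M$ is terminal, would be a $Q$ invariant subset (again via Proposition \ref{qprop02}(a)) of size at least two, contradicting primeness; symmetrically there is no initial point. Hence every point of $X$ meets the hypotheses of the main statement and so lies on a $3$-cycle, i.e. $R$ is point cyclic.

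The only delicate point is the $Q$-invariance verification: one must recognize that the negated $3$-cycle condition is exactly the relation $R^{\circ -1}(x) \ha R^{\circ}(x)$ needed to push $A = R^{-1}(x)$ through the criterion of Proposition \ref{qprop02}(a), after which primeness does all the remaining work. I expect no other obstacle.
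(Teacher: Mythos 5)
Your proof is correct, but it takes a genuinely different route from the paper's. The paper starts from the same key observation (a point on no cycle satisfies $a \ha x \ha b \Rightarrow a \ha b$), but then it exploits the failure of balancedness: if $x$ is, say, not left balanced, then $R^{\circ -1}(x)$ is clopen, so sending $R^{\circ -1}(x) \mapsto 0$ and $R(x) \mapsto 1$ is a continuous quotient map onto an arc, and Theorem \ref{qtheo09}(a) (for a prime tournament every quotient map onto a non-trivial tournament is an isomorphism) then forces $(X,R)$ to be an arc, contradicting the hypothesis. So in the paper's argument the exclusion of balanced points is essential — it is what makes the two-piece map continuous — and the ``not an arc'' hypothesis is what gets contradicted. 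You instead feed the set $A = R^{-1}(x)$, which is closed automatically with no balancedness needed, into the definition of primeness via closed $Q$ invariant sets, using the criterion of Proposition \ref{qprop02}(a); primeness then leaves only $A = \{x\}$ ($x$ initial) or $A = X$ ($x$ terminal). This is more elementary (no quotient construction, no appeal to Theorem \ref{qtheo09}) and it proves a strictly stronger statement: in any prime tournament, every point that is neither initial nor terminal lies on a $3$-cycle, so the balancedness alternative and the exclusion of the arc in the lemma are indeed artifacts of the paper's proof method, as you observe. For the finite clause, the paper leaves the nonexistence of initial and terminal points implicit (it follows from Proposition \ref{qprop09a} together with Theorem \ref{qtheo09}(a): an isolated terminal point would give an arc quotient, making the prime tournament an arc), whereas you establish it by the same $Q$-invariance device applied to $X \setminus \{M\}$; both routes are valid.
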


 \begin{proof} Assume that $(X,R)$ is a prime tournament and $x \in X$ is not contained in a cycle. Then for every $a, b \in X, \ \ a \ha x \ha b$ implies
 $a \ha b$. If $x$ is neither terminal nor initial, then both $R^{\circ -1}(x)$  and $R^{\circ}(x)$
are nonempty. If $x$ is not left balanced, then $R^{\circ -1}(x)$ is closed and $R^{\circ -1}(x) \mapsto 0$ and $R(x) \mapsto 1$
 is a quotient map to an arc. Since $(X,R)$ is prime, it is an arc.  Similarly, if $x$ is not right balanced, then
 $(X,R)$ is an arc.

 In a finite prime tournament, every point is isolated and so no point is  balanced. If it is not an arc, then it has no initial
 nor terminal point.

 \end{proof}
 \vspace{.5cm}

  \begin{add}\label{classadd07} In the  construction of Theorem \ref{classtheo05}, assume that every $(Y_{ix_i},P_{ix_i})$ is a finite tournament.
 A point $x \in X$ is a cycle point if and only if it satisfies the condition:
     \begin{itemize}
 \item[(iv'')] For  infinitely many $i \in \N$, with  $(Y_{ix_i},P_{ix_i})$ is a prime tournament which is not an arc.
 \end{itemize}

So $(X,R)$ is wac if and only if (iv'') or (iv') holds for every $x \in X$.
 \end{add}

 \begin{proof} Because the $(Y_{ix_i},P_{ix_i})$'s are finite, $X = IS$. Then Lemma \ref{classlem06} implies that conditions (iv) and (iv'')
 are equivalent.

 \end{proof}
 \vspace{.5cm}

 \begin{exes}\label{exes05}Uncountably many distinct arc cyclic tournaments on the Cantor set.\end{exes}

 Let $(Y_0,P_0)$ and $(Y_1,P_1)$ be the regular tournaments with $|Y_0| = 3$ and $|Y_1| = 5$. So
 $Y_0$ consists of a single $3-$cycle. Each of these is an arc cyclic, prime tournament. This is easy to check but we will verify
 these statements in the next section.
 \vspace{.25cm}

 (a) Let $\th \in \{ 0, 1 \}^{\N}$. Let $\{ (X_i,R_i,f_i) \} $ be the inverse system with
 $(X_1,R_1)  =  (Y_{\th_1},P_{\th_1})$ and $(X_{i+1},R_{i+1}) = (X_i,R_i) \ltimes (Y_{\th_{i+1}},P_{\th_{i+1}})$.
 That is, we use the construction of Theorem \ref{classtheo05} with $(Y_{ix_i},P_{ix_i}) = (Y_{\th_{i+1}},P_{\th_{i+1}})$
 for all $i \in \N$ and $x_i \in X_i$. It follows from the theorem that the inverse limit $(X_{\th},R_{\th})$ is an arc cyclic tournament on
 a Cantor set. Furthermore, if $\th \not= \th'$, then uniqueness of the classifiers implies that $(X_{\th},R_{\th})$
 is not isomorphic to $(X_{\th'},R_{\th'})$. If we exclude the countable set of $\th$'s which are eventually $0$ or eventually $1$,
 then remaining uncountable family of tournaments are all group tournaments associated with closed game subsets on the same group, namely a product of
 a countable number of $\Z/3\Z$'s with a countable number of $\Z/5\Z$'s. Alternatively, we can use the group structure which is
 the product of the $3-$adics with the $5-$adics.
 \vspace{.25cm}

 (b) The tournament $(Y_1,P_1)$ is a group tournament on the cyclic group $\Z/5\Z$ and the only automorphisms of $(Y_1,P_1)$ are
 translations by elements of the group, see, e.g. \cite{A20} Theorem 3.9. Choose $x_1, x_2 \in \Z/5\Z$ with $x_1 \not= e \not= x_2$
 and $x_2 \not= x_1 \not= x_2^{-1}$. For example, choose $x_1, x_2$ the two distinct members of the game
 subset $A$. There is then no automorphism of $(Y_1,P_1)$ which maps the pair $\{ e, x_1 \}$ to $\{ e, x_2 \}$.
 Now let $(X_1,R_1) = (Y_1,P_1)$ and let $(X_2,R_2) = (X_1,R_1) \ltimes \{ (Y_{1x},P_{1x}) \}$ with
 $(Y_{1x},P_{1x}) = (Y_1,P_1)$ for $x = e, x_1$ and $= (Y_0,P_0)$ otherwise. Alternatively, let
 $(X'_2,R'_2) = (X_1,R_1) \ltimes \{ (Y'_{1x},P'_{1x}) \}$ with
 $(Y'_{1x},P'_{1x}) = (Y_1,P_1)$ for $x = e, x_2$ and $= (Y_0,P_0)$ otherwise. Now fix $\th \in \{ 0, 1 \}^{\N}$.
 Let $(X,R) = (X_2,R_2) \ltimes (X_{\th },R_{\th })$ and $(X',R') = (X'_2,R'_2) \ltimes (X_{\th },R_{\th })$.
 Then $(X,R)$ and $(X',R')$ are  arc cyclic tournaments on the Cantor set which are not isomorphic despite the fact that the sets
 $\{ (Y_{ix_i},P_{i,x_i}) : x \in X \} $ and $\{ (Y_{ix_i},P_{i,x_i}) : x \in X' \} $ are equal for every level $i$.
 \vspace{.25cm}

 (c)  Let $Y_2 = \{ a_1, a_2, b_1, b_2, c \}$ and on it define the tournament $P_2$ to consist of
  \begin{align}\label{exeq} \begin{split}
  (a_1, b_1), \ \ (b_2, a_2), \quad (a_1, b_2), &\ \ (a_2, b_1), \quad (a_1, a_2), \ \ (b_1, b_2), \\
  (c, a_1), \ \ (c, a_2), &\quad (b_1,c), \ \ (b_2,c).
  \end{split}\end{align}

 We have $3-$cycles $\{ a_1, b_i, c \}, \{ c, a_i, b_1 \}$ for $i = 1,2$ and $\{ b_1, b_2, a_2 \}$. Thus, every arc is in a
 $3-$cycle except for $(a_1, a_2)$. Also $b_2 \in P_2(a_1) \cap P_2^{-1}(a_2)$.

 It easily follows that $(Y_2,P_2)$ is prime and the maximal arc cyclic subsets are $Y_2 \setminus \{ a_1 \}$ and $Y_2\setminus \{ a_2 \}$.
 Notice that the restriction of $P_2$ to neither of these subsets is arc cyclic.

 Let $\{ (X_i,R_i,f_i) \} $ be the inverse system with
 $(X_1,R_1) \ = \ (Y_{2},P_{2})$ and $(X_{i+1},R_{i+1}) = (X_i,R_i) \ltimes (Y_{2},P_{2})$.

 It follows from Addendum \ref{classadd07} that for the limit system $(X,R)$ every point of $X$ is a cycle point and so $(X,R)$ is wac.
 On the other hand, if $x, x' \in X$ with $x_i = x'_i$ and $x_{i+1} = (x_i,a_1), x'_{i+1} = (x_i,a_2)$, then the arc $(x, x')$ is not
 contained in any $3-$cycle in $X$. It follows that $X$ contains no nonempty, open, arc cyclic subset.
 \vspace{1cm}

  \section{ \textbf{Prime Tournament Constructions}}\vspace{.5cm}

  Throughout our examples below, for a set $J$ when we consider the product $J \times \{ -1, +1 \}$ we will write for
  $a \in J \ \ a- = (a,-1)$ and $a+ = (a,+1)$ and similarly write
  $J\pm$ for $J \times \{ \pm 1 \}$.
  \vspace{.25cm}

  \subsection{ \textbf{Doubles and Reduced Doubles}}

  \begin{exes}\label{exesnew01}\end{exes}

  (a)  For $(J,P)$ a finite tournament, we follow \cite{A20} Section 6, to define the
  \emph{double}\index{double}\index{tournament!double} $2(J.P) = (2J,2P)$ \index{$2(J,P)$}  to be a tournament
  on $2J = \{ 0 \} \cup J \times \{ -1, +1 \}$.

  The tournament $2P$ is defined as follows.
  \begin{align}\label{exeq01} \begin{split}
a \in J \quad &\Longrightarrow \quad a- \ha a+, \ a+ \ha 0,\ 0 \ha a- \ \ \text{in} \  2P. \\
a \ha b \ \ \text{in} \ P \quad &\Longrightarrow \quad  a+ \ha b+, \ a- \ha b-, \ b+ \ha a-, \ b- \ha a+\ \ \text{in} \ 2P.
\end{split}\end{align}

The \emph{reduced double}\index{reduced double}\index{double!reduced}\index{tournament!double!reduced}
$2'(J,P) = (2'J,2'P)$ \index{$2'(J,P)$}is the restriction of the double to $J \times \{ -1, +1 \}$.  That is, we remove the point $0$.
Thus, the double of a trivial tournament is a $3-$cycle and its reduced double is an arc.

We will call the tournament $(J,P)$ \emph{irreducible}\index{irreducible}\index{tournament!irreducible} if for every
pair $a  \not= b$ in $J$ there exists $c \in J$ such that either $\{ a, b \} \subset P^{\circ}(c)$ or
$\{ a, b \} \subset P^{\circ -1}(c)$.  We will explain later the reason for the label. Clearly, a tournament is irreducible
when for every $a, b \in J, \ \ Q(a,b) \not= J$.

\begin{theo}\label{extheo01} For a finite tournament $(J,P)$ the double $2(J,P)$ is regular, arc cyclic and prime with
$|2J| = 2 |J| + 1$.

If $(J,P)$ is irreducible, then the reduced double $2'(J,P)$ is arc cyclic and prime with $|2'J| = 2 |J|.$
\end{theo}

\begin{proof} A double is always regular and so is arc cyclic. Directly, observe that if $a \ha b $ in $P$, then
$\{ a-, b-, b+ \}$ and $\{ a+, b+, a- \}$ are $3-$cycles in $2'J$. So if $U$ is a $Q$ invariant subset for $2'(J,P)$ and
any pair among the four points  $\{ a-, b-, a+, b+ \}$ other than $\{ a+, b- \}$ is contained in $U$ then all four points are
contained in $U$.

Furthermore, $\{ a+, 0, b- \}$ and $\{ a+, 0, a- \}$ are $3-$cycles in $2J$. It easily follows directly that the double is
arc cyclic and prime.

If $\{ a, b \} \subset P^{\circ}(c)$, then $\{ a+, c-, b- \}$ is a $3-$cycle in $2'J$. If
$\{ a, b \} \subset P^{\circ -1}(c)$, then $\{ a+, c+, b- \}$ is a $3-$cycle in $2'J$. So if $\{ a+, b- \}$ is contained in $U$
then either $\{ c-, b- \}$ or $\{ a+, c+ \}$ is contained in $U$ as well.  In the first case
it follows as above that all four points of $\{ c-, b-, c+, b+ \}$
are contained in $U$ and in the second, all four points of $\{ c-, a-, c+, a+ \}$ are contained in $U$.  It easily follows that
the reduced double is arc cyclic and prime.

The cardinality results are obvious.

 \end{proof}
 \vspace{.5cm}

 \begin{cor}\label{excor02} For every odd number $2n+1 \ge 3$ there  are arc cyclic, prime tournaments of order $2n + 1$.
For every even number $2n \ge 8$ there are arc cyclic, prime tournaments of order $2n$. \end{cor}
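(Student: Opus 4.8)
The plan is to obtain every order in the two ranges as the double or the reduced double of a suitable finite tournament, so that arc cyclicity and primeness come for free from Theorem~\ref{extheo01}.

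For the odd case this is immediate. Given $2n+1 \ge 3$, so $n \ge 1$, pick any tournament $(J,P)$ with $|J| = n$---for instance a linear order, or the trivial tournament when $n = 1$. Theorem~\ref{extheo01} then says $2(J,P)$ is arc cyclic and prime, and $|2J| = 2|J| + 1 = 2n+1$. Letting $n$ run through all positive integers realizes every odd order $\ge 3$, the case $n=1$ recovering the $3$-cycle already mentioned.

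For the even case I would reduce the problem, via Theorem~\ref{extheo01}, to producing an \emph{irreducible} tournament on $n$ vertices for each $n \ge 4$; its reduced double is then arc cyclic and prime of order $2|J| = 2n$, and $2n$ runs through all even numbers $\ge 8$. To build such a $(J,P)$, I would start from a tournament $(J', P')$ on $J' = J \setminus \{v\}$ with $|J'| = n-1 \ge 3$ having \emph{no sink}, i.e. every vertex dominates at least one other; such tournaments exist for all $m = n-1 \ge 3$, e.g. any tournament containing a spanning directed cycle $0 \to 1 \to \cdots \to (m-1) \to 0$ with the remaining pairs oriented arbitrarily. I then adjoin a new vertex $v$ with $v \to x$ for every $x \in J'$.

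It remains to check irreducibility of $(J,P)$. For a pair $\{a,b\}$ with $a, b \ne v$ the vertex $v$ is a common dominator, so $\{a,b\} \subset P^{\circ}(v)$. For a pair $\{v,a\}$, since $a$ is not a sink of $(J',P')$ there is $c \in J' \setminus \{a\}$ with $a \to c$; as $v \to c$ as well, we get $\{v,a\} \subset P^{\circ -1}(c)$. Thus every pair has a common dominator or a common dominatee, so $(J,P)$ is irreducible. The main obstacle---and the only nonroutine point---is exactly this construction of arbitrarily large irreducible tournaments together with its verification; note that it cannot be pushed below $n = 4$, since there is no irreducible tournament on two or three vertices (an arc has no third vertex, and on three vertices both the transitive tournament and the $3$-cycle fail), which is precisely why the reduced-double method yields even orders only from $8$ onward.
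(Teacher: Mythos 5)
Your proof is correct, and its overall architecture is exactly the paper's: the odd case is handled identically (double an arbitrary tournament of order $n$ and invoke Theorem~\ref{extheo01}), and the even case is reduced, again via Theorem~\ref{extheo01}, to producing an irreducible tournament of every order $n \ge 4$ and taking its reduced double. The only divergence is the concrete construction of that irreducible tournament, and your verification of it is sound. The paper starts from an arbitrary tournament $(J_0,P_0)$ and adjoins three points: an initial point $m$ and terminal point $M$ for $J_1 = J_0 \cup \{m,M\}$, then a point $p$ with $p \ha m$, $p \ha M$ and $x_0 \ha p$ for some $x_0 \in J_0$; the resulting tournament keeps $M$ as a sink, so pairs avoiding $M$ have the common dominated vertex $M$, and the pairs involving $M$ are handled by $m$, $p$, and $x_0$. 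Your construction is the dual idea done more economically: adjoin a single source $v$ to a sink-free tournament on $n-1 \ge 3$ vertices, so pairs avoiding $v$ have common dominator $v$, and pairs $\{v,a\}$ are pushed into the inset of any out-neighbor of $a$, which exists by the no-sink hypothesis. Both arguments need an auxiliary existence fact (the paper: none beyond choosing $J_0$; you: sink-free tournaments of every order $\ge 3$, which your Hamiltonian-cycle completion supplies), and both bottom out at $n=4$, hence order $8$, for the reason you correctly identify: no tournament on two or three vertices is irreducible.
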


\begin{proof} Beginning with any tournament of order $n$, including the trivial tournament with $n = 1$, the double of a tournament of
order $n$ is an arc cyclic, prime tournaments of order $2n + 1$.

Now begin with any tournament $(J_0,P_0)$. First attach two additional points $m, M$ to get $(J_1,P_1)$ with $J_1 = J_0 \cup \{m, M \}$,
and with $P_1$ extending $P_0$ so that $m$ is initial and $M$ is terminal in $J_1$.
 Now attach an additional point $p$ to get $(J_2,P_2)$ with $J_2 = J_1 \cup \{ p \}$ and with $P_2$ extending $P_1$ so that
 $p \ha m, \ p  \ha M$ and $x_0 \ha p$ in $P_2$ for some $x_0 \in J_0$. We check that $(J_2,P_2)$ is irreducible.

 The point $M$ is still terminal for $P_2$ and so any pair which does
 not include $M$ is contained in $P_2^{-1}(M)$. Any pair $\{ x, M \}$ with $x \in J_0$ is contained in $P_2(m)$. This
 takes care of all pairs except for $\{ m, M \} \subset P_2(p)$ and $\{ p, M \}\subset P_2(x_0)$.

 Thus, $(J_2,P_2)$ is irreducible and so its reduced double
 is arc cyclic and prime. The smallest case of this is with $(J_0,P_0)$ trivial. In that case $n = |J_2| = 4$ and so $2n = 8$.
 \end{proof}

 There also exists a prime tournament of order 6 which can be obtained from a regular tournament of order 7 by removing a suitable point.
 However, any tournament of order 4 has either a $3-$cycle or an arc as a quotient and so is not prime.
 \vspace{.5cm}

 If $J$ is the odd cyclic group $\Z/(2n + 1)\Z$ and the tournament $\widehat{A}$ is associated with the game subset $A = \{1, \dots, n \}$, then
the tournament $(J,\widehat{A})$ is isomorphic to the double of the order $(I,L)$ of length $n$, see \cite{A20} Example 6.5. Hence, it is
arc cyclic and prime. In particular, with $n = 2$ this applies to the unique regular tournament of order $5$. Notice that
if $2n+1$ is not a prime number, then Proposition \ref{prop3.03} implies that there is a game subset whose associated tournament is
isomorphic to a non-trivial lexicographic product and so is not prime.

On the other hand, if the odd order group $J$ is a non-cyclic group with any game subset or a cyclic group $\Z/(2n + 1)\Z$ with
game subset $A'$ such that $(J,\widehat{A'})$ is not isomorphic to $(J,\widehat{A})$ above, then by \cite{A20}Theorem 3.18, the tournament
$(J,\widehat{A'})$ is irreducible, as well as regular. Hence, its reduced double is prime.
 \vspace{.25cm}

(b)  For any topological tournament $(J,P)$ a  \emph{generalized reduced double}\index{generalized reduced double}
\index{double!generalized reduced}\index{tournament!double!generalized reduced} $2'(J,P) = (2'J, 2'P)$ is a topological tournament
on $J \times \{ -1, +1 \}$ such that the following conditions are satisfied.

\begin{itemize}
\item[(i)] The map $x+ \mapsto x$ is an isomorphism from the restriction to $J+ = J \times \{ +1 \}$ to $(J,P)$ and
$x- \mapsto x$ defines an isomorphism from the restriction to $J- = J \times \{ -1 \}$ to $(J,P)$.
\vspace{.25cm}

\item[(ii)]  The set $(2'P)\cap (J- \times J+)$ is a surjective relation from $J-$ to $J+$.  That is, for every $a \in J$,
there exist $a', a'' \in J$ such that $a'- \ha a+$ and $a- \ha a''+$ in $2'P$, i.e. $(2'P)^{-1}(J+) \supset J-$ and
$(2'P)(J-) \supset J+$.
\vspace{.25cm}
\end{itemize}

For the ordinary reduced double of a finite tournament, $a- \ha a+$ for all $a \in J$ implies condition (ii).

The lexicographic product of the arc on $\{ -1, +1 \}$ with $-1 \ha +1$  together with $(J,P)$ is a generalized reduced double.
However we will be primarily interested
in the cases when $2'(J,P)$ is prime.

 \vspace{.5cm}
 \subsection{ \textbf{Compact Countably Infinite Tournaments}}
\begin{exes}\label{exes06} \end{exes}
 \vspace{.25cm}

 (a) Let $\N^*$ \index{$\N^*$} be the one point compactification of the set $\N$ via the point $\infty$ at infinity.
\vspace{.25cm}

 We define\index{$N_0$}    the tournament $N_0 = (\N^*, L_0)$ with $L_0$ the linear order on $\N^*$, i.e.
 \begin{equation}\label{exeq01a}
 i \ha j \quad \Longleftrightarrow \quad i < j \quad \text{including } \ \ j = \infty.
 \end{equation}
 We will write $\bar N_0$\index{$\bar N_0$}  for the reverse tournament $(\N^*, L_0^{-1})$.

 The tournament $N_0$ has an arc quotient with infinitely many quotient maps to the arc.

 Notice that for any $i \in \N$, $|L_0^{-1}(i)| = i$. It follows that $N_0$ is rigid, i.e. the only automorphism of $N_0$ is the identity.
  \vspace{.5cm}

    We define \index{$N_1$}   the tournament $N_1 = (\N^*, L_1)$ with $L_1$ the linear order on $\N^*$ adjusted by reversing the arcs $(i, i+1)$
   for all $i \in \N$. Thus,
    \begin{equation}\label{exeq01b}
 i+1 \ha i \ha j \quad \Longleftrightarrow \quad i+1 < j \quad \text{including } \ \ j = \infty.
 \end{equation}
 The reverse tournament is $\bar N_1 = (\N^*, L_1^{-1})$.\index{$\bar N_1$}

 \begin{theo}\label{extheo01a} The tournament $N_1$ is prime and rigid. Furthermore, the restriction
 to any interval $\{k : i \le k \le j \}$ is  prime provided $j - i \ge 2$.\end{theo}

 \begin{proof}  Let $U$ be a closed, non-trivial $Q$ invariant subset of $\N^*$.

 Observe first that for any $i \in \N, \ \ c_i =  \{ i, i+2, i+1 \}$ is a $3-$cycle.

 Assume $i < j $ are in $U$.

 If $1 < i$, then $i \ha i-1 \ha j$ implies that $i-1 \in U$ and so, inductively,
 $i' \in U$ for all $i' < i$.

 If $j < \infty$, then $i \ha j+1 \ha j$ implies that $j+1 \in U$ and so
 $j' \in U$ for all $j' > j$ with $j' < \infty$. Hence, if $j = i+1, \ \ \N \subset U$.

 If $j = i+2$, then $i+1 \in U$ because of the $3-$cycle $c_i$. Again $\N \subset U$.

 If $j > i+2$, then  $i \ha k \ha j$ for all $k$ with $i+1 < k < j-1$ implies that such $k$ are in $U$.  Thus,
 $U$ contains every point of $\N$ except possibly $i+1$ and, if $j$ is finite, $j-1$. When $j$ is finite, $j+1 \in U$
 and so $i \ha j-1 \ha j+1$ implies $j-1 \in U$.  The $3-$cycle $c_i$ then implies that $i+1 \in U$.

 It follows that $\N \subset U$. Since $U$ is closed, $\infty \in U$.

 A similar argument shows that the restriction to an interval containing at least three points is prime.

 Assume that $h : N_1 \to N_1$ is a continuous tournament map with image non-trivial. Since $N_1$ is prime and not an arc, it follows
 that $h$ is a tournament isomorphism onto its image. Hence, $h(\infty) = \infty$. Furthermore, (\ref{exeq01b}) implies that
 $h(i+1) = h(i) + 1$. Thus, with $k = h(1)$ we have $h(i) = k + i - 1$ for all $i$.  In particular, if $h$ is surjective, it is
 the identity.

 \end{proof}
  \vspace{.5cm}

(b) Let $2\N^*$ \index{$2\N^*$}  be the one-point compactification of $ \N \times \{ -1, +1 \} $ by the point $\infty  $ at infinity. We will use
the label \index{$2N$}   $2N = (2\N^*, 2L)$ for a tournament which satisfies
  \begin{align}\label{exeq02aa} \begin{split}
  \N+ \ = \ (2L)^{\circ -1}(\infty), &\quad \N- \ = \ (2L)^{\circ}(\infty), \\
(i-, i+) \ \in \ 2L \quad &\text{for all} \ \ i \in \N.
\end{split}\end{align}

There are two important examples
\vspace{.25cm}

We define
 \index{$2N_0$}   the tournament $2N_0 = (2\N^*, 2L_0)$  so that the restriction of $2N_0$ to
 $\N^*+ = \N \times \{+1 \} \cup \{\infty \}$ is isomorphic to $N_0$ by $i+ \mapsto i$ and
the restriction of $2N_0$ to $\N^*- = \N \times \{-1 \} \cup \{\infty \}$ is isomorphic to $\bar N_0$ by $i- \mapsto i$. In addition,
  \begin{equation}\label{exeq02}
 i- \ha i+, (i+2)+ \qquad i+ \ha j- \quad \text{for all} \ \ j \not= i, i-2.
\end{equation}

 \begin{theo}\label{extheo01b} The tournament $2N_0$ is arc cyclic, prime and rigid. \end{theo}

\begin{proof} If $i < j$ and $i+2 \not= j$, then $$\{i-,  i+, j- \},\{ i+, j+, i- \}, \{ i+, \infty, i- \}, \{(i+2)+, \infty, i- \}$$ are $3-$cycles.
Thus, $2N_0$ is arc cyclic.

Assume $U$ is a non-trivial, closed $Q$ invariant subset. If any pair in $\{ i-, i+, (i+1)+,(i+1)- \}$ except $\{ (i+1)+,(i+1)- \}$
is contained in $U$, then
all four points are contained in $U$. Proceeding upward, we obtain $j-, j+ \in U$ for all $j \ge i$  as well as $\infty \in U$.
If $\{ (i+1)+,(i+1)- \} \subset U$, then because $j-, j+ \in U$ for all $j \ge i+1$  we have $\infty \in U$ because $U$ is closed. Because
$\{(i+2)+, \infty, i- \}$ is a $3-$cycle, it follows that $i- \in U$. Since $i-, (i+1)- \in U$ it follows that $\{ i-, i+, (i+1)+,(i+1)- \} \subset U$.
Thus, it follows that $2\N^* \subset U$ and so $2N_0$ is prime.

Because any automorphism of $2N_0$ would have to fix $\infty$ and because $N_0$ is rigid, it follows  that $2N_0$ is rigid.

 \end{proof}

\vspace{.25cm}

If we fix $n \in \N$ with $n > 3, \ \ 2L_0$ contains the countable set of arcs
$A_n \ = \ \{ ((i+k)+,i-) : i \in \N, 3 < k \le n \}$.
 If we reverse the arcs in any subset of $A_n$ we still have an arc cyclic, prime tournament.  An isomorphism between two such would have to
 be the identity on $2\N^*$ by rigidity of $N_0$ and $\bar N_0$. Thus, for distinct subsets of $A_n$
 the resulting tournaments are not isomorphic. In this way we obtain an uncountable number of distinct,
 countably infinite, compact, arc cyclic, prime tournaments each with a single non-isolated point.
  \vspace{.5cm}

   We  define
\index{$2N_1$}  the tournament $2N_1 = (2\N^*, 2L_1)$  so that the restriction of $2N_1$
 to $\N^*+ $ is isomorphic to $N_1$ by $i+ \mapsto i$ and
the restriction  to $\N^*- $ is isomorphic to $\bar N_1$ by $i- \mapsto i$. In addition,
 \begin{equation}\label{exeq02b}
 i- \ha i+, \qquad i+ \ha j- \quad \text{for all} \ \ j \not= i.
 \end{equation}

 \begin{theo}\label{extheo01c} The tournament $2N_1$ is arc cyclic, prime and rigid. The restriction to
 $\{ k-, k+ : i \le k \le j \}$ is arc cyclic and prime provided $j - i \ge 2$. \end{theo}

 \begin{proof} If $i+1 < j \le \infty$, then
$$\{ i-, i+, j- \}, \{ i-, i+, j+ \}, \{ (i+1)-, (i+1)+, i+ \}, \{ (i+1)+, i-, (i+1)- \}$$
are  $3-$cycles with $j- = j+ = \infty$ when $j = \infty$.
It follows that $2N_1$ is arc cyclic.

 Assume $U$ is a non-trivial, closed $Q$ invariant subset. If $U$ contains two points of $N^*+$, then because $N_1$ is prime, it
 follows that $N^*+ \subset U$.  Similarly, if $U$ contains two points of $N^*-$, then  $N^*- \subset U$. If either of these occurs then
 from the cycles it contains all $i+, i-$ and so $2\N^* \subset U$.

 Now we use the  cycles listed above.

 If $U$ contains $i+, i-$ for some
 $i$, then it contains $j+$ and $j-$ for all $j > i+1$ and so again  $2\N^* \subset U$.

 Now assume $U$ contains $i+, j-$ with $i \not= j$. If $j = i+1$, then $(i+1)+ \in U$. If $j = i-1$, then $i- \in U$. If $j > i+1$, then
 $i- \in U$. If $j < i-1$, then $j- \in U$. From the earlier computations it follows that $U = 2\N^*$ in these casees as well.

 Thus, $2N_1$ is prime.

 A similar computation works for the restriction.

 An automorphism must fix $\infty$. Again because $N_1$ is rigid, it follows that $2N_1$ is rigid.

  \end{proof}
  \vspace{.5cm}

 \subsection{ \textbf{Adjusting Lexicographic Products}}
  \begin{exes}\label{exesnew02}  \end{exes}

 (a) We assume that $(J,P)$ is a topological tournament with a generalized reduced double $2'(J,P)$ which is
  prime and  arc cyclic.  We also assume that $J$ does not have both an initial point and a terminal point. For example, in the finite case we may use
  $(J,P)$ any regular, irreducible tournament as in that case Theorem \ref{extheo01} says that
  the reduced double $2'(J,P)$ is arc cyclic and  prime.

  We will start with a topological lexicographic product and then alter the arc connections over certain pairs in the
  base of the product.
  \vspace{.25cm}

 We begin with the topological lexicographic product of $N_0 \ltimes \{ (Y_a,S_a) : a \in \N^* \}$ with $(Y_i,S_i) = (J,P)$ for all $i \in \N$
  and $(Y_{\infty},S_{\infty})$ trivial.  So the total space $Y = (\N \times J) \cup \{ \infty \}$.

   Leaving the other arcs unchanged we define $(Y,S)$ so that for each $i \in \N$, the  restriction
 $S|[\{ i, i+1 \} \times J]$ is isomorphic to $2'(J,P)$ by the map $(i,x) \mapsto x-$ and $(i+1,x) \mapsto x+$ for $x \in J$.

 \begin{theo}\label{extheo01d} The tournament $(Y,S)$ is prime with non-isolated terminal point $\infty$. Every point of $Y$ except for
 $\infty$ has an arc cyclic neighborhood. Furthermore, if $i < j$, then the restriction of $S$ to $ \{ k : i \le k \le j \} \times J$ is
 prime and locally arc cyclic.\end{theo}

 \begin{proof} For each point $(i,x)$  $\{ i, i+1 \} \times J$ is an arc cyclic neighborhood by assumption
 on $2'(J,P)$. Hence, $(Y,S)$ is an almost wac tournament.  It follows from Addendum \ref{qadd04a}
   that any closed, non-trivial $Q$ invariant subset $U$ is
 clopen.  If $U$ contains two points of  $\{ i, i+1 \} \times J$, then because $2'(J,P)$ is prime, $U$ contains
 $ \{ i, i+1 \} \times J$ and if $i > 1$, then
 $U$ contains $ \{ i-1, i \} \times J$ as well. Proceeding upwards and downwards,
 we obtain $\N \times J \subset U$ and since $U$ is closed, $\infty \in U$.

 Now assume $(i,x), (j,y) \in U$ with $j > i+1$ so that $(i,x) \ha (j,y)$. If $J$ does not have a terminal point, then there exists
 $x' \in J$ so that $(x,x') \in P^{\circ}$. Then $(i,x) \ha (i,x') \ha (j,y)$ and so $(i,x') \in U$ and as above $U = Y$.
 If $J$ does not have an initial point, then there exists
 $y' \in J$ so that $(y',y) \in P^{\circ}$. Then $(i,x) \ha (j,y') \ha (j,y)$ and so $(j,y') \in U$ and as above $U = Y$.

 If $(i,x), \infty \in U$ then for any $j > i+1$, $(i,x) \ha (j,x) \ha \infty$ and so $(j,x) \in U$. As before this implies $U = Y$.

  The same arguments work for the restriction to $ \{ k : i \le k \le j \} \times J$.

   \end{proof}
  \vspace{.5cm}

  The maximal arc cyclic subsets of $L$ are all of the form $\{i, i+1 \} \times J$. In particular, the isomorphism class of the restriction
  $ \{ k : i \le k \le j \} \times J$ is determined by the length $j - i$ since the restriction has exactly $j - i$ maximal arc cyclic subsets.

 If we had used $N_1$ instead of $N_0$ in the above construction we would have obtained the same tournament $(Y,S)$
  since $N_1$ was obtained from $N_0$ by reversing the $(i, i+1)$ arcs.

    Notice that if $(Y,S)$ is a compact tournament with a terminal point $M$ which is not isolated, and so is left balanced,
  it cannot happen that every arc not connected to $M$ is contained in a $3-$cycle. For suppose that $\{ y_n \}$ is a sequence
  in $Y \setminus \{ M \}$ which converges to $M$ and that $x \in  Y \setminus \{ M \}$.  Since $M \in S^{\circ}(x)$, eventually
  $x \ha y_n $. Suppose $z_n \in Y$ with $z_n \ha x \ha y_n $.
 We may assume $\{ z_n \}$ converges to a point $z$ so that $z \ \underline{\ha} \ x $ and, in particular, $z \not= M$.
Hence, $z \ha M$. Since $(z_n,y_n)$ converges $(z,M)$, eventually $(z_n, y_n) \in S^{\circ}$ and so eventually
$\{ z_n, x, y_n \}$ is not a $3-$cycle. That is, eventually the pair $\{ x, y_n \}$ is not contained in any  $3-$cycle.

For the arc $ (\{ 0, 1 \}, L)$ with $\{ 1 \}  = L^{\circ}(0)$ we consider the lexicographic product with  $(Y,S)$, defining
  $(\tilde Y, \tilde S) = (\{ 0, 1 \},L) \ltimes (Y,S)$.  The first coordinate projection map, $\pi$, is an arc quotient map.
  However, we have another prime quotient map
  $\tilde \pi : (\tilde Y, \tilde S) \to (Y,S)$ given by, with $x \in L$:
   \begin{equation}\label{exeq04}
  \tilde \pi(0,x) \ = \ x, \quad \text{and} \quad  \tilde \pi(1,x) \ = \ \infty.
\end{equation}

  Thus, the prime quotients of $(\tilde Y, \tilde S)$ exist, but are not unique.

 \vspace{.5cm}

  (b) Now assume that $(Z,P)$ is a topological tournament with\\ $\{ Z-, Z+ \}$ a partition of $Z$ by two disjoint clopen subsets
  such that the relation $P \cap (Z- \times Z+)$ is surjective. We will write $P\pm$ for the restriction $P|Z\pm$.

  For example, we may use $(Z,P)$ equal to the generalized double $2'(J,P)$ as in part (a).

  Now let $2N = (2\N^*, 2L)$ be a tournament satisfying (\ref{exeq02aa}).

  To define $(K,T)$ we begin with the topological lexicographic product of
  $2N \ltimes \{ (Y_{a},S_{a}) : a \in 2\N^* \ \}$ such that for all $i \in \N$
   \begin{align}\label{exeq04aa}\begin{split}
 (Y_{i+},S_{i+}) \ = \ &(Z+,P+), \\ (Y_{i-},S_{i-}) \ = \ &(Z-,P-)
 \end{split}\end{align}
  and with $(Y_{\infty},S_{\infty})$ trivial. Thus, the total space $K = (\N+ \times Z+) \cup (\N- \times Z-) \cup \{ \infty \}$.

  Leaving the other arcs unchanged we define $(K,T)$ so that for each $i \in \N$, the  restriction to
 $[(\{ i- \} \times Z-) \cup (\{  i+ \} \times Z+)]$ is isomorphic to $(Z,P)$ by the map $(i-,z-) \mapsto z-$
 for $z- \in Z-$ and $(i+,z+) \mapsto z+$ for $z+ \in Z+$.

  Recall that we defined a section for a topological lexicographic product. In this case, given any  function $\tilde \xi : \N+ \cup \ \N- \to Z$ with
  $\tilde \xi(\N+)  \subset Z+$ and $\tilde \xi(\N-)  \subset Z-$, the associated section $\xi : 2\N^* \to K$ is defined by
  $\xi(i\pm) = (i\pm, \tilde \xi(i\pm))$ and
  $\xi(\infty) = \infty$.  Lemma \ref{lemlex01b} says that any section $\xi$ is continuous and induces an isomorphism from
   $2N$ to the restriction of the corresponding lexicographic product to the image of $\xi$. We will restrict ourselves to
   sections which satisfy the condition

     \begin{equation}\label{exeq04ab}
     \tilde \xi(i-) \ha \tilde \xi(i+) \ \ \text{in} \ \ P \ \ \text{for all} \ \ i \in \N.
     \end{equation}

   This will imply that $\xi$ is a tournament isomorphism from $2N$ to the restriction $T|j(2\N^*)$.

    \begin{theo}\label{extheo01e} If $2N$ and $(Z,P)$ are both arc cyclic (or both prime) tournaments
    then $(K,T)$ is an arc cyclic (resp. prime) tournament. \end{theo}

    \begin{proof} Let $U$ be a closed, non-trivial $Q$ invariant subset of $K$.

    First consider a pair of points in $[(\{ i- \} \times Z-) \cup (\{ , i+ \} \times Z+)]$ for some $i$.
    If $(Z,P)$ is arc cyclic then such a pair is contained in an arc in $[(\{ i- \} \times Z-) \cup (\{ , i+ \} \times Z+)]$
      Furthermore, if  $(Z,P)$ is prime and $U$ contains such a pair,
    then it contains all of $[(\{ i- \} \times Z-) \cup (\{ , i+ \} \times Z+)]$.

    Given any other sort of pair, the assumption that $P \cap (Z- \times Z+)$ is surjective implies that there
    exists a section $\xi$ which contains the pair and, in particular, so that $\xi$ satisfies condition (\ref{exeq04ab}).
   If $2N$ is arc cyclic, then any such pair is contained in a
    $3-$cycle in $j(2\N^*)$. It follows that $(K,T)$ is arc cyclic when $2N$ and $(Z,P)$ are arc cyclic.

   If $2N$ is prime, it follows that if $U$ contains a pair of points in $j(2\N^*)$ then it contains all of $j(2\N^*)$.

    Hence, $U$ contains $j(2\N^*)$ for some section $\xi$.  If $\xi'$ is another section which agrees with $\xi$ at some pair of points,
    then $U$ contains $\xi'(2\N^*)$. By thus varying the sections, we see that $U$ contains $(\N+ \times Z+) \cup (\N- \times Z-)$.
    Since $U$ is closed, it
  contains all of $K$. Thus, $(K,T)$ is prime when
    $2N$ and $(Z,P)$ are prime.

   \end{proof}
  \vspace{.5cm}

  For two special cases we can use $2N = 2N_0$ and $2N = 2N_1$ which are arc cyclic, prime tournaments by Theorems \ref{extheo01b} and
  \ref{extheo01c}. We use the labels $(K,T_0)$ and $(K,T_1)$ for these special cases. Thus we have

   \begin{cor}\label{excor01ea} If $(Z,P)$ is an arc cyclic, prime tournament, then the
    tournaments $(K,T_0)$ and $(K,T_1)$ are arc cyclic and prime. \end{cor}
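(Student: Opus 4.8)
The plan is to obtain this as an immediate specialization of Theorem \ref{extheo01e}. That theorem is stated for an arbitrary tournament $2N = (2\N^*,2L)$ satisfying (\ref{exeq02aa}), together with a tournament $(Z,P)$ carrying the clopen partition $\{Z-,Z+\}$ with $P \cap (Z- \times Z+)$ surjective, and it asserts that the spliced tournament $(K,T)$ inherits arc cyclicity (respectively, primeness) whenever both $2N$ and $(Z,P)$ have that property. The tournaments $(K,T_0)$ and $(K,T_1)$ are, by definition, the outputs of this construction for the two choices $2N = 2N_0$ and $2N = 2N_1$, so the only thing to verify is that these two choices meet the hypotheses of Theorem \ref{extheo01e}.

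First I would record that both $2N_0$ and $2N_1$ are, by their very definitions, tournaments on $2\N^*$ satisfying (\ref{exeq02aa}): in each case the restriction to $\N^*+$ places every $i+$ below $\infty$ (so $\N+ = (2L)^{\circ -1}(\infty)$), the restriction to $\N^*-$ places $\infty$ below every $i-$ (so $\N- = (2L)^{\circ}(\infty)$), and the relation $i- \ha i+$ is imposed for all $i \in \N$. Hence the construction producing $(K,T_0)$ and $(K,T_1)$ is legitimate. Next, Theorem \ref{extheo01b} supplies that $2N_0$ is arc cyclic and prime, and Theorem \ref{extheo01c} supplies the same for $2N_1$. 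Since $(Z,P)$ is arc cyclic and prime by hypothesis—and carries the required clopen partition $\{Z-,Z+\}$ with surjective cross-relation as part of the standing setup of this subsection—both pairs $(2N_0,(Z,P))$ and $(2N_1,(Z,P))$ satisfy the ``both arc cyclic'' and the ``both prime'' conditions. Applying Theorem \ref{extheo01e} once to each pair then yields at once that $(K,T_0)$ and $(K,T_1)$ are arc cyclic and prime.

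There is essentially no obstacle here beyond the bookkeeping of confirming that $2N_0$ and $2N_1$ fulfill the input requirements of Theorem \ref{extheo01e}; all of the genuine work has already been done upstream, namely the verification that the two ``spine'' tournaments $2N_0$ and $2N_1$ are arc cyclic and prime (Theorems \ref{extheo01b} and \ref{extheo01c}) and the verification that the lexicographic-type splice preserves both properties (Theorem \ref{extheo01e}). Consequently the proof of the corollary reduces to a single invocation of that theorem for each of the two distinguished spines.
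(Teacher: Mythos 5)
Your proposal is correct and takes essentially the same approach as the paper: the corollary is stated there as an immediate consequence of Theorem \ref{extheo01e} applied to the two spines $2N = 2N_0$ and $2N = 2N_1$, whose arc cyclicity and primeness are supplied by Theorems \ref{extheo01b} and \ref{extheo01c}. Your extra bookkeeping step, checking that $2N_0$ and $2N_1$ satisfy (\ref{exeq02aa}), is implicit in the paper's definitions of these tournaments and is a harmless addition.
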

 \vspace{.5cm}

 If we use $(Z,P)$ equal to the reduced double of part (a), then  $K = [(\N- \ \cup \ \N+) \times J] \cup \{ \infty \}$.
\vspace{.5cm}
\subsection{ \textbf{The Attachment Construction}}

   \begin{df}\label{exdef04}For a tournament $(X,R)$ a subset $E$ of $X$ is called a \emph{spanning set}\index{spanning set} when
   it satisfies the following equivalent conditions.
   \begin{itemize}
   \item[(i)] The set $E$
   meets every input and output set, i.e.  for all $x \in X$, $R^{\circ}(x) \cap E$ and $R^{\circ -1}(x) \cap E$ are nonempty.

   \item[(ii)] There does not exists $x \in X$ such that either $E \subset R(x)$ or $E \subset R^{-1}(x)$.

 \item[(iii)] The  images   $R^{\circ}(E)$ and $(R^{\circ})^{-1}(E)$ each equal all of $X$.

 \item[(iv)] For every $x \in X$ there exist $x', x'' \in E$ such that $x' \ha x \ha x''$.
  \end{itemize}\end{df}
 \vspace{.5cm}

 If $E$ is a spanning set, then $Q(E \times E) = X$. Conversely, if $(X,R)$ is balanced and $E$ is open, then $Q(E \times E) = X$
 implies that $E$ is a spanning set, because in that case for any $x \in E$, both $R^{\circ}(x)$ and $R^{\circ -1}(x)$ meet $E$.

  If $E$ is a spanning set, then any subset which contains $E$ is a spanning set. We will be primarily interested in sets $E$ such that
  both $E$ and its complement are spanning sets.

 Spanning sets need not be large. Let $(J,P)$ be a finite tournament with three points $a, b, c \in J$ with $b \ha c $.
 It is easy to check that in the double $2(J,P)$
 each $3-$cycle $\{ 0, a-, a+ \}$ $\{ b-, b+, c+ \}$ is a spanning set and so each has a spanning set complement as well.
 If $(J,P)$ has no initial point, then $J-$ and $J+$ are complementary spanning sets for the reduced double $2'(J,P)$.

   \begin{prop}\label{exprop05} Let $(J,P)$ be a finite, regular tournament of size $2n + 1$. There are at least
   ${2n + 1 \choose n} - (2n + 1)(2n + 2)$ separate spanning sets $A$ with size $|A| = n$. If
   $n \ge 6$, then there are at least $n(2n+1)(n+1)$ such sets. For each such
   spanning set, the complement is a spanning set as well.\end{prop}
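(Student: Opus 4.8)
The plan is to reduce everything to a counting argument built on the regularity hypothesis. Since $(J,P)$ is regular of size $2n+1$, the outset $P^{\circ}(x)$ and inset $P^{\circ -1}(x)$ each have exactly $n$ elements, so $P(x) = P^{\circ}(x) \cup \{x\}$ and $P^{-1}(x) = P^{\circ -1}(x) \cup \{x\}$ each have exactly $n+1$ elements. I would use characterization (ii) of a spanning set: a subset $A$ fails to be spanning precisely when there is some $x \in J$ with $A \subseteq P(x)$ or $A \subseteq P^{-1}(x)$.

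For the first bound I would fix $x$ and count: the number of $n$-element subsets contained in the $(n+1)$-element set $P(x)$ is $\binom{n+1}{n} = n+1$, and likewise $n+1$ subsets lie in $P^{-1}(x)$. Summing over the $2n+1$ choices of $x$ gives a union bound, so the number of $n$-subsets that fail to be spanning is at most $(2n+1)\cdot 2(n+1) = (2n+1)(2n+2)$ (overcounting from distinct witnesses only strengthens the bound, since it gives an upper estimate on the bad sets). Subtracting from the total number $\binom{2n+1}{n}$ of $n$-subsets yields at least $\binom{2n+1}{n} - (2n+1)(2n+2)$ spanning sets of size $n$.

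For the complement statement I would first observe that an outset is never spanning: taking the witness $z = x$ in (ii), we have $P^{\circ}(x) \subseteq P(x)$, so $P^{\circ}(x)$ is not spanning, and dually $P^{\circ -1}(x) \subseteq P^{-1}(x)$ is not spanning. Now let $A$ be one of the counted spanning sets, so $|A| = n$ and $|A^{c}| = n+1 = |P(x)| = |P^{-1}(x)|$. Because the sizes match, $A^{c}$ could fail to be spanning only by equalling some $P(x)$ or some $P^{-1}(x)$, i.e. only if $A = J \setminus P(x) = P^{\circ -1}(x)$ or $A = J \setminus P^{-1}(x) = P^{\circ}(x)$; but these are insets and outsets, hence not spanning, contradicting that $A$ is spanning. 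Therefore $A^{c}$ is a spanning set.

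Finally, for the sharper estimate when $n \ge 6$, it suffices to show $\binom{2n+1}{n} - (2n+1)(2n+2) \ge n(2n+1)(n+1)$; collecting the common factor $(2n+1)(n+1)$ (using $2n+2 = 2(n+1)$) this is equivalent to $\binom{2n+1}{n} \ge (2n+1)(n+1)(n+2)$. I would verify the base case $n = 6$ directly ($\binom{13}{6} = 1716 \ge 13\cdot 7\cdot 8 = 728$) and then induct using the recurrence $\binom{2n+3}{n+1} = \tfrac{2(2n+3)}{n+2}\binom{2n+1}{n}$, which reduces the inductive step to the elementary inequality $2(2n+1)(n+1) \ge (n+2)(n+3)$, i.e. $(n-1)(3n+4) \ge 0$, valid for all $n \ge 1$. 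The main point—really the only substantive step—is the structural observation that insets and outsets are themselves non-spanning, which simultaneously gives the complement claim and guarantees the counted sets avoid those degenerate subsets; the two numerical inequalities are routine once this is in place.
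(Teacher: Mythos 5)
Your proof is correct. The core combinatorial content matches the paper exactly: the first bound is the same union bound over the $2(2n+1)$ sets $P(x)$, $P^{-1}(x)$, each of cardinality $n+1$ and hence containing $n+1$ subsets of size $n$; and the complement claim is derived, as in the paper, from the size coincidence $|A^{c}| = n+1 = |P(x)|$, which forces $A^{c} = P(x)$ or $P^{-1}(x)$ and hence $A = P^{\circ -1}(x)$ or $P^{\circ}(x)$, contradicting that $A$ is spanning (the paper phrases the contradiction via condition (i) of Definition \ref{exdef04}, you via condition (ii); these are interchangeable). Where you genuinely diverge is the $n \ge 6$ estimate. The paper writes the difference $\binom{2n+1}{n} - (2n+1)(2n+2)$ as $(2n+1)(n+1)\bigl[(\text{an explicit product of fractions})(n+2) - 2\bigr]$ and argues that the product of fractions exceeds $1$; you instead observe that the desired bound is equivalent to the single inequality $\binom{2n+1}{n} \ge (2n+1)(n+1)(n+2)$ and prove it by induction from the base case $n=6$, the inductive step reducing, via the ratio $\binom{2n+3}{n+1}/\binom{2n+1}{n} = 2(2n+3)/(n+2)$, to $(n-1)(3n+4) \ge 0$. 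The two routes establish the same inequality; yours is easier to verify line by line (the paper's cancellation of fractions with denominators $\ldots, 6, 10, 12$ takes some deciphering), while the paper's factorization exhibits directly how the bound emerges from the shape of the binomial coefficient. Incidentally, the inequality you isolate already holds at $n=5$, with equality ($\binom{11}{5} = 462 = 11 \cdot 6 \cdot 7$), so your base case has room to spare.
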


   \begin{proof} Each $P(x)$ and $P^{-1}(x)$ has size $n + 1$ and so contains $n + 1$ subsets of size $n$. Hence,
   there are at most $2 (2n + 1)(n + 1)$ sets of size $n$ which are contained in some $P(x)$ or $P^{-1}(x)$.
   Hence, $J$ contains at least ${2n + 1 \choose n} - (2n + 1)(2n + 2)$
   subsets of  size $n$ which are not contained in any $P(x)$ or $P^{-1}(x)$.  We can write the difference as
     \begin{equation}\label{exeq05}
    (2n+1)(n+1)[ (\frac{2n}{n+1} \frac{2n-1}{n} \cdots \frac{n+5}{6}\frac{n+4}{10}\frac{n+3}{12})(n+2) - 2]
 \end{equation}
Cancelling the initial $2$ into the $12$ and observing that $2n - 1 \ge n + 1$ when $n \ge 2$,
we see that when  $n \ge 6$, then the parenthesized expression is greater than $1$.

If $B$ is the complement of one of these sets $A$, then $|B| = n + 1$. So if $B$ is contained in some $P(x)$, then it equals $P(x)$ and
so $A$ is disjoint from $P^{\circ}(x)$, contra the assumption that $A$ is a spanning set.  Similarly, $B$ cannot be contained in any
$P^{-1}(x)$.  Thus, $B$ is a spanning set.

  \end{proof}
 \vspace{.5cm}

 Clearly, if $(X,R)$ has an initial or terminal point, then it does not admit a
  spanning set.

   \begin{theo}\label{extheo05a} If the topological tournament $(X,R)$ has no initial or terminal point, then
   the entire space $X$ is a spanning set.

   Assume that the compact topological tournament $(X,R)$ has no initial or terminal point and $X$ has no isolated points. If $U$ is an open
   spanning set, then $U$ contains a pairwise disjoint sequence of finite subsets $\{ H_i : i \in \N \}$ all with the same cardinality
   and such that each is a spanning set. \end{theo}

   \begin{proof}  Assume that the topological tournament $(X,R)$ has no initial or terminal point. It is clear that $X$ is a spanning set.

   If $U$ is an open spanning set, then for every $z \in X$, there exist $z-, z+ \in U$ such that $z \in R^{\circ}(z-) \cap R^{\circ -1}(z+)$.
   Choose  $\{ U_{z-}, U_z, U_{z+} \}$ a thickening of $\{ z-, z, z+ \}$ so that for every $x- \in U_{z-}, x+ \in U_{z+}$ we have
   $U_z \subset R^{\circ}(x-) \cap R^{\circ -1}(x+) $. By intersecting with $U$ we may assume that
   $U_{z-},  U_{z+} \subset U$.

    Let $\{ U_{z_j} : j = 1, \dots, k \}$ be a subcover of $X$. If
   $x_j- \in U_{z_j-}, x_j+ \in U_{z_j+},$ then $\{ x_j- : j = 1, \dots, k \} \cup \{ x_j+ : j = 1, \dots, k \}$ is a spanning set
   with cardinality $2k$.

   Now assume that $X$ has no isolated points.  We can then choose for each $j$ sequences of distinct points
   $\{ x_{ij}- : i \in \N \}$ in $U_{z_j-}$ and $\{ x_{ij}+ : i \in \N \}$ in $U_{z_j+}$. Since there are no isolated points, every open set is
   uncountable and so we can inductively make the choices so that for each $j$ none the points of $\{ x_{ij}- : i \in \N \} \cup \{ x_{ij}+ : i \in \N \} $
are contained in $\bigcup_{k< j} \ \{ x_{ik}- : i \in \N \} \cup \{ x_{ik}+ : i \in \N \}$.

   Let $H_i =   \{ x_{ij}- : j = 1, \dots, k \} \cup \{ x_{ij}+ : j = 1, \dots, k \}$ to define the pairwise disjoint sequence of spanning sets
   each with cardinality $2k$.

  \end{proof}
 \vspace{.5cm}

 An $n-$fold \emph{partition}\index{partition} $\{ C_1, \dots, C_n \}$ of a space $X$ is a cover by $n$ pair-wise disjoint clopen sets.
 It is called \emph{proper}\index{partition!proper}\index{proper partition} when no $C_i$ is empty.

A \emph{spanning set partition}\index{spanning set partition}  is a $2-$fold partition $\{ E, F \}$ of  $X$ by a pair  of complementary
spanning sets.

 \begin{prop}\label{exprop06a} Let $(X,R)$ be a topological tournament and $\{ E, F \}$ be a $2-$fold partition of $X$.
The following conditions are equivalent.
   \begin{itemize}
   \item[(i)] The restriction $R \cap (E \times F)$ is a surjective relation from $E$ to $F$.

   \item[(ii)] For every $a \in E, \ R(a) \cap F \not= \emptyset$ and for every $b \in F, \ R^{-1}(b) \cap E \not= \emptyset$.

   \item[(iii)] The images $R(E) \supset F$ and $R^{-1}(F) \supset E$.

   \item[(iv)] For every $a \in E,$ and $b \in F$, there exists $a' \in E, b' \in F$ such that $a \ha b',$ and $a' \ha b$.
 \end{itemize}
These conditions imply that neither $E$ nor $F$ is empty.

 Furthermore, the following conditions are equivalent.
   \begin{itemize}
  \item[(i)]  $R \cap [(E \times F) \cup (F \times E)]$ is a surjective relation on $X$.

   \item[(ii)] $R \cap (E \times F)$ is a surjective relation from $E$ to $F$ and $R \cap (F \times E)$ is a surjective relation from $F$ to $E$.

   \item[(iii)] $R(E) \cap R^{-1}(E) \supset F$ and $R(F) \cap R^{-1}(F) \supset E$.
   \end{itemize}

   If $\{ E, F \}$ is a spanning set partition,  then the restriction $R|[(E \times F) \cup (F \times E)]$ is a
surjective relation on $X$.  Conversely, if $(X,R)$ is balanced and $R|[(E \times F) \cup (F \times E)]$ is a
surjective, then $(E,F)$ is a spanning set partition.  \end{prop}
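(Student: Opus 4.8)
The plan is to treat the two blocks of equivalences as direct unwindings of the definition of a surjective relation from Section 2, and then to isolate the one genuinely non-formal step, which occurs in the converse of the final assertion. Throughout I would use that $E$ and $F$ are disjoint, so that for $a\in E$ and $b\in F$ any arc between a point of $E$ and a point of $F$ is automatically in $R^{\circ}$.

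For the first block I would start from the definition: $R\cap(E\times F)$, viewed as a relation from $E$ to $F$, is surjective exactly when $(R\cap(E\times F))(a)\neq\emptyset$ for every $a\in E$ and $(R\cap(E\times F))^{-1}(b)\neq\emptyset$ for every $b\in F$. Since $E$ and $F$ are disjoint these are $R(a)\cap F$ and $R^{-1}(b)\cap E$, which is precisely (ii), giving (i)$\iff$(ii). Using the identities of (\ref{eq1.01aa}) one reads off $R^{-1}(F)\supset E\iff(R(a)\cap F\neq\emptyset\text{ for all }a\in E)$ and $R(E)\supset F\iff(R^{-1}(b)\cap E\neq\emptyset\text{ for all }b\in F)$, so (ii)$\iff$(iii). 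Condition (iv) is the same pair of statements written with $\ha$, once one observes that the clause on $a$ and the clause on $b$ are independent; hence (ii)$\iff$(iv). The nonemptiness of $E$ and $F$ then follows: if $E=\emptyset$ while $F\neq\emptyset$, then (ii) would demand $R^{-1}(b)\cap E\neq\emptyset$ for some $b\in F$, which is impossible, and $X=E\cup F\neq\emptyset$ excludes both being empty.

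The second block is handled the same way, applied to $S=R\cap[(E\times F)\cup(F\times E)]$ regarded as a relation on $X$. I would compute $S(a)=R(a)\cap F$ and $S^{-1}(a)=R^{-1}(a)\cap F$ for $a\in E$, and $S(b)=R(b)\cap E$ and $S^{-1}(b)=R^{-1}(b)\cap E$ for $b\in F$; then $S$ surjective on $X$ means all four of these are nonempty at all such points. This conjunction is exactly ``$R\cap(E\times F)$ surjective from $E$ to $F$'' together with ``$R\cap(F\times E)$ surjective from $F$ to $E$'', giving (i)$\iff$(ii), and rewriting the four nonemptiness conditions as the containments $R(E)\cap R^{-1}(E)\supset F$ and $R(F)\cap R^{-1}(F)\supset E$ via (\ref{eq1.01aa}) gives (iii).

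For the final two sentences the forward direction needs no balancedness: if $E$ and $F$ are both spanning sets, then the spanning property of $F$ applied at each $a\in E$ yields $R^{\circ}(a)\cap F\neq\emptyset$ and $R^{\circ -1}(a)\cap F\neq\emptyset$, and the spanning property of $E$ applied at each $b\in F$ yields $R^{\circ}(b)\cap E\neq\emptyset$ and $R^{\circ -1}(b)\cap E\neq\emptyset$; these are the four conditions characterizing surjectivity of $S$ from the second block. The converse is the one step that is not purely formal, and is where I expect the only real difficulty. Assuming $(X,R)$ balanced and $S$ surjective, surjectivity supplies only the inter-part information: for $a\in E$, $R(a)$ and $R^{-1}(a)$ meet $F$, and for $b\in F$, $R(b)$ and $R^{-1}(b)$ meet $E$; it says nothing about $R^{\circ}(a)$ meeting $E$ itself. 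To recover this I would invoke balancedness together with the clopenness of the partition: since $a$ is right balanced, $a\in\overline{R^{\circ}(a)}$, so the open neighborhood $E$ of $a$ must meet $R^{\circ}(a)$, and left balancedness gives $E\cap R^{\circ -1}(a)\neq\emptyset$, with the symmetric statement for $b\in F$ using that $F$ is open. Combining these balancedness-derived ``self'' intersections with the surjectivity-derived ``cross'' intersections shows that $R^{\circ}(x)$ and $R^{\circ -1}(x)$ each meet both $E$ and $F$ for every $x\in X$, so both $E$ and $F$ are spanning sets and $\{E,F\}$ is a spanning set partition. The main obstacle is precisely recognizing that $S$ controls only the arcs between the two pieces, and that balancedness plus openness of the clopen pieces is exactly the mechanism, flagged in the remark following Definition \ref{exdef04}, that promotes membership in a closure into an actual neighborhood intersection.
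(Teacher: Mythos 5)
Your proof is correct and follows essentially the same route as the paper: the paper dismisses the two blocks of equivalences as "easy to check" (which your unwinding of the definition of surjective relation fills in), and for the final assertion it uses exactly your mechanism — the spanning property gives the cross intersections directly, while the converse combines surjectivity (for cross arcs) with balancedness plus openness of the clopen pieces (for the self intersections). No gaps; your identification of the balancedness step as the only non-formal ingredient matches the paper's own emphasis.
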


\begin{proof} The equivalences are easy to check. Definition \ref{exdef04}(iii) shows that a spanning set partition
satisfies $R(E) \cap R^{-1}(E) \supset F$ and $R(F) \cap R^{-1}(F) \supset E$. The converse holds when $(X,R)$ is balanced because
if a point $x$ is balanced, then $R^{\circ}(x)$ and $R^{\circ -1}(x)$ meet $E$ whenever $x $ is in the interior of $E$.

  \end{proof}
 \vspace{.5cm}

Now we develop the \emph{attachment construction}\index{attachment construction}. We begin with two examples.

 \begin{prop}\label{exprop06} Let $(X,R)$ be a topological tournament and $\{ E, F \}$ be a $2-$fold partition of $X$.

 Given a point $u$ not in $X$ let $X' \ = \ X \cup \{ u \}$ with $u$ isolated and define the topological tournament $R'$ on $X'$ by
 \begin{equation}\label{exeq06}
R'|X \ = \ R, \qquad \text{and} \quad R'^{\circ}(u) \ = \ F, \ \ R'^{\circ -1}(u) \ = \ E.
\end{equation}
When $X$ is compact, $X'$ is compact. If $(X,R)$ is wac, then $(X',R')$ is wac.

Assume that the relation $R \cap (F \times E)$ is surjective.

If $(X,R)$ is arc cyclic, then $(X',R')$ is arc cyclic.

If $(X,R)$ is prime, then $(X',R')$ is prime. \end{prop}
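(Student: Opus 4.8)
The plan is to verify first that $R'$ is a topological tournament and that $X'$ is compact, and then to treat the wac, arc cyclic, and prime assertions separately, using in each case that $u$ is isolated so that $X$ and $\{u\}$ are both clopen in $X'$. Since $\{E,F\}$ partitions $X$, there is exactly one arc between $u$ and each point of $X$, so $R'$ is total and antisymmetric. For closedness, decompose $X'\times X'$ into the four clopen blocks $X\times X$, $X\times\{u\}$, $\{u\}\times X$, $\{u\}\times\{u\}$; on these $R'$ restricts to $R$, to $E\times\{u\}$, to $\{u\}\times F$, and to $\{(u,u)\}$ respectively, each closed because $E,F$ are clopen and $R$ is closed. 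Compactness of $X'$ is immediate since it is the union of the compactum $X$ with one point. For the wac claim, adjoining an isolated point alters neither the isolated points of $X$ nor the $3$-cycles lying inside $X$, so every non-isolated point of $X'$ is a non-isolated point $x$ of $X$, hence a cycle point of $R$; given any $X'$-neighborhood $U'$ of $x$, the set $U'\cap X$ is an $X$-neighborhood containing a $3$-cycle through $x$, and that cycle lies in $U'$, so $x$ is a cycle point of $R'$.

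For arc cyclicity, assuming $R\cap(F\times E)$ surjective, the arcs inside $X$ are completed to $3$-cycles inside $X$ by the arc cyclicity of $(X,R)$. For an arc $u\ha y$ with $y\in F$, surjectivity of $R\cap(F\times E)$ as a relation from $F$ to $E$ supplies $z\in E$ with $y\ha z$, and then $u\ha y\ha z\ha u$ is a $3$-cycle (here $z\ha u$ because $z\in E$). For an arc $x\ha u$ with $x\in E$, the preimage part of surjectivity supplies $z\in F$ with $z\ha x$, and $x\ha u\ha z\ha x$ is a $3$-cycle (here $u\ha z$ because $z\in F$). Thus every arc of $R'$ lies in a $3$-cycle.

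For primeness, the decisive step is to show that for every closed $Q$ invariant subset $U$ of $X'$ the trace $U\cap X$ is a closed $Q$ invariant subset of $(X,R)$. Indeed, for $z\in X\setminus U$, $Q$ invariance of $U$ in $X'$ together with $R'(z)\cap X=R(z)$ and $R'^{-1}(z)\cap X=R^{-1}(z)$ gives $U\cap X\subseteq R(z)$ or $U\cap X\subseteq R^{-1}(z)$, which is exactly the criterion of Proposition \ref{qprop02}(a) for $U\cap X$ inside $X$. Since $(X,R)$ is prime, $U\cap X$ is either trivial or equals $X$. If $U\cap X=X$ then $U=X$ or $U=X'$; but $X$ is \emph{not} $Q$ invariant in $X'$, because surjectivity forces both $E$ and $F$ nonempty, so neither $X\subseteq R'(u)=F\cup\{u\}$ nor $X\subseteq R'^{-1}(u)=E\cup\{u\}$ holds, and hence $U=X'$. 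Otherwise $U\cap X$ is a singleton $\{x_0\}$ and $u\in U$, so $U=\{x_0,u\}$; $Q$ invariance would require $Q^{\circ}(x_0,u)=\emptyset$, but surjectivity produces a witness: if $x_0\in E$ there is $z\in F$ with $z\ha x_0$, so $u\ha z\ha x_0$, while if $x_0\in F$ there is $z\in E$ with $x_0\ha z$, so $x_0\ha z\ha u$; in either case $Q^{\circ}(x_0,u)\neq\emptyset$, a contradiction. Hence $U=X'$, and since $X'$ is compact and non-trivial, $(X',R')$ is prime.

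The main obstacle is the primeness argument. The tempting move of restricting directly to $X$ fails because $X$ itself is not $Q$ invariant in $X'$, so one must instead pass to the trace $U\cap X$ and invoke primeness of $(X,R)$ there. The real content then concentrates in eliminating the single leftover configuration $U=\{x_0,u\}$, and it is precisely here that the surjectivity hypothesis on $R\cap(F\times E)$ is consumed; the remaining parts are routine verifications once the clopen decomposition of $X'$ is in hand.
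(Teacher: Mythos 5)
Your proposal is correct and follows essentially the same route as the paper: the same $3$-cycles $\{a,u,b\}$ supplied by surjectivity of $R\cap(F\times E)$ handle arc cyclicity, and primeness is obtained by applying primeness of $(X,R)$ to the trace $U\cap X$ of a $Q$ invariant set and then using those cycles to absorb $u$ (or to rule out $U=\{x_0,u\}$). The only difference is presentational: you make explicit, via Proposition \ref{qprop02}(a), the step that $U\cap X$ is $Q$ invariant in $X$, which the paper's terser case analysis on pairs leaves implicit.
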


\begin{proof} The compactness and wac results are obvious.

If $b \in F$, there exists $a \in E$ with $b \ha a $ and if
$a \in E$, there exists $b \in F$ with $b \ha a$. In each case, $\{ a, u, b\}$ is a $3-$cycle.

It follows that $(X',R')$ is arc cyclic when $(X,R)$ is.

Now assume that $(X,R)$ is prime and that $U$ is a non-trivial, closed  $Q$ invariant subset of $X'$.

If any pair in $X$ is contained in $U$, then all of $X$ is contained in $U$. Using the above $3-$cycles we see that $u \in U$ as well.

If $\{ x, u \} \subset U$ for some $x \in X$, then the above $3-$cycles show that there exists $x'$ in the complementary member of the
pair $\{ E, F \}$ with $x' \in U$.  Again since $(X,R)$ is prime, $X \subset U$.

  \end{proof}
 \vspace{.5cm}

 \begin{prop}\label{exprop07} Let $(X,R)$ be a topological tournament and $\{ E, F \}$ be a $2-$fold partition of $X$.

  Given distinct points $u, v$ not in $X$ let $X'' \ = \ X \cup \{ u, v \}$ with $u, v$ isolated and define the tournament $R''$ on $X''$ by
 \begin{equation}\label{exeq07}\begin{split}
R''|X \ = \ R, \qquad \text{and} \quad R''^{\circ}(u) \ = \ F \cup \{ v \}, \ \ R''^{\circ}(v) \ = \ E, \\
\text{so that} \qquad R''^{\circ -1}(u)  \ = \ E, \ \ R''^{\circ -1}(v)  \ = \ F \cup \{ u \}. \hspace{1cm}
\end{split}\end{equation}
When $X$ is compact, $X''$ is compact. If $(X,R)$ is wac, then $(X'',R'')$ is wac.

Assume that  $R(E) \supset F$ and
 $R^{-1}(E) \supset F$, that is, for all $b \in F$, there exist $a, a' \in A$ such that $a \ha b \ha a'$.
 This assumption includes the possibility that $F = \emptyset$ and so $ EA = X$.

 If  $(X,R)$ is arc cyclic, then $(X'',R'')$ is arc cyclic.

Assume, in addition, that $F$ is nonempty.

If $(X,R)$ is prime, then $(X'',R'')$ is prime. \end{prop}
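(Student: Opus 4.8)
The plan is to dispose of the compactness and \emph{wac} assertions first, exactly as in Proposition \ref{exprop06}: since $u$ and $v$ are isolated and $X$ is clopen in $X''$, compactness of $X''$ follows from that of $X$, and the non-isolated points of $X''$ are precisely those of $X$, each of which remains a cycle point because a neighbourhood base and the relevant $3$-cycles can be taken inside the open set $X$. For the two remaining parts I would first record the arc structure of $R''$: reading off (\ref{exeq07}), one has $v \ha a \ha u$ for every $a \in E$, $u \ha b \ha v$ for every $b \in F$, and $u \ha v$. From the hypotheses $R(E) \supset F$ and $R^{-1}(E) \supset F$ I extract, for each $b \in F$, points $a, a' \in E$ with $a \ha b \ha a'$. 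The three families of $3$-cycles that drive everything are then $\{a,u,v\}$ for $a \in E$ (from $a \ha u \ha v \ha a$), $\{u,b,a'\}$ for $b \in F$ (from $u \ha b \ha a' \ha u$), and $\{a,b,v\}$ for $b \in F$ (from $a \ha b \ha v \ha a$). Observe that $E$ is always nonempty: if $F \not= \emptyset$ then $R(E) \supset F$ forces it, and if $F = \emptyset$ then $E = X$, which is nonempty.

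For arc cyclicity I would run through the arc types of $R''^{\circ}$. Arcs inside $X$ lie in $3$-cycles of $X$ by arc cyclicity of $(X,R)$, and these remain $3$-cycles in $X''$. The arc $(u,v)$ and the arcs $(a,u)$, $(v,a)$ with $a \in E$ all lie in a cycle $\{a,u,v\}$; the arc $(u,b)$ with $b \in F$ lies in $\{u,b,a'\}$; and the arc $(b,v)$ with $b \in F$ lies in $\{a,b,v\}$. This exhausts $R''^{\circ}$, and no nonemptiness of $F$ is needed for this part.

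The substance is the primality claim, and this is where I expect the real work. Let $U$ be a closed, non-trivial ($|U| \ge 2$), $Q$ invariant subset of $X''$; I must show $U = X''$. The main lever is the identity $Q(x,y) \cap X = Q(x,y)$ computed inside $X$, valid for $x,y \in X$ because $R''|X = R$; consequently $U \cap X$ is a closed, $Q$ invariant subset of $X$. Hence \emph{if $U$ contains two points of $X$}, primality of $(X,R)$ forces $U \cap X = X$, i.e. $X \subset U$, and then choosing $a \in E$ and $b \in F$ (this is where $F \not= \emptyset$ enters) one checks directly that $u, v \in Q(a,b) \subset U$, so $U = X''$. It remains to rule out the cases where $U$ meets $X$ in at most one point. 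Since $|U| \ge 2$, such a $U$ contains $u$ or $v$. If it contains both, a direct computation gives $Q(u,v) = E \cup F \cup \{u,v\} = X''$, so $U = X''$. If it contains exactly one new point together with one point $x \in X$, I would use the recorded $3$-cycles to force a second point into $U$: when $x \in E$ the cycle $\{x,u,v\}$ puts the missing new point into $U$ (reducing to the both-new-points case), while when $x \in F$ the cycle $\{u,x,a'\}$ or $\{a,x,v\}$ puts a second point of $E$ into $U$ (reducing to the two-points-of-$X$ case). In every branch $U = X''$, so $(X'',R'')$ is prime.

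Finally, I would note that $F \not= \emptyset$ is genuinely necessary for primality: when $F = \emptyset$ one has $E = X$, and then $X \subset R''^{-1}(u)$ and $X \subset R''(v)$, so by Proposition \ref{qprop02}(a) the clopen set $X$ is a proper, non-trivial, $Q$ invariant subset of $X''$, whence $(X'',R'')$ is not prime. The only delicate bookkeeping is the case analysis on how $U$ meets $\{u,v\}$; everything else is the routine verification of the three cycle families together with the transfer of $Q$ invariance between $X$ and $X''$.
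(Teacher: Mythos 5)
Your proof is correct and follows essentially the same route as the paper: the same three families of $3$-cycles $\{a,u,v\}$ for $a \in E$, $\{u,b,a'\}$ and $\{a,b,v\}$ for $b \in F$ drive both the arc cyclicity check and the case analysis on how a non-trivial, closed, $Q$ invariant set $U$ meets $X$ and $\{u,v\}$, with primality of $(X,R)$ invoked once $U$ captures two points of $X$. You are in fact slightly more complete than the paper: its proof never explicitly treats the case $U = \{u,v\}$, which you dispose of by computing $Q(u,v) = X''$, and your closing remark on the necessity of $F \not= \emptyset$ matches the paper's observation that when $F = \emptyset$ the set $E = X$ is a proper, closed, $Q$ invariant subset.
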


\begin{proof} The compactness and wac results are again obvious.

Now assume that  $R(E) \supset F$ and
 $R^{-1}(E) \supset F$.

If $a \in E$, then $\{ a, u, v \}$ is a $3-$cycle.

If $b \in F$, we may choose $a, a' \in E$ such that $a \ha b \ha a'$, so that
$\{ a, b, v\}$ and $\{ b, a', u \}$ are $3-$cycles.

It follows that $(X'',R'')$ is arc cyclic if $(X,R)$ is.

Observe that if $F = \emptyset$, then $E = X$ is a closed $Q$ invariant subset of $X''$. Smashing it to a point we see that
$(X'',R'')$ has a $3-$cycle quotient.

Now assume that $F$ is nonempty and that $(X,R)$ is prime. Notice that since $F$ is nonempty,
there are at least two points in $E$ and so at least three points in $X$.
It follows that $(X,R)$ is not an arc. Let $U$ be a non-trivial, closed $Q$ invariant subset of $X''$.

If $U$ contains any pair in $X$, then it contains all of $X$ because $(X,R)$ is prime.  Then since $F$ is nonempty, the
three cycles $\{ a, b, v\}$ and $\{ b, a', u \}$ imply that $u, v \in U$.

If $\{ b, v \} \subset U$ with $b \in B$, then $a \in U$ and if
 or $\{ b, u \}  \subset U$ then $a' \in U$.  Since two points of $X$ are in $U$, all of $X''$ is contained in $U$, again.

 Now assume $\{ a, u \} \subset U$ or $\{ a, v \} \subset U$ with $a \in A$, then the $3-$cycle $\{ a, u, v \}$ for all $a \in E$
 first implies first that both $u$ and $v$ are in $U$ and then that all of $E \subset U$. If $b \in B$, then the cycle $\{ a, b, v\}$
 implies that $b \in U$. Thus, $F \subset U$ and so $X'' = U$.

 Thus, $(X'',R'')$ is prime.

\end{proof}

Let $(Y,S)$ be a tournament containing isolated points $u, v$ with $(u,v) \in S$.  Assume that there does not exist $y \in Y \setminus \{ u,v \}$
such that either $y \in S(u) \cap S(v)$ or $y \in S^{-1}(u) \cap S^{-1}(v)$. With $E = S^{\circ -1}(u) = S^{\circ}(v)$ and
so $F = S^{\circ}(u) \setminus \{ v \} = S^{\circ -1}(v) \setminus \{ u \}$, we see that $(Y,S)$ is isomorphic to $(X'',R'')$
with $X = Y \setminus \{ u,v \}$ and $R = S|X$. Following \cite{A20} we then call $(Y,S)$ \emph{reducible}\index{reducible}
\index{tournament!reducible} via $\{ u, v \}$. If $(Y,S)$ is finite and no such pair $u, v$ exists, then, as above, we call $(Y,S)$
\emph{irreducible}\index{irreducible}\index{tournament!irreducible}.
\vspace{.5cm}

For a topological tournament $(X,R)$ we will call two  $2-$fold partitions
$(E_1,F_1)$ and $(E_2,F_2)$ distinct when $E_1 \not= E_2$. Note that
 \begin{equation}\label{exeq07a}
 E_1 \ \not= \ E_2 \quad \Leftrightarrow \quad  F_1 \ \not= \ F_2 \quad \Leftrightarrow \quad [(E_1 \cap F_2) \cup (E_2 \cap F_1)] \ \not= \ \emptyset.
 \end{equation}
This allow the possibility, which we will frequently use, that $E_1 = F_2$ and $F_1 = E_2$.

For the attachment construction we begin with two topological tournaments $(Y,S)$ and $(X,R)$. In the resulting tournament,
$(Y,S)$ and $(X,R)$ play symmetric roles, but it is convenient to use an asymmetric construction method. We assume that $X$ and $Y$
are disjoint.

Let $\{ C_i : i = 1, \dots, n \}$ be a proper $n-$fold partition of $Y$. Let
$\{ (E_i,F_i) : i = 1, \dots, n \}$ be a list of $n$ pairwise distinct $2-$fold partitions of $X$. Define
 $Z = X \cup Y$ with the topology on $Z$ so that $X$ and $Y$ are clopen subsets of $Z$ with their initial topologies the
 relative topologies from $Z$. Define the topological tournament
 $$T \subset Z \times Z = (Y \times Y) \cup (X \times X) \cup [(X \times Y) \cup (Y \times X)]$$
 by
 \begin{equation}\label{exeq08}
T \ = \ S \cup R \cup ( \bigcup_i [(E_i \times C_i) \cup (C_i \times F_i)]).
\end{equation}

We call $(Z,T)$ the \emph{attachment}\index{attachment}\index{tournament!attachment}
  of $(Y,S)$ to $(X,R)$ via $\{ C_i : i = 1, \dots, n \}$ and $\{ (E_i,F_i) : i = 1, \dots, n \}$.

  Clearly,  if $(Y,S)$ and $(X,R)$ are both compact, wac or locally arc cyclic, then $(Z,T)$ satisfies the corresponding property.

  For example, the tournament $(X'',R'')$ of Proposition \ref{exprop07} is the attachment of the
  arc  $Y = \{ u, v \}$ with $ u \ha v $ to $(X,R)$ with $C_1 = \{ u \}, C_2 = \{ v \}$ and
  $(E_1,F_1) = (E,F), (E_2,F_2) = (F,E)$.

  The tournament $(X',R')$ of Proposition \ref{exprop06} is the attachment of the trivial tournament on
  $Y = \{ u \} $ to $(X,R)$ with $C_1 = \{ u \}$ and $(E_1,F_1) = (E, F)$.

\begin{theo}\label{extheo08} Let $(Z,T)$ be the attachment
  of $(Y,S)$ to $(X,R)$ via $\{ C_i : i = 1, \dots, n \}$ and $\{ (E_i,F_i) : i = 1, \dots, n \}$.
  Assume that $n \ge 2$ and that for each $(E_i,F_i)$ the relation $R \cap (F_i \times E_i)$ is surjective (and so neither $E_i$ nor
  $F_i$ is empty), e.g. it suffices that each $(E_i,F_i)$ be a spanning partition.

   If $(Y,S)$ and $(X,R)$ are arc cyclic, then $(Z,T)$ is arc cyclic.

 If $(Y,S)$ and $(X,R)$ are prime, then $(Z,T)$ is prime.

 If $Y$ is finite with $|Y| = n$ so that each $C_i$ is a singleton, and $(X,R)$ is prime, then $(Z,T)$ is prime.

 If $X = \bigcup_i E_i$, then $T \cap (X \times Y)$ is a surjective relation from $X$ to $Y$.

  If $X = \bigcup_i F_i$, then $T \cap (Y \times X)$ is a surjective relation from $Y$ to $X$.
\end{theo}

\begin{proof} If $c \in C_i$ and $b \in F_i$, then there exists $a \in E_i$ such that $b \ha a $.
If $c \in C_i$ and $a \in E_i$, then there exists $b \in F_i$ such that $b \ha a$.  In each case,  $\{ c, b, a \}$ is a $3-$cycle.

It follows that  $(Z,T)$ is arc cyclic if $(Y,S)$ and $(X,R)$ are arc cyclic.

Now assume that $(X,R)$ is prime and that $U$ is a nontrivial, closed $Q$ invariant subset of $Z$.

If any pair of $X$ is in $U$, then $X \subset U$ because $(X,R)$ is prime and so the above $3-$cycles imply that every $c \in C_i$ is in $U$.
Thus, $Z \subset U$.

If some pair $\{ c, a \} \subset U$ with $c \in C_i$ and $a \in E_i$, then the above $3-$cycles show that
there exists $b \in F_i \cap U$. Since some pair in $X$ is contained in $U$ again $U = Z$. Similarly, if
$\{ c, b \} \subset U$ with $c \in C_i$ and $b \in F_i$ we have $U = X$.

There remains the case when some pair of $Y$ is contained in $U$.

Case 1: Assume that $(Y,S)$ is prime.  It then follows that $Y \subset U$. Let $c_1 \in C_i, c_2 \in C_j$ with $i \not= j$. Recall that the $C_i$'s
are nonempty and $n \ge 2.$   There exists $x \in (E_i \cap F_j) \cup (E_j \cap F_i)$ because the $2-$fold partitions of $X$
are distinct. It follows that $T^{\circ}(x)$ and $T^{\circ -1}(x)$ meet $U$ and so $x \in U$. Since $\{ c_2, x \} \subset U$ it follows as above that
$U = Z$.

Case 2: Assume that $|Y| = n$. If a pair $\{ c_1, c_2 \} \subset U$ with $c_1 \not= c_2$ in $Y$, there exist
$i \not= j$ such that $\{ c_1 \} = C_i $ and $\{ c_2 \} = C_j $.  Since $i \not= j$ we may choose $x$ as in Case 1, and so
 obtain that $U = Z$.

 If $y \in C_i$, then with $x \in E_i, x' \in F_i$ we have $x \ha y \ha x'$. If $x \in X$ and $\bigcup_i E_i = X$, then $x \in E_i$ for
 some $i$ and so $x \ha y$ for $y \in C_i$. Hence, $T \cap (X \times Y)$ is surjective.  Similarly if $\bigcup_i F_i = X$, then
 $T \cap (Y \times X)$ is surjective.

 \end{proof}

\textbf{ Remark:} A \emph{Special Case} which we will use repeatedly has $n = 2$, with $(E_1, F_1)$  a spanning partition and
$(E_2, F_2) = (F_1, E_1)$, in which case, of course,
 $E_1 \cup E_2 = F_1 \cup F_2 = X$.

\vspace{.5cm}

\begin{exes}\label{exes07aa} Constructing Generalized Reduced Doubles\end{exes}
Let $(X,R)$ be a compact  tournament with no initial or terminal point. Assume that $X$ totally disconnected with no isolated points.
By Proposition \ref{qprop09aa} the assumption that $X$ be totally disconnected is redundant when
$(X,R)$ is prime.

Let $\{ E, F \}$ be a spanning partition, which exists by Theorem \ref{extheo05a}. Define

\begin{equation}\label{exeq09}
C_1 \ = \ E_1 \ = \ F_2  \ = \  E, \quad \text{and} \quad C_2  \ = \ F_1 \ = \ E_2 \ = \  F.
\end{equation}

  We let $X\pm = X \times \{\pm 1 \}$, writing, as before, $x\pm$ for $(x, \pm 1)$ with $x \in X$. The tournaments $(X\pm, R\pm)$ are
  defined so that each is a copy of $(X,R)$ via the isomorphisms $x\pm \mapsto x$.

  We obtain a generalized reduced double  $2'(X,R) = (2'X, 2'R)$, as defined in Example \ref{exesnew01} (b), by using
   the attachment of $(X+,R+)$ to $(X-,R-)$ via $\{ C_1, C_2\}$ and $\{ (E_1,F_1),  (E_2,F_2)\}$.
  This is an example of the Special Case mentioned in the Remark after Theorem \ref{extheo08}. In particular,
  $2'R \cap [(X- \times X+) \cup (X+ \times X-)]$ is a surjective relation on $2'X$.

  It follows that if $(X,R)$ is wac, locally arc cyclic, arc cyclic or
  prime, then $2'(X,R)$ satisfies the corresponding property.

\vspace{1cm}

  \section{ \textbf{Prime Tournament Examples }}\vspace{.5cm}

  We will show that $(\Z[2], \widehat{A})$, the standard tournament of $2-$adics, is a prime tournament.  This provides us with an example of
  a prime, arc cyclic tournament on a Cantor set.  We will use it to construct other such examples. However, we require an invariant
  which will allow us to distinguish among such examples.  What we will use for such a tournament $(X,R)$ is the collection
  of the  almost wac tournaments which are the restrictions of $R$ to the subsets $R(x)$ as $x$ varies over $X$. In particular, we will look
  at the prime quotients of these restrictions. \vspace{.25cm}

Now recall that we regard the additive  group of $2-$adic integers,  $\Z[2]$
  as the product $ \{ 0. 1 \}^{\N}$ with addition of two sequences pointwise but with carrying to the right.
  We write $\0 = 000\dots $ for the identity element, instead of $e$, and we write the group additively.  Thus,
  $\Z[2]$ is a topological group on a Cantor set. In fact, as it is the inverse limit of the finite rings $\Z/2^i \Z =  \{ 0. 1 \}^{i} $,
  $\Z[2]$ is a topological integral domain with $\1 = 100\dots $ the multiplicative identity.
  Two elements of $\Z[2]$ are congruent mod $2^i$ when their projections to $\Z/2^i \Z$ are equal, or
  equivalently, they have the same first $i$ coordinates. In particular,
  $x \in \Z[2]$ is \emph{even}, i.e. there exists $x'$ such that $x = 2 x'$, if and only if $x_1 = 0$. Otherwise, $x_1 = 1$ and $x$ is
  \emph{odd} with $x - \1 $ even.

   With $\bar 0 = 1, \bar 1 = 0$ we defined $\bar y$ for $y \in \Z[2]$ by $(\bar y)_i = \overline{y_i}$ and saw that
   $y + \bar y + \1 = \0 $ and so $-y = \bar y + \1 .$  If $y = 0^{i-1} 1 z$, then $-y = 0^{i-1} 1 \bar z$.
   We defined  $A_i = \{ 0^{i-1} 1 0 z : z \in \Z[2]\}$, a clopen
   subset with $-A_i = \{ 0^{i-1} 1 1  z : z \in \Z[2]\}$. We then defined the game subset $A = \{ \0 \} \cup (\bigcup_{i} A_i)$. We use the label
   $\widehat{A}$ for the tournament associated with $A$.

   The set $A_1$ consists of the elements $x \in \Z[2]$ which are congruent to $1$ mod 4. This is a multiplicative subgroup of $\Z[2]$ and it
   is easy to check that multiplication by any element of $A_1$ is an additive group isomorphism which preserves each $A_i$ and so
   is a tournament automorphism for $\widehat{A}$.

   Define the shift map $\s$ on $\Z[2]$ by $\s(y)_i = y_{i+1}$. Algebraically, $\s$ is given by
   \begin{equation}\label{adiceq01aa1}
  \s(y) \ = \ \begin{cases} \ \ y/2  \ \text{if} \ y \ \text{is even}, \\ \ (y - \1 )/2 \ \text{if} \ y \ \text{is odd}.\end{cases}
\end{equation}

For $k \in \N $ and $ w \in \{ 0, 1 \}^k $ let $I_w = \{ z \in \Z[2] : z_i = w_i$ for $i = 1, \dots, k \}$.  This is the mod $2^i$
congruence class associated with $w$.  Thus,
$\s^k : I_w \to \Z[2] $ is a bijection with inverse $x \mapsto wx$.

  Observe that for all $i,j  \in \N$ with $j > i+1$ and all $x, y \in \Z[2]$
  \begin{align}\label{adiceq01}\begin{split}
(i) &\quad  0^{j-1}1x + 0^{i-1}1 z \ = \  0^{i-1}1 y,  \quad  \text{with} \ \ z + 0^{j-i-1}1x = y, \\
(ii) &\quad 0^{i-1}10x + 0^{i-1}10z \ = \  0^i1y \quad  \ \text{with} \ \ z + x = y, \\
(iii) &\quad 0^{i-1}11x + 0^{i-1}11z \ = \  0^i1y \quad  \ \text{with} \ \ z + x + \1 = y, \\
(iv) &\quad 0^{i-1}10x + 0^{i}1\ep z \ = \  0^{i-1}11y \quad  \ \text{with} \ \ \ep z + x = y \ (\ep = 0,1).
\end{split}\end{align}
\vspace{.25cm}

Recall from Example \ref{exes06} (a) the tournament $N_1 = (\N^*,L_1)$ and its inverse $\bar N_1 = (\N^*,L_1^{-1})$.

   \begin{theo}\label{adictheo01} (a) The $2-$adic group tournament $(\Z[2],\widehat{A})$ is an arc cyclic, prime tournament on a Cantor set.
\vspace{.25cm}

   (b) The homeomorphism $inv$ on $\Z[2]$ given by $inv(x) = -x$, i.e. multiplication by $- \1 $, is an isomorphism from  $(\Z[2],\widehat{A})$ to
   $ (\Z[2], \widehat{A}^{-1})$.
\vspace{.25cm}

   (c) For each $k \in \N, \ w \in \{ 0, 1 \}^k $, the shift $\s^{k}  : I_w \to \Z[2]$ is an isomorphism from the restriction
    $\widehat{A}|I_w$  to    $\widehat{A}$ .
   \vspace{.25cm}

   (d) For each $x \in \Z[2]$ the restriction of $\widehat{A}$ to $\widehat{A}(x)$ is isomorphic to the topological lexicographic product
   $\bar N_1 \times \{ (Y_a,S_a) \}$ with $(Y_i, S_i) = (\Z[2],\widehat{A})$ for $i \in \N $ and with $(Y_{\infty}, S_{\infty)}$ trivial.
   The projection map to $\bar N_1 = (\N^*, L_1^{-1})$ is given by $A_i \mapsto i$ and $\0 \mapsto \infty$.

    The restriction of $\widehat{A}$ to $\widehat{A}^{-1}(x)$ is isomorphic to the topological lexicographic product
   $N_1 \times \{ (Y_a,S_a) \}$ with $(Y_i, S_i) = (\Z[2],\widehat{A})$ for $i \in \N $ and with $(Y_{\infty}, S_{\infty)}$ trivial.
   The projection map to $N_1 = (\N^*, L_1)$ is given by $-A_i \mapsto i$ and $\0 \mapsto \infty$.

   In each case, the two-level product is the classifier for the restriction.
    \end{theo}

   \begin{proof} Observe first that $(\Z[2],\widehat{A})$  is  arc cyclic by Theorem \ref{theocycle02}.

   (b): Clearly, $x - y \in A$ if and only if $inv(x) - inv(y) \in - A $. Recall that $\widehat{- A} = \widehat{A}^{-1}$.

   (c): Note  that $z \in A_i$ if and only if $0^kz \in A_{i+k}$. The result follows because
      $w x - w y = 0^{k}(x - y)$.

   (d): From (\ref{adiceq01})(i)-(iii) we see that for  all $i,j  \in \N$ with $j > i+1$
   \begin{align}\label{adiceq01ac}\begin{split}
   x \in A_i \ \ \  \text{and} \ \ x' \in A_j \cup - A_j \quad \ \  &\Rightarrow \ \ (x', x) \in \widehat{A}^{\circ}, \\
   x \in -A_i \ \ \text{and} \ \ x' \in A_j \cup - A_j \quad \ &\Rightarrow \ \ (x, x') \in \widehat{A}^{\circ}, \\
   x \in A_i \ \ \text{and} \ \ x' \in A_{i+1} \cup - A_{i+1} \ \ &\Rightarrow \ \ (x, x') \in \widehat{A}^{\circ}, \\
   x \in -A_i \ \ \text{and} \ \ x' \in A_{i+1} \cup - A_{i+1} \ \ &\Rightarrow \ \ (x, x') \in \widehat{A}^{\circ}.
   \end{split}\end{align}

    From (\ref{adiceq01})(iv) it follows that
      for $x \in A_i$ and $ x' \in -A_i$,
     \begin{equation}\label{adiceq01ae}
     (x,x') \in \widehat{A}^{\circ} \ \ \text{if } \ \ x_{i+2} = x'_{i+2} \ \ \text{and} \ \  (x',x) \in \widehat{A}^{\circ} \ \ \text{otherwise}.
     \end{equation}

   Since translation by $-x$ is an automorphism of $\widehat{A}$ we may restrict attention to
   $x = \0 $.

   Any neighborhood of $\0 $ contains  $A_i \cup -A_i$ for $i$ sufficiently large. Hence, $A_i \mapsto i$ and $\0 \mapsto \infty$ is a
   continuous surjection  from $A$ onto $\N^*$.

   From (\ref{adiceq01ac}) it follows that for the restriction of $\widehat{A}$ to $A$,
   we have for $j > i+1$ that $A_j \ha A_i$ and
   $A_i \ha A_{i+1}$. We see, first, that in $A$ each $A_i$ is a $Q$ invariant clopen subset and, second, that
   the quotient tournament is isomorphic to
   $\bar N_1 = (\N^*, L_1^{-1})$ which is prime by Theorem \ref{extheo01a}.

   Since $(\Z[2],\widehat{A})$ is arc cyclic
   Theorem \ref{wactheo02} (f) implies that the quotient map induces an isomorphism of $\widehat{A}$ on $A$ with
   the lexicographic product $\bar N_1 \times \{ (Y_a,S_a) \}$ with $(Y_i, S_i)$ the restriction of $\widehat{A}$
   to $A_i$ which is, by (c), isomorphic to $(\Z[2],\widehat{A})$.

  The proof for the restriction to $-A$ is similar or can be obtained using the isomorphism $inv$.

   (a): A section $\xi : \N^* \to A$ is a choice function with $\xi(i) \in A_i$ and with $\xi(\infty) = \0 $. A section
   $\bar \xi : \N^* \to -A$ is a choice function with $\bar \xi(i) \in -A_i$ and with $\bar \xi(\infty) = \0 $. By
   Lemma \ref{lemlex01b} each section is continuous. Each $\xi$ is a tournament isomorphism from $\bar N_1$ to the
   restriction of $\widehat{A}$ to the image $\xi(\N^*)$, and each $\bar \xi$ is a tournament isomorphism from $ N_1$ to the
   restriction of $\widehat{A}$ to the image $\bar \xi(\N^*)$.

   To prove that $(\Z[2],\widehat{A})$ is prime, we let $U$ be a non-trivial $Q$ invariant subset. By translation we may assume that
   $\0 \in U$. Since the tournament is arc cyclic, $U$ is clopen. Hence, for sufficiently large $i \in \N$,
   $A_i \cup -A_i \subset U$. It follows that for any section $\xi$ or $\bar \xi$, infinitely many points of the image are contained in $U$.
   The restriction of $\widehat{A}$ to each image is prime and so each entire image is contained in $U$. For any $x \in A_i$
   there is a section $\xi$ with $\xi(i) = x$. Hence, $x \in U$.  For any $x \in -A_i$
   there is a section $\bar \xi$ with $\bar \xi(i) = x$. Hence, $x \in U$. Thus, $U = \Z[2]$ and so the tournament is prime.

   This implies that all of the $(Y_i,S_i)$'s are all prime and so the above lexicographic product is the second stage of the classifier construction.
Since all the points of $\Z[2]$ are non-isolated, the classifier system terminates at this second level.

     \end{proof}

\textbf{Remark:} It follows from the uniqueness of the classifiers, that if $h$ is any automorphism of $(\Z[2],\widehat{A})$ such that
$h(\0 ) = \0 $, then $h(\pm A_i) = \pm A_i$ for all $i \in \N$.

 \vspace{.5cm}

 For any $j \in \N$, we define the complementary subsets $D_j, \bar D_j$ by:
 \begin{equation}\label{adiceq02}
D_j \ = \ \{ x \in \Z[2] : x_j = 0 \}, \qquad \bar D_j \ = \ \{ x \in \Z[2] : x_j = 1 \}.
\end{equation}

\begin{prop}\label{adicprop02} (a) For each $j \in \N$ the $2-$fold partition $\{ D_j, \bar D_j \}$ is a spanning set partition.
\vspace{.25cm}

(b) For any $x \in \Z[2] \ \ \widehat{A}^{\circ}(x) \ = \ \bigcup_i \ (x + A_i)$ and we have
\begin{itemize}
\item If $x \in D_j$ (or $ x \in \bar D_j$), then for all $i > j$, $(x + (\pm A_i)) \subset D_j$ (resp.  $(x + ( \pm A_i)) \subset \bar D_j$)
and $x + A_j \subset \bar D_j$ (resp. $x + A_j \subset D_j$).
\vspace{.25cm}

\item If $x \in D_j \cap D_{j-1}$ or $x \in \bar D_j \cap \bar D_{j-1}$, then
$x + A_{j-1} \subset D_j$ and $x + (- A_j) \subset \bar D_j$. If $x \in \bar D_j \cap D_{j-1}$ or $x \in D_j \cap \bar D_{j-1}$, then
$x + A_{j-1} \subset \bar D_j$ and $x + (- A_j) \subset  D_j$.
\vspace{.25cm}

\item For any $x \in \Z[2]$, if $i < j-1$, then
$(x + (\pm A_i)) \cap D_j$ and $(x + (\pm A_i)) \cap \bar D_j$ are nonempty.
\end{itemize}
\vspace{.25cm}

(c) The restrictions $\widehat{A}|D_1$ and $\widehat{A}|\bar D_1$ are each isomorphic to $(\Z[2],\widehat{A})$
via the maps $x \mapsto \ep x$ for $\ep = 0, 1$.
\vspace{.25cm}

(d) Define the map $h: Z[2] \times \{ -1, +1 \} \to Z[2]$ by $x- \mapsto 0x$ and $x+ \mapsto 1x$.
The map $h$ is a homeomorphism. Letting $T$ be the topological
tournament on $Z[2] \times \{ -1, +1 \}$ such that $h$ is a tournament isomorphism to $\widehat{A}$ on  $Z[2]$, we obtain
a generalized reduced double  $2'(\Z[2],\widehat{A})$ which is itself isomorphic to $(\Z[2],\widehat{A})$.
\end{prop}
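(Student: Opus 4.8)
The plan is to handle the four assertions in turn, using throughout that $\widehat{A}^{\circ}(x) = x + A^{\circ} = \bigcup_i (x + A_i)$ and, since $A^{\circ -1} = -A^{\circ}$, that $\widehat{A}^{\circ -1}(x) = \bigcup_i (x + (-A_i))$; both are immediate from $\widehat{A} = \{(x,y) : y - x \in A\}$. The only genuinely computational part is (b), and everything else is assembled from it together with Theorem \ref{adictheo01}. I would first record the two digit facts that drive every computation: each element of $\pm A_i$ has its first nonzero coordinate in position $i$, with coordinate $i+1$ equal to $0$ on $A_i$ and to $1$ on $-A_i$, and with all coordinates past $i+1$ unconstrained. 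The first statement of (b) is then the identity $\widehat{A}^{\circ}(x) = \bigcup_i(x+A_i)$ noted above.

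For the three bullets of (b) I would compute, for $a \in \pm A_i$, the coordinate $(x+a)_j$ by tracking the carry into position $j$ under the ``carry to the right'' addition of \eqref{adiceq01}. In the first bullet $i > j$, so $a$ vanishes in coordinates $1,\dots,j$ and no carry reaches position $j$, whence $(x+a)_j = x_j$ and $x + (\pm A_i) \subset D_j$ (resp. $\bar D_j$); for $a \in A_j$ one has $a_j = 1$ with no carry from below, so $(x+a)_j = x_j + 1 \bmod 2$, giving the flip $x + A_j \subset \bar D_j$ (resp. $D_j$). The second bullet is the same carry analysis carried out one position lower: elements of $A_{j-1}$ are active in coordinate $j-1$, so the carry into position $j$ now depends on $x_{j-1}$, which is exactly why the hypothesis splits on the value of $x_{j-1}$, while elements of $-A_j$ vanish below position $j$. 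For the third bullet, when $i < j-1$ the coordinate $j$ of $a$ lies in the free tail, so toggling $a_j$ toggles $(x+a)_j$ without leaving $\pm A_i$; hence $x + (\pm A_i)$ meets both $D_j$ and $\bar D_j$.

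Part (a) then follows at once: $\{D_j,\bar D_j\}$ is a complementary pair of clopen sets, and the third bullet shows that for every $x$ each of $D_j,\bar D_j$ meets $x + A_i \subset \widehat{A}^{\circ}(x)$ and $x + (-A_i) \subset \widehat{A}^{\circ -1}(x)$ for small $i$; thus each is a spanning set in the sense of Definition \ref{exdef04}, so the pair is a spanning set partition. Part (c) is the special case $k=1$ of Theorem \ref{adictheo01}(c): $D_1 = I_0$ and $\bar D_1 = I_1$, and the isomorphism $\s\colon I_w \to \Z[2]$ of that theorem has inverse $x \mapsto \ep x$ with $\ep = 0,1$, which is therefore a tournament isomorphism of $(\Z[2],\widehat{A})$ onto $\widehat{A}|D_1$, respectively $\widehat{A}|\bar D_1$.

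For (d) I would observe that $h$ carries $\Z[2]\times\{-1\}$ homeomorphically onto the clopen set $D_1$ and $\Z[2]\times\{+1\}$ onto $\bar D_1$ (each restriction being the inverse of $\s$), so $h$ is a homeomorphism; then $T = (h\times h)^{-1}(\widehat{A})$ is closed and makes $h$ a tournament isomorphism onto $(\Z[2],\widehat{A})$ by construction, which already gives the final ``isomorphic to $(\Z[2],\widehat{A})$'' claim. It remains to check the two conditions of a generalized reduced double from Example \ref{exesnew01}(b): condition (i) is exactly part (c) transported through $h$, since on $J+$ the map $x+\mapsto x$ equals $\s\circ h$ and likewise on $J-$; and condition (ii), surjectivity of $T\cap(J-\times J+)$ from $J-$ to $J+$, is the transport of the statement that $\widehat{A}\cap(D_1\times\bar D_1)$ is a surjective relation from $D_1$ to $\bar D_1$, which holds by part (a) together with Proposition \ref{exprop06a}. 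Equivalently, $(\Z[2]\times\{\pm1\},T)$ is the instance of the attachment construction of Example \ref{exes07aa} applied to the spanning partition $\{D_1,\bar D_1\}$. The main obstacle is the carry bookkeeping of part (b); once the digit patterns of $A_i$ and $-A_i$ are in hand, the rest is formal.
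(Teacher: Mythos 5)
Your overall route coincides with the paper's: do the digit/carry analysis for (b), get (c) from Theorem \ref{adictheo01}(c) via $D_1 = I_0$, $\bar D_1 = I_1$, and get (d) from (c) plus the reduced-double/attachment picture of Example \ref{exes07aa} (your check of conditions (i) and (ii) is a more explicit version of the paper's ``clear from (c)''). However, your deduction of (a) has a genuine gap. You derive the spanning property from the third bullet of (b), which produces points of $(x + (\pm A_i)) \cap D_j$ and $(x + (\pm A_i)) \cap \bar D_j$ only when there exists $i \in \N$ with $i < j-1$; no such $i$ exists when $j = 1$ or $j = 2$, so for those $j$ your argument is vacuous. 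This is not an ignorable corner case: $j = 1$ is exactly the instance you rely on in (d), where surjectivity of $T \cap (J- \times J+)$ is obtained from the spanning partition $\{D_1, \bar D_1\}$ via Proposition \ref{exprop06a}, so the gap propagates into your proof of (d). The paper proves (a) uniformly in $j$ by the direct observation that adding any element of $\pm A_j$ (of the form $0^{j-1}10y$ or $0^{j-1}11y$) flips the $j$-th coordinate of $x$ --- its first nonzero digit sits in position $j$ and no carry arrives from below --- while adding any element of $\pm A_i$ with $i > j$ leaves coordinates $1,\dots,j$ unchanged; hence $\widehat{A}^{\circ}(x)$ and $\widehat{A}^{\circ -1}(x)$ each meet both $D_j$ and $\bar D_j$. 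The ingredients for this fix are already in your first-bullet computation (extended to $-A_j$), but as written your step fails.

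A second caution concerns the $-A_j$ clause of the second bullet. Carrying out the computation you sketch, one finds that adding an element of $-A_j$ \emph{always} flips the $j$-th coordinate, independently of $x_{j-1}$: its digits below position $j$ vanish, so $(x+a)_j = x_j + 1 \bmod 2$. Thus $x + (-A_j) \subset \bar D_j$ when $x \in D_j$ and $x + (-A_j) \subset D_j$ when $x \in \bar D_j$; the clause as printed agrees with this in only two of its four cases (e.g. for $x \in \bar D_j \cap \bar D_{j-1}$ the sum lands in $D_j$, not $\bar D_j$). Indeed the paper's own proof of (a) asserts the always-flip fact, so the printed clause is a misprint. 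The genuine dependence on $x_{j-1}$ occurs only for $x + A_{j-1}$. Consequently your remark that the hypothesis splits on $x_{j-1}$ ``exactly'' to handle both sets cannot be turned into a verification of the bullet as stated; what the carry analysis proves is the corrected always-flip statement, which is also all that (a), (d), and the later examples actually use.
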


\begin{proof} (a): Adding $0^{j-1}10y$ or $0^{j-1}11y$ to any $x$ in one of the partition elements yields a point in the opposite element.
Adding $0^k10y$ or $0^k11y$ with $k \ge j$ yields a point in the same element. Hence, $\widehat{A}^{\circ}(x)$ and $\widehat{A}^{\circ -1}(x)$
each meet both $D_j$ and $\bar D_j$.

(b): Because the translation $\ell_x$ is an automorphism of $(\Z[2],\widehat{A})$ it follows that
$$ \widehat{A}^{\circ}(x) \ = \ x + \widehat{A}^{\circ}(\0 ) \ = \ x + A^{\circ} \ = \  \bigcup_i (x + A_i).$$
The remaining results are easy to check directly using $A_i = \{ 0^{i-1}10z : z \in \Z[2] \}$.

(c): $D_1 = I_0$ and $\bar D_1 = I_1$  and so the results follow from Theorem \ref{adictheo01}(c).

(d): This is clear from (c). This is an example of the reduced double construction from Example \ref{exes07aa}.

\end{proof}

 \vspace{.5cm}

 On $\Z[2]$ we define the \emph{twist map}\index{twist map}  $\t_j$ by
 \begin{equation}\label{adiceq02a}
 \t_j(w0x) \ = \ w1(x+ \1 ), \quad \t_j(w1x) \ = \ w0x \quad \text{with} \ \ w \in \{0, 1 \}^{j-1}, x \in \Z[2].
 \end{equation}

 \begin{prop}\label{adicprop02aaa} The twist map $\t_j$ is an automorphism of $(\Z[2],\widehat{A})$ such that
 $\t_j(D_j) \ = \ \bar D_j$ and $\t_j(\bar D_j) \ = \ D_j$. \end{prop}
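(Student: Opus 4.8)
The plan is to establish three facts: that $\t_j$ is a homeomorphism of $\Z[2]$, that it interchanges $D_j$ and $\bar D_j$, and that it preserves the relation $\widehat{A}$. Together these say that $\t_j$ is a topological tournament automorphism of $(\Z[2],\widehat{A})$ with the asserted effect on the partition $\{D_j,\bar D_j\}$.

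First I would dispose of the two easy points. By the defining formula (\ref{adiceq02a}), $\t_j$ flips the $j$-th coordinate of every $x$, so it carries $D_j = \{x : x_j = 0\}$ into $\bar D_j$ and $\bar D_j$ into $D_j$; moreover on $D_j$ it is $w0x \mapsto w1(x+\1)$ and on $\bar D_j$ it is $w1x \mapsto w0x$, each a bijection between the two clopen pieces (using that $x \mapsto x+\1$ is a bijection of $\Z[2]$). Hence $\t_j$ is a bijection of $\Z[2]$, and since its restriction to each of the clopen sets $D_j,\bar D_j$ is continuous it is a homeomorphism by compactness. This already gives $\t_j(D_j)=\bar D_j$ and $\t_j(\bar D_j)=D_j$.

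Next comes the crucial reformulation. Reading (\ref{adiceq02a}) additively and tracking carries, I would show that $\t_j$ is a \emph{piecewise translation},
$$\t_j(x) = \begin{cases} x + 3\cdot 2^{j-1}, & x \in D_j, \\ x - 2^{j-1}, & x \in \bar D_j.\end{cases}$$
Indeed, when $x_j=0$, adding $2^{j-1}$ (a single $1$ in coordinate $j$) flips $x_j$ to $1$ with no carry, and adding the further $2^{j}$ realizes the $+\1$ on the tail; when $x_j=1$, subtracting $2^{j-1}$ clears coordinate $j$ with no borrow. Consequently, for any pair $x,y$ the difference $\t_j(y)-\t_j(x)$ equals $y-x$ when $x_j=y_j$, and equals $(y-x)\mp 2^{j+1}$ when $x_j\ne y_j$ (using $4\cdot 2^{j-1}=2^{j+1}$).

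Finally I would prove the automorphism property in the form $y-x\in A \iff \t_j(y)-\t_j(x)\in A$. Recall from $A_i=\{0^{i-1}10z\}$ and $-A_i=\{0^{i-1}11z\}$ that membership of a nonzero $\delta$ in $A$ is decided solely by its first nonzero coordinate $i$ together with coordinate $i+1$ (namely $\delta\in A$ iff $\delta_{i+1}=0$). If $x_j=y_j$ the difference is unchanged and there is nothing to check. If $x_j\ne y_j$, then $y\not\equiv x \pmod{2^j}$, so $\delta=y-x$ is not divisible by $2^j$ and its first nonzero coordinate satisfies $i\le j$; since $\pm 2^{j+1}$ is a single $1$ in coordinate $j+2$, passing from $\delta$ to $\delta\pm 2^{j+1}$ alters only coordinates $\ge j+2$, leaving the first nonzero coordinate and the coordinates $i,i+1$ (all $\le j+1$) untouched. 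Hence $A$-membership is preserved, and with the homeomorphism property $\t_j$ is a tournament automorphism. The main obstacle is exactly this carry/borrow bookkeeping: getting the piecewise-translation description right and verifying that $\pm 2^{j+1}$ cannot disturb the low-order coordinates $i,i+1$ that govern membership in $A$. Once that is secured the biconditional is immediate.
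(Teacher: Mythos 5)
Your proof is correct, but it takes a genuinely different route from the paper's. The paper proves the arc-preservation directly by a four-way case analysis on the index $k$ of the element $0^{k-1}10z \in A_k$ being added (the cases $k>j$, $k=j$, $k=j-1$, $k<j-1$, each with sub-cases tracking carries). You instead identify $\t_j$ algebraically as a piecewise translation --- by $3\cdot 2^{j-1}$ on $D_j$ and by $-2^{j-1}$ on $\bar D_j$ --- so that $\t_j(y)-\t_j(x)$ equals $y-x$ when $x_j=y_j$ and equals $(y-x)\mp 2^{j+1}$ otherwise, and you then reduce everything to the observation that membership of a nonzero $\d$ in $A$ is decided by its first nonzero coordinate $i$ together with $\d_{i+1}$: when $x_j\not=y_j$ one has $i\le j$, so both deciding coordinates are at most $j+1$, while the perturbation by $\pm 2^{j+1}$ leaves the first $j+1$ coordinates untouched. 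This is shorter and more conceptual than the paper's computation --- it explains \emph{why} the twist is an automorphism (it only disturbs coordinates above those that govern $A$-membership) --- whereas the paper's argument is elementary but requires lengthy carry bookkeeping. One minor wording slip in your write-up: $-2^{j+1}$ is not ``a single $1$ in coordinate $j+2$'' (its expansion is $0^{j+1}111\cdots$); the correct justification, which your argument in effect uses, is the congruence $\d\pm 2^{j+1}\equiv\d \pmod{2^{j+1}}$, so nothing breaks. Both arguments also dispose of the topological point in the same brief way, noting that $\t_j$ is a homeomorphism interchanging the two clopen halves.
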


 \begin{proof} It is clear that $\t_j$ is a homeomorphism which interchanges $D_j$ and $\bar D_j$. To check that it maps arcs to arcs
 we must consider a number of cases of the effect of adding $0^{k-1}10z$.
 \begin{itemize}
 \item[(i)] ($k > j$):  For $\ep = 0, 1$ if $w \ep x + 0^{k-1}10z = w \ep y$, then
$w \bar \ep x +  0^{k-1}10z = w \bar \ep y$ and $w \bar \ep (x + \1 ) +  0^{k-1}10z = w \bar \ep (y + \1 )$.
\vspace{.25cm}

 \item[(ii)] ($k = j$): If $w0x + 0^{j-1}10z = w1y$ (and so $x + 0z = y$ and $x_1 = y_1$), then
 $\t_j(w0x) = w1(x + \1 ) + 0^{j-1}1(0z - 2) = w0y = \t_j(w1y)$.

 If $w1x + 0^{j-1}10z = w0y$ (and so $x + \1 + 0z = y$ and  $\bar x_1 = y_1$), then
 $\t_j(w0x)  + 0^{j-1}1(0z - 2)  = w1(x + \1 ) + 0^{j-1}1(0z - 2) = w0(y + \1 ) = \t_j(w0y)$.
\vspace{.25cm}

  \item[(iii)] ($k = j-1$): With $w' \in \{0, 1 \}^{j-2}$ we have:

  If $w'00x + 0^{j-2}10z = w'10y$, then $w'01(x + \1 ) + 0^{j-2}10z = w'11(y + \1 )$.

  If $w'10x + 0^{j-2}10z = w'01y$, then $w'11(x + \1 )  + 0^{j-2}1(0z - 2) = w'00y$.

   If $w'01x + 0^{j-2}10z = w'11y$, then $w'00x + 0^{j-2}10z = w'10y$

 If $w'11x + 0^{j-2}10z = w'00y$, then $w'10x + 0^{j-2}1(0z + 2) = w'01(y + \1 )$.
 \vspace{.25cm}

 \item[(iv)] ($k < j-1$): With $p \in \{0, 1 \}^{j-k-1}$ we have the following.

 If $w0x + 0^{k-1}10pz = w'0y$, then $w0 + 0^{k-1}10p = w'0$ and $x + z = y$ (no carry to the $j+1$ place), and so
 $w1(x + \1 ) +  0^{k-1}10pz= w'1(y + \1 )$ since $w1 + 0^{k-1}10p = w'1$.

 If $w0x + 0^{k-1}10pz = w'1y$, then $w0 + 0^{k-1}10p = w'1$ and $x +  z = y$ (no carry to the $j+1$ place), and so
 $w1(x + \1 ) +  0^{k-1}10p(z - 2) = w'0y$ since $w1 + 0^{k-1}10p = w'01$.

  If $w1x + 0^{k-1}10pz = w'0y$, then $w1 + 0^{k-1}10p = w'01$ and $x + \1 + z = y$ (carry to the $j+1$ place), and so
 $w0x +  0^{k-1}10p(z + 2) = w'1(y + \1 )$ since $w0 + 0^{k-1}10p = w'1$.

   If $w1x + 0^{k-1}10pz = w'1y$, then

   EITHER,  $w1 + 0^{k-1}10p = w'1$ and $x +  z = y$ (no carry to the $j+1$ place), and so
 $w0x +  0^{k-1}10pz = w'0y$ since $w0 + 0^{k-1}10p = w'0$.

 OR, $w1 + 0^{k-1}10p = w'11$ and $x + \1  + z = y$ (carry to the $j+1$ place), and so
$w0x +  0^{k-1}10pz = w'0y$ since $w0 + 0^{k-1}10p = w'01$.
\vspace{.25cm}
 \end{itemize}

 For all of these cases,  $x \ha x'$ implies $\t_j(x) \ha \t_j(x')$.

\end{proof}

 \vspace{.5cm}

\begin{ex}\label{adicexes02} Arc cyclic, prime tournaments on a Cantor set via attachment. \end{ex}

Fix $j, k \in \N$.   We let  $Z = \Z[2] \times \{-1, +1 \}$ labelling $Z\pm = \Z[2] \times \{ \pm \}$. As usual we write $z\pm$ for
$(z, \pm 1) \in Z \pm$ and we write $  \widehat{A}\pm$
   for the tournament $\widehat{A}$ on $Z\pm$. In general, for any  $B \subset \Z[2]$ we write $B \pm$ for the
   copy of the subset in $Z \pm$. In particular, we let $D_j+$ and $\bar D_j+$ be the copy of $D_j$ and $\bar D_j$ in $Z+$ and
   $D_k-$ and $\bar D_k-$ be the copy of $D_k$ and $\bar D_k$ in $Z-$.

   We define the topological tournament $P[j,k]$ on $Z$, regarding
   $$ Z \times Z \ = \ (Z+ \times Z+)  \cup  (Z- \times Z-)  \cup  (Z- \times Z+)  \cup  (Z+ \times Z-).$$
   Define
  \begin{align}\label{exeq01neww01a}\begin{split}
P[j,k] \ = \ \widehat{A}+ \ &\cup \ \widehat{A}- \ \cup\\
[(D_j- \times D_j+)  \cup  (\bar D_j- \times \bar D_j+)] \ &\cup \ [(D_j+ \times \bar D_j-)  \cup  (\bar D_j+ \times  D_j-)].
 \end{split}\end{align}
 Thus, the restriction of $P[j,k]$ to $Z\pm$ is independent of $j,k$.

 Define the \emph{twist map} $\t_{j,k}$ and the \emph{interchange map} $ \r_{j,k}$ on $Z$ by
 \begin{align}\label{exeq01neww01b}\begin{split}
 \t_{j,k}(z+) \ = \ (\t_j(z))+, &\quad \t_{j,k}(z-) \ = \ (\t_k(z))-, \\
  \r_{j,k}(z+) \ = \ z-, &\quad \r_{j,k}(z-) \ = \ (\t_j(z))+.
  \end{split}\end{align}

 \begin{theo}\label{extheo10a} For each $j,k \in \N$, $(Z,P[j,k])$ is an arc cyclic, prime tournament on the Cantor set.

 The twist map $\t_{j,k}$ is an automorphism
 of $(Z,P[j,k])$ which interchanges each $D_j+$ with $\bar D_j+$ and $D_k-$ with $\bar D_k-$.

 The interchange map $\r_{j,k}$ is an isomorphism from $(Z,P[j,k])$ to $(Z,P[k,j])$.
 \end{theo}

 \begin{proof} The tournament $(Z,P[j,k])$ is the attachment of $(\Z[2],\widehat{A}) = (Z+,\widehat{A}+)$ to
 $(\Z[2],\widehat{A}) = (Z-,\widehat{A}-)$ via $\{ C_1 = D_j+, C_2 = \bar D_j+ \}$ and
 $\{(E_1,F_1) = (D_k-, \bar D_k-), (E_2, F_2) = (\bar D_k-, D_k-)\}$. Since $(\Z[2],\widehat{A})$ is arc cyclic and prime
 it follows that $(Z,P[j,k])$ is arc cyclic and prime.

 From Proposition \ref{adicprop02aaa} it follows that $\t_{j,k}$ and $\r_{j,k}$ preserve arcs which are contained in $Z+$ or $Z-$.
 Let $z- \in Z-$ and $z'+ \in Z+$.
 If $z- \in D_k-$ and $z+ \in D_j+$ so that $(z-,z'+) \in P[j,k]$, then
 $\t_{j,k}(z-) \in \bar D_k-$ and $\t_{j,k}(z'+) \in \bar D_j+$ so that $(\t_{j,k}(z-),\t_{j,k}(z'+)) \in P[j,k]$.
 $\r_{j,k}(z-) \in \bar D_k+$ and $\r_{j,k}(z'+) \in D_j-$ so that $(\r_{j,k}(z-),\r_{j,k}(z'+)) \in P[k,j]$. The remaining
 three possibilities are similar.

 \end{proof}
 \vspace{.25cm}

 Now we analyze the wac tournaments which are the restrictions to $P[j,k](x)$ for $x \in Z$. Because of the twist and interchange
 automorphisms, we need only consider the case when $x = z+ \in  D_j+$.

 \begin{equation}\label{exeq01neww02}
P[j,k]^{\circ}(z+) \ = \ \bar D_k- \ \cup \ [\bigcup_{i \in \N} \ (z + A_i)+].
\end{equation}

 From Proposition \ref{adicprop02} we have $z + A_i \subset D_j$ for all $i > j$, and
 $z + A_j \subset \bar D_j$ Furthermore, $z + A_{j-1} \subset D_j$ if $z \in D_{j-1}$ and $z + A_{j-1} \subset \bar D_j$ if $z \in \bar D_{j-1}$.
  For each $i < j-1$, $z + A_i$ meets both $D_j$ and $\bar D_j$.

  It follows that for the restriction of $P[j,k]$ to $P[j,k](z+)$ the set $\bar D_k-$ and each $(z + A_i)+$ for $i \ge j-1$ is a $Q$ invariant subset.
   So we obtain
 a quotient by smashing each to a point, which we will label $\bar d_k -$ and $i+$ for $i \ge j-1$. We label quotient tournament
 $(Z_{z+},T_{z+})$ with
  \begin{equation}\label{exeq01neww03}
 Z_{z+} \ = \ \{ z+ \}  \ \cup \  \{ \bar d_k - \} \ \cup \ \{ i+ : i \ge j-1 \} \ \cup \ [\bigcup_{i=1}^{j-2} \ (z + A_i)+].
 \end{equation}

  Case 1 ($j = 1$): The map $\bar d_k \mapsto 1 \ \ i+ \mapsto i+1, z+ \mapsto \infty $ is an isomorphism from $(Z_{z+},T_{z+})$
  onto the prime tournament $\bar N_1 = (\N^*,V_1^{-1})$. The next stage of the classifier for the restriction to $P[j,k](z+)$
 is the topological lexicographic product  $\bar N_1 \ltimes \{ (Y_a,S_a) \}$ with $(Y_1,S_1)$ the restriction of $\widehat{A}$ to $\bar D_k$,
$(Y_i,S_i) = (\Z[2], \widehat{A})$ for each $i > 1$ and $(Y_{\infty},S_{\infty})$  trivial. Note that since the almost wac tournament
which is the restriction to $P[j,k](z+)$ does not have an arc quotient it does have a classifier, see the Remark after Theorem \ref{classtheo02}.
   \vspace{.25cm}

Case 2 ($j = 2$) : The restriction of $T_{z+}$ to $\N^*+$ and the restriction to $\{ \bar d_k- \} \cup \N^{*} \setminus \{ 1 \}$ are each isomorphic
to $\bar N_1$ with $1+ \ha \bar d_k-$ if $x \in D_1$ and $\bar d_k- \ha 1$ if $x \in \bar D_1$. In either case, the pair $\{ \bar d_k-, 1 \}$ is a $Q$
invariant subset on which $T_{z+}$ restricts to an arc. Smashing the two points together we again obtain $\bar N_1$ as the prime quotient of
the restriction to
$P[j,k](z+)$. However, in this case, the next stage of the classifier for the restriction to
$P[j,k](z+)$
 is the topological lexicographic product  $N_1 \ltimes \{ (Y_a,S_a) \}$ with $(Y_1,S_1)$ the arc on $\{ \bar d_k-,1 \}$,
$(Y_i,S_i) = (\Z[2], \widehat{A})$ for each $i > 1$ again and $(Y_{\infty},S_{\infty})$  trivial. Over the arc at the next stage, one fiber is isomorphic to
$(\Z[2], \widehat{A})$ and the other is isomorphic to the restriction of $\widehat{A}$ to $\bar D_k$.
 \vspace{.25cm}

 Case 3 ($j > 2$) : The tournament $(Z_{z+},T_{z+})$ is  prime. The restriction of $T_{z+}$ to the Cantor set portion of $Z_{z+}$, which is
 $\bigcup_{i=1}^{j-2} \  (z + A_i)+$ is prime when $j = 3$ and when $j > 3$ it
 has a prime quotient which is the restriction of $\bar N_1$ to the set $\{ 1, \dots, j-2 \}$ (which is an arc when $j = 4$).

 \begin{proof}    A section for $(Z_{z+},T_{z+})$ is a map $\xi$
 from $\N^*$ to $Z_{z+}$ with $\xi(\infty) = z+, \xi(i) = i+$ for $i \ge j-1$ and $\xi(i) \in (x + A_i)+$ for $i \le j-2$. A section induces an
 isomorphism from $\bar N_1$ on $\N^*$ to the restriction of $T_{z+}$ on the image of $\xi$.

 Let $U$ be  a closed, non-trivial $Q$ invariant subset. It is clopen in $Z_x$ because $(Z_{z+},T_{z+})$ is
 almost wac.

 If $U$ contains any point in $(z + A_i)+$ for some $i \le j-2$, then from Lemma \ref{qlem01a} it follows that
 $U$ contains additional points of $(z + A_i)+$ because the restriction to $(z + A_i)+$ is balanced. Since the restriction
 is also prime, it then follows that $(z + A_i)+ \subset U$.
 In that case, $\bar d_k \in U$ since $z + A_i$ meets both $D_j$ and $\bar D_j$.
 If $y \in (z + A_i) \cap D_j$, then $\{ y+, \bar d_k, j+ \}$ is a $3-$cycle in $Z_{z+}$ and so $j+ \in U$. Furthermore, there exists a section through
 $y+$ and $j+$.

    The restriction to the image of any section $\xi$ is  prime and so if $U$ meets two points in the image, then
    it contains the entire image. By varying the section we see that  $(z + A_i)+ \subset U$ for all $i \le j-2$. Thus, if $U$ meets
    any $(z + A_i)+$ with $i \le j-2$ or meets any section in two points, then we have
     $U = Z_{z+}$.

    If $\bar d_k-, k+ \in U$ with $k \ge j$, then for $y \in (z + A_{j-2}) \cap D_j$ we have $k+ \ha y+ \ha \bar d_k$ and so $y+ \in U$. As two points
    of a section lie in $U$, we have $U = Z_{z+}$. Finally, if $\bar d_k-, (j-1)+ \in U$  and $y' \in (z + A_{j-2}) \cap \bar D_j$, then
    $\bar d_k- \ha y'+ \ha (j-1)+$ and so again two points of a section are in $U$ and $U = Z_{z+}$.

    Thus, $(Z_x, T_x)$ is prime.

    Now restrict to the Cantor set portion of $Z_x$. If $j = 3$, then the Cantor set portion
    is $z + A_1$ whose restriction is isomorphic to $(\Z[2],\widehat{A})$ and so is prime. For $j > 3$, each $(z + A_i)+$ is $Q$ invariant
     the resulting quotient is clearly
    restriction of $\bar N_1$ to the set $\{ 1, \dots, j-2 \}$. This is an arc when $j = 4$ and is prime in any case.

    \end{proof}

    \begin{lem}\label{exlem10b} The restriction of $\widehat{A}$ to the subsets $D_2 $ and
    $\bar D_2$ of $\Z[2]$ is isomorphic to the lexicographic product
    $\{ 0, 1 \} \times (\Z[2],\widehat{A})$ where $\{ 0, 1 \}$ is the arc with $0 \ha 1$. \end{lem}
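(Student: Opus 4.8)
The plan is to build an explicit isomorphism for each of $D_2$ and $\bar D_2$; the two cases are identical, so I would describe $D_2$ and then remark that $\bar D_2$ is verbatim the same. First I would split $D_2$ by its leading digit: writing a point of $D_2$ as $\ep 0 z$ with $\ep \in \{0,1\}$ and $z \in \Z[2]$, we get the clopen partition $D_2 = I_{00} \sqcup I_{10}$ (and likewise $\bar D_2 = I_{01} \sqcup I_{11}$). By Theorem \ref{adictheo01}(c) the shift $\s^2 : I_w \to \Z[2]$ is a homeomorphism carrying $\widehat A | I_w$ isomorphically onto $(\Z[2],\widehat A)$ for each two-letter prefix $w$, so each half of $D_2$ is already a faithful copy of the fiber via $x \mapsto \s^2(x)$.

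The heart of the argument is the arc pattern between the two halves. I would show that if $u,v$ lie in the same one of $D_2, \bar D_2$ with $u_1 = 0$ and $v_1 = 1$, then $u \ha v$. For this, compute the first two digits of $v-u$: since $u_2 = v_2$ (both digits $0$ on $D_2$, both $1$ on $\bar D_2$) and $u_1 = 0$, the digitwise subtraction with carry to the right forces $(v-u)_1 = 1$ with no carry into the second place, and then $(v-u)_2 = 0$, so $v - u \in A_1 \subset A^{\circ}$ and hence $u \ha v$. (Equivalently this is the relevant instance of Proposition \ref{adicprop02}(b), but the one-line digit computation is cleanest.) Thus the leading-digit-$0$ half maps entirely into the outset of every point of the leading-digit-$1$ half, which is exactly the base relation $0 \ha 1$.

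With these two facts I would define $\Phi(x) = (x_1, \s^2(x))$ from $D_2$ to $\{0,1\} \times \Z[2]$ (and analogously on $\bar D_2$) and check it is the desired isomorphism. It is a bijection; it is a homeomorphism because the leading-digit partition is clopen, $\s^2$ is a homeomorphism, and, the base $\{0,1\}$ being discrete, the lexicographic product of Definition \ref{dflex01a} carries precisely the disjoint-union (product) topology that $\Phi$ realizes. To see $\Phi$ preserves arcs I would run the four cases for $(x_1,y_1)$: when $x_1 = y_1$ the condition reduces to the isomorphism $\s^2$ on the common cylinder, and when $x_1 \ne y_1$ it reduces to the between-halves computation, which matches the base arc $0 \ha 1$. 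The only delicate point is the digit computation of the middle paragraph, and even there the key observation is simply that fixing the second coordinate blocks any borrow past position two, so the arc direction is governed entirely by the leading digit.
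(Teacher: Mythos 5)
Your proof is correct and takes essentially the same route as the paper: split $D_2$ into the clopen halves $I_{00}$ and $I_{10}$, observe that every arc between them runs from the leading-digit-$0$ half to the leading-digit-$1$ half (so the base is the arc $0 \ha 1$), and identify each half with $(\Z[2],\widehat{A})$ via the shift $\s^2$ as in Theorem \ref{adictheo01}(c). The only cosmetic difference is that the paper disposes of $\bar D_2$ by invoking the twist automorphism $\t_2$ of Proposition \ref{adicprop02aaa}, whereas you rerun the (equally short) digit computation directly on $I_{01}$ and $I_{11}$.
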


    \begin{proof} $D_2$ is the disjoint union of $I_{00}$ and $I_{10}$ with $I_{00} \ha I_{10}$. These are $Q$ invariant for
    the restriction to $D_2$ and so the quotient is the arc $\{ 0, 1 \}$. The restriction of $\widehat{A}$ to each of
    $I_{00}$ and $I_{10}$ is isomorphic to $(\Z[2],\widehat{A})$.

    By he twist map $\t_2$, the restriction to $\bar D_2$ is isomorphic to
    the restriction to $D_2$.

    \end{proof}

    \begin{theo}\label{extheo10c} As the pair $(j,k)$ varies over the set $\{ (1,1), (1,2), (2,2) \} $ \\$\cup \{ (j,k) : j \le k,$ and $j \not= 2 \}$,
    no two of the tournaments $(Z,P[j,k])$ are isomorphic. \end{theo}

    \begin{proof} From the interchange isomorphism we see that $(Z,P[j,k])$  and $(Z,P[k,j])$ are isomorphic for any $j, k \in \N$.

    With $j = 1, 2$ the prime quotients of $P[j,k]|P[j,k](z+)$ are isomorphic to $\bar N_1$ by Case 1 and Case 2, above.

    We distinguish between $P[1,1], P[1,2]$ and $P[2,2]$ by looking at the classifiers for the restrictions to $P[j,k]|P[j,k](z+)$
    and applying Lemma \ref{exlem10b}. Notice that $(Z,P[1,1])$ is isomorphic to $(\Z[2],\widehat{A})$ as it is the same
    as the generalized reduced double constructed in part (d) of Proposition \ref{adicprop02}.  Furthermore, none of these can be isomorphic
    to any $(Z,P[j,k])$ with $j \ge 3$ since some of the prime quotients in the latter case have Cantor set portions.

    For $j, j' \ge 3$ and any $k, k'$ the restriction to the Cantor set portion of the prime quotients of $P[j,k]|P[j,k](z+)$ and
    $P[j',k']|P[j,k](z+)$ have in turn different prime quotients when $j \not= j'$ by Case 3.  In particular,
    $(Z,P[j,1])$ and $(Z,P[j',1])$ are not isomorphic if $j \not= j'$. Furthermore, for $j, j', k, k' \ge 3$ with $j \le k$ and $j' \le k'$,
    $(Z,P[j,k])$ is isomorphic to $(Z,P[j',k'])$ only when $j = j'$ and $k = k'$.

       \end{proof}

      \textbf{ Remark:} Distinguishing between $(Z,P[1,k])$ and $(Z,P[2,k])$ for $k \ge 3$ would require an analysis of the
      restriction of $\widehat{A}$ to $D_k$ analogous to that of Lemma \ref{exlem10b} and we have not bothered with it.

     \vspace{.5cm}

     Thus, we obtain a countable infinity of distinct arc cyclic prime tournaments on the Cantor set. While additional examples can be constructed
     using more complicated attachments, this method will still only yield a countable family of tournaments.
     If we begin with a countable family of tournaments $(Y,S)$ and $(X,R)$ we will only be able to construct
     countably many new examples because a Cantor set contains only
     countably many clopen sets.
     \vspace{.5cm}

  \begin{ex}\label{adicexes01} Uncountably many arc cyclic, prime tournaments on a Cantor set,  \end{ex}

  We now follow Example \ref{exesnew02} (b) by beginning with $2N_1 = (2\N^*, 2L_1)$
 which is arc cyclic and prime.

  We build tournaments
  indexed by $\th \in \N^{\N}$. On $\N^{\N}$ we define the shift map $\s$ by $\s(\th)_i = \th_{i+1}$.

  Recall that in Example \ref{adicexes02}
   we let  $Z = \Z[2] \times \{-1, +1 \}$ labelling $Z\pm = \Z[2] \times \{ \pm \}$ with $  \widehat{A}\pm$
   the tournament $\widehat{A}$ on $Z\pm$ and $D_j\pm, \ \bar D_j\pm$  the copies of $D_j, \ \bar D_j$ in $Z \pm$.

   In Example \ref{adicexes02} we defined $P[j] = P[j,j]$ on $Z$ by
  \begin{align}\label{exeq01neww}\begin{split}
P[j] \ = \ \widehat{A}+ \ &\cup \ \widehat{A}- \ \cup\\
(D_j- \times D_j+) \ \cup \ (\bar D_j- \times \bar D_j+) \ &\cup \ (D_j+ \times \bar D_j-) \ \cup \ (\bar D_j+ \times  D_j-).
 \end{split}\end{align}
 The twist map $\t_{j,j}$ is an automorphism
 of $(Z,P[j])$ which interchanges each $D_j\pm$ with $\bar D_j\pm$.

  To define $(K,T[\th])$ we begin with the topological lexicographic product of
  $2N_1 \ltimes \{ (Y_{a},S_{a}) : a \in 2\N^* \ \}$ such that for all $a \in \N+ \ \cup \ \N-$,
  $(Y_{a},S_{a}) = (\Z[2], \widehat{A})$
  and with $(Y_{\infty},S_{\infty})$ trivial.  The underlying space $K = [(\N+ \ \cup \ \N-)\times \Z[2]] \cup \{ \infty \}$.

  Leaving the other arcs unchanged we define $T[\th]$ so that for each $i \in \N$, the  restriction to
 $\{ i- , i+\} \times \Z[2]$ is isomorphic to $(Z,P[j])$ with $j = \th_i$ by the map $(i-,z) \mapsto z- \in Z-$
  and $(i+,z) \mapsto z+ \in Z+$.

 From Theorem \ref{extheo01e} we see that $(K,T[\th])$
is an arc cyclic, prime tournament on a Cantor set for each $\th$.

 We compute the prime quotients of the almost wac tournaments which are the restrictions of $T[\th]$ to $T[\th](x))$  for all $x \in K$.

 We note first the following which is obvious from the way the tournaments were obtained from the lexicographic products.

 \begin{lem}\label{adiclem03} Let $i \in \N$ and let $K'$ be a closed subset of $K$.

 If $K'$ is disjoint from $ \{ i+ \} \times Z[2]$,
 then $K' \cap ( \{ i- \} \times Z[2])$ is a clopen subset of $K'$ which, if it is nonempty,  is $Q$ invariant for
 $(K', T[\th]|K')$.

 If $K'$ is disjoint from $ \{ i- \} \times Z[2]$,
 then $K' \cap ( \{ i+ \} \times Z[2])$ is a clopen subset of $K'$ which, if it is nonempty,  is $Q$ invariant for
 $(K', T[\th]|K')$. \end{lem}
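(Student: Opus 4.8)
The plan is to verify the two assertions of the lemma directly from the definition of $T[\th]$, treating clopenness and $Q$ invariance separately. For the clopenness I would note that $i-$ is an isolated point of the base space $2\N^*$, so by Theorem \ref{theolex01}(c) the fiber $\{ i- \} \times \Z[2]$ is a clopen subset of the total space $K$; since the passage from the lexicographic product $2N_1 \ltimes \{ (Y_a,S_a) \}$ to $T[\th]$ alters only arcs and not the topology, $\{ i- \} \times \Z[2]$ remains clopen in $K$, whence its intersection with the closed set $K'$ is clopen in $K'$. The same remark applies to $\{ i+ \} \times \Z[2]$.

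For $Q$ invariance I would appeal to the criterion of Proposition \ref{qprop02}(a): writing $A = K' \cap (\{ i- \} \times \Z[2])$, it suffices to show that for every $z \in K' \setminus A$, either $A \subset T[\th](z)$ or $A \subset T[\th]^{-1}(z)$ (and since $A \subseteq K'$, it is harmless to test this in $K$ rather than in the restriction $T[\th]|K'$). The crucial observation is that the disjointness hypothesis forces any such $z$ to avoid the block $\{ i-, i+ \} \times \Z[2]$ entirely: a point $z \in K'$ outside $A$ cannot lie in $\{ i- \} \times \Z[2]$, or it would belong to $A$, and it cannot lie in $\{ i+ \} \times \Z[2]$ by assumption. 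Hence $z = \infty$ or $z = (a,y)$ with $a \in (\N+ \cup \N-) \setminus \{ i- \}$, and in either case $z$ and the fiber $\{ i- \} \times \Z[2]$ lie over distinct points of $2\N^*$.

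I would then use the fact that $T[\th]$ differs from the lexicographic product only on within-block arcs, so that arcs joining points over distinct base points are exactly those of $2N_1 \ltimes \{ (Y_a,S_a) \}$ and therefore, by (\ref{eqlex01}), depend only on the base coordinates: for $a \neq i-$ the arc between $(a,y)$ and $(i-,w)$ points in the direction dictated by whether $a \ha i-$ or $i- \ha a$ in $2N_1$, uniformly in $w$ and $y$. Consequently $\{ i- \} \times \Z[2] \subset T[\th](z)$ when $a \ha i-$ and $\{ i- \} \times \Z[2] \subset T[\th]^{-1}(z)$ when $i- \ha a$; for $z = \infty$, since $2N_1$ is of the form $2N$, (\ref{exeq02aa}) gives $i- \in (2L_1)^{\circ}(\infty)$, i.e. $\infty \ha i-$, so again $\{ i- \} \times \Z[2] \subset T[\th](\infty)$. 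In every case $A$ is contained in $T[\th](z)$ or in $T[\th]^{-1}(z)$, so $A$ is $Q$ invariant. The second statement is symmetric, using instead $i+ \ha \infty$ from (\ref{exeq02aa}), so that $\{ i+ \} \times \Z[2] \subset T[\th]^{-1}(\infty)$.

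The content is genuinely ``obvious'' once the uniformity across fibers is isolated, and that uniformity is exactly what the disjointness hypothesis buys: the only non-uniform arcs in $T[\th]$ are those inside a block $\{ i-, i+ \} \times \Z[2]$ coming from $(Z, P[\th_i])$, and the main (indeed only) point requiring care is to observe that, had $z$ been permitted in the opposite half of the block, those altered arcs would split the fiber $\{ i- \} \times \Z[2]$ between the outset and the inset of $z$, destroying $Q$ invariance. No estimate or convergence argument is needed, so I anticipate no real obstacle beyond recording these cases cleanly.
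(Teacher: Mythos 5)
Your proof is correct and takes exactly the route the paper intends: the paper states this lemma without proof, calling it ``obvious from the way the tournaments were obtained from the lexicographic products,'' and your argument—clopenness of fibers over isolated base points via Theorem \ref{theolex01}(c), plus the observation that all arcs between a point outside the block $\{i-,i+\}\times \Z[2]$ and the fiber $\{i\pm\}\times \Z[2]$ are unaltered lexicographic arcs, hence uniform in direction, verifying the criterion of Proposition \ref{qprop02}(a)—is precisely the elaboration of that remark. No gaps.
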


 \vspace{.5cm}

 Case 1 ($x = \infty$) : $T[\th](\infty) = \{ \infty \} \ \cup \ [\bigcup_i \ (\{ i- \} \times Z[2]) $. From Lemma \ref{adiclem03} it
 follows that each $\{ i- \} \times Z[2]$ is a $Q$ invariant subset.  Smashing each to a point we obtain the prime quotient
 which is isomorphic to $\bar N_1$. Similarly, the prime quotient of the restriction to $T[\th]^{-1}(\infty)$ is
 isomorphic to $N_1$.
  \vspace{.25cm}

 Case 2 ($ x = (i-,z) \in \{ i- \} \times Z[2], j = \th_i$) : Because of the twist map automorphism, we may assume that $z \in D_j$.
 \begin{align}\label{exeq02neww}\begin{split}
 T[\th](x) \ = \ (\{ i- \} \times  &\widehat{A}(z)) \ \cup (\{ i+ \} \times D_j) \ \cup\\
  (\{ (i+1)- \} \  &\cup \  \{ k- : k < i-1 \}) \times \Z[2]).
 \end{split}\end{align}
 
 If $i > 2$, then $\{ 1- \} \times \Z[2]$ is a $Q$ invariant subset for the restriction to $T[\th](x)$ and in the quotient
 it is an isolated terminal point.  
 
 If $i = 1,2$, then the
set $\{ (i+1)- \} \times \Z[2]$ is a $Q$ invariant subset for the restriction to $T[\th](x)$ and in the quotient
 it is an isolated terminal point. 
 
 In either case, the restriction of $T[\th]$ to $T[\th](x)$ has an arc quotient.
  \vspace{.25cm}

   Case 3 ($ x = (i+,z) \in \{ i+ \} \times Z[2], j = \th_i$) :  Again we may assume that $z \in D_j$.
  \begin{align}\label{exeq03neww}\begin{split}
   T[\th](x) \ = \ (\{ i+ \} \times  &\widehat{A}(z)) \ \cup (\{ i- \} \times \bar D_j) \ \cup \
  \{ (i+1)- \} \times \Z[2]  \ \cup \\
 [\ \bigcup_{k < i}  \ \{ j- \} \times \Z[2]\ ] \ &\cup \ [\bigcup_{k > i+1} \ \{ k-, k+ \} \times \Z[2]\ ] \ \cup \ \{ \infty \}.
  \end{split}\end{align}

 The restriction of $T[\th]$ to $\bigcup_{k > i+1} \ \{ k-, k+ \} \times \Z[2] \cup \{ \infty \}$
 is isomorphic to $T[\s^{i+1}(\th)]$  by that map $k\pm \mapsto (k-i-1)\pm$. Hence, this restriction is an arc cyclic, prime tournament.

 For each $k < i$ and for $k = i+1$ we smash $\{ k- \} \times \Z[2]$ to a point which we label $k-$.

 For each $k \ge j-1$, $\{i+ \} \times (x + A_k)$ is a $Q$ invariant set in $(\{ i+ \} \times \widehat{A}(z)) \ \cup (\{ i- \} \times \bar D_j)$
 and hence in $T[\th](x)$. We smash each to a
 point which we label $(i+,k)$. Similarly, $\{ i- \} \times \bar D_j$ is a $Q$ invariant set in
 $(\{ i+ \} \times  \widehat{A}(z)) \ \cup (\{ i- \} \times \bar D_j)$ and hence in $T[\th](x)$.  We smash it to a point which we label
 $(i-, \bar d)$.

 As we saw when we analyzed Example \ref {adicexes02}, the prime quotient of the restriction
 to $(\{ i+ \} \times  \widehat{A}(z)) \ \cup (\{ i- \} \times \bar D_j)$
 is $$K' \ = \  \{(i-, \bar d) \} \ \cup \ \{ (i+, k) : k \ge j-1 \} \ \cup \ (\bigcup_{k < j-1} \ \{ i+ \} \times \{ z + A_k \}).$$

  Thus, we have a quotient $(K',T')$ of the restriction to $T[\th](x)$ with $K'$
 the union of three pieces. Fix $a \in \Z[2]$.
 \begin{align}\label{exeq04neww}\begin{split}
 K'_1 \ &= \ (\bigcup_{k > i+1} \ \{ k-, k+ \} \times \Z[2]) \cup \{ \infty \}, \\
 K'_2 \ &= \ \{ 1-, 2-, \dots, (i-1)-, (i- , \bar d), (i+1)-,\\ &((i+2)-,a), ((i+3)-,a), \dots, \infty \}. \\
 K'_3 \ &= \ \{ (i-, \bar d) \} \cup \{ (i+,k) : j-1 \le k \le \infty \} \cup (\bigcup_{k < j-1} \ \{ i+ \} \times \{ z + A_k \}).
   \end{split}\end{align}

Recall that $x + A_j \subset \bar D_j$ while $x + A_k \subset D_j$ for $k > j$.
Hence, for any $k > j+1$,
  $\{ (i-, \bar d), (i+, j), (k-, a) \}$ is a $3-$cycle in $K'_2 \cup K'_3$.

   On $K'_1$ the restriction of the quotient of $T[\th]$ is isomorphic to $T[\s^{j+1}(\th)]$. On $K'_2$
   the restriction of the quotient of $T[\th]$ is
   isomorphic to $\bar N_1$. On $K'_3$ the restriction is also isomorphic to the
   prime quotient of $x$ in $(Z,P[j])$. Thus, the restriction of the quotient of $T[\th]$ to each
   set is prime.

     Now let $U$ be a non-trivial $Q$ invariant subset.

    If $U$ contains two points of $K'_{\ep}$ for $\ep = 1,2,3$ then it contains $K'_{\ep}$.
   $K'_2 \cap K'_1$ is infinite. Hence, $K'_1 \subset U$ or $K'_2 \subset U$
   implies $K'_1 \cup K'_2 \subset U$. Since $(i-, \bar d), (k-, a) \in U$ the above $3-$cycle implies the $(i+, j) \in U$.
   Since $(i-, \bar d), (i+, j) \in U \cap K'_3$ it follows that
    $K'_3 \subset U$.

    On the other hand, if $K'_3 \subset U$ then $(i-, \bar d), (i+, j) \in U$ implies $(k-, a) \in U$  for all
    $k > j+1$ and so $K'_1 \cup K'_2 \subset U$. Thus, in all these cases,
   $U = K'$.

  If $U$ contains a point of $K'_1$, then because $K'_1$ is balanced, it follows from Lemma \ref{qlem01a}
   that $U$ contains two points of $K'_1$ and so, as above, $U = K'$.

  If $U$ contains a point of $\{ i+ \} \times \{ z + A_k \}$ for some $k < j-1$, then because $z + A_k$ is balanced, it follows from Lemma \ref{qlem01a}
  again
   that $U$ contains two points of $K'_3$ and so, as above, $U = K'$.

  Finally, suppose that $x \in U \cap K'_3 \setminus \{ (i-, \bar d) \}$  and $\ell- \in U \cap K'_2$ for some $\ell = 1,2, \dots, i-1$ or $i+1$. Since
  $x \in \{ i+ \} \times \Z[2]$,
  , $x \ha (k+,b) \ha \ell-$ for all $k > j+1$ and $b \in \Z[2]$. Hence, $ (k+, b) \in U \cap K'_1$ for all such $k$ and $b$.
  As before, it follows that $U = K'$.

  Hence, $(K',T')$ is a prime topological tournament. Notice that it contains a Cantor set and a countable number of isolated points.
\vspace{.25cm}

To summarize, the almost wac tournaments which are restrictions of $T[\th]$ to the sets $T[\th](x)$ have arc quotients except when $x = \infty$ or
$x \in \{ i+ \} \times \Z[2]$. In the latter case, the restriction has a prime quotient which contains a Cantor set and a countable
set of isolated points. In the former, the restriction has a prime quotient which is isomorphic to $\bar N_1$. Thus, $\infty$ is the
unique point of $K$ such that the restriction of $T[\th]$ to the set $T[\th](x)$ has a countably infinite prime quotient.
\vspace{.5cm}

 \begin{theo}\label{theo01neww} The tournaments $(K,T[\th])$  are arc cyclic, prime tournaments on a Cantor set for all $\th \in \N^{\N}$ and
 with no two of them isomorphic. \end{theo}

 \begin{proof} Suppose $h : (K,T[\th]) \to (K,T[\th'])$ is an isomorphism. Because of the above characterization of the point $\infty$ it
 follows that $h(\infty) = \infty$. It then follows that $h$ is an isomorphism from the restriction $T[\th]|T[\th](\infty)$ to
$T[\th']|T[\th'](\infty)$ and from $T[\th]|T[\th]^{-1}(\infty)$ to $T[\th']|T[\th']^{-1}(\infty)$. From Case 1 above, the uniqueness
of classifiers and the rigidity of $N_1$ and $\bar N_1$ it must follow that $h$ maps each $\{ i \pm \} \times \Z[2]$ into itself.

Hence, for each $i$, $h$ induces an isomorphism from $(Z,P[\th_i])$ to $(Z,P[\th'_i])$.  But Theorem \ref{extheo10c} implies that
these are isomorphic only when $\th_i = \th'_i$.  Since $\th_i = \th'_i$ for all $i$, $\th = \th'$.

 \end{proof}

 Thus, we have obtained an uncountable family of distinct arc cyclic, prime tournaments on the Cantor set.
\vspace{.5cm}

\begin{exes}\label{exes08}Limit points of the set of isolated points in prime tournaments.\end{exes}

For the examples below, let $(Z,P)$ be a compact, arc cyclic, prime tournament
with $Z$ metrizable. Since the tournament is prime, $Z$ is totally disconnected.
We assume that $e$ is a point of $Z$  which has a clopen neighborhood $G_0$ no point of which is isolated and so is a Cantor set. Since
$(Z,P)$ is arc cyclic,
every point of $G_0$ is a cycle point.

Let $E = P^{\circ}(e)$ and $F = P^{\circ -1}(e)$. Since $E$ is open and $e$ is a $G_{\d}$ point we can choose
an increasing sequence of clopen sets $\{ E_i : i \in \N \}$ with $\bigcup_i \ E_i  \ = \ E $. Let $F_i = P(E_i) \cap P^{-1}(e)$.

\begin{lem}\label{exlemnewww01} $\{ F_i : i \in \N \}$ is an increasing sequence of clopen sets with  $\bigcup_i \ F_i  \ = \ F $.
For each $i \in \N, \ \ P \cap (E_i \times F_i)$ is a surjective relation from $E_i$ to $F_i$. \end{lem}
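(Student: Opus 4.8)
The plan is to derive all three assertions from a single consequence of arc cyclicity at the point $e$: every arc $e \ha x$ lies in a $3$-cycle $e \ha x \ha y \ha e$ and, symmetrically, every arc $y \ha e$ lies in such a cycle. First I would fix the notation coming from the hypotheses: $E = P^{\circ}(e)$ and $F = P^{\circ -1}(e)$ are disjoint open sets with $Z = E \cup \{e\} \cup F$, while $P(e) = E \cup \{e\}$ and $P^{-1}(e) = F \cup \{e\}$ are closed.

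My first real step would be to rewrite $F_i = P(E_i) \cap P^{-1}(e)$ in a more usable form, namely $F_i = P^{\circ}(E_i) \cap F$. Since each $x \in E_i \subset E$ satisfies $e \ha x$, antisymmetry forbids $(x,e) \in P$, so $e \notin P(E_i)$; hence $P(E_i) \cap P^{-1}(e) = P(E_i) \cap F$, and this equals $P^{\circ}(E_i) \cap F$ because $E_i$ is disjoint from $F$. This identity hands me clopenness at once: read as $P(E_i) \cap P^{-1}(e)$, the set $F_i$ is closed, being the intersection of the closed set $P^{-1}(e)$ with the image $P(E_i)$ of the compact set $E_i$ under the closed relation $P$ (closed by the compactness properties of closed relations recorded in Section 2); read as $P^{\circ}(E_i) \cap F$, it is open, since $P^{\circ}(E_i) = \pi_2((E_i \times Z) \cap P^{\circ})$ is a projection of an open set and $F$ is open. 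Monotonicity of the sequence is then immediate from $E_i \subset E_{i+1}$.

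For the union I would use that $P^{\circ}$ distributes over unions, so $\bigcup_i F_i = P^{\circ}(E) \cap F$, reducing the claim $\bigcup_i F_i = F$ to the inclusion $F \subset P^{\circ}(E)$. Here arc cyclicity enters: given $y \in F$, i.e. $y \ha e$, the arc $(y,e)$ lies in a $3$-cycle $\{y,e,x\}$ with $y \ha e \ha x \ha y$, and then $e \ha x$ gives $x \in E$ while $x \ha y$ gives $y \in P^{\circ}(x) \subset P^{\circ}(E)$.

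Finally, for surjectivity of $P \cap (E_i \times F_i)$ I would check the two directions separately. That every $y \in F_i$ has an $E_i$-preimage under $P$ is exactly the defining property of $F_i$. That every $x \in E_i$ has an $F_i$-image is again the $3$-cycle argument: from $e \ha x$ arc cyclicity produces $y$ with $e \ha x \ha y \ha e$, so $y \in F$ and $x \ha y$; since $x \in E_i$, this places $y$ in $P^{\circ}(E_i) \cap F = F_i$. I expect no serious obstacle; the only point worth flagging is that the one $3$-cycle construction does double duty, supplying both the surjectivity of the union onto $F$ and the forward half of the surjectivity of the restricted relation, while everything else is the closed/open image bookkeeping already packaged in Section 2.
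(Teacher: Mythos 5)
Your proposal is correct and follows essentially the same route as the paper: the identity $F_i = P^{\circ}(E_i) \cap F$ (forced by antisymmetry) gives clopenness, and both halves of the remaining claims come from putting the arcs $(y,e)$ and $(e,x)$ into $3$-cycles via arc cyclicity. The only difference is presentational — you spell out the closed-relation/open-relation bookkeeping and reduce the union statement to $F \subset P^{\circ}(E)$, steps the paper leaves implicit.
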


\begin{proof} By asymmetry, $e \not\in F_i$ and $E_i \cap F_i = \emptyset$. Therefore,  $F_i = P^{\circ}(E_i) \cap F$ and so
$F_i$ is open as well as closed.

If $b \in F$, then because $(Z,P)$ is arc cyclic, there exists $a \in Z$ such that $\{ b, e, a \}$ is a $3-$cycle. Since
$a \in E$, we have $a \in E_i$ for some $i$ and so $b \in F_i$. Hence,  $\bigcup_i \ F_i  \ = \ F $.

If $b \in F_i$, then from its definition, there exists $a \in E_i$ such that $a \ha b$. If $a \in E_i$,
 then because $(Z,P)$ is arc cyclic, there exists $b \in Z$ such that $\{ b, e, a \}$ is a $3-$cycle.
 Hence, $b \in F_i$.  Thus, $P \cap (E_i \times F_i)$ is a surjective relation.

 \end{proof}
  \vspace{.25cm}

 By compactness, there exists $i_0$ such that $E_{i_0} \cup F_{i_0} \cup G_0 = Z$. We renumber the sequences, labelling $E_{i+i_0-1}$ as $E_i$
 and  $F_{i+i_0-1}$ as $F_i$.  Let $G_i = Z \setminus (E_i \cup F_i)$. Thus, $\{ G_i \}$ is a decreasing sequence of clopen sets
 each contained in $G_0$ and with $\bigcap_i \ G_i \ = \ \{ e \}$. Since every point of $G_i$ is a cycle point, the restriction
$P|G_i$ is balanced for all $i$.
\vspace{.25cm}

(a) Let $(Y,S)$ be a prime tournament with no initial point, but with a non-isolated terminal point $M$. As it is not isolated,
it is left balanced. Examples are $N_1$
of Example \ref{exes06} (a) or
 $(Y,S)$ from Example \ref{exesnew02} (a).

Since $(Y,S)$ is prime, $Y$ is totally disconnected. We can choose a strictly decreasing sequence of clopen subsets $\{ G'_i : i \in \N \}$
with $G'_1 \ = \ Y$ and $\bigcap_i \ G'_i \ = \ \{ M \}$.  Let $H_i = G'_i \setminus G'_{i+1}$ so that $\{ H_i \}$ is a pairwise disjoint
sequence of nonempty clopen subsets of $Y$ with union equal to $Y \setminus \{ M \}$.

We initially assume that $Y$ and $Z$ are disjoint.

We define the compact space $X = (Y \cup Z)/\{M, e\}$ by identifying the point $M$ in $Y$ with the point $e $ in $Z$. 
That is, we smash the pair $\{ M, e \}$ to
a point which we will label $e$. We now regard
$Y$ and $Z$ as subsets of $X$ so that $Y \cap Z = \{ e \}$.  Notice that the isolated points of $X$ are the
isolated points of $Y$ or $Z$ since $M$ is not isolated in $Y$ and $e$ is not isolated in $Z$.

 With $$X \times X \ = \ (Y \times Y) \cup (Z \times Z) \cup [(Y \times Z) \cup (Z \times Y)]$$
we define the tournament $R$ on $X$ as the following union of a countable number of closed sets.
\begin{equation}\label{exeq11a}
R \ = \ S \  \cup \ P \ \cup \ \
\bigcup_i [H_i \times (E_i \cup G_i) \ \cup  \ F_i \times H_i].
\end{equation}

\begin{theo}\label{extheo09} The tournament $(X,R)$ is a prime topological tournament.

If every non-isolated point of $Y$ except for $M$ is a cycle point, then every non-isolated point of $X$ is a cycle point and so $(X,R)$ is wac.

If every point of $Y$ except for $M$ has an arc cyclic subset neighborhood in $Y$, then
every point of $X$ except for $M = e $ has an arc cyclic subset neighborhood in $X$.
\end{theo}

\begin{proof} Let $\{ (z_n,w_n) \}$ be a  sequence in $R$ which converges to a point $(z,w)$ of $X \times X$. If the sequence lies infinitely
often in $Y \times Y$ and so in $S$, then the limit point lies in $S \subset R$. Similarly, if the sequence lies infinitely often
in $Z \times Z$ the limit lies in $P \subset R$.

So we may assume that the sequence lies entirely in $Y \times Z$ or $Z\times Y$. If $\{ \ell_n \}$ is  the sequence in $Y$
let $i_n$ be defined by $\ell_n \in H_{i_n}$. If for some $i \in \N$
$i_n = i$ infinitely often then we may assume that $\ell_n \in H_i$ for all $n$ by going to a subsequence. Because
$[H_i \times (E_i \cup G_i)] \cup [F_i \times H_i]$ is a closed set, it follows that $(z,w)$ is in this set and so in $R$.

Otherwise,  $i_n$ tends to $\infty$.  So for every $j \in \N \ \ \{ \ell_n \}$ is eventually
in $G'_j$.  That is, $\{ \ell_n \}$ converges to $M = e$.

 Now let $\{ z_n \}$ be the sequence in $Z$.
If $z_n \in G_{i_n}$ infinitely often, then since $i_n \to \infty$ and $\bigcap_i G_i = \{ e\}$, it follows that $\{ z_n \}$
converges to $ e$.  That is,
 the limit point $(z,w) \ = \ (e , e) \in R$. Otherwise, either $z_n \in P(e)$ infinitely often and with limit in $a \in P(e)$ or it lies in
$P^{-1}(e)$ infinitely often with limit $b \in P^{-1}(e)$. The limit point is then either $(e , a)$ or $(b, e )$ both of which are in $P \subset R$.

Thus, $R$ is closed and so $(X,R)$ is a topological tournament.

Now let $U$ be a non-trivial, closed $Q$ invariant subset of $X$.

If two points of $Y$ are in $U$, then because $(Y,S)$ is prime, $Y \subset U$ and, in particular, $e = M$ is in $U$.

 Now if there exists $ g \in G_0 \cap U$ (and this includes the case $g = e$),
 then because $P|G_0$ is balanced, there exist other points in $G_0 \cap U$ by Lemma \ref{qlem01a}.
 Because $(Z,P)$ is prime, $Z \subset U$.

 If $(\ell, a) \in R^{\circ}$ with $\ell \in H_i \cap U, a \in E_i \cap U$,  then there exists $b \in F_i$ such that
 $\{ \ell, a, b \}$ is a $3-$cycle in $X$. If $(b, \ell) \in R^{\circ}$ with $\ell \in H_i \cap U, b \in F_i \cap U$,
 then there exists $a \in E_i$ such that  $\{ \ell, a, b \}$ is a $3-$cycle in $X$. Hence, $a, b \in U$ in each case and
 so again $Z \subset U$.

 If two points of $Z$ are in $U$, then $Z \subset U$ and, in particular, $e = M$ is in $U$. If $\ell \in H_i$ and
 $b \in   F_i$, then $b \ha \ell \ha M = e $. Hence, $\ell \in U$. Thus, $Y \subset U$.

 So in any case $U = X$ which implies that $(X,R)$ is prime.

 Because every non-isolated point of $Z$ is a cycle point in $Z$, it is a cycle point in $X$.  This includes $ e = M$.  Hence, if
 every non-isolated point of $Y \setminus \{ M \}$ is a cycle point in $Y$, it follows that every non-isolated point of $X$ is a cycle point in $X$ and
 so $(X,R)$ is wac.

 The local arc cyclicity result is clear.

 \end{proof}
 \vspace{.5cm}

(b)  Let $2N = (2\N^*,2L)$ be a countably infinite, arc cyclic, prime tournament from Example  \ref{exes06}(b), e.g. use either
$2N_0 = (2\N^*,2L_0)$  or $2N_1 = (2\N^*,2L_1)$.
As in (a) above, we define the compact space $X$ by identifying the point $\infty$ in $2\N^*$ with the point $e $ in $Z$. We will regard
$2\N^*$ and $Z$ as subsets of $X$ and use $e $ to label the point $\infty = e $. Thus, $X$ contains a countable number of isolated points
$\N+ \ \cup \ \N-$
with limit point $e $ which lies in a Cantor set.

Define
the tournament $R$ on $X$ as the following union of a countable number of closed sets.
\begin{align}\label{exeq11}\begin{split}
R \ = \ &2L \cup \ P \ \cup \\
\bigcup_i [\{ i+ \} \times  &(E_i \cup G_{i})] \cup [F_i \times \{ i+ \}] \quad \cup \\
\bigcup_i [\{ i- \} \times  &E_i] \cup [(F_i \cup G_i) \times \{ i- \}].
\end{split} \end{align}

\begin{theo}\label{extheo09b} The tournament $(X,R)$ is an arc cyclic, prime topological tournament with isolated points
the set $\N+ \ \cup \ \N-$ which has limit point $ \infty = e$. \end{theo}

\begin{proof} The proof that $R$ is closed and so that $(X,R)$ is a topological tournament is similar to the proof in (a) above.
The proof that $(X,R)$ is prime is also  similar to the proof in (a).

Both $2N$ and $(Z,P)$ are arc cyclic, prime tournaments. Thus any arc in $\N^*$ or $Z$ is contained in a $3-$cycle.

For $g \in G_i$, $\{ i+, g, i- \}$ is a $3-$cycle in $X$. Given $a \in E_i$, there exists $b \in F_i$ and given $b \in F_i$ there
exists $a \in E_i$ such that $\{ i+, a, b \}$ and $\{ i-, a, b \}$ are $3-$cycles in $X$.  Thus, $(X,R)$ is arc cyclic.

 \end{proof}
 \vspace{.5cm}

 Call a tournament $(X,R)$ \emph{almost locally arc cyclic}\index{tournament!almost locally arc cyclic}
 if only
 finitely many points of $X$ do not have an arc cyclic subset neighborhood. We call these the \emph{exceptional points}\index{exceptional points}.
 Since an isolated point has a trivial arc cyclic neighborhood, any exceptional point is non-isolated.

 \begin{theo}\label{extheo10} Let $(J,P)$ be a finite tournament.
 \begin{itemize}
 \item[(a)] There exists an arc cyclic, prime tournament $(X,R)$ with $X$ countably infinite and with finitely many non-isolated points.
 Furthermore, the restriction  $(F, R|F)$ to the set $F$ of non-isolated points is isomorphic to $(J,P)$.
\vspace{.25cm}

 \item[(b)] There exists an arc cyclic, prime tournament $(X,R)$ such that $X$ contains a
 Cantor subset $C$ and countably many isolated points. Each of the - only finitely many - limit points of the isolated points is contained in $C$.
Furthermore, the restriction  $(F, R|F)$ to the set $F$ of limit points points of the isolated points is isomorphic to $(J,P)$.
\vspace{.25cm}

 \item[(c)] There exists an arc cyclic, prime tournament $(X,R)$ such that $X$ contains a
 Cantor subset $C$ and countably many isolated points. Each of the limit points of the isolated points is contained in $C$.
 The set $F$ of the limit points of the isolated points is countably infinite and
 the restriction  $(F, R|F)$ to the set $F$ of limit points points of the isolated points is isomorphic to $N_1 = (\N^*,L_1)$.
\vspace{.25cm}

\item[(d)]  There exists an almost locally arc cyclic, wac, prime tournament $(X,R)$ with $X$ a Cantor set.
 Furthermore, the restriction  $(F, R|F)$ to the set $F$ of exceptional points is isomorphic to $(J,P)$.
 \end{itemize}
 \end{theo}

 \begin{proof} In cases (a), (b) and (d) we proceed by induction on $n = |J|$.

 (a): For $n = 1$ with $(J,P)$ trivial, we use  $2N_1 = (2\N^*,2L_1)$ from Example  \ref{exes06}(b). It is
 a countably infinite, arc cyclic, prime tournament
 with $\infty$ the single non-isolated point.

 We will need  $2-$fold partitions $(E,F)$ of $2\N^*$ such that $2L_1 \cap [(E \times F) \cup (F \times E)]$ is a surjective relation on $2\N^*$.

 For $n, m > 2$ let
 \begin{align}\label{exeq12}\begin{split}
 A+ \ = \ \{ i+ : i &\leq n \}, \quad A- \ = \ \{ i- : i \leq n \}, \\
B+ \ = \ \{ i+ : n < i &\leq n + m \}, \quad B- \ = \ \{ i- : n < i \leq n + m \}, \\
 C \ = \ K \setminus (A+  \cup \ A-  \cup \  &B+  \cup  B-) \ = \ \{ i+, i- : n + m < i \} \cup \{ \infty \}.
 \end{split} \end{align}

 It is easy to check that the restriction of $2L_1$ to each of the following subsets is a surjective relation:
 \begin{displaymath}\begin{split}
 A- \times \ A+, \quad  B+ \times \ (B-  \cup \ C), \\
 A+ \times \ B+, \quad B- \times \ (A-  \cup \ B+), \\
 C \times \ A- \hspace{2cm}
 \end{split}\end{displaymath}

 Hence, for $n, m > 2$ and $(E,F) = (A- \ \cup \ B+, A+ \ \cup \ B- \ \cup \ C)$ we have that
$2L_1 \cap [(E \times F) \cup (F \times E)]$ is a surjective relation on $2\N^*$.

Now let $(J',P')$ be a tournament with $|J'| > 1$ and $a \in J'$. Let $J = J' \setminus \{ a \}$ and let $P = P'|J$.
Let $J_- = P'^{\circ -1}(a), J_+ = P'^{\circ}(a)$ so that $J $ is the disjoint union of $J_-$ and $J_+$.

By induction hypothesis, there exists an arc cyclic, prime tournament $(X,R)$ with $X$ countably infinite and with finitely many non-isolated points.
 Furthermore, the restriction  $(F, R|F)$ to the set $F$ of non-isolated points is isomorphic to $(J,P)$. Using the isomorphism we
 identify $(R, R|F)$ with $(J,P)$ and so regard $J$ as a subset of $X$. Choose $C_1$ a proper clopen subset of $X$ which contains $J_-$ and is
 disjoint from $J_+$ and let $C_2 = X \setminus C_1$.

 Let $(E_1, F_1) = (E, F)$ and $(E_2, F_2) = (F, E)$. These are distinct $2-$fold partitions of
 $2\N^*$ with $2L_1 \cap (F_i \times E_i)$ surjective for $i = 1, 2$.

 Let $(X',R')$ be the attachment
  of $(X,R)$ to $(2\N^*,2L_1)$ via $\{ C_i : i = 1, 2\}$ and $\{ (E_i,F_i) : i = 1,2 \}$. It follows from Theorem \ref{extheo08}
  that $(X', R')$ is arc cyclic and prime.

  Identifying the point $a \in J'$ with $ \infty $ in $2\N^*$, we see that $J'$ is the set of non-isolated points $F'$ in $X'$ and that
  $R'|F'$ equals $(J',P')$, completing the induction.
\vspace{.25cm}

(b): For $n = 1$ we begin with the tournament $(Y,S)$ obtained as in Example \ref{exes08} (b)  by connecting
$2N_1 = (2\N^*,2L_1)$ to an arc cyclic, prime tournament $(Z,P)$ on the Cantor set, identifying $\infty \in 2\N^*$ with $e \in Z$.
By Theorem \ref{extheo09b}
this is an arc cyclic, prime tournament and
 the point $\infty = e $ is the unique limit point of the isolated points of $Y$, which are exactly those in $2\N^*$.

 Since we have chosen $Z$ with no isolated points, there exists, by Theorem \ref{extheo05a} a spanning partition $\{Z_1,Z_2 \}$ for $(Z,P)$,
 labelled so that $e \in Z_1$.
With $(E,F)$ as in part (a) it
 follows that $(\hat E, \hat F) =  (E \cup Z_2, F \cup Z_1)$ is a $2-$fold partition of $Y$ with
$S \cap [(\hat E \times \hat F) \cup (\hat F \times \hat E)]$  a surjective relation relation on $Y$.
Notice that since $Z_1$ is a clopen neighborhood of $e $
in $Z$ and $F$ is a clopen neighborhood of $\infty $ in $2\N^*$, it follows that $\hat F$ is clopen in $Y$.

As before let $(J',P')$ be a tournament with $|J'| > 1, a \in J', J = J' \setminus \{ a \}$ and  $P = P'|J$.
Again let $J_- = P'^{\circ -1}(a), J_+ = P'^{\circ}(a)$.

By induction hypothesis, there exists an arc cyclic, prime tournament $(X,R)$ such that $X$ contains a Cantor subset and countably many isolated points
 with each limit point of the isolated points contained in $C$. There are only finitely many of these.
Furthermore, the restriction  $(F, R|F)$ to the set $F$ of limit
points points of the isolated points is isomorphic to $(J,P)$. Using the isomorphism we again
 identify $(R, R|F)$ with $(J,P)$ and so regard $J$ as a subset of $X$. Choose $C_1$ a proper clopen subset of $X$ which contains $J_-$ and is
 disjoint from $J_+$ and let $C_2 = X \setminus C_1$.

 Let $(E_1, F_1) = (\hat E, \hat F)$ and $(E_2, F_2) = (\hat F, \hat E)$. These are distinct $2-$fold partitions of
 $Y$ with $S \cap (F_i \times E_i)$ surjective for $i = 1, 2$.

 Let $(X',R')$ be the attachment
  of $(X,R)$ to $(Y,S)$ via $\{ C_i : i = 1, 2\}$ and $\{ (E_i,F_i) : i = 1,2 \}$. It follows from Theorem \ref{extheo08}
  that $(X', R')$ is arc cyclic and prime.

  Identifying the point $a \in J'$ with $ \infty = e$ in $Z$, we see that $J'$ is the set of limits of isolated points $F'$ in $X'$ and that
  $R'|F'$ equals $(J',P')$, completing the induction.
  \vspace{.25cm}

  (c) Again we begin with $2N_1 = (2\N^*,2L_1)$.  We will use the construction of Example \ref{exesnew02}(b) which adjusts the lexicographic
  product via an arc cyclic, prime topological tournament $(W,S)$ with $\{ W+, W- \}$ a partition of $W$.

  For $(W+,S+)$ we will use the tournament from Example \ref{exes08} (b) obtained by attaching $2N_1 = (2\N^*,2L_1)$ to the arc cyclic, prime tournament
  $(Z,P)$.  We assume that $Z$ is a Cantor set. So $(W+,S+)$ is an arc cyclic, prime tournament with a countable set of
  isolated points and a single limit point $\infty = e $ in the Cantor set $Z$.  Let $\{ C_1, C_2 \}$ be an arbitrary $2-$fold proper partition
  of $W+$. For $(W-,S-)$  we will use $(Z,P)$  and we choose a spanning partition $\{ Z_1,Z_2 \}$.
  We define the pair of spanning partitions $(E_1,F_1) = (Z_1, Z_2)$ and
  $(E_2,F_2) = (Z_2, Z_1)$. We then let $(W,S)$ be the attachment of $(W+,S+)$ to $(W-,S-)$ via $\{ C_1, C_2 \}$ and
  $ (E_1,F_1),(E_2,F_2)$.  By Theorem \ref{extheo08} we see, as usual, that $(W,S)$ is a prime, arc cyclic tournament and that
  $S|(W- \times \ W+)$ is a surjective relation. We now proceed as in Example \ref{exesnew02}(b)to
  obtain the tournament $(K,T)$. It then follows from Theorem \ref{extheo01e} that the tournament $(K,T)$
  is an arc cyclic, prime tournament. The isolated points are those of
  $\bigcup_i \ \{i+ \} \times W+$ with limit points $$F \ = \  \{ \infty \in 2\N^* \} \cup \{ (i+, \0 = \infty) \in \{i+ \} \times Z+ : i \in \N \}.$$

  It is clear that the restriction of $T$ to $F$ is isomorphic to $N_1 = (\N^*, L_1)$.
\vspace{.25cm}

  (d): For $n = 1$ we begin with the standard $2-$adic example $(J,P) = (\Z[2],\widehat{A})$ which we regard as its own
  reduced double $2'(J,P)$ following Proposition \ref{adicprop02} (d). We then proceed as in Example  \ref{exesnew02}(a).
   The result is an almost locally arc cyclic tournament with
  a single exceptional point $\infty$ which is terminal.  Then as in \ref{exes08}(a) we identify $\infty $ with $\0 $ in $\Z[2]$ to
  obtain a tournament $(Y,S)$ on the Cantor set. By Theorem \ref{extheo09} the tournament is wac and almost locally arc cyclic with
  $\infty = \0 $ the only exceptional point.

 As before let $(J',P')$ be a tournament with $|J'| > 1, a \in J', J = J' \setminus \{ a \}$ and  $P = P|J$.
Again let $J_- = P'^{\circ -1}(a), J_+ = P'^{\circ}(a)$.

By induction hypothesis, there exists an almost locally arc cyclic, wac, prime tournament $(X,R)$ with $X$ a Cantor set.
 Furthermore, the restriction  $(F, R|F)$ to the set $F$ of exceptional points is isomorphic to $(J,P)$.
 Using the isomorphism we again
 identify $(R, R|F)$ with $(J,P)$ and so regard $J$ as a subset of $X$.

Since $X$ has no isolated points we can apply Theorem \ref{extheo05a} to choose disjoint finite sets $H_1, H_2$  both disjoint from
$J$ as well, and such that $H_1$ and $H_2$ are each spanning sets for $(X,R)$. Choose $E$ a clopen subset of $X$ such that
$J_- \cup H_1 \subset E$ and $E$ is disjoint from $J_+ \cup H_2$. With $F = X \setminus E$, it follows that $(E,F)$ is
a spanning set partition of $X$. Let $(E_1, F_1) = (E, F)$ and $(E_2, F_2) = (F, E)$. These are distinct $2-$fold partitions of
 $X$ with $R \cap (F_i \times E_i)$ surjective for $i = 1, 2$.

Now let $C_1$ be a proper clopen subset of $Y$ which contains the point $\infty = \0 $ and let $C_2$ be its -nonempty- complement.

 Let $(X',R')$ be the attachment
  of $(Y,S)$ to $(X,R)$ via $\{ C_i : i = 1, 2\}$ and $\{ (E_i, F_i) : i = 1,2 \}$. It follows from Theorem \ref{extheo08}
  that $(X', R')$ is prime. Since $(X,R)$ and $(Y,S)$ are wac and almost locally arc cyclic, it follows that
  $(X',R')$ is wac and almost locally arc cyclic.

    Identifying the point $a \in J'$ with $ \infty = \0 $ in $W$, we see that $J'$ is the set of exceptional points $F'$ in $X'$ and that
  $R'|F'$ equals $(J',P')$, completing the induction.

 \end{proof}

 \vspace{1cm}

  \section{\textbf{Semi-Prime Tournaments}}
  \vspace{.5cm}

  While the results can be extended to the non-metric case, it will be convenient to restrict to metrizable spaces in this section.
  As the spaces are assumed to be totally disconnected, we will assume that each is equipped with an ultrametric labelled $u$.

  \begin{df}\label{semidef01} A topological tournament $(X,R)$ is called
  \emph{semi-prime}\index{semi-prime tournament}\index{tournament!semi-prime}
     when $X$ is a compact, totally disconnected space
  and there exists $\ep > 0$ such that every non-trivial $Q$ invariant subset of $X$ has diameter at least $\ep$. \end{df}
  \vspace{.5cm}

  \begin{theo}\label{semitheo02} Let $R$ be a topological tournament on a compact, totally disconnected space $X$.
  \begin{itemize}
  \item[(a)] If $(X,R)$ is a prime tournament or if $X$ is finite, then $(X,R)$ is semi-prime.

  \item[(b)] If $(X,R)$ is a semi-prime tournament and $A$ is a non-empty clopen subset of $X$, then the restriction
  $R|A$ is a semi-prime tournament on $A$.

  \item[(c)] Assume $h : (X_2,R_2) \to (X_1,R_1)$ is a quotient map. If $(X_2,R_2)$ is a semi-prime, wac tournament,
  then $(X_1,R_1)$ is a semi-prime, wac tournament.  Furthermore, there exists a finite set $H$ of isolated points
  of $X_1$ such that $h^{-1}(y)$ is a singleton set for   all $y \in X_1 \setminus H$. In particular, if $X_1$ has infinitely
  many isolated points, then $X_2$ has infinitely many isolated points.
  \end{itemize}
  \end{theo}

  \begin{proof} (a): If $R$ is prime, then $X$ is the only non-trivial $Q$ invariant subset of $X$.  If $X$ is finite, then
  there exists $\ep > 0$ such that $u(x,x') \ge \ep$ whenever $x \not= x'$.
\vspace{.25cm}

  (b): Let $\ep_1 > 0$ be a lower bound for the diameters of  non-trivial $Q$ invariant subsets of $X$. With $B = X \setminus A$
  let $\ep_2 > 0$ such that $(x,x') \in A \times B$ implies $u(x,x') \ge \ep_2$.  By  Theorem \ref{theo2.07} there exists
  $\ep > 0$ with $\ep < \ep_1$ such that if $u(x,x') \ge \ep_2$, then $\{ V_{\ep}(x), V_{\ep}(x') \}$ is a thickening for
  $\{ x, x' \}$. In particular, if $(x,x') \in A \times B$ then $x \ha x'$ implies $V_{\ep}(x) \times V_{\ep}(x') \subset R^{\circ}$.
  Otherwise, $V_{\ep}(x) \times V_{\ep}(x') \subset R^{\circ -1}$.  It follows that if $U \subset A$ is non-trivial and $Q$ invariant for
  the restriction $R|A$, then $diam \ U < \ep$ would imply  $U \subset V_{\ep}(x)$ for $x \in U$. It would then follow that
  $U$ is  $Q$ invariant in $X$ with respect to $R$. Since the diameter of $U$ is less than $\ep_1$, this cannot happen.

  Thus, the diameter is at least $\ep$ for any subset of $A$ which is non-trivial and $Q$ invariant with respect to $R|A$.  That is,
  $(A,R|A)$ is semi-prime.
  \vspace{.25cm}

  (c): By Theorem \ref{wactheo02}(f) $(X_1,R_1)$ is wac since $(X_2,R_2)$ is. Furthermore, if $y = h(x)$ is non-isolated, then
  $\{ x \} = h^{-1}(y)$ and $x$ is non-isolated. Let $\ep > 0$ be a lower bound for the diameters of  non-trivial $Q$ invariant subsets of $X_2$.
  By compactness, we can choose $U$ an open subset of $X_1$ with $y \in U$ such that the diameter of $h^{-1}(U)$ is less than $\ep$.
  If for $y' \in U$, the set $h^{-1}(y')$ were not a singleton, then $y'$ would be isolated and so $h^{-1}(y')$ would be a non-trivial
  $Q$ invariant subset of $X_1$ with diameter less than $\ep$.  As this does not happen, it follows that each $h^{-1}(y')$ is a singleton.
  That is, $h$ restricts to a continuous bijection from $h^{-1}(U)$ to $U$.  By Theorem \ref{wactheo02}(f) $h$ is an open map and so
  the restriction to $h^{-1}(U)$ is a homeomorphism onto $U$.

  We can choose for each non-isolated point $y \in X_1$ an open set $U_y$ such that the restriction of the projection $h$ to
 $h^{-1}(U_y)$ is a homeomorphism to $U_y$. The collection $\{ U_y : y \in X_1$
 non-isolated $\} \cup \{ \{ z \} : z \in X_1 $ isolated $\}$ is an
 open cover of $X_1$.  Let $\{ U_{y_1}, \dots U_{y_k} \} \cup \{ \{z_1 \}, \dots, \{ z_{\ell} \} \}$ be a finite subcover. For each
 $y \in U_{y_1} \cup \dots \cup U_{y_k}$, $h^{-1}(y)$ is trivial. Thus, $h^{-1}(y)$ is non-trivial only for $y$ in some subset
  $H$ of $\{ z_1, \dots, z_{\ell} \}$.

  If $X_1$ has infinitely many isolated points, then for infinitely many isolated points $y \in X_1$, the clopen set
  $h^{-1}(y) \subset X_2$ is a singleton and these are isolated points of $X_2$.

  If $(X_1,R_1)$ were not semi-prime, then we could choose a sequence $\{ B_n \}$ of non-trivial $Q$ invariant subsets with
  $diam \ B_n \ \to \ 0$. By going to a subsequence we may assume that the sets converge to a singleton $\{ y \}$ in $X_1$.
  As it is a limit, the point $y$ is non-isolated.
  With the open set $U$ chosen as above, eventually we would have $B_n \subset U$. By  Theorem \ref{qtheo03}, $h^{-1}(B_n)$ is
  $Q$ invariant in $X_2$ and it is non-trivial since $B_n$ is. However, the diameter of  $h^{-1}(B_n)$ is bounded by the
  diameter of $h^{-1}(U)$ which is smaller than $\ep$. The contradiction implies that $(X_1,R_1)$ is semi-prime.

  \end{proof}

 \begin{theo}\label{semitheo03}  Let $\{(X_i,R_i,f_i) \}$ be a classifier system for a wac tournament $(X,R)$ with maps $\{ h_i \}$.
 If $(X,R)$ is a semi-prime tournament, then the system terminates at some finite level $n$.  That is, for some
 $n \in \N$ the map $h_n : X \to X_n$ is a homeomorphism inducing an isomorphism from $(X,R)$ to $(X_n,R_n)$. \end{theo}

 \begin{proof} As before, let $\ep > 0$ be a lower bound for the diameters of  non-trivial $Q$ invariant subsets of $X$.

By Theorem \ref{classtheo02} we can regard $(X,R)$ as the inverse limit of the system which implies that for every $x \in X$
$\{ x \} \ = \ \bigcap_{i \in \N} \ h_i^{-1}(x_i)$ from which it follows that $1_X $ is the intersection of the
decreasing sequence  $ \{ (h_i \times h_i)^{-1}(1_{X_i})  \subset X \times X \}$.
By compactness, there exists $n \in \N$ such that $(h_n \times h_n)^{-1}(1_{X_n}) \subset V_{\ep}$.

If for some $x \in X$, it happened that $h_n^{-1}(x_n)$ were not a singleton, then $x_n$ would be isolated in $X_n$ and
so $h_n^{-1}(x_n)$ would be a non-trivial $Q$ invariant subset of $X$.  Since $h_n^{-1}(x_n) \times h_n^{-1}(x_n) =  (h_n \times h_n)^{-1}(x_n,x_n)$
it would follow that $h_n^{-1}(x_n)$ has diameter less than $\ep$. Since this does not happen, it follows that each $h_n^{-1}(x_n)$ is a
singleton.  Thus, $h_n$ is a continuous bijection and so is a homeomorphism by compactness. Since $h_n$ maps $R$ to $R_n$, it is
an isomorphism $(X,R)$ to $(X_n,R_n)$.

\end{proof}
\vspace{.5cm}

We will call the  minimum $n$ such that $h_n$ is a bijection the \emph{terminal level}\index{terminal level} for $(X,R)$.

\begin{lem}\label{semilem04}  Let $(X_2,R_2) = (X_1,R_1) \ltimes \{ (Y_x,S_x) : x \in X_1 \}$ be a topological lexicographic product with
$(X_1,R_1)$ and each $(Y_x,S_x)$ wac tournaments so that $(X_2,R_2)$ is wac. The tournament $(X_2,R_2)$ is semi-prime if and only if
 $(X_1,R_1)$ and each $(Y_x,S_x)$ is semi-prime and, in addition, $(Y_x,S_x)$ is trivial except for a finite subset of isolated points
 $x \in X_1$. \end{lem}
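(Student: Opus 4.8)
The plan is to prove the two implications separately, deriving the ``only if'' direction almost entirely from Theorem~\ref{semitheo02} together with the structure of the lexicographic projection, and proving the ``if'' direction by a direct diameter estimate that splits $Q$ invariant subsets of $X_2$ according to whether or not they lie inside a single fibre. Throughout I write $\pi:(X_2,R_2)\to(X_1,R_1)$ for the first coordinate projection, which is a continuous open surjective tournament map by Theorem~\ref{theolex01}(a), and I recall that by Theorem~\ref{qtheo03} every fibre $\pi^{-1}(x)=\{x\}\times Y_x$ is a closed $Q$ invariant subset of $X_2$.

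For the forward implication, suppose $(X_2,R_2)$ is semi-prime; it is also wac (by Lemma~\ref{classlem05a}), so Theorem~\ref{semitheo02}(c) applies to $\pi$. This at once gives that $(X_1,R_1)$ is semi-prime and produces a finite set $H$ of isolated points of $X_1$ with $\pi^{-1}(y)$ a singleton for every $y\in X_1\setminus H$. Since $\pi^{-1}(y)=\{y\}\times Y_y$, this says exactly that $(Y_y,S_y)$ is trivial off the finite set $H$ of isolated points, which is the required third condition. Semi-primality of each fibre then follows case by case: if $y\in H$ the set $\{y\}\times Y_y$ is clopen (Theorem~\ref{theolex01}(c)), so its restriction is semi-prime by Theorem~\ref{semitheo02}(b) and is isomorphic to $(Y_y,S_y)$, again by Theorem~\ref{theolex01}(c); if $y\notin H$ or $y$ is non-isolated the fibre $Y_y$ is trivial, hence finite and semi-prime by Theorem~\ref{semitheo02}(a).

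For the converse, assume the three conditions and fix semi-primality constants $\ep_1>0$ for $(X_1,R_1)$ and $\ep_x>0$ for each of the finitely many non-trivial fibres $(Y_x,S_x)$, $x\in H=\{x_1,\dots,x_m\}$. Fix a compatible ultrametric $u_2$ on $X_2$ and an ultrametric $u_1$ on $X_1$, and use uniform continuity of $\pi$ (with $\delta=\ep_1/2$) to obtain $\eta>0$ such that $u_1(\pi(a),\pi(a'))\ge \ep_1/2$ implies $u_2(a,a')\ge\eta$. Let $A\subset X_2$ be non-trivial and $Q$ invariant. If $A$ is not contained in any single fibre, then $\pi(A)$ has at least two points and is $Q$ invariant in $X_1$ by Theorem~\ref{qtheo03}; hence $\mathrm{diam}\,\pi(A)\ge\ep_1$, and the choice of $\eta$ forces $\mathrm{diam}\,A\ge\eta$. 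If instead $A\subset\{x_j\}\times Y_{x_j}$ for some $j$ (note this forces the fibre to be non-trivial, so $x_j\in H$), then because $\pi^{-1}(x_j)$ is itself $Q$ invariant the restricted $Q$-operator on that clopen fibre agrees with $Q$ on $A$, so $A$ is $Q$ invariant for $R_2|\pi^{-1}(x_j)$; transporting through the isomorphism of Theorem~\ref{theolex01}(c) gives a non-trivial $Q$ invariant subset of $Y_{x_j}$ of $u$-diameter at least $\ep_{x_j}$, which bounds $\mathrm{diam}\,A$ below by some $\ep'_j>0$ depending only on $j$. Taking $\ep=\min(\eta,\ep'_1,\dots,\ep'_m)$ shows $(X_2,R_2)$ is semi-prime.

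The forward direction is essentially bookkeeping over Theorem~\ref{semitheo02}, so the genuine work lies in the converse, specifically in verifying that the two cases are exhaustive and that a single $\ep$ survives. Exhaustiveness is clean, since a subset either lies in one fibre or projects to at least two points, and the fibre-to-ambient diameter comparison is routine once one notes there are only finitely many non-trivial fibres and each is clopen. The one point demanding care is the reduction in the second case: one must confirm that $Q$ invariance of $A$ inside $X_2$ coincides with $Q$ invariance of $A$ for the restricted tournament on the fibre, which holds precisely because each $\pi^{-1}(x)$ is $Q$ invariant, so that $Q(a,a')\subset\pi^{-1}(x)$ for $a,a'$ in that fibre. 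I expect this identification, together with keeping the diameter comparisons uniform across the finitely many fibres and the base, to be where most of the attention goes.
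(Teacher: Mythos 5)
Your proof is correct and follows essentially the same route as the paper: the forward direction is the same bookkeeping over Theorem~\ref{semitheo02}, and the converse uses the identical case split according to whether the non-trivial $Q$ invariant set $A$ lies in a single fibre or projects (via Theorem~\ref{qtheo03}) to a non-trivial $Q$ invariant subset of $X_1$. The only difference is cosmetic metric bookkeeping: where you invoke uniform continuity of $\pi$ and transport fibre constants through the isomorphism of Theorem~\ref{theolex01}(c), the paper simply replaces $u_2$ by $\max(u_2,\pi^*u_1)$ and measures the fibres directly with $u_2$, so that one constant $\ep$ serves all cases at once.
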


 \begin{proof} It follows from Lemma \ref{classlem05a} that $(X_2,R_2)$ is wac.

 If $(X_2,R_2)$ is semi-prime then from  Theorem \ref{semitheo02} it follows that the quotient $(X_1,R_1)$ and the restriction to
 each $Q$ invariant subset $\{ x \} \times Y_x$ for $x$ isolated in $X_1$ is a semi-prime tournament. Moreover, by Theorem \ref{semitheo02}
 $\pi^{-1}(x)$ is non-trivial only for $x$ in some finite set $H \subset X_1$ . Thus, $Y_x$ is non-trivial only for  $x$ in the finite set $H$.

 Now assume that the base and fibers are semi-prime tournaments and that $Y_x$ is non-trivial only for $x \in H$.
 Let $u_1$ be an ultrametric on $X_1$. Replacing an ultrametric $u_2$ on $X_2$ by $\max(u_2, \pi^*u_1)$ we may assume that
 $u_1(x,x') \le u_2((x,y),(x',y'))$ for $(x,y), (x',y') \in X_2$. Choose $\ep > 0$ a lower bound for the $u_1$ diameter of the non-trivial
 $Q$ invariant subsets of $X_1$ and for the $u_2$ diameter of the non-trivial $Q$ invariant subsets of any of the $Y_{x}$'s for $x \in H$.

 Let $A$ be a non-trivial $Q$ invariant subset of $X_2$.  If $A$ is contained in some $Y_{x}$ then $x \in H$ and the diameter
 of $A$ is at least $\ep$ because
 $Q$ invariance for $X_2$ implies $Q$ invariance for $Y_{x}$. Otherwise, $\pi(A)$ is a non-trivial $Q$ invariant subset of $X_1$ and
 so it has $u_1$ diameter at least $\ep$.  Hence, the $u_2$ diameter of $A$ is at least $\ep$.

 Thus, $(X_2,R_2)$ is semi-prime.

 \end{proof}

 \begin{theo}\label{semitheo05} A wac tournament $(X,R)$ is semi-prime if and only if there exists a wac prime tournament $(X',R')$
 and a clopen subset $A \subset X'$ such that $(X,R)$ is isomorphic to the restriction $(A,R'|A)$.
 If $(X,R)$ is arc cyclic, then $(X',R')$ can be chosen to be arc cyclic.\end{theo}

 \begin{proof} By Theorem \ref{semitheo02} the restriction of a prime tournament to a clopen subset is semi-prime.

 Now assume that $(X,R)$ is wac and semi-prime and let $\{(X_i,R_i,f_i) \}$ be a classifier system for  $(X,R)$ with maps $\{ h_i \}$.
 We apply Theorem \ref{semitheo03} and prove the result by induction on the terminal level $n$.

 If $n = 1$, then $(X,R)$ which is isomorphic to $(X_1,R_1)$ is either itself a prime tournament other than an arc, or else
 it is a finite order.  If $(X,R)$ is prime we use $(X',R') = (X,R)$.  If $(X,R)$ is any finite tournament, then we obtain
 $(X',R')$ by using Theorem \ref{extheo08}, or when $|X| = 1$ Proposition \ref{exprop06}.

 Now, inductively, assume the result when the terminal level is $n$ and assume that $(X,R)$ has terminal level $n+1$ and so
 we will use $h_{n+1}$ as an identification regarding $(X,R) = (X_{n+1},R_{n+1})$. Then
 using the lexicographic construction for the classifier, we write $(X,R)$ as the topological lexicographic product
 $(X_n,R_n) \ltimes \{  (Y_x,S_x) : x \in X_n \}$. By Theorem \ref{semitheo02} again, the quotient map $h_n : (X,R) \to (X_n,R_n)$
 shows that $(X_n,R_n)$ as well as $(X,R)$ is a semi-prime and wac. So Lemma \ref{semilem04} implies that
 $(Y_x,S_x)$ is trivial except for $x \in H$ with $H$ a nonempty finite set of isolated points of $X_n$. In addition, for each
 $x \in H$, either $(Y_x,S_x)$ is a prime tournament other than an arc, or else it is a finite order and so $Y_x$ is finite.

 The  wac, prime tournament $(X_n,R_n)$ clearly has terminal level $n$ and so the induction hypothesis implies that there
 exists a wac, prime tournament $(Z_1,T_1)$ with $X_n$ a clopen subset of $Z_1$ such that $R_n = T_1|X_n$. If $(X,R)$ is arc cyclic, then
 the quotient $(X_n,R_n)$ is arc cyclic and we can choose $(Z_1,T_1)$ arc cyclic.

 If we let $(Z_2,T_2)$ be the topological lexicographic product
 $(Z_1,T_1) \ltimes \{ (Y_x,S_x) : x \in Z_1 \}$ with $(Y_x,S_x)$ as before when $x \in H$, and
 with $(Y_x,S_x)$ trivial otherwise.  Clearly, $X$ is a clopen subset of $Z_2$ with $R = T_2|X$. If $(X,R)$ is arc cyclic, then
 $(Y_x,S_x)$ is arc cyclic for each $x \in H$ and so $(Z_2,T_2)$ is arc cyclic.

 First we choose a proper partition $\CC = \{ C_i : i = 1, \dots, m  \}$ of $Z_2$ as follows. If for $x \in H, (Y_x,S_x)$
 is an infinite prime tournament, then we choose as  two members of $\CC$  a proper $2-$fold partition of the clopen set
 $\{ x \} \times Y_x$. If for $x \in H, Y_x$ is finite,
 then we choose as members of $\CC$ the singleton subsets of the finite set of isolated points $\{ x \} \times Y_x$. The
 complement of union of all of these is  the clopen set $Z_2 \setminus \bigcup \{ \{ x \} \times Y_x : x \in H \}$.
 If it is nonempty, then  it is adjoined as the last member of $\CC$.

 Now let $(Z_3,T_3)$ be an arc cyclic, prime tournament on a Cantor set, e.g. we may use $(\Z[2],\widehat{A})$. By
 Theorem \ref{extheo05a} we can choose distinct spanning set partitions of $Z_3$: $\{ (E_i,F_i) : i = 1,\dots, m \}$.
 We obtain $(X',R')$ by attaching $(Z_2,T_2)$ to $(Z_3,T_3)$ via the proper partition $\CC = \{ C_i \} $ of $Z_2$ and
 the associated partitions $\{ (E_i,F_i) \}$ of $Z_3$.

 It is clear that $(X',R')$ is wac and by Theorem \ref{extheo08} it is arc cyclic if $(Z_2,T_2)$ is.

 Now let $U$ be a non-trivial $Q$ invariant subset for $(X',R')$.

 By Lemma \ref{qlem01a} if $U$ contains a point of $Z_3$, then because $(Z_3,T_3)$ is balanced, it contains two points of $Z_3$.
 If $U$ contains two points of $Z_3$ then $Z_3 \subset U$
 because $(Z_3,T_3)$ is prime. If $z \in C_i$, then there exist $a \in E_i, b \in F_i$ with $b \ha a$ for $T_3$. It follows
 that $\{ z, b, a \}$ is a $3-$cycle for $R'$ and so $z \in U$. Thus, $U = X'$.

 Now suppose that distinct points $z_1, z_2$ lie in $ U \cap Z_2$.  If for some $i \not= j$, $z_1 \in C_i, z_2 \in C_j$, then there
 exists $x \in (E_i \cap F_j)$ or $x \in (E_j \cap F_i$ because the spanning partitions are distinct. By relabelling we may
 assume the first. Then $z_2 \ha x \ha z_1$ in $R'$ and so $x \in U$.  As above, it then follows that $U = X'$.

If $z_1$ and $z_2$ do not lie in the same set  $\{ x \} \times Y_x$, then there is a
section $\xi : Z_1 \to Z_2$ which contains both  $z_1$ and $z_2$. Since $(Z_1,T_1)$ is prime,
the image of the section lies in $U$.  No section is entirely contained in
a member of $\CC$ and so there exists two points $z_1'$ and $z_2'$ in $U \cap Z_2$ which lie in different elements of $\CC$.

 If $z_1$ and $ z_2$ are both in $\{ x \} \times Y_x$ with
 $Y_x$ finite, then they do not lie in the same member of $\CC$.

 Finally, if $z_1, z_2 \in \{ x \} \times Y_x$ with $(Y_x,S_x)$ an infinite prime tournament, then $Y_x \subset U$.
 Since $Y_x$ contains two different members of $\CC$ we can choose two points $z_1', z_2'$ of $\{ x \} \times Y_x$ which
 lie in different elements of $\CC$.  Thus, in this case as well, $U = X'$.

 We have proved that $(X',R')$ is prime as required.

 \end{proof}

  \begin{theo}\label{semitheo06} Let $\{(X_i,R_i,f_i) \}$ be a classifier system for a wac tournament $(X,R)$ with maps $\{ h_i \}$. Assume that
  $X$ has only finitely many isolated points. The tournament $(X,R)$ is semi-prime if and only if every $X_i$ has only finitely many isolated points and
  there exists a terminal level $n$, i.e. $h_n : (X,R) \to (X_n,R_n)$ is an isomorphism. In particular, this
  applies if $X$ is a Cantor set. \end{theo}

  \begin{proof} We use the lexicographic construction for the classifier. In any case, $(X_1,R_1)$ is either prime or finite and so is
  semi-prime.

  If  $X_i$ has only finitely many isolated points and $(X_i,R_i)$ is semi-prime, then Lemma \ref{semilem04} implies that
   $(X_{i+1},R_{i+1})$ is semi-prime.  It follows by induction that if every $X_i$ has only finitely many isolated points, then
   every $(X_i,R_i)$ is semi-prime. If there is a terminal level $n$, then the isomorphism $h_n$ implies that $(X,R)$ is semi-prime and
   has only finitely many isolated points.

   Now assume that $(X,R)$ is semi-prime.  It has a terminal level by Theorem \ref{semitheo03}.
   If for some $i$, $(X_i,R_i)$ has infinitely many isolated points, then Theorem \ref{semitheo02} (c) implies that
   $X$ has infinitely many isolated points.

   \end{proof}

 \vspace{1cm}

 \section{\textbf{Appendix}}

\subsection{\textbf{Alternative Game Subsets for the 2-adics}}

\vspace{.5cm}

We return to  the additive  group of $2-$adic integers,
  which we regard as the product $\Z[2] =  \{ 0. 1 \}^{\N}$.

   For $\ep \in \Z[2] = \{ 0, 1 \}^{\N}$  we define $A(\ep)$ by
     \begin{equation}\label{adiceq01aa}
          A(\ep)_i \  = \ \begin{cases} -A_i \ \ \text{ when} \ \ \ep_i = 1, \\
   \ \ \ A_i \ \ \text{ when} \ \ \ep_i = 0. \end{cases} \quad \text{and} \quad A(\ep) = \{ \0 \} \cup (\bigcup_{i} A(\ep)_i).
   \end{equation}
   So we can write $A(\ep)_i = (-1)^{\ep_i}A_i$.

   Thus, $A(\ep)$ is a closed game subset for $\Z[2]$ and we let $\widehat{A(\ep)}$ be the associated topological tournament. The original
   subset $A$ is $A(\ep)$ with $\ep = \0 $.  Letting $\bar \ep $ be given by $(\bar \ep)_i = \overline{\ep_i}$, then the complementary
   game subset    $-A(\ep) = A(\bar \ep)$. Recall that for any game subset $B$ we have $\widehat{-B} = \widehat{B}^{-1}$.

   \begin{theo}\label{adictheo01xx}
%
\vspace{.25cm}

   (a) For each $k \in \N, \ w \in \{ 0, 1 \}^k $, the shift $\s^{k}$ is an isomorphism from the restriction
    $(I_w,\widehat{A(\ep)}|I_w)$  to
   $(\Z[2],\widehat{A(\s^{k}(\ep))}$ .
   \vspace{.25cm}

   (b) For any $\ep \in \{ 0, 1 \}^{\N}$ there is a topological tournament isomorphism $h[\ep] : (\Z[2],\widehat{A}) \to (\Z[2],\widehat{A(\ep)})$
   with $h[\ep](\0 ) = \0 $ and for all $k \in \N$, the following diagram commutes.
    \begin{equation}\label{adicdiag}\begin{CD}
  I_{0^k} \  @>h[\ep]>> \  I_{0^k} \\
  @V \s^{k} VV  @VV \s^{k} V \\
  \Z[2] \ \  @>>h[\s^{k}(\ep)]> \ \ \Z[2]
  \end{CD}
  \end{equation}
    \end{theo}

   \begin{proof}    (a): Note  that
    \begin{equation}\label{adiceq01ab} \begin{split}
   0x \in A(\ep)_{i+1} = (-1)^{\ep_{i+1}}A_{i+1} \ \  \Longleftrightarrow \\  x \in (-1)^{\ep_{i+1}}A_{i} = (-1)^{(\s(\ep)_{i}}A_{i} = A(\s(\ep))_i.
   \end{split}\end{equation}
   Because $0^{i-1}1\ep x - 0^{i-1}1\ep y = 0^{i+1}(x - y)$ for $\ep = 0, 1$, the result follows by induction.

   (b): From (\ref{adiceq01})(i)-(iii) we see, as in (\ref{adiceq01ac}) that for all $\ep \in \{ 0, 1 \}^{\N}$ and  all $i,j  \in \N$ with $j > i+1$
   \begin{align}\label{adiceq01acxx}\begin{split}
   x \in A_i(\ep) \ \ \text{and} \ \ x' \in A_j(\ep) \cup (- A_j(\ep)) \ \ &\Rightarrow \ \ (x', x) \in \widehat{A(\ep)}^{\circ}, \\
   x \in -A_i(\ep) \ \ \text{and} \ \ x' \in A_j(\ep) \cup (- A_j(\ep)) \ \ &\Rightarrow \ \ (x, x') \in \widehat{A(\ep)}^{\circ}, \\
   x \in A_i(\ep) \ \ \text{and} \ \ x' \in A_{i+1}(\ep) \cup (- A_{i+1}(\ep)) \ \ &\Rightarrow \ \ (x, x') \in \widehat{A(\ep)}^{\circ}, \\
   x \in -A_i(\ep) \ \ \text{and} \ \ x' \in A_j(\ep) \cup (- A_j(\ep)) \ \ &\Rightarrow \ \ (x, x') \in \widehat{A(\ep)}^{\circ}.
   \end{split}\end{align}

    From (\ref{adiceq01})(iv) it follows that
    \begin{align}\label{adiceq01adxx}\begin{split}
   x \in A_i(\ep) \ \ \text{and} \ \ x' \in -A_i(\ep)  \ \ &\Rightarrow  \ \ (x, x') \in \widehat{A(\ep)}^{\circ} \\
  \text{if either } \  x_{i+2} = x'_{i+2} \ \text{and} \ \ep_i = \ep_{i+1} \ \ &\text{or} \
  \ x_{i+2} = \bar x'_{i+2} \ \text{and} \ \ep_i = \bar \ep_{i+1}, \\
   \text{ and} \ \ (x', x) \in &\widehat{A(\ep)}^{\circ} \ \ \text{otherwise}.
 \end{split}\end{align}

     Now assume that for some $\ep$ and all $p \in \N$,
      the isomorphisms $h[\s^p(\ep)] : (\Z[2],\widehat{A}) \to (\Z[2],\widehat{A(\s^p(\ep)})$ have been defined so that
     the diagrams (\ref{adicdiag}) commute with $\ep$ replaced by $\s^p(\ep)$.
     We now define $h[\ep]$.

      First, observe that $h[\ep]$ is defined on $I_0$ from \ref{adicdiag}. Hence, for $x, x' \in I_0$, that $(x, x') \in \widehat{A}$
      if and only if $(h[\ep](x),h[\ep](x')) \in \widehat{A(\ep)} $.  Provided that $h[\ep]$ is chosen to map $\pm A_1$ to
      $\pm A(\ep)_1$, it will follow from  (\ref{adiceq01acxx}) that if $x \in A_1, x' \in A_j \cup (-A_j)$ with $j > 2$,
      then $(x,x') \in \widehat{A}$ and $(h[\ep](x),h[\ep](x')) \in\widehat{A(\ep)}$ with the reverse directions for $x \in -A_1$.
      Furthermore, if
      $x \in A_1, x' \in A_2 \cup (-A_2)$, then $(x',x) \in \widehat{A}$ and $(h[\ep](x'),h[\ep](x)) \in\widehat{A(\ep)}$  with
      the reverse directions for $x \in -A_1$.

      It remains to define $h[\ep]$ on $A_1 \cup -A_1$
        the definition  depends on the values of $\ep_1$ and $\ep_2$:
     \begin{align}\label{adiceq01af}\begin{split}
     \text{For} \ \ (\ep_1,\ep_2) = (0,0), &\quad 10z \mapsto 10 h[\s^2(\ep)](z), \ \  11x \mapsto 11 h[\s^2(\ep)](z). \\
     \text{For} \ \ (\ep_1,\ep_2) = (0,1), &\quad 10z \mapsto 10 (h[\s^2(\ep)](z) + \1 ), \ \  11x \mapsto 11 h[\s^2(\ep)](z). \\
  \text{For} \ \ (\ep_1,\ep_2) = (1,0), &\quad 10z \mapsto 11 (h[\s^2(\ep)](z) + \1 ), \ \  11z \mapsto 10 h[\s^2(\ep)](z). \\
  \text{For} \ \ (\ep_1,\ep_2) = (1,1), &\quad 10z \mapsto 11 h[\s^2(\ep)](z), \ \  11z \mapsto 10 h[\s^2(\ep)](z).
   \end{split}\end{align}

   Observe that for $x = wz, x' = wz',$ then $(x, x') \in \widehat{A}$ if and only if $(00z, 00z') \in \widehat{A}$
   if and only if $(h[\s^2(\ep)](z),h[\s^2(\ep)](z')) \in \widehat{A(\s^2(\ep))}$ if and only if
   $(w'h[\s^2(\ep)](z),w'h[\s^2(\ep)](z')) \in \widehat{A(\ep)}$ and if and only if
   $(w'(h[\s^2(\ep)](z) + \1 ),w'(h[\s^2(\ep)](z') + \1 )) \in \widehat{A(\ep)}$ for $w, w' \in \{0, 1 \}^2$.
It follows that for $x, x' \in \pm A_1$, that $(x,x') \in \widehat{A}$ if and only if \\
$(h[\ep](x),h[\ep](x')) \in\widehat{A(\ep)}$.

Lastly, to show that for $10z \in A_1, 11z' \in -A_1, \ \ (10z,11z') \in \widehat{A}$ if and only if
$(h[\ep](10z),h[\ep](11z')) \in\widehat{A(\ep)}$ we use (\ref{adiceq01adxx}) and (\ref{adiceq01ae}).
We observe that for any $z$ and $\ep$, $h[\s^2(\ep)](z)_1 = z_1$, i.e. either both $z$ and $h[\s^2(\ep)](z)$
are  even or both are odd. Now suppose $(10z,11z') \in \widehat{A}$ and so by (\ref{adiceq01ae})
$z_1 = z'_1$. If $\ep_1 = 0$, $h[\ep](10z) \in A_1, h[\ep](11z) \in -A_1$ and so
by (\ref{adiceq01adxx}) $(10h[\s^2(\ep)](z),11h[\s^2(\ep)](z')) \in\widehat{A(\ep)}$ if $\ep_2 = 0$ and
$(11h[\s^2(\ep)](z'),10h[\s^2(\ep)](z)) \in \widehat{A(\ep)}$ if $\ep_2 = 1$. So in the latter case
$(10(h[\s^2(\ep)](z) + \1 ),11h[\s^2(\ep)](z')) \in \widehat{A(\ep)}$. With similar arguments for the two cases with
$\ep_1 = 1$.

This construction requires that we know the isomorphisms $h[\s^p(\ep)]$ for all $p \in \N$. We begin with $\ep = \0 $ for which
$A(\ep) = A$ and we use the identity with $h[\0 ](x) = x$ for all $x$. The construction then yields $h[ \1 ]$. Continuing on
we obtain the definition of $h[\ep]$ for any $\ep$ with $\ep_j = 0$ for $j$ sufficiently large. This set is $\Z_+ = \N \cup \{ 0 \}$ regarded as
as subset of $\Z[2]$.

Recall that $x \cong x' (mod \ 2^k)$ when $x_i = x'_i$ for all $i \le k$ in $\N$.
\vspace{.25cm}

CLAIM: Assume for  $\ep, \ep' \in \Z_+$ that $\ep \ \cong \  \ep' \ (mod \ 2^{k-1})$.

(a) For $x, x' \in \Z[2], \ \ x \ \cong \  x' \ (mod \ 2^{k})$ if and only if $h[\ep](x) \ \cong \  h[\ep'](x')\  (mod \ 2^{k})$.

(b) If $x \ \ncong \ x' \ (mod \ 2^{k-1})$, then $(x,x') \in \widehat{A(\ep)}$ if and only if \\ $(x,x') \in \widehat{A(\ep')}$.

\begin{proof} (a): For $k = 1$ the assumption on $\ep$ and $\ep'$ is vacuous and the result follows because $x \ \cong \  h[\ep](x) (mod \ 2)$
for all $x$ and $\ep$. For $k > 1$ the result is clear for $\ep = \ep' = \0 $.  Assume the result for $\ep, \ep'$ with $\ep_j = \ep'_j = 0$ for
all $j > N$ and we prove the result when $\ep_j = \ep'_j = 0$ for
all $j > N+1$ and so we can apply the result for $\s^p(\ep) \ \cong \  \s^p(\ep') (mod \ 2^{k-p-1})$ with $p \ge 1$. If $x = 0z, x' = 0z' \in I_0$, then
\begin{align}\label{new}\begin{split}
x \ \cong \  x' \ (mod \ 2^k) \ &\Longleftrightarrow \ z \ \cong \  z' \ (mod \ 2^{k-1})
 \Longleftrightarrow \\
h(\s(\ep))(z) \ \cong \  &h(\s(\ep'))(z') \ (mod \ 2^{k-1})  \Longleftrightarrow \\
h[\ep](x) = 0h(\s(\ep)(z) &\ \cong \  0h(\s(\ep')(z') =  h[\ep'](x') \ (mod \ 2^k).
\end{split}\end{align}

If $x, x' \in \pm A_1 $, then $h[\ep](x) \in \pm A(\ep)_1 $ and $h[\ep'](x') \in \pm A(\ep')_1$. If $\ep_1 = \ep'_1$, then $A(\ep)_1 = A(\ep')_1$ and
so $h[\ep](x)  \ \cong \   h[\ep'](x') \ (mod \ 2^2)$, proving the result for $k = 2$.

For $k > 2$, $\ep_1 = \ep'_1$ and $\ep_2 = \ep'_2$  and  because the result holds for $\s^2(\ep) \ \cong \  \s^2(\ep') \ (mod \ 2^{k-3})$ and
in (\ref{adiceq01af}) $z \ \cong \  z' \ (mod \ 2^{k-2})$ we obtain the result from the definition (\ref{adiceq01af}) for $x$ and $x'$.
\vspace{.25cm}

(b) : If $x \ \ncong \ x' \ (mod \ 2^{k-1})$, then $(x,x') \in \widehat{A(\ep)}$ if and only if $x' - x \in A(\ep)_i$ for some $i < k-1$.
Since $\ep \ \cong \  \ep' \ (mod \ 2^{k-1})$ we have $A(\ep)_i = A(\ep')_i$ for all $i \le k-1$.

\end{proof}

Now given an arbitrary $\ep \in \Z[2]$ define $\ep^n$ by
$ \ep^n_i \ = \ \begin{cases} \ep_i \ \text{for} \ i \le n \\ \ 0 \  \ \text{for} \ i > n. \end{cases}$
From part (a) of the Claim we have $h[\ep^n](x) \ \cong \  h[\ep^m](x) \ (mod \ 2^k)$ provided $n,m \ge k$. Thus, we can define
$h[\ep](x)_i =  h[\ep^n](x)_i$ for all $n \ge i$. From part (b) of the Claim it then follows that $(x,x') \in \widehat{A}$ if and only if
$(h[\ep](x),h[\ep'](x)) \in  \widehat{A(\ep)}$.

     \end{proof}

 \vspace{.5cm}

Thus we have an uncountable set of game subsets $A(\ep)$ all of whose associated tournaments are isomorphic.

 \vspace{1cm}

 \subsection{\textbf{Sections Over the Cantor Set}}

  \vspace{.5cm}

  \begin{theo}\label{Cantor1} Let $f : X \to Y$ be a continuous, open surjection from a compact metric space $X$ onto a totally
  disconnected space $Y$.
  There exists a continuous map $r : Y \to X$ such that $f \circ r = 1_C$. \end{theo}

  \begin{proof} By replacing the metric $d$ on $X$ by $\min (d, 1)$ we may assume that $X$ has diameter at most $1$.

  We define a decreasing sequence of open subsets  $\{ Z_n \}$ of $X$ and successively refining clopen partitions $\A_n$ of $C$
  such that for each $U \in \A_n$ the open set $f^{-1}(U) \cap Z_n$ has diameter at most $1/n$ and is mapped by $f$ onto $U$.

  Begin with $Z_1 = X$ and $\A_1 = \{ Y \}$.

  Given $Z_n$ and $\A_n$ we choose for each $U \in \A_n$ an open cover $\B(U)$ of $f^{-1}(U) \cap Z_n$ by subsets of diameter at most
  $1/(n+1)$. Now choose a clopen partition $\A_{n+1}|U$ of $U$ which refines the open cover $\{ f(B) : B \in \B(U) \}$.

  For each $U' \in \A_{n+1}|U$ choose a $B(U') \in \B(U)$ such that $U' \subset f(B(U'))$ and so $f^{-1}(U') \cap B(U')$ maps
  onto $U'$ and has diameter at most $1/(n+1)$. Let $\A_{n+1} = \bigcup \{ \A_{n+1}|U : U \in \A_n \}$ and
  $Z_{n+1} = \bigcup \{ f^{-1}(U') \cap B(U') : U' \in \A_{n+1} \}$.

  Note that the closure $\overline{Z_{n}} = \bigcup \{ \overline{f^{-1}(U) \cap Z_n} : U \in \A_n \}$. Define
  $Z = \bigcap_n \overline{Z_{n}}$.

  For $x \in Y$, let $U_n(x)$ denote the member of $\A_n$ which contains $x$. $f^{-1}(x) \cap Z = \bigcap_n \overline{f^{-1}(U_n) \cap Z_n} $
  which is a singleton since $\overline{f^{-1}(U_n) \cap Z_n}$ has diameter at most $1/n$.

  Hence, the restriction $f|Z$ is a continuous bijection which is therefore a homeomorphism by compactness.
  We define $r = (f|Z)^{-1}$.

  \end{proof}

  It follows that in Proposition \ref{prop3.03} if $G_2$ is a compact metrizable group mapping onto a totally disconnected  group, then
  the lift $j$ and the retraction $p$ can be chosen to be continuous.

 \begin{cor}\label{Cantor2} If a compact group $H$ acts on a totally disconnected compact metric space $X$, then the
 quotient space of orbits $ Y = \{ Hx : x \in X \}$ is totally disconnected and there exists a continuous selection
 $r : Y \to X$ with $r(Hx) \in Hx$ for all $x$. \end{cor}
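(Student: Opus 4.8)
The plan is to apply Theorem \ref{Cantor1} to the orbit projection $\pi : X \to Y$ given by $\pi(x) = Hx$, with $Y$ carrying the quotient topology. Three properties of $\pi$ are immediate or nearly so: it is surjective by definition, continuous by the definition of the quotient topology, and open because for any open $U \subseteq X$ the saturation $\pi^{-1}(\pi(U)) = HU = \bigcup_{h \in H} hU$ is a union of translates of $U$, each open since every $h \in H$ acts as a homeomorphism; hence $\pi(U)$ is open. Since $X$ is compact, $Y = \pi(X)$ is compact as well. It then remains to check that $Y$ is Hausdorff and totally disconnected, for with those in hand Theorem \ref{Cantor1} applies and produces a continuous $r : Y \to X$ with $\pi \circ r = 1_Y$; unravelling, $\pi(r(Hx)) = Hx$ says exactly $r(Hx) \in \pi^{-1}(\{Hx\}) = Hx$, which is the desired selection.

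The substantive step, and the one I expect to be the main obstacle, is showing that $Y$ is totally disconnected; Hausdorffness will fall out of the same separation argument. Here I would exploit an invariant ultrametric. By Background \ref{back} (8) the compact, metrizable, totally disconnected space $X$ carries an ultrametric $d$, and by Background \ref{back} (10) the metric $u(x,x') = \max\{d(gx,gx') : g \in H\}$ is then an $H$-invariant ultrametric compatible with the topology. For each $\ep > 0$ the $\ep$-balls of $u$ form a finite clopen partition of $X$ (again Background \ref{back} (8)), and $H$-invariance of $u$ gives $h(V_{\ep}(x)) = V_{\ep}(hx)$, so $H$ merely permutes these finitely many balls.

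Given two distinct orbits $Hx \neq Hx'$, both are compact (continuous images of $H$) and disjoint, so $\d = \min\{u(y,y') : y \in Hx, \ y' \in Hx'\} > 0$. Choosing $\ep$ with $0 < \ep \le \d$, no $\ep$-ball meets both orbits, and the union $W$ of those $\ep$-balls that meet $Hx$ is a clopen set containing $Hx$ and disjoint from $Hx'$; it is $H$-invariant, since $H$ permutes the balls and preserves $Hx$ setwise. Because $W$ is saturated, $\pi^{-1}(\pi(W)) = W$, and the same holds for $X \setminus W$, so both $\pi(W)$ and its complement are open, i.e. $\pi(W)$ is a clopen subset of $Y$ separating $Hx$ from $Hx'$. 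This simultaneously shows $Y$ is Hausdorff and that clopen sets separate points of the compact space $Y$, whence its components are singletons and $Y$ is totally disconnected (Background \ref{back} (2)). With $Y$ now a compact, Hausdorff, totally disconnected space and $\pi$ a continuous open surjection from the compact metric space $X$, Theorem \ref{Cantor1} yields the required continuous selection $r$.
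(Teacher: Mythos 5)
Your proof is correct, and it reaches the same two conclusions (total disconnectedness of $Y$ and the selection) from the same starting ingredient — an $H$-invariant ultrametric $u$ on $X$ via Background \ref{back} (8) and (10) — but the middle of your argument is genuinely different from the paper's. The paper pushes $u$ down to the quotient: it defines $\bar u(Hx,Hy) = \min\{u(x_1,y_1) : x_1 \in Hx,\ y_1 \in Hy\}$, uses invariance to move representatives and verify that $\bar u$ is an ultrametric inducing the quotient topology, and then reads off both total disconnectedness and openness of $\pi$ from the ball identity $\pi(V^u_{\ep}(x)) = V^{\bar u}_{\ep}(\pi(x))$. You avoid constructing any metric on $Y$: openness of $\pi$ comes from the saturation identity $\pi^{-1}(\pi(U)) = HU$, an argument that needs neither the metric nor compactness of $H$, and total disconnectedness (together with Hausdorffness, which is indeed needed before invoking Theorem \ref{Cantor1}, since its proof uses compactness of $Y$ and clopen partitions) comes from separating two disjoint orbits by a saturated clopen union of $\ep$-balls, using that $H$ permutes the finitely many balls of the invariant ultrametric. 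Your route is more economical, as it skips the verification of the ultrametric axioms on the quotient; the paper's route yields strictly more information, namely an explicit quotient ultrametric $\bar u$ with $\pi^* \bar u \le u$, exhibiting $Y$ as a compact ultrametric space and making the openness of $\pi$ quantitative ($\ep$-balls map onto $\ep$-balls). Both are complete proofs of the corollary as stated.
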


 \begin{proof} With respect to the diagonal action of $H$ on $X \times X$, the diagonal
 $1_X = H 1_X$ is the intersection $ \bigcap \ HV$, as $V$ varies over the closed neighborhoods of the diagonal. Hence, if $V_1$ is
 a neighborhood of the diagonal, then for some such $V \ \ H V \subset V_1$ and so $V \subset \bigcap \{ (h \times h)^{-1}(V_1) : h \in H \}$.
 That is, the action is equicontinuous. Hence, if $u$ is an ultra-metric on $X$, we can replace it by $\max \{ h^*u : h \in H \}$
 where $h^*u(x,y) = u(hx, hy)$. That is, we may assume that $u$ is $H$ invariant.

 Now on the quotient space define $\bar u$ by
 $\bar u (Hx, Hy) = \min \{ u(x_1, y_1) : x_1 \in Hx, y_1 = Hy \}$. If the minimum is achieved at the pair $(x_1,y_1)$ and
 $x_2 \in Hx$, there exists $h \in H$ such that $hx_1 = x_2$ and so with $y_2 = hy_1$ we have $\bar u(Hx, Hy) = u(x_2, y_2)$.
 In particular, if $\pi : X \to Y$ is the projection with $\pi(x) = Hx$, then $\pi(V^u_{\ep}(x)) = V^{\bar u}_{\ep}(\pi(x))$
 for all $x \in X$. Note that for $x, y, z \in X$, there exist $x_1 \in Hx, y_1 \in Hy, z_1 \in Hz$ such that
 $\bar u(Hx, Hy) = u(x_1, y_1)$ and $\bar u(Hy, Hz) = u(y_1, z_1)$. Hence,
 \begin{align}\begin{split}\label{eqCantor}
 \max ( \bar u(Hx, Hy), \bar u(Hy, Hz)) \ = \  &\max( u(x_1, y_1),u(y_1, z_1))\\
  \ge \  u(x_1,z_1) \  \ge \  &\bar u(Hx, Hz).
 \end{split}\end{align}
 Clearly, $\bar u(Hx, Hy) = 0$ if and only if $Hx = Hy$ and $\bar u$ is symmetric. Since $\pi^* \bar u \le u$, it follows that
 $\bar u$ is a continuous ultra-metric on the quotient space $Y$ and so it induces the quotient topology by compactness.
 Hence, the quotient is totally disconnected.

 Since  $\pi(V^u_{\ep}(x)) = V^{\bar u}_{\ep}(\pi(x))$, it follows that $\pi$ is an open map and so the selection exists by
 Theorem \ref{Cantor1}.

 \end{proof}

 This result generalizes Lemma \ref{lem3.04a}.

\vspace{1cm}

\bibliographystyle{amsplain}

\printindex

 \end{document}